\theoremstyle{plain}
\newtheorem{thm}{Theorem}[section]
\newtheorem{lem}[thm]{Lemma}
\newtheorem{cor}[thm]{Corollary}
\newtheorem{prop}[thm]{Proposition}
\newtheorem{assump}[thm]{Assumption}
\newtheorem{construction}[thm]{Construction}
\theoremstyle{definition}
\newtheorem{defn}[thm]{Definition}
\newtheorem{exmp}[thm]{Example}
\theoremstyle{definition}
\newtheorem{note}[thm]{Note}
\newtheorem{rk}[thm]{Remark}
\newcommand{\bC}{{\mathbb C}}
\newcommand{\bD}{{\mathbb D}}
\newcommand{\bF}{{\mathbb F}}
\newcommand{\bH}{{\mathbb H}}
\newcommand{\bK}{{\mathbb K}}
\newcommand{\bP}{{\mathbb P}}
\newcommand{\bR}{{\mathbb R}}
\newcommand{\bS}{{\mathbb S}}
\newcommand{\bZ}{{\mathbb Z}}
\newcommand{\cA}{{\mathcal A}}
\newcommand{\cE}{{\mathcal E}}
\newcommand{\cF}{{\mathcal F}}
\newcommand{\cG}{{\mathcal G}}
\newcommand{\cH}{{\mathcal H}}
\newcommand{\cJ}{{\mathcal J}}
\newcommand{\cL}{{\mathcal L}}
\newcommand{\cM}{{\mathcal M}}
\newcommand{\cN}{{\mathcal N}}
\newcommand{\cO}{{\mathcal O}}
\newcommand{\cP}{{\mathcal P}}
\newcommand{\cU}{{\mathcal U}}
\newcommand{\cV}{{\mathcal V}}
\newcommand{\cW}{{\mathcal W}}
\newcommand{\cX}{{\mathcal X}}
\newcommand{\scrH}{\EuScript H}
\newcommand{\scrU}{\EuScript U}
\newcommand{\R}{\mathbb{R}}
\newcommand{\fX}{\mathfrak{X}}
\newcommand{\fY}{\mathfrak{Y}}
\newcommand{\sheafhom}{\mathcal{H} \kern -.5pt \mathit{om}}
\newcommand{\derivedsheafhom}{\mathcal{R}\mathcal{H} \kern -.5pt \mathit{om}}
\newcommand{\hocolim}{\operatornamewithlimits{hocolim}\limits}
\newcommand{\colim}{\operatornamewithlimits{colim}\limits}
\newcommand{\red}[1]{\textcolor{red}{#1}}
\newcommand{\ev}{\operatorname{ev}}
\numberwithin{equation}{section}
\numberwithin{figure}{section}
\title[Equivariant Floer homotopy via Morse--Bott theory]{Equivariant Floer homotopy via Morse--Bott theory}
\author{Laurent C\^{o}t\'{e}}
\address{Max Planck Institute for Mathematics, Vivatsgasse 7, 53111 Bonn, Germany}
\email{lcote@math.harvard.edu}
\author{Yusuf Bar{\i}\c{s} Kartal}
\address {School of Mathematics, University of Edinburgh, Edinburgh, UK}
\email {ykartal@ed.ac.uk}
\date{}
\date{}
\begin{document}

\maketitle	

\begin{abstract}
We generalize the Cohen--Jones--Segal construction to the Morse--Bott setting. In other words, we define framings for Morse--Bott analogues of flow categories and associate a stable homotopy type to this data. We use this to recover the stable homotopy type of a closed manifold from Morse--Bott theory, and the stable equivariant homotopy type of a closed manifold with the action of a compact Lie group from Morse theory. 

We use this machinery in Floer theory to construct a genuine circle equivariant model for symplectic cohomology with coefficients in the sphere spectrum. Using the formalism of relative modules, we define equivariant maps to (Thom spectra over) the free loop space of exact, compact Lagrangians. We prove that this map is an equivalence of Borel equivariant spectra when the Lagrangian is the zero section of a cotangent bundle -- an equivariant Viterbo isomorphism theorem over the sphere spectrum.

\end{abstract}

\tableofcontents

\section{Introduction}

\subsection{Background} 
%
%
%

In \cite{cohen1995floer}, Cohen--Jones--Segal explained how to recover the stable homotopy type of a manifold via Morse theory. More precisely, they defined the notion of a \emph{flow category}, that packages the data of critical points of a Morse function, and its gradient trajectories with respect to a metric. Via a construction called \emph{geometric realization}, they produce the stable homotopy type of the underlying manifold from the flow category. Their motivation was to produce the \emph{Floer homotopy type}, i.e. a stable homotopy type that refines the Floer homology, which is also produced from the data of critical points and the gradient trajectories of an action functional. More recent work has been conducted to produce stable homotopy types associated to various Floer theories (e.g.\ \cite{lipshitzsarkarkhovanov}, \cite{largethesis}), or to produce Floer theoretic invariants with coefficients in generalized cohomology (e.g.\ \cite{abouzaidblumberg}). The notion of a flow category has also proven to be useful in ordinary Hamiltonian Floer homology beyond the monotone/exact setting (e.g.\ \cite{baixuarnold}, \cite{rezchikovarnold}). 

Despite lying at the heart of Floer homotopy theory, the notion of a flow category as defined by Cohen--Jones--Segal is insufficient for equivariant constructions. The reason is that the flow categories which occur in Morse--Floer theory are very seldom equivariant. For instance, for a manifold carrying action of a compact Lie group, usually there is no equivariant Morse--Smale pair. Similarly, one expects an $S^1$-action by loop rotation on the Hamiltonian Floer homotopy type, but there is no regular $S^1$-equivariant Floer data even in the exact case. Finally, the ordinary flow categories are insufficient to produce $G$-action on symplectic/quantum cohomology (homotopy type) out of Hamiltonian $G$-manifolds. To tackle this problem, one needs to go beyond the Morse theory and allow Morse--Bott functions. In \cite{zhoumorsebott}, Morse--Bott analogues of flow categories are studied to produce homology level invariants out of Morse--Bott functions.

In this work, we generalize the geometric realization of \cite{cohen1995floer} to associate stable homotopy types to Morse--Bott functions. We use this to recover the stable equivariant homotopy type of a $G$-manifold from its Morse theory. We also utilize this to define a model for the Hamiltonian Floer homotopy type/symplectic cohomology with coefficients in the sphere spectrum $\bS$ that carries a genuine $S^1$-action. As an application, we show that the symplectic cohomology of $T^*Q$ with $\bS$-coefficients has the same (Borel) $S^1$-equivariant stable homotopy type as (a Thom spectrum over) the free loop space $\cL Q$. 

\subsection{A Morse--Bott generalization of the Cohen--Jones--Segal construction}
A \emph{Morse--Bott flow category} is a non-unital directed category whose object set is given by a finite disjoint union of closed manifolds, and whose morphisms are manifolds with corners. The prototypical example arises in Morse--Bott theory: given a Morse--Bott function $f:N\to\bR$ on a closed manifold (and a nice metric), define $\cM_f$ to be the category whose object set is given by the union of critical manifolds of $f$, and morphisms from one critical point to another are given by moduli spaces of broken trajectories of $-grad(f)$. 

Our first goal is a Morse--Bott analogue of Cohen--Jones--Segal's result: to recover the stable homotopy type of $N$ from $\cM_{f}$. For this, one needs an extra piece of information, namely a \emph{framing}. To explain this, let $\cM_f(X,Y)$ denote the moduli of broken trajectories from a critical manifold $X$ to another $Y$. There are evaluation maps $\cM_f(X,Y)\to X,Y$ sending a trajectory to the points it is asymptotic to. We define a (relative) \emph{framing} to be a choice of virtual bundles $V_X,V_Y$ on each critical manifold and identifications 
\begin{equation}\label{eq:introframe}
	V_X\oplus T_X= T_{\cM_f(X,Y)}\oplus \underline\bR\oplus V_Y
\end{equation}
compatible with the flow category structure. This can also be seen as a relative framing of the virtual bundle $T_{\cM_f(X,Y)}+ \underline\bR-T_X$. We call a flow category $\cM$ endowed with this data a \emph{framed flow category}, and we construct a spectrum $|\cM|$ called the geometric realization. Our first theorem is the following Morse--Bott analogue of the Cohen--Jones--Segal theorem
\begingroup
\def\thethm{\ref*{thm:morsebottrecovers}}
\begin{thm}
There is a natural choice of framing on $\cM_f$ such that $|\cM_f|\simeq \Sigma^\infty N_+$. Moreover, for a given virtual bundle $\nu$ on $N$, one can twist the framing so that the geometric realization is given by the Thom spectrum $N^\nu$. 
\end{thm}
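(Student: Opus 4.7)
The plan is to take $V_X := \nu^-_X$, the negative normal bundle of each critical manifold, and to compare $|\cM_f|$ with the sublevel-set filtration of $N$. I would proceed in three steps.

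First, the natural framing. At an unbroken trajectory $\gamma \in W^u(X) \cap W^s(Y)$, parallel transport along the gradient flow gives canonical identifications $T_\gamma W^u(X) \cong e_X^*(T_X \oplus \nu^-_X)$, $T_\gamma W^s(Y) \cong e_Y^*(T_Y \oplus \nu^+_Y)$, and $T_\gamma N \cong e_Y^*(T_Y \oplus \nu^-_Y \oplus \nu^+_Y)$. The transversality short exact sequence
\begin{equation*}
0 \to T_\gamma\bigl(W^u(X)\cap W^s(Y)\bigr) \to T_\gamma W^u(X) \oplus T_\gamma W^s(Y) \to T_\gamma N \to 0
\end{equation*}
then yields, as virtual bundles, $T_\gamma(W^u(X)\cap W^s(Y)) \cong e_X^*(T_X \oplus \nu^-_X) - e_Y^*\nu^-_Y$, and quotienting by the free $\bR$-translation produces
\begin{equation*}
T\cM_f(X,Y) \oplus \underline{\bR} \oplus e_Y^*\nu^-_Y \;\cong\; e_X^*(T_X \oplus \nu^-_X),
\end{equation*}
which is precisely \eqref{eq:introframe}. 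Compatibility across the broken face $\cM_f(X,Z) \times_Z \cM_f(Z,Y)$ is a bookkeeping check: the framings on the two factors combine, the $\nu^-_Z$ summands cancel, and one $\underline{\bR}$-summand is absorbed into the gluing parameter; this step invokes the Morse--Bott gluing theorem.

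Second, I would identify $|\cM_f|$ with $\Sigma^\infty N_+$ by induction on critical values. Let $c_1 < \cdots < c_m$ be the critical values, $N_i := f^{-1}(-\infty, c_i + \epsilon]$, and $\cM_{\leq i}$ the full sub-flow-category on critical manifolds at levels $\leq c_i$. The standard Morse--Bott handle decomposition yields a cofiber sequence
\begin{equation*}
\Sigma^\infty (N_{i-1})_+ \to \Sigma^\infty (N_i)_+ \to \bigvee_{f(X)=c_i} X^{\nu^-_X},
\end{equation*}
while the construction of the geometric realization provides an analogous cofiber sequence $|\cM_{\leq i-1}| \to |\cM_{\leq i}| \to \bigvee_{f(X)=c_i} X^{V_X}$, with matching quotients since $V_X = \nu^-_X$. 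To close the induction one matches the attaching maps: both are described by downward gradient flow from a neighborhood of $X$ into lower sublevel sets, encoded in the first picture by the Thom--Pontryagin collapse and in the second by the evaluation maps on the moduli $\cM_f(X,Y)$ for $Y$ at lower levels.

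Finally, for the twisted version, set $V_X' := \nu^-_X \oplus \nu|_X$. The framing identity \eqref{eq:introframe} persists because $\nu|_X$ and $\nu|_Y$ are canonically identified along any trajectory by parallel transport and cancel after pull-back. Rerunning the induction replaces each wedge summand $X^{\nu^-_X}$ by $X^{\nu^-_X \oplus \nu|_X}$, and the appropriately twisted attaching maps give $|\cM_f| \simeq \Sigma^\infty N^\nu$. The main obstacle throughout is the matching of attaching maps in the induction: it is easy to see that the successive filtration quotients agree as wedges of Thom spectra, but proving homotopy-level agreement of the attaching data coming from the CJS-style realization with the Thom--Pontryagin attaching maps of the Morse--Bott handle decomposition requires precise control of the framed corner structure on the compactified moduli of broken trajectories, together with a sharp form of the Morse--Bott gluing theorem.
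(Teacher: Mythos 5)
Your framing construction is correct and is exactly the one the paper uses (\Cref{exmp:morsebottfunction}): the index bundle $V_X=\nu^-_X$, the transversality exact sequence identifying $T_{\cM_f(X,Y)}\oplus\underline\bR\oplus \operatorname{ev}_Y^*V_Y\cong \operatorname{ev}_X^*(V_X\oplus T_X)$, and the twist by $\nu$ are all as in the paper. The gap is in your second step. You produce two filtered objects --- the sublevel filtration of $\Sigma^\infty N_+$ and the critical-value filtration of $|\cM_f|$ --- and observe that their associated graded pieces are both $\bigvee X^{\nu^-_X}$. But an abstract isomorphism of filtration quotients does not determine the total object: you must also match the attaching maps, and you explicitly defer this (``the main obstacle throughout is the matching of attaching maps''). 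That deferred step is not bookkeeping; it is the entire content of the theorem. Without a map between the two filtered spectra in hand, there is no mechanism for comparing the attaching data, and ``both are described by downward gradient flow'' is not an argument --- the CJS-style realization encodes the attaching map through Pontryagin--Thom collapses onto compactified moduli of \emph{broken} trajectories smashed with the $\cJ(i,j)$ cubes, while the handle decomposition encodes it through the topology of the unstable disc bundle, and identifying these two descriptions up to homotopy, compatibly with all the corner strata, is precisely what one cannot do ``by inspection.''

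The paper resolves this by never comparing attaching maps abstractly. Instead it constructs an explicit, filtration-preserving map $|\cM_f|\to N^\nu$ using the \emph{standard $N$-relative module}: $\cN(i)$ is the compactified moduli of half-trajectories $\tilde x:(-\infty,0]\to N$ emanating from $X_i$ (i.e.\ the compactified descending manifold), with evaluation $\tilde x\mapsto \tilde x(0)$ and the tautological framing $V_X\oplus T_X\cong T_{\cN(i)}$. \Cref{prop:relative-module} turns this into a single map of spectra, which by construction sends the level-$p$ piece of the flow category into $N_p^\nu$. Once such a map exists, one only needs to check it is an equivalence on associated graded pieces --- where it reduces to $X(i_0)\wedge(\cJ/\partial\cJ)\to \cN(i_0)^\nu/\partial\cN(i_0)^\nu\to M_p^\nu/M_{p-\epsilon}^\nu$, both arrows being equivalences by elementary considerations --- and no attaching-map comparison ever arises. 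If you want to rescue your outline, the fix is to insert exactly this relative-module map (or some equivalent global comparison map built from half-trajectories) before running the induction; the induction then closes automatically.
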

\addtocounter{thm}{-1}
\endgroup
The natural choice of framing bundle $V_X$ is given by the normal bundle of $X$ within the descending manifold. One can also use the dual choice, i.e.\ the tangent bundle of the ascending manifold, which by \Cref{thm:morsebottrecovers} gives the Atiyah dual, see \Cref{cor:dualframe}. Other immediate applications are Morse--Bott inequalities in generalized cohomology, see \Cref{corollary:betti-bound}, \Cref{cor:mbineqs}.

In  order to define the geometric realization, we use $X\leftarrow \cM(X,Y)\to Y$ as a ``correspondence'' and produce maps $X^{V_X}\to \Sigma Y^{V_Y}$. To produce this, we use wrong way/umkehr maps from the Thom spectrum $X^{V_X}$ to a Thom spectrum over $\cM(X,Y)$, and use \eqref{eq:introframe} to project to $\Sigma Y^{V_Y}$. Then, the geometric realization is essentially obtained by taking iterated cones of this correspondence maps. We use the language of $\cJ$-modules in order to organize this data. This can be seen analogous to Banyaga's construction of Morse--Bott homology, from the chains on the critical manifolds and correspondence maps. Note that defining correspondence/umkehr maps at the cohomology level do not necessarily require the twists by virtual bundles, as one often appeals to Thom isomorphism theorem in this case. 

To prove \Cref{thm:morsebottrecovers}, we define the notion of framed $P$-relative modules on the flow category $\cM$, where $P$ is a space. We show that, these induce maps from $|\cM|$ to (a Thom spectrum over) $P$. The category $\cM_f$ admits a natural $N$-relative module, allowing us to define a map $|\cM_f|\to N^\nu$. Then, we use a filtration argument to prove this is an equivalence. This proof is different from \cite{cohen1995floer} and possibly similar to \cite[Appendix D]{abouzaidblumberg}.

We show that geometric realizations of $G$-equivariant framed Morse--Bott flow categories carry a genuine $G$-action. The freedom to use Morse--Bott functions allow us to extend \Cref{thm:morsebottrecovers} to the equivariant setting. For instance, if $N$ admits a free $G$-action, where $G$ is a compact Lie group, one can lift Morse--Smale pairs on $N/G$ to obtain a $G$-equivariant Morse--Bott function on $N$ with critical manifolds given by $G$-torsors (and a compatible $G$-equivariant metric on $N$). When the action is not free, one can replace $N$ by $EG\times N$. By using appropriate Morse models for $EG$ we show
\begingroup
\def\thethm{\ref*{thm:equivariantcjs}}
\begin{thm}
There exists a framed $G$-equivariant flow category $\cM^G_{\tilde f, h_s}$ such that $|\cM^G_{\tilde f, h_s}|$ has the same stable equivariant homotopy type as $\Sigma^\infty N_+$.
\end{thm}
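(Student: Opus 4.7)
The strategy is to reduce to the free-action case, apply a $G$-equivariant enhancement of \Cref{thm:morsebottrecovers}, and handle the general case by replacing $N$ with $EG\times N$ via a Morse-theoretic model of $EG$.

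First I would dispatch the case when $G$ acts freely on $N$. Picking a Morse--Smale pair $(f,h)$ on the quotient $N/G$, one pulls back to obtain a $G$-equivariant Morse--Bott pair $(\tilde f,\tilde h)$ on $N$; the critical manifolds are the $G$-orbits over critical points of $f$ (free $G$-torsors), and each moduli space $\cM_{\tilde f}(X,Y)$ fibers $G$-equivariantly over the corresponding Morse moduli on $N/G$. Equivariant framings as in \eqref{eq:introframe} exist because $G$ acts freely on a neighborhood of each critical submanifold, so equivariant normal bundles and tubular neighborhoods are available by averaging. A $G$-equivariant enhancement of the proof of \Cref{thm:morsebottrecovers}, using a $G$-equivariant $N$-relative module and observing that the filtration argument is preserved by the genuine $G$-action on all spaces involved, yields $|\cM^G_{\tilde f,\tilde h}|\simeq\Sigma^\infty N_+$ as genuine $G$-spectra in the free case.

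For a general action, I would take the diagonal $G$-action on $EG\times N$ (which is free) and apply the free case. Since $EG$ is infinite-dimensional, this requires a Morse model: an exhaustive filtration $V_1\subset V_2\subset\cdots$ of $EG$ by compact free $G$-manifolds of increasing connectivity, each equipped with a $G$-equivariant Morse--Bott function $g_n$ obtained by pullback from a Morse--Smale function on $V_n/G$ and arranged so that $V_n$ is a sublevel set of $g_{n+1}$. The functions $g_n+\tilde f$ on $V_n\times N$, together with a parametric family of metrics $h_s$ interpolating between the levels, assemble into a single framed $G$-equivariant flow category $\cM^G_{\tilde f,h_s}$. Its geometric realization is $\colim_n \Sigma^\infty(V_n\times N)_+\simeq\Sigma^\infty(EG\times N)_+\simeq EG_+\wedge\Sigma^\infty N_+$, the standard Borel model for the equivariant homotopy type of $\Sigma^\infty N_+$.

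The main obstacle is organizing the framings coherently across the filtration $V_n\subset V_{n+1}$ to produce a single framed flow category rather than merely a diagram of spectra. This is precisely the role of the parametric metric $h_s$: its gradient in the $s$-direction encodes the inclusions $V_n\hookrightarrow V_{n+1}$ as additional broken trajectories, converting what would otherwise be a telescope at the level of realizations into a single framed $\cJ$-module. Verifying that this assembly respects the relative framing identity \eqref{eq:introframe} reduces to the naturality of the equivariant normal bundle construction under the inclusions $V_n\hookrightarrow V_{n+1}$, which holds by design of the filtration.
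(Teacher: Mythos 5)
Your overall skeleton — dispatch free actions by pulling back a Morse--Smale pair from the quotient, then reduce the general case to the free one via $EG\times N$ and a Morse-theoretic exhaustion of $EG$ — is exactly the route the paper takes, and your free case is essentially its Proposition~\ref{prop:mbscjs}. The gap is in how you set up the Morse data on $E_nG\times N$ in the non-free case. You propose a product-type function $g_n+\tilde f$, where $\tilde f$ is a single function on $N$, and delegate everything else to a ``parametric family of metrics $h_s$''. For the critical manifolds of $g_n+\tilde f$ to be free $G$-torsors (which your free-case argument and the framing construction require), $\tilde f$ would have to be a $G$-invariant Morse function on $N$ whose critical points form free orbits. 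No such function exists when the action is not free: if $p$ is a fixed point whose isotropy representation on $T_pN$ has no trivial summand, then $df_p$ is an invariant functional and hence zero for \emph{every} invariant $f$, so non-free orbits are forced into the critical locus. Even granting an invariant Morse--Bott $\tilde f$ with larger critical orbits, the descended function on $E_nG\times_G N$ is only Morse--Bott, and a Morse--Bott--Smale metric need not exist. A family of metrics cannot repair any of this, since perturbing the metric does not move the critical locus.

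The missing idea — which is the actual content of the construction $\cM^G_{\tilde f,h_s}$ — is that $h_s$ is a family of \emph{functions} $h_s:N\to\bR$ (with metrics $g_s$) parametrized by $s\in EG$, equivariant only as a family, $h_{gs}=g_*h_s$, so that each individual $h_s$ for $s\in\operatorname{crit}(\tilde f)$ can be an honest, non-invariant Morse--Smale datum; the morphism spaces are cut out by the coupled system $\dot s=-\operatorname{grad}\tilde f$, $\dot\gamma=-\operatorname{grad}_{g_{s(t)}}h_{s(t)}$ rather than by the gradient flow of a sum. With that in place the proof does run as you sketch: one filters by the value of $\tilde f$ on $EG$, and the subquotient maps have the form $X^{T^dX}\wedge|\cM_{h_{s_0}}|\to X^{T^dX}\wedge\Sigma^\infty N_+$ with $X$ a free $G$-torsor carrying all of the $G$-action, so the non-equivariant Cohen--Jones--Segal equivalence on the second factor upgrades to a genuine equivalence. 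Finally, your device of inserting extra broken trajectories in the $EG$-direction to fuse the exhaustion into one object is unnecessary and not what is done: one imposes that the gradient and indices of $\tilde f|_{E_{n+1}G}$ restrict to those of $\tilde f|_{E_nG}$, so the flow categories are nested inclusions forming an ind-flow category, and the realization is simply the colimit.
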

\addtocounter{thm}{-1}
\endgroup
More precisely, $|\cM^G_{\tilde f, h_s}|$ is equivalent to $\Sigma^\infty(EG\times N)_+$ as a genuine $G$-spectrum, implying it is equivalent to $\Sigma^\infty N_+$ as a ``homotopy/Borel'' $G$-spectrum. Similarly, we use this to prove Morse--Bott inequalities in generalized equivariant homology, see \Cref{cor:eqmbineq}, \Cref{cor:circleeqmbineq}.
 	
\subsection{$S^1$-equivariant Hamiltonian Floer theory} We use the machinery of Morse--Bott flow categories to construct genuine $S^1$-equivariant models for the Hamiltonian Floer homotopy type and spectral symplectic cohomology. In \cite{largethesis}, Large defines spectra $HF(H,\bS)$ associated to a Liouville manifold $M$ with stably trivial tangent bundle and a non-degenerate Hamiltonian $H:S^1\times M\to \bR$ that is linear at infinity, lifting the Hamiltonian Floer homology. Similarly, he defines a homotopical lift $SH(M,\bS)$ of the symplectic cohomology, as a colimit of $HF(H,\bS)$. 

Just as $SH(M,\bZ)$ carries an $S^1$-action by ``loop rotation'', one expects to have an analogous $S^1$-action on $SH(M,\bS)$. We encounter a standard issue: there is no $S^1$-equivariant Floer data. We go around this problem similarly to the construction outlined above, as well as \cite{seidelequivariantpants,shelukhinzhao,bourgeoisoancea}. Namely, we fix an equivariant Morse--Bott function $\tilde f:S:=ES^1=S^\infty\to \bR$ whose critical manifolds are circles. We choose Floer data parametrized by $S$, that satisfy non-degeneracy over the critical points of $\tilde f$, that are regular and such that the data on $s\in S$ and $z.s\in S$ are related by rotation by $z\in S^1$. This allows us to produce an $S^1$-equivariant flow category $\cM_{\tilde f,H,J}$, whose object set is given by a disjoint union of circles. We define $HF_S(H,\bS)$ to be its geometric realization. We define $SH_S(M,\bS)$ to be the genuine $S^1$-equivariant spectrum obtained by taking the colimit of $HF_S(H,\bS)$ as the slope of $H$ goes to infinity. We show that these coincide with $HF(H,\bS)$ and $SH(M,\bS)$ if one forgets the equivariant structure. We expect (but do not check) that the classical equivariant symplectic cohomology can be recovered as the homology of the homotopy quotient of $SH_S(M, \mathbb{S})$. In particular, $SH_S(M, \mathbb{S})$ is richer than a putative Floer homotopy version of the equivariant symplectic cohomology.

We also use the notion of $P$-relative modules to construct $S^1$-equivariant maps from $SH_S(M,\bS)$ to the (Thom spectrum over) the free loop space of an exact compact Lagrangian $Q$. For simplicity, assume $Q$ is simply connected. Then
\begingroup
\def\thethm{\ref*{thm:equivariantcl},\ref*{thm:viterbo}}
\begin{thm}
There is a morphism of genuine $S^1$-equivariant spectra $ SH_{S}(M,\bS)\to \Sigma^{\infty+n}\cL Q_+$.	Moreover, when $M=T^*Q$, this map is an equivalence of (Borel/homotopy) $S^1$-equivariant spectra.	
\end{thm}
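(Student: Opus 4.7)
The theorem has two parts: constructing the equivariant morphism, and verifying it is an equivalence when $M=T^*Q$.

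For the construction, I would extend the setup used to define $SH_S(M,\bS)$: using the same equivariant Morse--Bott function $\tilde f:S=ES^1\to\bR$ and $S$-parametrized Floer data $(H_s,J_s)$, for each $1$-periodic orbit $x$ sitting over a critical orbit of $\tilde f$ define a moduli space $\cN(x)$ of pairs $(s,u)$ where $s\in S$ and $u:(-\infty,0]\times S^1\to M$ is a $(H_s,J_s)$-perturbed half-cylinder satisfying $u(0,\cdot)\subset Q$ and asymptotic to $x$ at $-\infty$. Boundary evaluation $u\mapsto u(0,\cdot)$ defines a map $\cN(x)\to \cL Q$, and Gromov--Floer compactifications assemble these, compatibly with the flow category $\cM^S_{\tilde f,H,J}$, into a framed $\cL Q$-relative module. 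The dimension shift by $n=\dim Q$ comes from the index of the Cauchy--Riemann operator with Lagrangian boundary condition. Feeding this through the $P$-relative module formalism with $P=\cL Q$ produces a morphism of genuine $S^1$-equivariant spectra $HF_S(H,\bS)\to \Sigma^{\infty+n}\cL Q_+$, and taking the colimit as the slope of $H\to\infty$ yields the desired map from $SH_S(M,\bS)$.

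For the equivalence when $M=T^*Q$, the key observation is that in the Borel/homotopy $S^1$-equivariant category a map is an equivalence precisely when its underlying non-equivariant map is an equivalence. Hence it suffices to identify the underlying morphism with a Viterbo-type equivalence. The cleanest internal strategy is to choose a Hamiltonian $H$ on $T^*Q$ that is (a small Morse--Bott perturbation of) a fiberwise quadratic function, so that $1$-periodic orbits Morse--Bott-correspond to closed geodesics and action sublevels track energy sublevels on $\cL Q$. Applying the Morse--Bott realization theorem \Cref{thm:morsebottrecovers} to the Floer flow category within each action window identifies its geometric realization with a Thom spectrum over the corresponding energy sublevel of $\cL Q$; simple connectedness of $Q$ trivializes the twisting virtual bundle up to the dimension shift $n$, so the colimit as the slope of $H\to\infty$ yields $\Sigma^{\infty+n}\cL Q_+$.

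The main obstacle is compatibility: showing that the morphism constructed in the first step agrees, up to equivalence, with the Morse--Bott identification in the second. I expect this to follow from a cobordism of framed $\cL Q$-relative modules interpolating between the half-cylinder moduli cut out by the Lagrangian $Q\subset T^*Q$ and the Morse--Bott model above (for example, inserting a long cylindrical neck and deforming the boundary condition toward a cap near the zero section); the two boundary modules produce homotopic maps on geometric realizations, and direct inspection identifies one with our equivariant morphism and the other with the Morse--Bott identification of $SH(T^*Q,\bS)$. Two subsidiary points needing care are the framing/index computation for half-cylinders with Lagrangian boundary, which pins down the $\Sigma^{\infty+n}$ twist, and the extension of $S^1$-equivariance together with framings to all compactifications and interpolations; both are handled by the framed flow category machinery developed in the body of the paper.
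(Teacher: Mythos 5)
Your construction of the map is essentially the paper's: the same $S$-parametrized moduli of half-cylinders with boundary on $Q$, assembled into a framed $\cL Q$-relative module $\cN_{\tilde f,H,J}$ and fed through \Cref{prop:relative-module}, with the $n$-shift coming from the index of the Cauchy--Riemann problem with Lagrangian boundary conditions (the paper carries this out via negative caps in \Cref{prop:framingoncl}). You also correctly identify the key reduction for the second part: a map of Borel $S^1$-spectra is an equivalence iff its underlying non-equivariant map is.

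Where you diverge is in how the underlying non-equivariant equivalence is established, and here your route has a genuine gap. You propose quadratic Hamiltonians, an action-window/energy-sublevel filtration, and an application of \Cref{thm:morsebottrecovers} to identify Floer subquotients with Thom spectra over energy sublevels of $\cL Q$, followed by a cobordism of relative modules comparing the two maps. But \Cref{thm:morsebottrecovers} is a statement about Morse--Bott functions on finite-dimensional closed manifolds; it does not apply to the Floer flow category of a quadratic Hamiltonian on $T^*Q$, and making it apply would amount to proving a spectrum-level refinement of the Abbondandolo--Schwarz comparison of the Floer and Lagrangian action functionals. The paper explicitly flags this as the route it does \emph{not} take (it is the strategy of \cite{cohencotangent}, and is deferred to a closing remark as something one ``presumably'' could do with physical Hamiltonians). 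Moreover the paper's entire framework uses Hamiltonians linear at infinity, so quadratic Hamiltonians would require reworking compactness and the continuation-map formalism. The paper's actual argument is much softer: it observes that the composite $SH(T^*Q,\bS)\xrightarrow{\simeq} SH_S(T^*Q,\bS)\to\Sigma^{\infty+n}\cL Q_+$ is the map induced by the non-equivariant module $\cN_{H,J}$, that both sides are bounded below (Conley--Zehnder indices agree with geodesic Morse indices), that the map induces an isomorphism on integral homology by the classical Viterbo isomorphism of \cite{abouzaid2013symplectic}, and then applies the stable Hurewicz theorem to the cone to conclude it is contractible. If you want a complete proof along the paper's lines, replace your filtration argument by this Hurewicz argument; if you want to pursue your filtration route, be aware that it requires substantial additional analytic input not present in the paper.
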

\addtocounter{thm}{-1}
\endgroup
In other words, we prove an equivariant Viterbo isomorphism theorem over the sphere spectrum $\bS$. Note that a non-equivariant version of this claim appears in \cite{cohencotangent}; however, the proof in loc.\ cit.\ uses a comparison of Floer action functional on the loop space of $T^*Q$ with the Morse action functional on $\cL Q$, similar to \cite{abbondandoloschwarz1}. 

To produce the $S^1$-equivariant map $ SH_{S}(M,\bS)\to \Sigma^{\infty+n}\cL Q_+$, we define $S^1$-equivariant framed $\cL Q$-relative modules $\cN_{\tilde f,H,J}$ using moduli of half-cylinders as in \cite{zhaothesis}. To prove the second claim, we use the fact that this induces an isomorphism on homology by \cite{abouzaid2013symplectic}. As both sides of the map are bounded below in this case, we can apply the Hurewicz theorem to its cone. Note that, when $Q$ is not simply connected, $\Sigma^{\infty+n}\cL Q_+$ needs to be replaced by a Thom spectrum $\Sigma^\infty\cL Q^{-\rho^*W_{mas}}$, where $\rho^*W_{mas}$ is a virtual bundle whose rank matches the Maslov index over each component of $\cL Q$.  

\subsection{Other Applications}
This work is foundational for a long term project to understand topological Hochschild/cyclic homology of the Fukaya category as well as its algebraic $K$-theory. This requires understanding cyclotomic structures on $SH(M,\bS)$, of which the $S^1$-action is an important part. Separately, the genuine model we construct facilitates the computation of homotopy quotients/fixed points, and we plan to generalize the results of \cite{zhaoperiodic} and recover $R_*(M)$ from $SH(M,R)$, the circle action and the filtration on it for some ring spectra $R$, such as Morava $K$-theories. This would let us recover the $\bF_p$ Betti numbers of $M$. 

Another application which we hope to pursue is the construction of equivariant and $K$-theoretic Coulomb branches. More precisely, in the presence of an Hamiltonian $G$-action on $M$, we expect a lift of the construction of \cite{gonzalezmakpomerleanocoulombsh}, to a genuine $G$-spectrum $SH_{EG}(M,\bS)$, as well as the existence of $G$-equivariant maps $\Sigma^\infty\Omega G_+\to SH_{EG}(M,\bS)$ of spectra. One can apply equivariant $K$-theory and elliptic cohomology to this to produce the expected maps from the $K$-theoretic Coulomb branches. 

Another possible application is to Floer theory for autonomous Hamiltonians (c.f.\ \cite{bourgeois2009symplectic}). Given non-degenerate $H:M\to\bR$, and a generic (domain dependent) almost complex structure, the Floer action functional is Morse--Bott, and one can use our machinery to develop the corresponding Floer theory. More generally, (up to gluing issues) one can use our tools to give convenient descriptions of the spectral symplectic cohomology, when the Reeb flow is $1$-periodic.

\subsection*{Acknowledgments}
The authors would like to thank Jacob Lurie, Paul Seidel, Sanath Devalapurkar, Shaoyun Bai, Alex Oancea, Noah Porcelli, Ivan Smith, Tim Large and John Pardon for useful discussions and comments. This work was completed while the first author was supported by the National Science Foundation under Grant No. DMS-2305257 and by the Simons Collaboration on Homological Mirror symmetry; and the second author was supported by ERC Starting Grant 850713 (Homological mirror symmetry, Hodge theory, and symplectic topology), and the Simons Collaboration on Homological Mirror Symmetry.


\section{Morse--Bott flow categories and their geometric realizations}\label{sec:mbcatsgeomrealizations}
\subsection{Definition and the geometric realization}
In \cite{cohen1995floer}, Cohen--Jones--Segal explained how to reconstruct the stable homotopy type of a manifold from its Morse theory. To a Morse--Smale function on a Riemannian manifold $(M, g)$, they associate a \emph{flow category} whose objects are the critical points of $f$ and whose morphisms are moduli spaces of gradient trajectories. The Pontryagin--Thom construction is then used to define a ``twisted complex in spaces'' (or \emph{$\cJ$-module} to be precise) from the data of a flow category, whose geometric realization is a spectrum. 

In this section, we generalize the approach of \cite{cohen1995floer} to the Morse--Bott setting. The prototypical example to have in mind comes from Morse--Bott functions (\Cref{exmp:morsebottfunction}), but we will also apply our constructions to Floer theory. We will finally explain the (superficial) modifications needed to generalize all of this to the $G$-equivariant setting, where $G$ is a compact Lie group. 

We refer to  \cite{hajek2014manifolds} for foundational material on the category of manifolds with corners (diffeomorphisms, transversality, etc.). We also refer to \Cref{appendix:k-manifolds} for the notion of a $\langle k\rangle$-manifold.
\subsubsection{Morse-Bott flow categories and framings}
\begin{defn}[cf.\ \cite{zhoumorsebott,largethesis}]\label{defn:flowcategory}
A \emph{Morse--Bott flow category} $\cM$ is given by the following data:
\begin{itemize}
	\item a topological space $Ob(\cM)$ that is given by the disjoint union of smooth closed manifolds
	\item a proper continuous function $\mu:Ob(\cM)\to\bZ$ called the index (we refer to a connected component $X\subset \mu^{-1}(i)$ as an object of $\cM$) 
	\item for any two objects $X,Y$ of $\cM$, a compact $\langle \mu(X)-\mu(Y)-1\rangle$-manifold $\cM(X,Y)$ (in addition, $\cM(X,Y)$ is empty whenever $\mu(X)\leq \mu(Y)$)
	\item evaluation maps $\ev_X:\cM(X,Y)\to X$ and $\ev_Y:\cM(X,Y)\to Y$
	\item for any objects $X,Y,Z$ an associative composition map $\cM(X,Y)\times_Y \cM(Y,Z)\to \cM(X,Z)$
\end{itemize}
We impose the following additional conditions 

\begin{enumerate}
\item\label{item:iflow} The maps $\cM(X,Y),\cM(Y, Z) \to Y$ are transverse to each other (or equivalently $\cM(X,Y)\times \cM(Y,Z)\to Y\times Y$ is transverse to the diagonal). As a result, the fiber product $\cM(X,Y)\times_Y \cM(Y,Z)$ is cut out transversally 
\item\label{item:iiflow} the composition $\cM(X,Y)\times_Y \cM(Y,Z)\to \cM(X,Z)$ is a diffeomorphism onto a boundary face and $\partial_i\cM(X,Z)$ is given by the disjoint union of $\cM(X,Y)\times_Y \cM(Y,Z)$ with $\mu(Y)=\mu(Z)+i$ 	
\end{enumerate}
\end{defn} 
We will usually ignore the prefix Morse--Bott, and refer to these simply as flow categories. 
\begin{rk}
We emphasize two salient features of \Cref{defn:flowcategory}: (i) we allow objects to be manifolds, not just points; (ii) the morphisms spaces are smooth manifolds with corners and the composition maps are smooth embeddings. 
  
For comparison with other definitions in the literature: \cite[Def.\ 2.9]{zhoumorsebott} also applies in the Morse--Bott setting, but imposes weaker conditions on the composition maps (see \cite[Def.\ 2.9, (4)]{zhoumorsebott}). \cite[Def.\ 2.2]{largethesis} does not apply in the Morse--Bott setting, but imposes similarly strong smoothness conditions as ours. 

The reason why we impose smoothness conditions on our morphisms spaces and composition maps is because we wish to extract spectral invariants from our flow categories. This is in contrast to e.g. \cite{zhoumorsebott} which is focused on cohomological invariants. 
\end{rk}
\begin{note}\label{note:fiber-products}
Recall from \cite[Def.\ 16]{hajek2014manifolds} that two smooth maps $P,Q\to N$ from manifolds with corners $P,Q$ to a manifold without boundary are called \emph{transverse} if their restriction to each pair of faces is transverse. As a result, (\ref{item:iflow}) and (\ref{item:iiflow}) together imply that for any sequence $X^0,\dots,X^n$ of objects, the natural map $\mathcal{M}(X^0, X^1) \times \dots \times \mathcal{M}(X^{n-1}, X^n) \to X^1 \times X^1 \times \dots \times X^{n-1} \times X^{n-1}$ is transverse to $\Delta_{X^1} \times \dots \times \Delta_{X^{n-1}}$.

On the other hand, if one assumes the transversality condition (\ref{item:iflow}) only in the interiors, the transversality to $\Delta_{X^1} \times \dots \times \Delta_{X^{n-1}}$ has to be imposed separately. There is one condition that is at least as strong, and is easy to check though: in addition to assuming (\ref{item:iflow}) for the interiors, and that $int(\cM(X,Y))\times_Y int(\cM(Y,Z))\hookrightarrow \cM(X,Z)$ is a smooth embedding into the interior of the boundary, one assumes that this embedding has a normal vector field whose push-forward to $Z$ vanishes. This implies $\cM(X,Y)\times_Y \cM(Y,Z)\times_Z \cM(Z,W)$ has interior cut out transversally. One then further imposes the same normal field assumption for $\cM(X,Y)\times_Y \cM(Y,Z)\times_Z \cM(Z,W)\hookrightarrow \cM(X,Z)\times_Z \cM(Z,W)$, etc.  

 (\ref{item:iflow}) is used to establish \Cref{collary:neat-embedding-category} (via \Cref{lemma:fiber-j-module}) which is in turn used in the proof of \Cref{proposition:j-modules}.
\end{note}

A \emph{metric} on a flow category is the data of a Riemannian metric on each $\cM(X,Y)$, such that the product metric on $\cM(X,Y) \times_Y \cM(Y,Z) \hookrightarrow \cM(X,Y) \times \cM(Y, Z)$ coincides with the pullback metric under the embedding $\cM(X,Y) \times_Y \cM(Y,Z) \hookrightarrow \cM(X,Z)$.

A metric on a flow category induces isomorphisms
\begin{equation}\label{equation:boundary-isom}
    T_{\cM(X,Y) \times_Y \cM(Y,Z) } \oplus \underline\bR \cong T_{\cM(X,Z)}|_{\cM(X,Y) \times_Y \cM(Y,Z)}
\end{equation}
where $\underline{\bR}$ denotes the trivial bundle of rank $1$. 

\Cref{defn:flowcategory}(i) implies that we have an exact sequence of vector bundles over $\mathcal{M}(X, Y) \times_Y \mathcal{M}(Y, Z)$ 
\begin{equation}\label{equation:fiber-prod-tangent}
0 \to T_{ \mathcal{M}(X, Y) \times_Y \mathcal{M}(Y, Z)} \to  T_{\mathcal{M}(X, Y)}  \oplus  T_{\mathcal{M}(Y, Z)} \to \operatorname{ev}^*_Y T_Y \to 0,
\end{equation}
where the map into $\operatorname{ev}^*_Y T_Y$ takes $(v,w) \mapsto d \operatorname{ev}_Y(v) - d \operatorname{ev}_Y(w)$. A metric on the flow category thus also induces a splitting 
\begin{equation}\label{equation:splitting-fiber}
T_{\mathcal{M}(X, Y) \times_Y \mathcal{M}(Y, Z)}  \oplus \operatorname{ev}^*_Y T_Y \simeq  T_{\mathcal{M}(X, Y)}  \oplus  T_{\mathcal{M}(Y, Z)}.
\end{equation} 

\begin{defn}[{cf. \cite[Definition 2.4]{largethesis}}] \label{definition:stable-framing} A \emph{stable framing} on a flow category with metric is a choice of a virtual bundle $V_X\to X$ for each object $X$ along with an isomorphism of virtual bundles
	\begin{equation}\label{equation:framing-equation}
		\ev_X^* V_X\oplus \ev_X^* T_X \simeq T_{\cM(X,Y)}\oplus \underline{\bR}\oplus \ev_Y^*V_Y
	\end{equation}
 over $\cM(X, Y)$ for all pairs of objects $X, Y$.  We require \eqref{equation:framing-equation} to be compatible with the splitting \eqref{equation:splitting-fiber}. We call a flow category endowed with a metric and a stable framing a \emph{stable flow category}. 
\end{defn}
From now on, for the ease of notation, we will drop terms like $ev_X^*$, $ev_Y^*$ when there is no risk of confusion. 

If one unfolds the meaning of the compatibility condition, it is the commutativity of the following diagram: 
\begin{equation}\label{equation:framing-diagram-square}
\xymatrix{V_X\oplus T_X\oplus T_Y \ar[r]\ar[d]& T_{\cM(X,Z)}\oplus \underline\bR\oplus V_Z\oplus T_Y \ar[d] \\ 
T_{\cM(X,Y)}\oplus \underline{\bR}\oplus V_Y\oplus T_Y  \ar[r]& T_{\cM(X,Y)}\oplus \underline{\bR}\oplus T_{\cM(Y,Z)}\oplus  \underline{\bR} \oplus V_Z  }
\end{equation}
The right vertical arrow is an extension of \eqref{equation:splitting-fiber}. 
\begin{rk}
	In the Morse case, the index $\mu$ controls both the boundary structure of the moduli spaces, and the dimension. In our case, it imposes no restrictions on the dimension. However, in most examples, we will have $rank(V_X)=\mu(X)$, forcing $\cM(X,Y)$ to be $\mu(X)+dim(X)-\mu(Y)-1$ dimensional. The ones failing this condition will appear in \Cref{sec:bimodreal}, when we construct ``the cone flow category corresponding to continuation trajectories''.
\end{rk}
\begin{exmp}\label{exmp:morsebottfunction}

Let $M$ be a manifold. A function $f: M \to \bR$ is said to be \emph{Morse--Bott} if the critical points form a finite union of closed submanifolds and the Hessian is non-degenerate in the normal directions on each submanifold. Explicitly, this means that for each critical submanifold $X \subset M$ and any auxiliary Riemannian metric $g$, the Hessian $Hess_g(f)$ defines a symmetric bilinear form of maximal rank on $T_M/T_X$.  We let $T_M/T_X \supset V_X \to M$ be the bundle of negative eigenspaces of $Hess_g(f)$ and call it the \emph{index bundle}. Note that this bundle depends on the choice of metric.   

We say that the pair $(f,g)$ is \emph{Morse--Bott--Smale} if, for all pairs of connected critical manifolds, $X, X'$, the stable manifold $W^s(X')$ intersects the unstable manifold $W^u(p)$ transversally for all $p\in X$ \cite[Def.\ 9]{banyagamorsebott}. 

If $(f,g)$ is Morse--Bott--Smale, we define $\cM_f$ by 
\begin{itemize}
	\item $Ob(\cM_f)$ is the set of critical points of $f$. 
	\item $V_X$ is the index bundle as above and $\mu(X)$ is its rank 
    \item $\cM(X,Y)= \overline{W^u(X) \cap W^s(Y)}/ \bR$. Explicitly, this is the set of (unparametrized, broken) negative gradient trajectories that start from a point of $X$ and end at a point of $Y$
\end{itemize}
The fact that $\cM_f$ forms a flow category is a long-standing folklore theorem, a proof of which can be extracted from \cite[Sec.\ 8.2]{zhoumorsebott}. 
To endow $\cM_f$ with a framing, note that (at interior points of $\cM(X,Y)$) we have $ T_{\cM(X,Y)}\oplus \underline{\bR} = T_{\overline{(W^u(X) \cap W^s(Y)}}$. Observe that $W^u(X)$ is diffeomorphic to the total space of $V_X$, so has tangent bundle $\operatorname{ev}^*V_X + \operatorname{ev}_X^*T_X$. Since the intersection of $W^u(X)$ with $W^s(Y)$ is transverse, the residual direction of descent is the pullback of $\operatorname{ev}_Y^*V_Y$, and we have: $ T_{\cM(X,Y)} \oplus \underline{\bR}  \oplus \operatorname{ev}_Y^*V_Y= \operatorname{ev}_Y^*V_X \oplus \operatorname{ev}_X^*T_X $ as desired. An elaboration of this argument extends the framings to the boundaries of the moduli spaces such that \eqref{equation:framing-diagram-square} holds. We omit the details. Note that, in practice, we will work with Morse--Bott functions pulled back by quotients by free actions. 


\end{exmp}

Note that if $f: \to \mathbb{R}$ is a Morse--Bott function, there need not exist a metric $g$ so that $(f,g)$ is a Morse--Bott--Smale pair. On the other hand, there are many natural examples for which the condition can be verified. See \cite[Rmk.\ 2.4, Ex.\ 2.5]{latschev2000gradient} for examples and counterexamples.

\begin{exmp}\label{example:flow-cat-act}
Let $M$ be a manifold. Given a Morse--Bott function $f$, the critical manifolds are ordered by the value of $f$. For a critical manifold $X \subset M$, let $\mu(X)$ be the position of $X$ in this ordering starting at zero, i.e. $\mu^{act}(X)=0$ if $X$ is the critical manifold of smallest critical value, $\mu(X)=1$ if $X$ has the second smallest critical value, etc. 

Given a generic metric $g$, we define an alternative flow category $\mathcal{M}_f^{act}$ with the same object, morphisms, and index bundles $V_X$ as in \Cref{exmp:morsebottfunction}, and with index function $\mu^{act}$. As explained in \cite[Sec.\ 8]{zhoumorsebott}, these assumptions can always be satisfies for a generic choice of $g$. In particular, it is not necessary to assume that $(f,g)$ is Morse--Bott--Smale as in \Cref{exmp:morsebottfunction}.\footnote{We alert the reader to a potential clash of terminology: in \cite{zhoumorsebott}, a pair $(f,g)$ which is called Morse--Bott--Smale is not assumed to satisfy the Morse--Bott--Smale transversality condition.}
\end{exmp}

\subsubsection{$\cJ$-modules and their geometric realization}
We review the notion of a \emph{$\cJ$-module} and its geometric realization, as introduced in \cite{cohen1995floer}. Good references include \cite{cohen1995floer,cohenrevisited}. 

\begin{defn}
Given $i>j\in\bZ$, let $J(i,j)$ denote the set of sequences $\{\lambda_k\}_{k\in\bZ}\subset\bR_+=[0,\infty)$ such that $\lambda_k=0$ unless $i>k>j$. In particular, $J(i,j)\cong \bR_+^{i-j-1}$ (we let $J(j+1,j)$ be a point by convention). Addition of sequences defines a map $J(i,j)\times J(j,k)\to J(i,k)$, inducing $J(i,j)^+\wedge J(j,k)^+\to J(i,k)^+$ on the one point compactifications. We let $\cJ$ denote the category enriched in spaces with objects $\bZ$ and morphisms $\cJ(i,j)=J(i,j)^+$ (for $i\leq j$ we let it be empty). The composition is as above. 
\end{defn}
\begin{defn}\label{definition:j-module} A \emph{$\cJ$-module (in spaces)} is a collection $X=\{X(i)\}_{i\in\bZ}$ of based spaces and maps $X(i)\wedge \cJ(i,j)\to X(j)$ satisfying strict associativity. One can define a \emph{$\cJ$-module in spectra} similarly: this is a collection $X= \{ X(i)\}_{i \in \bZ}$ of spectra $X(i) \in \operatorname{Sp}^O$ along with maps $X(i) \wedge \cJ(i,j) \to X(j)$ satisfying strict associativity.
\end{defn}

The meaning of $\cJ$-modules can be illustrated in the simplest cases as follows: assume only $X(i)$ and $X(j)$ are non-trivial and valued in spaces. Then, if $i=j+1$, the $\cJ$-module structure is equivalent to a map $X(j+1)\to X(j)$. If $i>j+1\geq 1$, a priori one has a map $X(i)\wedge (\bR_+^{i-j-1})^+\to X(j)$. On the other hand, if $i>l>j$, the map $X(i)\wedge \cJ(i,l)\wedge\cJ(l,j)\to X(j)$ factors through the trivial based space $X(l)\wedge\cJ(i,j)$; hence, it is trivial. The images of $\cJ(i,l)\wedge\cJ(l,j)$ under the composition cover the boundary of $\cJ(i,j)$ as $l$ varies. As a result the map above factors as 
\begin{equation}
	X(i)\wedge ((\bR_+^{i-j-1})^+/\partial \bR_+^{i-j-1})=\Sigma^{i-j-1}X(i)\to X(j)
\end{equation}

Let us now consider a slightly more involved example: we assume we have a $\cJ$-module (in spaces) where only $X(i), X(j), X(k)$ ($i>j>k\geq 0$). In this case, one has maps $\Sigma^{i-j-1}X(i)\to X(j)$, $\Sigma^{j-k-1}X(j)\to X(k)$, and $X(i)\wedge (\bR_+^{i-k-1})^+\to X(k)$. Similarly to previous case, the last map is trivial on the image of $\cJ(i,l)\wedge \cJ(l,k)$, unless $l=j$. Hence, we obtain a map 
\begin{equation}
X(i)\wedge S^{i-k-2}\wedge [0,\infty]=c(\Sigma^{i-k-2}X(i) )\to X(k)
\end{equation}
giving a null homotopy to the map $\Sigma^{i-k-2}X(i) \to X(k)$ obtained by composing $\Sigma^{i-j-1}X(i)\to X(j)$ and $\Sigma^{j-k-1}X(j)\to X(k)$. 

In general, a finite $\cJ$-module can be thought as a twisted complex in based spaces/spectra. One would like to associate to it a based spectrum. When only $X(i)$ and $X(j)$ are non-trivial, we would like this to be (the suspension spectrum of) the cone of the induced map $\Sigma^{i-1}X(i)\to \Sigma^{j} X(j)$. When $X(i)$, $X(j)$ and $X(k)$ are non-trivial, we would like the geometric realization to be an iterated cone, i.e. \begin{equation}
cone(cone(\Sigma^{i-2}X(i)\to \Sigma^{j-1}X(j))\to \Sigma^kX(k))
\end{equation}
This motivates the following general definition:
\begin{defn}[{cf.\ \cite[Def.\ 2.18]{largethesis}}]\label{definition:geometric-real}
Let $\{X(i)\}_{i \in \bZ}$ be a $\cJ$-module in spaces or in spectra. Given $q\in\bZ$, we define a semi-simplicial space/spectrum $\fX_q$ by 
\begin{equation}\label{eq:pseudosimplforq}
[k]\mapsto \bigvee_{i_0>\dots>i_k} X(i_0)\wedge \cJ(i_0,i_1)\wedge \dots\wedge \cJ(i_{k-1},i_k)\wedge \cJ(i_k,-q)	
\end{equation}
Define $|X|_q$ to be the geometric realization of $\fX_q$. 
\end{defn}
This construction only has simplicial boundary maps, given by compositions of $\cJ$/structure maps of $X$ (hence, it is only semi-simplicial), and to define degeneracy maps, one needs to expand the category $\cJ$ by adding units ($\cJ(i,i)=S^0$ or $\cJ(i,i)=\bS$, depending on whether we consider $\cJ$-modules in spaces or spectra), and extend the bar construction accordingly. This is not needed to define the geometric realization. 
%
%

Observe the following: 
\begin{lem}\label{lem:suspendedrealization}
Given a finite $\cJ$-module $X$, $\Sigma |X|_q\simeq |X|_{q+1}$ for $q\gg 0$ (more precisely, when $-q$ is less than all $i$ for which $X(i)$ is non-trivial).
\end{lem}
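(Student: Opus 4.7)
The plan is to compare the semi-simplicial object $\fX_{q+1}$ to $\fX_q$ via a decomposition coming from the ``extra coordinate'' present in $\cJ(-,-q-1)$ relative to $\cJ(-,-q)$. Since $J(i, -q-1) = \bR_+ \times J(i, -q)$ (the new factor being the coordinate $\lambda_{-q}$), taking one-point compactifications gives a natural pointed homeomorphism
\[
\cJ(i, -q-1) \cong I \wedge \cJ(i, -q),
\]
where $I := \bR_+^+ = [0, \infty]$ is the closed interval with basepoint at $\infty$.

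I would split each summand of $\fX_{q+1}([k])$ according to whether the last index $i_k$ satisfies $i_k > -q$ or $i_k = -q$, writing $\fX_{q+1}([k]) = \fX_{q+1}^+([k]) \vee \fX_{q+1}^0([k])$. A direct check of the face maps shows that $\fX_{q+1}^+$ is a sub-semi-simplicial object: the only potentially problematic map -- the final composition $d_k$ -- produces a term $\cJ(i_{k-1}, -q-1)$ whose new last index satisfies $i_{k-1} > i_k > -q$, so the constraint $i_k > -q$ is preserved under all faces. Applying the isomorphism above on the final factor gives a levelwise, face-compatible identification $\fX_{q+1}^+ \cong \fX_q \wedge I$; since $I$ is contractible as a based space, $\fX_{q+1}^+$ is levelwise contractible and $|\fX_{q+1}^+| \simeq *$.

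The quotient $\fX_{q+1}^0 := \fX_{q+1}/\fX_{q+1}^+$ has the map $d_k$ trivialized (its image originally lay in $\fX_{q+1}^+$). For $q \gg 0$ so that $X(-q) = *$, the trivial factor $\cJ(-q, -q-1) = S^0$ can be dropped, giving a levelwise identification $\fX_{q+1}^0([k]) \cong \fX_q([k-1])$ for $k \geq 1$ (with $\fX_{q+1}^0([0]) = *$), under which $d_0, \ldots, d_{k-1}$ correspond to the face maps of $\fX_q([k-1])$ and $d_k$ is trivial. This is precisely the ``degree shift'' $S \fX_q$ of $\fX_q$; by the standard fact that $|S W| \simeq \Sigma |W|$ for any pointed semi-simplicial space $W$, one gets $|\fX_{q+1}^0| \simeq \Sigma |\fX_q|$. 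Combined with the cofiber sequence of realizations induced by $\fX_{q+1}^+ \hookrightarrow \fX_{q+1} \twoheadrightarrow \fX_{q+1}^0$ and the contractibility of $|\fX_{q+1}^+|$, we conclude $|\fX_{q+1}| \simeq \Sigma |\fX_q|$.

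The main point requiring care is the shift-to-suspension identification $|S W| \simeq \Sigma |W|$; although standard, a careful verification proceeds by skeletal analysis of the realization, identifying each cell $\fX_q([k-1]) \wedge \Delta^k$ of $|S \fX_q|$ (with its $d_k$-face collapsed to the basepoint) with $\fX_q([k-1]) \wedge \Sigma \Delta^{k-1}_+$, and checking compatibility of the resulting attaching maps with those on $\Sigma|\fX_q|$. The finiteness hypothesis enters only to guarantee $X(-q) = *$, ensuring $\fX_{q+1}^0([0])$ is the basepoint so that the shift description is strictly correct.
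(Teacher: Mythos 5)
Your proof is correct and follows essentially the same route as the paper's: both rest on the same decomposition $\fX_{q+1}[k]\cong(\fX_q[k]\wedge[0,\infty])\vee\fX_q[k-1]$ according to whether $i_k>-q$ or $i_k=-q$, with the same identification of face maps. The paper packages this as the simplicial mapping cone of $\fX_q\hookrightarrow\fX_q\wedge[0,\infty]$, while you phrase it as a cofiber sequence with contractible subobject $\fX_q\wedge I$ and shift quotient $S\fX_q$ — these are two descriptions of the identical argument.
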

\begin{proof}
Assume $q$ is large enough so that $X(i)$ is trivial for $i\leq -q$. Observe that \begin{equation}\fX_{q+1}[0]=\fX_{q}[0]\wedge[0,\infty]\text{ and }\fX_{q+1}[k]=(\fX_{q}[k]\wedge[0,\infty])\vee \fX_q[k-1]
\end{equation}
The $\fX_{q}[k]\wedge[0,\infty]$-component corresponds to the terms where $i_k\neq -q$, and $\fX_q[k-1]$ corresponds to the terms where $i_k=-q>-q-1$. Moreover, $\fX_{q}[k]\wedge[0,\infty]$ lies as a sub-complex of $\fX_{q+1}[k]$. All but the last boundary maps of the $\fX_q[k-1]\subset \fX_{q+1}[k]$ agree. The last boundary map on the $\fX_q[k-1]$ component is given by its embedding into $\fX_{q}[k-1]\wedge \{0\}\subset \fX_{q+1}[k-1]$. Thus $\fX_{q+1}$ can be seen as the simplicial cone of the inclusion map $\fX_q\hookrightarrow \fX_{q}\wedge[0,\infty]$, into $\fX_{q}\wedge\{0\}$, and its realization is homotopy equivalent to the quotient of $|\fX_{q}\wedge[0,\infty]|=|X|_{q}\wedge[0,\infty]$ by $|\fX_{q}\wedge\{0\}|=|X|_{q}\wedge\{0\}$. 

It is possible to check the last statement by hand as follows: the geometric realization of $\fX_{q+1}$ is defined as 
\begin{equation}
	\bigvee \fX_{q+1}[k]\wedge \Delta^k_+/``(\partial_j\sigma\wedge x)\sim(\sigma,\iota_j(x) )"
\end{equation}
where $\iota_j$ denotes the $j^{th}$-face inclusion (more precisely, one writes this as a co-equalizer). The $\fX_{q}[k]\wedge[0,\infty]$-components will produce the space/spectrum $|\fX_{q}|\wedge[0,\infty]=|X|_{q}\wedge[0,\infty]$. When the last boundary map of $\fX_q[k-1]$-component is ignored, the identifications will produce the cone of $|\fX_q|$ (as one takes a quotient of $\bigvee \fX_{q+1}[k-1]\wedge \Delta^k_+$, we see every $\Delta^k_+$ as the cone $c(|\fX_q|)$ of $\Delta^{k-1}_+$). Finally, the last boundary map identifies $|\fX_q|\subset c(|\fX_q|)$ with $|\fX_{q}\wedge\{0\}|\subset |\fX_{q}\wedge[0,\infty]|$. 
\end{proof}
%
%

%
The statement actually holds for any $\cJ$-module that is bounded below, although in practice we only need finite ones. Define
\begin{defn}
For a finite $\cJ$-module $X$ and $q \gg 0$ sufficiently large, set $|X|:=\Sigma^{-q+1}|X|_q$. We call this spectrum \emph{the geometric realization of $X$}. The geometric realization is well-defined by \Cref{lem:suspendedrealization}.
\end{defn}
%
\begin{rk}
For a $\cJ$-module $X$ in spaces, $|X|_q$ is well-defined as a space, and the geometric realization is a de-suspended suspension spectrum of a space. 
\end{rk}
In the sequel we will mostly work with $J$-modules valued in spectra (and $G$-equivariant spectra; cf.\ \Cref{subsection:extension-equivariant}).

The following is an important sanity check:
\begin{lem}\label{lem:singledegree}
If a $\cJ$-module $X$ is supported in degree $i_0$, its geometric realization is given by $\Sigma^{i_0}X(i_0)$.	
\end{lem}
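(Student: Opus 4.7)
The plan is to exploit the stabilization formula $|X| = \Sigma^{-q+1}|X|_q$ with $q$ chosen minimally, so that the semi-simplicial spectrum $\fX_q$ collapses to simplicial degree zero and its geometric realization can be read off trivially. The natural choice is $q := 1 - i_0$, which is the smallest integer satisfying $-q < i_0$ and hence lies in the stable range required by \Cref{lem:suspendedrealization} and the definition of $|X|$.

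For this $q$, I would unfold \eqref{eq:pseudosimplforq} and identify which wedge summands of $\fX_q[k]$ survive. A summand indexed by a strictly decreasing sequence $j_0 > j_1 > \dots > j_k$ is trivial unless (a) $X(j_0)$ is non-trivial, forcing $j_0 = i_0$, and (b) the trailing factor $\cJ(j_k, -q) = \cJ(j_k, i_0 - 1)$ is non-trivial, which by the convention that $\cJ(i,j)$ collapses to a basepoint whenever $i \leq j$ requires $j_k \geq i_0$. Together with strict descent this forces $k = 0$ and $j_0 = i_0$, leaving the single non-trivial summand $X(i_0) \wedge \cJ(i_0, i_0 - 1) = X(i_0) \wedge S^0 = X(i_0)$.

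Once $\fX_q$ is concentrated in simplicial degree zero, its geometric realization is immediately $|X|_q \simeq X(i_0)$, and the definition yields $|X| = \Sigma^{-q+1}|X|_q = \Sigma^{i_0} X(i_0)$. I do not foresee any substantive obstacle in executing this plan: the argument is essentially bookkeeping, and the only thing worth verifying carefully is that the minimal choice $q = 1 - i_0$ really lies in the stable range to which the definition of $|X|$ applies, which is immediate from the support hypothesis on $X$.
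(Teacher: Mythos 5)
Your proposal is correct and follows essentially the same route as the paper: both set $-q = i_0-1$, observe that the semi-simplicial spectrum $\fX_q$ collapses to the single summand $X(i_0)\wedge\cJ(i_0,i_0-1)=X(i_0)$ in simplicial degree zero, and then apply $\Sigma^{-q+1}$. Your extra check that $q=1-i_0$ lies in the stable range is a harmless (and correct) bit of added care.
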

\begin{proof}
Let $-q=i_0-1$. In this case, $\fX_q$ has only one term: $X(i_0)\wedge \cJ(i_0,-q)=X(i_0)\wedge \cJ(i_0,i_0-1)=X(i_0)$. Therefore, $|X|_q=X(i_0)$, and $|X|=\Sigma^{-q+1}|X|_q=\Sigma^{-q+1}X(i_0)=\Sigma^{i_0}X(i_0)$.
%
\end{proof}
One can also show that, in the presence of two non-zero $X(i)$, the geometric realization produces a mapping cone. We will not check this explicitly as it will follow from the machinery of inclusions of $\cJ$-modules and their quotients (see \Cref{corollary:mapping-cones-J}).  
\begin{rk}
Presumably $\cJ$-modules form another model for the stable $(\infty,1)$-category of spectra, at least after some localization. However, we will not work out the homotopy theory of this new category, and work with the geometric realizations.	
\end{rk}

Next we discuss the inclusions and quotients of $\cJ$-modules. This will be relevant later. 
\begin{defn}
Let $X$ and $Y$	be two $\cJ$-modules. A \emph{(strict) map} of $\cJ$-modules $X\to Y$ is a collection of based maps $X(i)\to Y(i)$ that commute with the $\cJ$-module structure maps. We call it an \emph{inclusion} if each $X(i)\to Y(i)$ is cofibrant.
\end{defn} 
The inclusions that we will encounter will be of the form $Y(i)=X(i)\vee Z(i)$, where the structure maps preserve the $X(i)$-component, but not the $Z(i)$-component. 

The following is immediate:
\begin{lem}\label{lem:jmapinducegeomreal}
Maps of $\cJ$-modules induce maps of geometric realizations. Cofibrant maps of finite $\cJ$-modules induce cofibrant maps. 
\end{lem}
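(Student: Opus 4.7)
The plan is a direct functoriality argument, followed by a separate cofibration check.

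First, I would build the map $|X|\to|Y|$ by producing a morphism of the semi-simplicial objects $\fX_q\to\fY_q$ of \Cref{definition:geometric-real} for each sufficiently large $q$. The natural definition on $k$-simplices is to smash $f_{i_0}\colon X(i_0)\to Y(i_0)$ with the identity on $\cJ(i_0,i_1)\wedge\cdots\wedge\cJ(i_k,-q)$ on each wedge summand indexed by $i_0>\cdots>i_k>-q$. To check that this is a semi-simplicial map, I would observe that the face maps of $\fX_q$ come in two flavors: the ``interior'' faces compose two consecutive morphisms of $\cJ$ and act trivially on the first factor $X(i_0)$, so they commute with $f$ for trivial reasons; and the zeroth face uses the $\cJ$-module action $X(i_0)\wedge \cJ(i_0,i_1)\to X(i_1)$, for which commutativity with $f$ is precisely the defining condition of a $\cJ$-module map. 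Functoriality of geometric realization of semi-simplicial objects then yields $|X|_q\to|Y|_q$.

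Second, I would verify that these maps are compatible with the stabilization equivalences $\Sigma|X|_q\simeq|X|_{q+1}$ of \Cref{lem:suspendedrealization}, so that they assemble into a well-defined map $|X|\to|Y|$. Inspecting the proof of that lemma, the decomposition $\fX_{q+1}[k]=(\fX_q[k]\wedge[0,\infty])\vee\fX_q[k-1]$ is given by partitioning wedge summands according to whether $i_k>-q$ or $i_k=-q$, and this partition is manifestly preserved by the levelwise maps induced by $f$. Naturality of the cone construction used in the proof then gives the desired compatibility essentially for free.

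Third, for the cofibration claim: smashing with a fixed pointed space or spectrum preserves cofibrations, and wedges of cofibrations are cofibrations, so if each $f_i$ is a cofibration then each map $\fX_q[k]\to\fY_q[k]$ is as well. For a finite $\cJ$-module, only finitely many $X(i)$ are nontrivial, so $\fX_q[k]$ vanishes for $k$ sufficiently large, and $|X|_q$ can be built by a finite sequence of pushouts attaching pieces of the form $\fX_q[k]\wedge\Delta^k_+$ along their boundaries. Iterated application of the gluing/pushout-product lemma shows that the induced comparison $|X|_q\to|Y|_q$ is a cofibration, and desuspending preserves this.

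The only mild subtlety is the interaction between the semi-simplicial structure (which lacks degeneracies) and the cofibration analysis; however, because finite $\cJ$-modules yield semi-simplicial objects with only finitely many nontrivial levels, the cofibration check reduces to a finite diagram of pushouts where the gluing lemma applies directly. Everything else is a routine unwinding of definitions.
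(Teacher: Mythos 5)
Your proof is correct and fills in exactly the definitional unwinding that the paper treats as immediate (it offers no proof of this lemma): the levelwise maps $f_{i_0}\wedge\mathrm{id}$ commute with the face maps because only $d_0$ involves the module structure, the stabilization compatibility is clear from the wedge decomposition in \Cref{lem:suspendedrealization}, and the finite skeletal pushout argument gives the cofibration claim. No gaps.
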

\begin{defn}
	Let $X\to Y$ be an inclusion of $\cJ$-modules, i.e. a collection of based, cofibrant inclusions $X(i)\to Y(i)$ that commute with the $\cJ$-module structure maps. Define \emph{the quotient $\cJ$-module $Y/X$} by $(Y/X)(i)=Y(i)/X(i)$ and such that $(Y/X)(i)\wedge \cJ(i,j)\to (Y/X)(j)$ is induced by the structure map $Y(i)\wedge \cJ(i,j)\to Y(j)$.  
\end{defn} 
It is easy to check that the structure maps of $Y/X$ are well-defined and $Y/X$ is a $\cJ$-module. If $Y(i)$ is of the form $X(i)\vee Z(i)$, then $(Y/X)(i)=Z(i)$.  
\begin{lem}\label{lemma:quotient-inclusion}
The sequence of $\cJ$-modules $X\to Y\to Y/X$ induces a quotient sequence $|X|\to |Y|\to |Y/X|$. In particular, $|Y/X|\simeq |Y|/|X|$, and there is a connecting map $|Y/X|\to \Sigma|X|$. 
\end{lem}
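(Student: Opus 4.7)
The plan is to verify the statement levelwise on the semi-simplicial objects $\fX_q, \fY_q, \mathfrak{(Y/X)}_q$ defining the geometric realizations, and then invoke the fact that geometric realization of semi-simplicial spectra preserves cofiber sequences. Concretely, I would first fix $q$ large enough (in the sense of \Cref{lem:suspendedrealization}) so that $|X|_q, |Y|_q, |Y/X|_q$ all compute $\Sigma^{q-1}$-shifts of the respective geometric realizations.

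The first step is to observe that, at each semi-simplicial level $k$, the map
\begin{equation}
	\fX_q[k] = \bigvee_{i_0>\dots>i_k} X(i_0)\wedge \cJ(i_0,i_1)\wedge \dots\wedge \cJ(i_k,-q) \longrightarrow \fY_q[k]
\end{equation}
is a wedge of smash products of the cofibrant inclusions $X(i_0)\to Y(i_0)$ with fixed based spaces, hence a cofibration of based spectra, with cofiber
\begin{equation}
	\bigvee_{i_0>\dots>i_k} (Y/X)(i_0)\wedge \cJ(i_0,i_1)\wedge \dots\wedge \cJ(i_k,-q) = \mathfrak{(Y/X)}_q[k].
\end{equation}
Since $X\to Y$ strictly commutes with the $\cJ$-module structure maps, the semi-simplicial face operators on $\fX_q$, $\fY_q$ and $\mathfrak{(Y/X)}_q$ assemble into a short exact sequence of semi-simplicial spectra $\fX_q\to \fY_q\to \mathfrak{(Y/X)}_q$ in which each level is a cofiber sequence of based spectra.

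Next I would invoke the standard fact that geometric realization of a (Reedy-cofibrant) semi-simplicial spectrum commutes with the formation of cofibers; concretely, the levelwise cofibrations $\fX_q[k]\to \fY_q[k]$ give rise to a cofibration of realizations $|\fX_q|\to |\fY_q|$ whose cofiber is $|\mathfrak{(Y/X)}_q|$. This yields a cofiber sequence
\begin{equation}
	|X|_q \longrightarrow |Y|_q \longrightarrow |Y/X|_q,
\end{equation}
and in particular an equivalence $|Y|_q/|X|_q\simeq |Y/X|_q$ together with a connecting map $|Y/X|_q\to \Sigma|X|_q$. Desuspending by $\Sigma^{-q+1}$ and using that the construction is compatible with the stabilization isomorphism of \Cref{lem:suspendedrealization} (which is itself built by cofibering against the collapse of $\fX_q\wedge\{0\}\subset \fX_q\wedge [0,\infty]$, a process which manifestly commutes with inclusions of $\cJ$-modules), we obtain the desired cofiber sequence $|X|\to |Y|\to |Y/X|$ and the connecting map $|Y/X|\to \Sigma|X|$.

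The only subtle point is checking that the levelwise cofibration property suffices to ensure the geometric realization computes the honest cofiber; for $\cJ$-modules in spaces this is a standard fact about semi-simplicial Hurewicz/CW cofibrations, and for $\cJ$-modules in orthogonal spectra one argues levelwise in spaces. I expect this to be essentially bookkeeping rather than a genuine obstacle, since our inclusions $X(i)\hookrightarrow Y(i)$ are assumed cofibrant and smashing with the cells of $\cJ(i_0,i_1)\wedge\dots\wedge \cJ(i_k,-q)$ preserves this property.
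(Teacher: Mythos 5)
Your argument is essentially the same as the paper's: both proofs pass to the semi-simplicial spectra $\fX_q,\fY_q,\mathfrak{Z}_q$, observe that the levelwise maps form a quotient/cofiber sequence, conclude the same for realizations, and desuspend by $\Sigma^{-q+1}$. You simply spell out a few details the paper leaves implicit (why the levelwise maps are cofibrations, and compatibility with the stabilization of \Cref{lem:suspendedrealization}), so the proposal is correct.
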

\begin{proof}
Let $q$ be large enough, and let $\fX_q$, $\mathfrak{Y}_q$ and $\mathfrak{Z}_q$ denote the semi-simplicial spectra (see \Cref{definition:geometric-real}) corresponding to $X$, $Y$ and $Y/X$, respectively. We have a quotient sequence 
\begin{equation}
	\fX_q[i]\to \mathfrak{Y}_q[i]\to \mathfrak{Z}_q[i]
\end{equation}	
which induces a quotient sequence $|X|_q\to |Y|_q\to |Y/X|_q$. Applying $\Sigma^{-q+1}$ finishes the proof. 
\end{proof}
\begin{cor}\label{corollary:mapping-cones-J}
If $X$ is a $\cJ$-module supported at degrees $i_0>j_0$ then $|X|$ is equivalent to a homotopy fiber $\Sigma^{i_0}X(i_0)\to \Sigma^{j_0+1}X(j_0)$ and to a mapping cone $\Sigma^{i_0-1}X(i_0)\to \Sigma^{j_0}X(j_0)$.
\end{cor}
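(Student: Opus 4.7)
The plan is to extract this from the preceding two lemmas by choosing a suitable sub-$\cJ$-module. Since $X$ is supported in degrees $i_0 > j_0$, I would define $Y \subset X$ to be the sub-$\cJ$-module with $Y(j_0) = X(j_0)$ and $Y(i) = *$ for all other $i$. This is closed under the $\cJ$-module structure maps because the only non-trivial structure maps out of $Y$ originate at $Y(j_0) = X(j_0)$ and land in some $X(k)$ with $k < j_0$, which is a basepoint by hypothesis; hence they are zero and factor through the trivial $Y(k)$. Moreover, each level map $Y(i) \hookrightarrow X(i)$ is either an identity or a basepoint inclusion, hence cofibrant.

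Now I would identify the quotient: $(X/Y)(i_0) = X(i_0)$ and $(X/Y)(i) = *$ otherwise, with structure maps inherited from $X$. So $X/Y$ is a $\cJ$-module supported in the single degree $i_0$, and $Y$ is supported in the single degree $j_0$. Applying \Cref{lem:singledegree} yields
\begin{equation*}
|Y| \simeq \Sigma^{j_0} X(j_0), \qquad |X/Y| \simeq \Sigma^{i_0} X(i_0).
\end{equation*}

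Next, I would invoke \Cref{lemma:quotient-inclusion} to obtain a cofiber sequence of spectra $|Y| \to |X| \to |X/Y|$, together with its connecting map $|X/Y| \to \Sigma |Y|$, which after the identifications above takes the form
\begin{equation*}
\delta \colon \Sigma^{i_0} X(i_0) \longrightarrow \Sigma^{j_0+1} X(j_0).
\end{equation*}
Stability of the category of spectra gives the two equivalent descriptions of $|X|$ asked for: it is the homotopy fiber of $\delta$, proving the first assertion, and equivalently the mapping cone of the desuspension $\Sigma^{-1}\delta \colon \Sigma^{i_0-1}X(i_0) \to \Sigma^{j_0} X(j_0)$, proving the second. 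There is no real obstacle here; the only points to verify carefully are the cofibrancy of the inclusion $Y \hookrightarrow X$ (immediate from the construction) and that the connecting map is indeed induced, at the semi-simplicial level, by the composed structure map $X(i_0) \wedge \cJ(i_0,j_0) \to X(j_0)$ of $X$ under the identifications of \Cref{lem:singledegree}.
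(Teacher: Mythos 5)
Your argument is correct and coincides with the paper's own proof: your sub-$\cJ$-module $Y$ is exactly the truncation $\tau_{\leq j_0}X$ used there, and both proofs combine \Cref{lem:singledegree} with the quotient sequence of \Cref{lemma:quotient-inclusion} before invoking stability. No issues.
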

\begin{proof}
Consider the truncations $X'=\tau_{\leq j_0}X$, $X''=\tau_{> j_0}X$ which are the $\cJ$-modules supported at degree $j_0$, resp. $i_0$ with $X'(j_0)=X(j_0)$, resp. $X''(i_0)=X(i_0)$. There is a quotient sequence of $\cJ$-modules given by $X'\to X\to X''$, inducing a quotient sequence $\Sigma^{j_0}X(j_0)\to |X|\to \Sigma^{i_0}X(i_0)$. This finishes the proof. 
\end{proof}

\subsubsection{Relative Pontryagin--Thom construction and $\cJ$-modules from flow categories}\label{subsubsec:relativethompontr}
Cohen--Jones--Segal \cite{cohen1995floer} associate a $\cJ$-module to a flow category of Morse type (meaning that the critical manifolds are points).  We now explain how to generalize their construction to the Morse--Bott setting. 


To associate a $\cJ$-module to a flow category, \cite{cohen1995floer,cohenrevisited} uses the Pontryagin--Thom construction. For instance, if the (ordinary) flow category $\cM$ has only two objects $x,y$ and a stable framing, one chooses an embedding of $\cM(x,y)\to \bR^N$, $N\gg 0$ with normal bundle $\underline{\bR}^{N}-\underline{V(x)}+\underline{V(y)}$. The geometric realization of the flow category is the cone of the collapse map $S^{N}\to \Sigma S^{N-\mu(x)+\mu(y)}$ (or rather its $\Sigma^{\infty-N+\mu(x)}$). When there are more objects, the collapse maps are organized into a $\cJ$-module using the neat embeddings.

We use a relative version of the collapse map to produce a $\cJ$-module. To motivate the construction, consider a Morse--Bott function $f:M\to \bR$ with two critical manifolds $X,Y$ such that $\mu(X)>\mu(Y)$. Let $T^d_X$ denote the descending tangent bundle of $X$. Then, one has a Puppe sequence $Y\hookrightarrow M\twoheadrightarrow X^{T^d_X}\to \Sigma Y$, where $X^{T^d_X}$ is the Thom space. In particular, one can obtain $M$ as the homotopy fiber of $X^{T^d_X}\to \Sigma Y$. Similarly assume $X$ and $Y$ are two critical sets with $\mu(X)>\mu(Y)$, $f(X)>f(Y)$, and such that no other critical set has index in between. Let $A_0$ denote the closure of the descending manifold of $X$, $A_1$ that of $Y$, and $A_2$ that of critical sets such that $\cM(Y,Z)\neq \emptyset$. Then, there are Puppe sequences
\begin{equation}
A_1\hookrightarrow A_0\twoheadrightarrow X^{T^d_X}\to \Sigma A_1 \text{ and }  	A_2\hookrightarrow A_1\twoheadrightarrow Y^{T^d_Y}\to \Sigma A_2
\end{equation} 
As a result $A_1$ is the homotopy fiber of $Y^{T^d_Y}\to \Sigma A_2$, and writing the map $X^{T^d_X}\to \Sigma A_1$ is equivalent to writing a map $X^{T^d_X}\to \Sigma Y^{T^d_Y}$ and a null-homotopy for the composition $X^{T^d_X}\to \Sigma Y^{T^d_Y}\to\Sigma^2 A_2$. 

If one similarly considers three adjacent critical manifolds $X,Y,Z$, one obtains maps $X^{T^d_X}\to \Sigma Y^{T^d_Y}$, $Y^{T^d_Y}\to \Sigma Z^{T^d_Z}$, and a null-homotopy $c(X^{T^d_X})\to \Sigma^2 Z^{T^d_Z}$ for the composition. The data is essentially the same as that of a $\cJ$-module with three non-trivial $X(i)$. 

Going back to two critical sets $X,Y$, one has natural embeddings $\cM_f(X,Y)\hookrightarrow T^d_X$ that is compatible with the evaluation/projection onto X. One can see the framing identity \begin{equation}
	T^d_X\oplus T_X= T_{\cM_f(X,Y)}\oplus T^d_Y\oplus \underline{\bR}
\end{equation} easily by considering the normal bundle of the embedding. It is not hard to see that the map $X^{T^d_X}\to Y^{T^d_Y+\bR}$ can be obtained by collapsing $X^{T^d_X}$. 

More generally, consider a framed flow category $\cM$ with two objects $X,Y$ of different index (say $\mu(X)>\mu(Y)$). Choose an embedding $\cM(X,Y)\to V_X$ compatible with the evaluation/projection onto $X$ (such an embedding can always be obtained by stabilizing the composition $\cM(X,Y)\to X\to V_X$). The normal bundle of such an embedding is $\ev_X^*V_X+\ev_X^*T_X-T_{\cM(X,Y)}=\ev_Y^*V_Y+\underline{\bR}$. By composing the collapse map
\begin{equation}
	X^{V_X}\to \cM(X,Y)^{\ev_X^*V_X+\ev_X^*T_X-T_{\cM(X,Y)}}=\cM(X,Y)^{\ev_Y^*V_Y+\underline{\bR}}
\end{equation}
with the natural map $Y^{V_Y+\underline{\bR}}$, we obtain a map $X^{V_X}\to \Sigma Y^{V_Y}$.
\begin{rk}
Note that this is similar with the construction of Morse--Bott homology as in \cite{banyagamorsebott}. Namely, the moduli spaces $\cM(X,Y)$ together with the evaluation maps to $X$ and $Y$ are used as correspondences. One can also define such a map $C_*(X)\to C_*(Y)$ by composing the natural map $C_*(\cM(X,Y))\to C_*(Y)$ with an umkehr map $C_*(X)\to C_*(\cM(X,Y))$. In general, one expects the space level version of an umkehr map to be $X^\xi\to \cM(X,Y)^{\xi+T_X-T_{\cM(X,Y)}}$, see \cite{cohenkleinumkehr} for instance. From another perspective, a space level umkehr map $X^{\xi-T_X}\to \cM(X,Y)^{\xi-T_{\cM(X,Y)}}$ can be produced using the functoriality of the Spanier--Whitehead duals. By letting $\xi=T_X+V_X$ (in the latter version), we obtain a map similar to before. 
\end{rk}

The rest follows as in \cite{cohen1995floer,cohenrevisited,lipshitzsarkarkhovanov}, namely we choose neat embeddings of the moduli spaces $\cM(X,Y)$, and apply the collapse map to obtain a $\cJ$-module. 

Before proceeding to make this precise, we record following easy facts about Thom spectra and Pontryagin--Thom collapse:
\begin{enumerate}
	\item Given embedding of manifolds $f:P\hookrightarrow Q$, collapse is a map $Q^+\to P^{\nu_{P/Q}}$, obtained by choosing a closed tubular neighborhood $P\subset N\subset Q$, and sending everything outside $N$ into $\partial N/\partial N\subset N/\partial N=P^{\nu_{P/Q}}$.
	\item More generally, if $E$ is a vector bundle on $Q$, there is a twisted collapse map $Q^E\to P^{f^*E+\nu_{P/Q}}$. One can see this as the collapse map for the embedding of $P$ into the total space of $E$.
	\item Given a map $g:Q_1\to Q_2$, and bundle $E$ over $Q_2$ there is natural map of Thom spaces/spectra $Q_1^{g^*E}\to Q_2^{E}$.
	\item These are compatible in the following sense: given commutative square as in the left side of the diagram below, where the normal bundles of $P_i$ are related by $h$ (i.e. $\nu_{P_1/Q_1}\cong h^*\nu_{P_2/Q_2}$), and given bundles $E_i\to Q_i$ such that $E_1\cong g^*E_2$, one obtains a square as in the right side of
	\begin{equation*}
		\xymatrix{P_1\ar@{^{(}->}[r]^{f_1}\ar[d]^{h}& Q_1\ar[d]^{g}\\
			P_2\ar@{^{(}->}[r]^{f_2}& Q_2  } 
		\Longrightarrow
		\xymatrix{Q_1^{E_1}\ar[r]\ar[d]& P_1^{f_1^*E_1+\nu_{P_1/Q_1}}\ar@<-25pt>[d]\\
			Q_2^{E_2}\ar[r]& P_2^{f_2^*E_2+\nu_{P_2/Q_2}}  }
	\end{equation*} 
   \item These facts extend naturally to manifolds with boundary, corners, $\langle k\rangle$-manifolds etc., as long as the embeddings respect the boundary decomposition.
\end{enumerate}
Notice that the twisted collapse map is very close to what was described above, and we will use this in the construction of the  $\cJ$-module. 

Now assume $\cM$ is finite and let $J_q(i,j)$, resp. $\cJ_q(i,j)$ denote $J(i,j)\times\bR^q$, resp. $J_q(i,j)^+=\cJ(i,j)\wedge S^q$. The spaces $J_q(i,j)\cong \bR_+^{i-j-1}\times \bR^q$ are prototypical examples of $\langle i-j-1\rangle$-manifolds. As before, there are natural products $J_{q_1}(i,j)\times J_{q_2}(j,k)\to J_{q_1+q_2}(i,j)$ and $\cJ_{q_1}(i,j)\wedge \cJ_{q_2}(j,k)\to \cJ_{q_1+q_2}(j,k)$. 

Let $X_i$ denote the disjoint union of index $i$ objects of $\cM$ (i.e. $\mu^{-1}(i)$), and let $\cM(i,j):=\cM(X_i,X_j)$. We will also abbreviate the fiber product $\times_{X_i}$ as $\times_i$.  Recall the notion of \emph{neat embeddings of $\langle k\rangle$-manifolds} from \cite{laures}, and \Cref{appendix:k-manifolds}: these are roughly the embeddings of a $\langle k\rangle$-manifold into $\bR_+^k\times \bR^q$, where the $i^{th}$-boundary of the given $\langle k\rangle$-manifold maps into the $i^{th}$-boundary of $\bR_+^k\times \bR^q$. By \Cref{collary:neat-embedding-category}, we may choose a \emph{neat embedding} of the flow category $\cM$. This is a collection of neat embeddings $\cM(i,j)\hookrightarrow J_{q_i-q_j}(i,j)$ such that the following diagram commutes
\begin{equation}\label{eq:neatflowemb}
	\xymatrix{\cM(i,j)\times_j\cM(j,k) \ar[r]\ar@{^{(}->}[d]& \cM(i,k)\ar@{^{(}->}[d] \\ J_{q_i-q_j}(i,j) \times J_{q_j-q_k}(j,k)\ar[r]& J_{q_i-q_k}(i,k) }
\end{equation}
for all $i,j,k$. 
We will often suppress the subscripts $q_i-q_j$ from the notation and write $J_q(i,j)$ instead. 

Let $V_i:=V_{X_i}$ and consider the lift $\cM(i,j)\to V_i$ given by the composition $\cM(i,j)\xrightarrow{ev_{X_i}} X_i \xrightarrow{0}V_i$. Note that $V_i$ is a virtual bundle, and can have negative virtual dimension. In this case, one can without loss of generality stabilize all $V_i$ simultaneously by a constant factor, so that they become actual vector bundles. Also note that these lifts are compatible with the gluing maps $\cM(i,j)\times_j\cM(j,k)\to\cM(i,k)$, i.e. the compositions $\cM(i,j)\times_j\cM(j,k)\to\cM(i,j)\to V_i$ and $\cM(i,j)\times_j\cM(j,k)\to\cM(i,k)\to V_i$ agree. One can use any choice of lifts satisfying this compatibility, but we stick with the choice above. 

As a result, we obtain embeddings $\cM(i,j)\to V_i\times J_q(i,j)$ such that
\begin{equation}\label{eq:relativeneatflowemb}
	\xymatrix{\cM(i,j)\times_j\cM(j,k) \ar[r]\ar@{^{(}->}[d]& \cM(i,k)\ar@{^{(}->}[d] \\ V_i\times J_q(i,j) \times J_q(j,k)\ar[r]& V_i\times J_q(i,k) }
\end{equation} 
commutes.
\begin{construction}\label{constr:jmodule}
Applying Pontryagin--Thom collapse as above gives rise to a map \begin{equation}
	X_i^{V_i}\wedge \cJ_q(i,j) =  X_i^{V_i}\wedge \cJ(i,j)\wedge S^q\to \cM(i,j)^{V_i+T_{X_i}+\underline{\bR}^{i-j-1+q}-T_{\cM(i,j)}}=\atop\cM(i,j)^{V_j+\underline{\bR}^{i-j+q}}\to \Sigma^{i-j+q}X_j^{V_j}
\end{equation}
By desuspension, we obtain maps $\Sigma^{-i}X_i^{V_i}\wedge \cJ(i,j)\to \Sigma^{-j}X_j^{V_j}$
\end{construction}
\begin{rk}
Desuspension is a spectra level operation. To actually construct a $\cJ$-module in spaces, one needs to apply $\Sigma^q$ for $q\gg 0$.
\end{rk}
Next,
\begin{prop}\label{proposition:j-modules}
The construction above gives a $\cJ$-module with $i^{th}$ spectrum given by $\Sigma^{-i}X_i^{V_i}$.	
\end{prop}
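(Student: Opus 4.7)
The plan is to verify the strict associativity axiom of the $\cJ$-module: for any $i > j > k$, the two maps
\begin{equation*}
\Sigma^{-i}X_i^{V_i}\wedge \cJ(i,j)\wedge \cJ(j,k) \to \Sigma^{-k}X_k^{V_k}
\end{equation*}
obtained either by applying the structure maps in sequence, or by first composing $\cJ(i,j)\wedge \cJ(j,k)\to \cJ(i,k)$ and then applying the $(i,k)$-structure map, must agree. The remaining requirements in the definition of a $\cJ$-module are built directly into the construction.

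My first step would be to show that both composites factor as a single Pontryagin--Thom collapse onto the Thom spectrum of the fiber product $\cM(i,j)\times_j \cM(j,k)$, followed by evaluation to $X_k$. For the direct route, I observe that the $\cJ$-composition $\cJ(i,j)\wedge \cJ(j,k)\to \cJ(i,k)$ is itself a Pontryagin--Thom collapse onto the boundary stratum $J_q(i,j)\times J_q(j,k)\hookrightarrow J_q(i,k)$. Composing with the collapse onto $\cM(i,k)\subset V_i\times J_q(i,k)$ and restricting to the boundary face $\cM(i,j)\times_j\cM(j,k)\subset \cM(i,k)$ yields a collapse map onto the Thom spectrum of this fiber product, with virtual bundle identified via the framing \eqref{equation:framing-equation} for $(X_i, X_k)$ combined with the boundary normal identification \eqref{equation:boundary-isom}. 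For the iterated route, smashing the $(i,j)$-collapse with the $(j,k)$-collapse and invoking the splitting \eqref{equation:splitting-fiber} produces a collapse onto the same fiber product, this time with virtual bundle identified via successive applications of \eqref{equation:framing-equation} for $(X_i, X_j)$ and $(X_j, X_k)$.

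The agreement of the two composites then reduces to two verifications. First, the underlying untwisted collapse maps agree by the functoriality of Pontryagin--Thom under commutative squares of embeddings (fact (iv) preceding \Cref{constr:jmodule}), applied to the commutative square \eqref{eq:relativeneatflowemb}. Second, the two virtual bundle identifications coincide -- this is precisely the commutativity of the compatibility diagram \eqref{equation:framing-diagram-square} built into \Cref{definition:stable-framing}. Matching the evaluation maps to $X_k$ on both sides is immediate from the fact that the composition $\cM(i,j)\times_j\cM(j,k)\to \cM(i,k)\to X_k$ equals the projection $\cM(i,j)\times_j\cM(j,k)\to \cM(j,k)\to X_k$.

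I expect the main obstacle to be the bookkeeping: one must track carefully the several $\underline{\bR}$ summands coming from boundary normal directions on both the $\cM$-side (via \eqref{equation:boundary-isom}) and the $\cJ$-side (from the boundary inclusion $J_q(i,j)\times J_q(j,k)\hookrightarrow J_q(i,k)$), and verify that they cancel compatibly after the overall desuspension by $\Sigma^{-q}$. Once these identifications are in place, both composites yield the same map into $\Sigma^{-k}X_k^{V_k}$, and strict associativity follows.
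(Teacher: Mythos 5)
Your proposal is correct and follows essentially the same route as the paper's proof: both composites are reduced to Pontryagin--Thom collapses onto the Thom spectrum of $\cM(i,j)\times_j\cM(j,k)$, with agreement of the untwisted collapses coming from functoriality of the collapse under the commuting squares of embeddings and agreement of the bundle identifications coming from \eqref{equation:framing-diagram-square}. The only step you gloss over is the one the paper isolates as the ``mid-square'' \eqref{eq:midsqr} — identifying the iterated route (collapse onto $\cM(i,j)$, project to $X_j^{V_j}$, collapse onto $\cM(j,k)$) with a single collapse onto the fiber product requires applying the naturality fact to the additional square \eqref{eq:embdiagram1}, not just to \eqref{eq:relativeneatflowemb}; this is exactly the ``bookkeeping'' you anticipate, and it is where the transversality hypothesis of \Cref{defn:flowcategory} enters.
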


\begin{proof}
The proposition amounts to the statement that the following diagram commutes.

\begin{equation}\label{eq:jmodulediagramflow}
    \begin{tikzcd}
 \Sigma^{-i} X_i^{V_i} \wedge \cJ(i,j) \wedge \cJ(j,k)  \ar[r, hook] \ar[d] & \Sigma^{-i}X_i^{V_i} \wedge \cJ(i,k)  \ar[d]\\
\Sigma^{-j} X_j^{V_j} \wedge \cJ(j,k) \ar[r] & \Sigma^{-k}X_k^{V_k}
    \end{tikzcd}
\end{equation}

Applying the Pontryagin--Thom construction to \eqref{eq:relativeneatflowemb} (and desuspending), one obtains the commutative diagram
\begin{equation}
		\xymatrix{\Sigma^{-i}X_i^{V_i}\wedge \cJ(i,j) \wedge \cJ(j,k)\ar[r]\ar[d]& \Sigma^{-i}X_i^{V_i}\wedge \cJ(i,k)\ar[d]\\ (\cM(i,j)\times_j\cM(j,k))^\nu \ar[r]& \cM(i,k)^\nu}
\end{equation}
where $\nu$ denotes the Thom spectrum of the normal bundle (or the virtual bundle obtained by subtracting $\bR^{i+q}$ during the desuspension). The composition of the right vertical arrow with $\cM(i,k)^\nu\to \Sigma^{-k}X_k^{V_k}$ is the $\cJ$-module multiplication $\Sigma^{-i}X_i^{V_i}\wedge \cJ(i,k)\to \Sigma^{-k}X_k^{V_k}$. Hence, the composition in \eqref{eq:jmodulediagramflow} through the upper right corner can also be obtained as \begin{equation}\label{eq:longcomposethom}
	\Sigma^{-i}X_i^{V_i}\wedge \cJ(i,j) \wedge \cJ(j,k)\to (\cM(i,j)\times_j\cM(j,k))^\nu \to \cM(i,k)^\nu\to \Sigma^{-k}X_k^{V_k}
\end{equation}
As recalled above the Thom construction is natural with respect to pull-backs. The last two maps in \eqref{eq:longcomposethom} are of this form; hence, the composition of the last two can be directly induced by the evaluation map $\cM(i,j)\times_j\cM(j,k)\to X_k$, where the framing condition $V_i\oplus T_{X_i}=T_{\cM(i,k)}\oplus \underline{\bR}\oplus V_k$ is also used. 

The compositions $\cM(i,j)\times_j\cM(j,k)\to \cM(i,k)\to X_k$ and $\cM(i,j)\times_j\cM(j,k)\to \cM(j,k)\to X_k$ are the same, and one can obtain the composition of the last two arrows in \eqref{eq:longcomposethom} also as a composition
\begin{equation}
	(\cM(i,j)\times_j\cM(j,k))^\nu\to \cM(j,k)^\nu\to \Sigma^{-k}X_k^{V_k}
\end{equation}
These maps can also be seen maps of the pull-back type (i.e. of the form $Q_1^{g^*E}\to Q_2^E$). On the other hand, we can use the framing condition $V_j\oplus T_{X_j}=T_{\cM(j,k)}\oplus \underline{\bR}\oplus V_k$ on $\cM(j,k)$ to identify the bundle on it with $V_j+T_{X_j}-T_{\cM(j,k)}- \underline{\bR}$ (up to a trivial bundle). Once again, on $\cM(i,j)\times_j\cM(j,k)$, we can use the condition $V_i\oplus T_{X_i}=T_{\cM(i,j)}\oplus \underline{\bR}\oplus V_j$ to identify the pull-back bundle with the normal bundle of $\cM(i,j)\times_j\cM(j,k)\hookrightarrow V_i\times J_q(i,j) \times J_q(j,k)$. Notice, we have two identifications with this bundle, the identifications match by the compatibility condition \eqref{equation:framing-diagram-square}.

Consider the diagram 
\begin{equation}\label{eq:hugejmoduleflowdiagram}
	\xymatrix{ \Sigma^{-i}X_i^{V_i}\wedge \cJ(i,j) \wedge \cJ(j,k) \ar[r]\ar[d]& (\cM(i,j)\times_j\cM(j,k))^\nu\ar@/^2.5pc/[lddd] \\
	 \cM(i,j)^\nu \wedge \cJ(j,k)\ar[d]\ar[ru]& \\
 \Sigma^{-j}X_j^{V_j} \wedge \cJ(j,k)\ar[d]& \\
  \cM(j,k)^\nu\ar[d]& \\
\Sigma^{-k}X_k^{V_k}}
\end{equation}
The vertical maps are the usual ones appearing in the $\cJ$-module structure, i.e. the first one is a Pontryagin--Thom collapse on the first two components, the second and the fourth are induced by pull-backs, and the third is another collapse. 

The horizontal arrow and the upper diagonal arrow are also collapse maps, and by the naturality of the collapse, the upper triangle commutes. Note that the upper diagonal arrow is a twisted collapse map (i.e. of the form $Q^E\to P^{f^*E+\nu_{P/Q}}$), where $E$ is given by the normal bundle of the embedding of $\cM(i,j)\times J_q(j,k)$ into $V_i\times J_q(i,j)\times J_q(j,k)$. 

The lower diagonal arrow is induced by a pull-back, and as observed above the composition of the horizontal arrow, the lower diagonal arrow and the last vertical arrow is the same as the composition in \eqref{eq:jmodulediagramflow} through the upper right corner. The composition through the lower right corner is given by the composition of all vertical arrows in \eqref{eq:hugejmoduleflowdiagram}, and to establish commutativity of \eqref{eq:jmodulediagramflow}, we only need to check the commutativity of the mid-square in \eqref{eq:hugejmoduleflowdiagram}: 
\begin{equation}\label{eq:midsqr}
	\xymatrix{\cM(i,j)^\nu \wedge \cJ(j,k) \ar[r]\ar[d]& (\cM(i,j)\times_j\cM(j,k))^\nu\ar[d] \\
	\Sigma^{-j}X_j^{V_j} \wedge \cJ(j,k) \ar[r]&\cM(j,k)^\nu }
\end{equation}
Consider the following commutative diagram
\begin{equation}\label{eq:embdiagram1}
\xymatrix{ \cM(i,j) \times J_q(j,k) \ar[d]& \cM(i,j)\times_j\cM(j,k)\ar[d]\ar@{_{(}->}[l] \\
	X_j \times J_q(j,k) &\cM(j,k)\ar@{_{(}->}[l]
}
\end{equation}
As observed above, the upper horizontal arrow of \eqref{eq:midsqr} comes as a twisted Pontryagin--Thom collapse map applied to the upper horizontal map of \eqref{eq:embdiagram1} (with respect to the normal bundle of the embedding of $\cM(i,j) \times J_q(j,k)$ into $V_i\times J_q(i,j)\times J_q(j,k)$). Similarly, the lower horizontal arrow can be seen of this form, with respect to a stabilization of $V_j$. Moreover, the bundles on the left side, resp. right side, are related by pull-back along the vertical arrows (this forces the normal bundles of the upper and lower horizontal embeddings to agree, which can be checked more directly). By the naturality of the collapse maps observed above, this implies the commutativity of \eqref{eq:midsqr}.

\end{proof}
The construction involved choices of neat embeddings. Any two choices of neat embeddings are isotopic after stabilizing with a high enough $\bR^q$; therefore, the construction does not depend on them. Similarly, if one would like to use different lifts $\cM(i,j)\to V_i$, one can stabilize the framings by adding a trivial bundle, and two different lifts become isotopic afterwards. 

As a result, we obtain a $\cJ$-module, and hence a based spectrum, out of a framed Morse--Bott flow category. We denote this spectrum by $|\cM|$ as well.

\subsection{Some properties of (Morse--Bott) flow categories}
\subsubsection{Index shifts of flow categories}\label{subsubsec:indexshift}
\begin{defn}
Let $\cM$ be a flow category. Let \emph{the shift of $\cM$ by $a$}, denoted by $\cM(a)$, be the flow category with the same spaces of objects and morphisms, but $\mu_{\cM(a)}=\mu_\cM+a$. When $\cM$ is framed, $\cM(a)$ is endowed with a canonical framing given by $V'_{i}:=V_{i-a}$.
\end{defn}
Let $X_i$, resp. $X_i'$ denote the index $i$ component of $Ob(\cM)$, resp. $Ob(\cM(a))$. In this notation, $X'_{i}=X_{i-a}$, and the framing bundles agree. Therefore, the $\cJ$-modules corresponding to $\cM$, resp. $\cM(a)$, which are given by $X(i)=\Sigma^{-i}X_i^{V_i}$, resp. $X'(i)=\Sigma^{-i}X_{i-a}^{V_{i-a}}$, are related by $X'(i)= \Sigma^{-a}X(i-a)$. It is easy to identify the $\cJ$-module structure maps as well. As a result
\begin{lem}\label{lem:shiftsame}
The geometric realizations $|\cM|$ and $|\cM(a)|$ are canonically identified. 
\end{lem}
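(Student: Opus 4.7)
The plan is to compare the $\cJ$-modules attached to $\cM$ and $\cM(a)$ by \Cref{constr:jmodule} directly, verify they differ by a shift of indexing together with a uniform suspension, and then check that the normalization $\Sigma^{-q+1}|X|_q$ in the definition of the geometric realization absorbs both shifts.

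Concretely, I would first record the identification of the underlying $\cJ$-modules. Denoting the associated $\cJ$-modules by $X$ and $X'$, we have $X(i) = \Sigma^{-i} X_i^{V_i}$ and $X'(i) = \Sigma^{-i} X'_i{}^{V'_i} = \Sigma^{-i} X_{i-a}^{V_{i-a}} = \Sigma^{-a} X(i-a)$. Since the morphism spaces of $\cJ$ only depend on the difference of indices, $\cJ(i,j) = \cJ(i-a, j-a)$, and the structure maps of $X'$ become
\[
X'(i)\wedge \cJ(i,j) = \Sigma^{-a}\bigl(X(i-a)\wedge \cJ(i-a,j-a)\bigr)\to \Sigma^{-a}X(j-a) = X'(j),
\]
i.e.\ $X'$ is the reindexed and $\Sigma^{-a}$-suspended version of $X$. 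This step is just a careful unraveling of definitions, using that the neat embeddings, framings, and evaluation maps underlying \Cref{constr:jmodule} depend only on the pair $(Ob(\cM),\cM(-,-))$ together with the framings $V_{\bullet}$, all of which are preserved by the shift.

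Next I would compare the geometric realizations term by term. For the semi-simplicial spectra $\fX_q$ and $\fX'_q$ of \Cref{definition:geometric-real}, the substitution $i_l = j_l + a$ gives, for each $k$,
\[
\fX'_q[k] = \bigvee_{j_0>\dots>j_k} \Sigma^{-a}X(j_0)\wedge \cJ(j_0,j_1)\wedge\dots\wedge \cJ(j_k,-q-a) = \Sigma^{-a}\,\fX_{q+a}[k],
\]
and the boundary maps on both sides agree under this identification by the preceding paragraph. Passing to geometric realizations yields $|X'|_q \simeq \Sigma^{-a}|X|_{q+a}$ for $q$ sufficiently large.

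Finally, applying the overall desuspension in the definition of $|\cM|$,
\[
|\cM(a)| = \Sigma^{-q+1}|X'|_q \simeq \Sigma^{-q+1-a}|X|_{q+a} = \Sigma^{-(q+a)+1}|X|_{q+a} = |\cM|,
\]
which is independent of $q$ by \Cref{lem:suspendedrealization}. I don't anticipate any real obstacle here; the only thing that requires slight care is checking that the identification is \emph{canonical}, i.e.\ that the choices of neat embeddings and stabilizations used in \Cref{constr:jmodule} for $\cM$ can be taken compatibly for $\cM(a)$ (indeed, the very same neat embedding of $\cM$ works for $\cM(a)$, since neatness depends only on the $\langle k\rangle$-manifold structure of the morphism spaces, and the $\langle \mu(X)-\mu(Y)-1\rangle$-structure is preserved by the shift).
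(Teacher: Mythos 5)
Your proof is correct and follows essentially the same route as the paper: identify $X'(i)=\Sigma^{-a}X(i-a)$ at the level of $\cJ$-modules, observe $\fX'_q=\Sigma^{-a}\fX_{q+a}$, and let the normalization $\Sigma^{-q+1}$ absorb the shift. The extra remark on canonicity (that the same neat embedding serves for $\cM(a)$) is a fine addition but not a departure from the paper's argument.
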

\begin{proof}
Let $\fX_{q}$, resp. $\fX_{q}'$ denote the collection of semi-simplicial sets given in \Cref{definition:geometric-real} corresponding to $X$ and $X'$ respectively. It is easy to check that $\fX_{q}'=\Sigma^{-a}\fX_{q+a}$. Therefore, $|X'|_q=\Sigma^{-a}|X|_{q+a}$, implying $|X'|=\Sigma^{-q+1}|X'|_q=\Sigma^{-q-a+1}|X|_{q+a}=|X|$.
\end{proof}
\subsubsection{Inclusions}
Geometric realizations have functoriality with respect to some fully-faithful embeddings. We define
\begin{defn}\label{defn:inclusion}
Let $\cM_1$ and $\cM_2$ be two flow categories. An \emph{inclusion of flow categories} is given by a fully faithful functor $\iota:\cM_1\to\cM_2$ such that
\begin{enumerate}
	\item\label{item:agreeingindices} the map $\iota:Ob(\cM_1)\to Ob(\cM_2)$ is a smooth closed embedding onto a union of connected components of $Ob(\cM_2)$ 
	\item $\mu_{\cM_2}\circ \iota$ and $\mu_{\cM_1}$ agree
	\item\label{item:alltrajectory} if $x\in Ob(\cM_1)$, $y\in Ob(\cM_2)$ and $\cM_2(\iota(x),y)\neq \emptyset$, then $y$ is in the image of $\iota$
\end{enumerate}
The index $i$ part $X_i$ of $\cM_1$ embeds into the index $i$ part $Y_i$ of $\cM_2$. Moreover, part of the data of the functor $\iota$ is a smooth embedding $\cM_1(X_i,X_j)\hookrightarrow\cM_2(Y_i,Y_j)$. By (\ref*{item:agreeingindices}), both $\cM_1(X_i,X_j)$ and $\cM_2(Y_i,Y_j)$ are $\langle k\rangle$-manifolds, where $k=\mu_{\cM_1}(X_i)-\mu_{\cM_1}(X_j)-1=\mu_{\cM_2}(Y_i)-\mu_{\cM_2}(Y_j)-1$. 
We also require 
\begin{enumerate}[resume]
	\item $\cM_1(X_i,X_j)\hookrightarrow\cM_2(Y_i,Y_j)$ is a morphism of $\langle k \rangle$-manifolds
\end{enumerate}
\end{defn}
\begin{rk}\label{rk:generalinclusions}
We assume $X_i$ is a union of connected components of $Y_i$ as these are the only examples we consider. However, one can drop this assumption, and allow higher codimension embeddings. In this case, we have further requirements. By (\ref*{item:alltrajectory}), $ev_{X_i}\times \iota:\cM_1(X_i,X_j)\xrightarrow{\cong} X_i\times_{Y_i}\cM_2(Y_i,Y_j)$. We ask
\begin{enumerate}[resume]
	\item\label{item:transvcut} $X_i\hookrightarrow Y_i$ and $\cM_2(Y_i,Y_j)\to Y_i$ are transverse maps (see also \Cref{note:fiber-products}). Therefore, $X_i\times_{Y_i}\cM_2(Y_i,Y_j)$ is a submanifold of $X_i\times\cM_2(Y_i,Y_j)$
\end{enumerate}
Notice that by (\ref*{item:transvcut}), $dim(\cM_1(X_i,X_j))-dim(X_i)$ and $dim(\cM_2(Y_i,Y_j))-dim(Y_i)$ coincide. When the flow categories are framed, these would be equal to $rank(V_i)-rank(V_j)-1$.
\end{rk}
The condition (\ref*{item:alltrajectory}) states that if there is a morphism from $\iota(x)$ to $y$, then $y$ is also in the image of $\iota$, i.e. $y=\iota(x')$ for a unique point $x'$. Combined with fully faithfulness, it actually implies that all the morphisms in $\cM_2$ from $\iota(x)$ are in the image of $\iota$. To define an inclusion into a flow category $\cM$, one only needs to specify a union of connected components of $Ob(\cM)$ such that (\ref*{item:alltrajectory}) holds.
\begin{exmp}
Given Morse/Morse--Bott function $f$ and given $a\in\bR$, one can define an inclusion of $\cM_f$ (see \Cref{exmp:morsebottfunction}) by taking the union of all critical sets whose value under $f$ is less than or equal to $a$.  
\end{exmp}
This example also illustrates why (\ref*{item:alltrajectory}) is necessary. Morally, if one was trying to produce a subcomplex of the Morse complex by taking a span of generators, (\ref*{item:alltrajectory}) would be required for this span to be closed under the differential. 
\begin{exmp}
Given Morse--Bott flow category $\cM$ and given $k\in \bZ$, an inclusion is given by by taking the union of objects of index less than or equal to $k$. 
\end{exmp}
\begin{rk}
Notice that if $\iota:\cM_1\hookrightarrow\cM_2$ is an inclusion, then a framing on $\cM_2$ induces a framing on $\cM_1$. This would not be the case if we allowed the generality in \Cref{rk:generalinclusions}. More precisely, as $T_{Y_i}|_{X_i}\neq T_{X_i}$, if $\{W_i\}$ are the corresponding vector bundles on $Y_i$, the restriction of the framing equation
\begin{equation}
	ev_{Y_i}^*W_i	\oplus ev_{Y_i}^*T_{Y_i}=T_{\cM_2(Y_i,Y_j)}\oplus ev_{Y_j}^*W_{Y_j}\oplus \underline{\bR}
\end{equation}
to $\cM_1$ does not define a framing. This can be resolved by considering inclusions with ``trivial normal bundle'', i.e. by assuming $X_i\hookrightarrow Y_i$, $X_j\hookrightarrow Y_j$ and $\cM_1(X_i,X_j)\hookrightarrow\cM_2(Y_i,Y_j)$ have (i) trivial normal bundles of constant rank $d$, (ii) trivializations of the normal bundles that are compatible with the flow category structure. 
\end{rk}
%
The following follows from the definitions, and \Cref{lem:jmapinducegeomreal}: 
\begin{lem}\label{lem:flowincljinclgeomincl}
The inclusions of flow categories induce inclusions of the associated $\cJ$-modules and thus maps of the geometric realizations.
\end{lem}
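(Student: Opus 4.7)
The plan is to verify that an inclusion $\iota:\cM_1\hookrightarrow\cM_2$ of flow categories produces a level-wise cofibrant map between the associated $\cJ$-modules that is strictly compatible with the structure maps. Once this is in place, \Cref{lem:jmapinducegeomreal} supplies the induced map of geometric realizations.

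Let $X_i\subset Y_i$ denote the index-$i$ parts of $\mathrm{Ob}(\cM_1)$ and $\mathrm{Ob}(\cM_2)$. By condition (i) of \Cref{defn:inclusion}, $X_i$ is a union of connected components of $Y_i$, and by the remark following the definition the framing bundles satisfy $V_i^{(1)}=V_i^{(2)}|_{X_i}$. Consequently the Thom spectrum at degree $i$ decomposes as
\begin{equation*}
Y_i^{V_i^{(2)}}\;\simeq\;X_i^{V_i^{(1)}}\;\vee\;(Y_i\setminus X_i)^{V_i^{(2)}|_{Y_i\setminus X_i}},
\end{equation*}
so the level-wise inclusion is that of a wedge summand, which is cofibrant. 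This exhibits the $\cJ$-module of $\cM_1$ as a summand of that of $\cM_2$, matching the shape described after the definition of an inclusion of $\cJ$-modules.

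Next I would check that the structure maps commute. The key geometric input is that conditions (iii) and fully-faithfulness of \Cref{defn:inclusion}, together with the fact that $X_i\subset Y_i$ is both open and closed, force the decomposition
\begin{equation*}
\cM_2(Y_i,Y_j)\;=\;\cM_1(X_i,X_j)\;\sqcup\;\mathrm{ev}_{Y_i}^{-1}(Y_i\setminus X_i)
\end{equation*}
as a disjoint union of $\langle \mu(i)-\mu(j)-1\rangle$-manifolds; the first component is precisely $\mathrm{ev}_{Y_i}^{-1}(X_i)$ and is identified with $\cM_1(X_i,X_j)$ via $\iota$. Choose a neat embedding for $\cM_2$ in the sense of \eqref{eq:neatflowemb}; by restriction it induces a compatible neat embedding for $\cM_1$. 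The Pontryagin--Thom collapse of \Cref{constr:jmodule} for $\cM_2$,
\begin{equation*}
Y_i^{V_i^{(2)}}\wedge\cJ_q(i,j)\longrightarrow \cM_2(Y_i,Y_j)^{\nu_2}\longrightarrow \Sigma^{i-j+q}Y_j^{V_j^{(2)}},
\end{equation*}
then respects the above wedge/disjoint-union decompositions at each stage: its restriction to $X_i^{V_i^{(1)}}\wedge\cJ_q(i,j)$ factors through the summand $\cM_1(X_i,X_j)^{\nu_1}$ and then through $\Sigma^{i-j+q}X_j^{V_j^{(1)}}\subset\Sigma^{i-j+q}Y_j^{V_j^{(2)}}$, and by the naturality/compatibility properties of collapse maps listed in \Cref{subsubsec:relativethompontr} it agrees with the corresponding structure map for $\cM_1$. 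This is exactly the compatibility needed for a strict inclusion of $\cJ$-modules.

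The only point that requires a moment of thought is the choice of neat embeddings: one must choose them for $\cM_2$ first and then restrict, rather than make independent choices for the two flow categories. This is harmless because the $\cJ$-module associated to a flow category is independent of the neat embedding up to stabilization (as noted after \Cref{proposition:j-modules}). With the strict inclusion of $\cJ$-modules in hand, \Cref{lem:jmapinducegeomreal} immediately yields the induced (cofibrant) map of geometric realizations, completing the proof.
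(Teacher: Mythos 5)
Your proof is correct and follows essentially the same route as the paper's: decompose the objects of $\cM_2$ into the image of $\iota$ and its complement, use condition (3) of \Cref{defn:inclusion} (plus fully-faithfulness) to see that the morphism spaces decompose accordingly with no component flowing out of the image, choose the neat embeddings compatibly, and observe that the Pontryagin--Thom collapse respects this decomposition so that the $\cJ$-module of $\cM_1$ sits as a wedge summand; the paper's proof is exactly this, merely written with less attention to the cofibrancy and embedding-choice points you flag. No changes needed.
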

\begin{proof}
Given an inclusion $\iota:\cM_1\hookrightarrow \cM_2$, write $Ob(\cM_1)$ as $X_i\sqcup Y_i$, where $X_i$ is the image of $\iota$ at index $i$. Then, $\cM_2(i,j)$ decomposes as $\cM_2(X_i,X_j)\sqcup \cM_2(Y_i,X_j)\sqcup \cM_2(Y_i,Y_j)$. Let the framing bundles on $X_i$ and $Y_i$ be denoted by $V_i$, resp. $W_i$. To apply \Cref{constr:jmodule}, we embed the $\cM_2(X_i,X_j)$ component into $V_i\times J_q(i,j)$, and the $\cM_2(Y_i,X_j)$ and $\cM_2(Y_i,Y_j)$ components into $W_i\times J_q(i,j)$. As a result, the $X_i^{V_i}\wedge \cJ_q(i,j)$ component of $(X_i^{V_i}\vee Y_i^{W_i})\wedge \cJ_q(i,j)$ maps into the $\cM(X_i,X_j)^\nu$ component under the collapse map, and this projects to the $X_j^{V_j}$ component of $X_j^{V_j}\vee Y_j^{W_j}$. 
\end{proof}
The condition (\ref*{item:alltrajectory}) of \Cref{defn:inclusion} is necessary for \Cref{lem:flowincljinclgeomincl} to hold. For instance, if a framed flow category $\cM$ is supported in two indices $i_0>i_1$ and if $\cM(i_0,i_1)\neq \emptyset$, then $X_{i_0}^{V_{i_0}}\wedge \cJ(i_0,i_1)\to X_{i_1}^{V_{i_1}}$ is generally non-trivial and $X_{i_0}^{V_{i_0}}$ supported in degree $i_0$ does not define a sub-module of the $\cJ$-module corresponding to $\cM$. Therefore, there is no natural map from the geometric realization of the subcategory spanned by $X_{i_0}$ into $|\cM|$. 

\subsubsection{Quotients}
As mentioned, one can think of inclusions of flow categories as analogous to subcomplexes of chain complexes. If we assume the dual to the condition (\ref*{item:alltrajectory}), i.e.\ that the trajectories with endpoints in $Im(\iota)$ are in $Im(\iota)$, this does not define a notion of flow subcategory whose geometric realization maps to that of the original category. However, such a subcategory can be thought of as a quotient:
\begin{defn}
Let $\iota:\cM_0\to\cM$	be an inclusion of flow categories. Define the flow category $\cM/\cM_0$ to be the full subcategory of $\cM$ with objects $Ob(\cM/\cM_0)=Ob(\cM)\setminus Ob(\cM_0)$. We call $\cM/\cM_0$ the \emph{quotient flow category}. If $\cM$ is framed, $\cM/\cM_0$ also inherits the framing.
\end{defn}
As observed, to specify a quotient category, it suffices to specify a union of the connected components of $Ob(\cM)$ such that if there is a trajectory from a point $x$ in $Ob(\cM)$ to a point $y$ in this subset, $x$ also belongs to this subset. 
\begin{rk}
The reason $\cM/\cM_0$ is a still a flow category is because the morphisms in $\cM$ with domain and target in $\cM/\cM_0$ cannot factor through the objects of $\cM_0$, otherwise the target would also be contained in $Ob(\cM_0)$.	
\end{rk}
Note that there is no quotient map $\cM\to\cM/\cM_0$ of flow categories. For instance, when $\cM_0=\cM$, $\cM/\cM_0$ is the empty category and $\cM$ admits no functors to this, as there is no morphism from a non-empty set to the empty set. This issue is not present for based spaces/spectra, and we have maps of the induced $\cJ$-modules, which in turn induce maps of geometric realizations. We have: 
\begin{lem}\label{lem:jmodquot}
The $\cJ$-module associated to $\cM/\cM_0$ is the quotient of the $\cJ$-module associated to $\cM$ by that for $\cM_0$. 
\end{lem}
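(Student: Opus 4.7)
The plan is to unfold the construction of the associated $\cJ$-module in both cases and verify that the relevant wedge summands and structure maps match up on the nose.

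First, I would fix notation. Write $X_i := \mu_{\cM}^{-1}(i) \subset Ob(\cM)$. Since $\iota : \cM_0 \hookrightarrow \cM$ is an inclusion, condition (\ref*{item:agreeingindices}) of \Cref{defn:inclusion} gives a decomposition into unions of connected components
\[
X_i = X_i^0 \sqcup X_i',
\]
where $X_i^0 = \iota(\mu_{\cM_0}^{-1}(i))$ and $X_i' = \mu_{\cM/\cM_0}^{-1}(i)$. The framing bundle $V_i$ on $X_i$ restricts to framing bundles on $X_i^0$ and on $X_i'$, and yields a wedge decomposition
\[
\Sigma^{-i} X_i^{V_i} \simeq \Sigma^{-i} (X_i^0)^{V_i|_{X_i^0}} \vee \Sigma^{-i} (X_i')^{V_i|_{X_i'}}
\]
at each level of the $\cJ$-module associated to $\cM$. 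By construction the first summand is the $i$-th spectrum of the $\cJ$-module of $\cM_0$, and the second is the $i$-th spectrum of the $\cJ$-module of $\cM/\cM_0$.

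Next I would analyze the structure maps. Fully faithfulness of $\iota$ identifies $\cM_0(X_i^0,X_j^0)$ with $\cM(X_i^0,X_j^0)$, and condition (\ref*{item:alltrajectory}) of \Cref{defn:inclusion} implies $\cM(X_i^0,X_j')=\emptyset$. Hence we have a decomposition as $\langle\mu(X_i)-\mu(X_j)-1\rangle$-manifolds
\[
\cM(X_i,X_j) = \cM(X_i^0,X_j^0) \sqcup \cM(X_i',X_j^0) \sqcup \cM(X_i',X_j').
\]
Now fix a neat embedding of $\cM$ as in the proof of \Cref{lem:flowincljinclgeomincl}; it restricts coherently to neat embeddings of $\cM_0$ and of $\cM/\cM_0$. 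With these compatible choices, the Pontryagin--Thom collapse map of \Cref{constr:jmodule} for $\cM$ sends the summand $\Sigma^{-i}(X_i^0)^{V_i|_{X_i^0}}\wedge \cJ(i,j)$ entirely into $\Sigma^{-j}(X_j^0)^{V_j|_{X_j^0}}$ (using the $\cM(X_i^0,X_j^0)$-piece only), while the summand $\Sigma^{-i}(X_i')^{V_i|_{X_i'}}\wedge \cJ(i,j)$ maps into the full $\Sigma^{-j}X_j^{V_j}$, with contributions coming from $\cM(X_i',X_j^0)$ landing in the $\cM_0$-summand and contributions from $\cM(X_i',X_j')$ landing in the $\cM/\cM_0$-summand.

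Finally I would invoke \Cref{lem:flowincljinclgeomincl} to conclude that the summand of $\cM_0$ is a sub-$\cJ$-module; the quotient, by its very definition, has structure maps obtained by projecting away the $(X_j^0)^{V_j|_{X_j^0}}$-factor. Under that projection only the $\cM(X_i',X_j')$-component survives, and this is precisely the collapse map that defines the $\cJ$-module of $\cM/\cM_0$ with respect to the restricted neat embedding. Hence the two $\cJ$-modules agree strictly. The only real subtlety is the bookkeeping of neat embeddings: one must ensure that the embeddings used for $\cM$, $\cM_0$ and $\cM/\cM_0$ are chosen compatibly, but this is automatic by restriction since $Ob(\cM_0)$ and $Ob(\cM/\cM_0)$ are unions of components of $Ob(\cM)$. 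Independence of these auxiliary choices was already addressed after \Cref{constr:jmodule}.
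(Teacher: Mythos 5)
Your proof is correct and follows essentially the same route as the paper's: decompose the object space into the $\cM_0$-components and the $\cM/\cM_0$-components, split each morphism space into three pieces (using condition (3) of the definition of inclusion to rule out the fourth), and observe that after quotienting by the $\cM_0$-sub-$\cJ$-module only the collapse maps coming from $(\cM/\cM_0)(X_i',X_j')$ survive. Your extra remarks on compatible neat embeddings are a harmless elaboration of the same argument.
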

\begin{proof}
Assume we are in the setting of the proof of \Cref{lem:flowincljinclgeomincl}, i.e. decompose $Ob(\cM)$ as $X_i\sqcup Y_i$, where $X_i$ is the component corresponding to $\cM_0$ and $Y_i$ to $\cM/\cM_0$ (and let $V_i$, $W_i$ denote the framing bundles on $X_i$, $Y_i$, respectively). As before $\cM(i,j)$ decomposes as $\cM(X_i,X_j)\sqcup \cM(Y_i,X_j)\sqcup \cM(Y_i,Y_j)$, and applying \Cref{constr:jmodule} with each of these components, we obtain the structure maps $X_i^{V_i}\wedge \cJ(i,j)\to X_j^{V_j}$, $Y_i^{W_i}\wedge \cJ(i,j)\to X_j^{V_j}$, and $Y_i^{W_i}\wedge \cJ(i,j)\to Y_j^{W_j}$ respectively. The $\cJ$-module corresponding to $\cM$ has $i^{th}$-spectrum $X_i^{V_i}\vee Y_i^{W_i}$, where $X_i^{V_i}$ gives the submodule corresponding to $\cM_0$. Taking the quotient by this submodule, we obtain a $\cJ$-module with the $i^{th}$-spectrum $Y_i^{W_i}$, and the only structure map that survives is $Y_i^{W_i}\wedge \cJ(i,j)\to Y_j^{W_j}$. This is induced by applying \Cref{constr:jmodule} to $\cM(Y_i,Y_j)=(\cM/\cM_0)(Y_i,Y_j)$, and therefore is the same as the structure map of the $\cJ$-module corresponding to $\cM/\cM_0$. 
\end{proof}

The following \namecref{cor:puppemaprealization} will be used to write continuation type maps between realizations of flow categories. It follows from \Cref{lemma:quotient-inclusion}, \Cref{lem:flowincljinclgeomincl} and \Cref{lem:jmodquot}. 
\begin{cor}\label{cor:puppemaprealization}
There are homotopy equivalences $|\cM/\cM_0|\simeq |\cM|/|\cM_0|\simeq cone (|\cM_0|\to|\cM|)$. In particular we have a map $|\cM/\cM_0|\to\Sigma |\cM_0|$. 
\end{cor}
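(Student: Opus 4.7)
The proof is essentially an assembly of the three cited lemmas, so my plan is to just chain them together carefully. First, I would apply \Cref{lem:flowincljinclgeomincl} to the inclusion $\iota\colon \cM_0 \hookrightarrow \cM$ to obtain a cofibrant inclusion of $\cJ$-modules, say $X \hookrightarrow Y$, where $X$ is the $\cJ$-module associated to $\cM_0$ and $Y$ is the $\cJ$-module associated to $\cM$. The cofibrancy is the standard one coming from the wedge decomposition $Y(i) = X(i) \vee Z(i)$, where $Z(i)$ is the Thom spectrum of the framing bundle over the components corresponding to $Ob(\cM)\setminus Ob(\cM_0)$; this decomposition was already used in the proof of \Cref{lem:jmodquot}.

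Next, I would invoke \Cref{lem:jmodquot} to identify the quotient $\cJ$-module $Y/X$ with the $\cJ$-module associated to the flow category $\cM/\cM_0$. Combined with the previous step, \Cref{lemma:quotient-inclusion} applied to the inclusion $X \hookrightarrow Y$ then produces a quotient sequence of spectra
\begin{equation*}
    |\cM_0| \longrightarrow |\cM| \longrightarrow |\cM/\cM_0|,
\end{equation*}
and simultaneously the equivalence $|\cM/\cM_0| = |Y/X| \simeq |Y|/|X| = |\cM|/|\cM_0|$.

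For the second equivalence $|\cM|/|\cM_0| \simeq \mathrm{cone}(|\cM_0|\to|\cM|)$, I would note that the map $|\cM_0|\to|\cM|$ is cofibrant (this is the content of the second sentence of \Cref{lem:jmapinducegeomreal}), so collapsing $|\cM_0|$ in $|\cM|$ is homotopy equivalent to taking the mapping cone. This is a standard fact about cofibrations of spectra and requires no additional input. Finally, the connecting map $|\cM/\cM_0| \to \Sigma|\cM_0|$ is precisely the connecting map of the cofiber sequence provided by \Cref{lemma:quotient-inclusion}.

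Since each of the ingredients is already proved, I don't anticipate a real obstacle here; the only thing worth being careful about is ensuring that the inclusion $X \hookrightarrow Y$ is genuinely a cofibration of $\cJ$-modules so that \Cref{lemma:quotient-inclusion} applies, but as noted this is immediate from the wedge decomposition $Y(i) = X(i) \vee Z(i)$ that underlies the proof of \Cref{lem:flowincljinclgeomincl}.
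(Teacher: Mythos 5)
Your proposal is correct and follows exactly the route the paper takes: the paper's entire proof is the remark that the corollary follows from \Cref{lemma:quotient-inclusion}, \Cref{lem:flowincljinclgeomincl} and \Cref{lem:jmodquot}, and you have simply spelled out the chain (inclusion of $\cJ$-modules via the wedge decomposition $Y(i)=X(i)\vee Z(i)$, identification of the quotient $\cJ$-module with that of $\cM/\cM_0$, then the quotient sequence of realizations and its connecting map). No gaps.
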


\subsection{Ind-flow categories} 
We will need to use Morse--Smale/Floer data parametrized by \emph{ind-manifolds}, such as $S^\infty$. An \emph{ind-manifold} is a colimit of finite dimensional smooth manifolds under smooth inclusions, and we will consider Morse/Morse--Bott functions on such manifolds. In general, the corresponding flow categories are not going to be finite; therefore, we introduce the following conceptual tool to work with them. 
\begin{defn}
An \emph{ind-flow category} is an ind-object in the category of finite Morse--Bott flow categories and their inclusions. In other words, it is a topological category $\cM$ such that there exists finite flow categories $\cM_k$ and inclusions $\cM_k\hookrightarrow \cM_{k+1}$ satisfying $\cM=\bigcup\cM_k$. A \emph{framed ind-flow category} is an ind-flow category with a presentation as above such that each $\cM_k$ admits a framing and the inclusions are compatible with framings.
\end{defn}
Note that $\cM_k$ are not strictly part of the data, and $\cM$ admits other equivalent presentations as such a colimit. 
\begin{defn}
For a framed ind-flow category, with a fixed presentation, define \emph{the geometric realization of $\cM$} to be the spectrum $\colim_k |\cM_k|=\hocolim_k |\cM_k|$. We denote this spectrum by $|\cM|$.
\end{defn}
Notice that this definition is independent of the choice of equivalent presentations. 
\begin{exmp}
Let $\cM$ be a (possibly infinite) Morse--Bott flow category with index function bounded below. Let $\tau_{\leq k}\cM$ denote the finite flow category obtained from $\cM$ by truncating the objects of index higher than $k$ (and the morphisms from them). If $\cM$ is framed, the framing restricts to each $\tau_{\leq k}\cM$, and $\tau_{\leq k}\cM\hookrightarrow\tau_{\leq k+1}\cM\hookrightarrow\cM$ describes a presentation of $\cM$ as a framed ind-flow category. 
\end{exmp}
\begin{exmp}
Consider an ind-manifold presented as $M=\bigcup_{i=1}^{\infty} M_i$, where each $M_i$ is a smooth compact manifold, and $M_i\hookrightarrow M_{i+1}$. Inductively, choose a function $f:M\to \bR$, such that the restriction of $f$ to each $M_i$ is Morse, $crit(f|_{M_i})\subset crit(f|_{M_{i+1}})$, and the indices for different restrictions agree. With a reasonable choice of a metric on $M$, this implies there are no trajectories from a point of $f|_{M_i}$ to outside $M_i$. Therefore, $\cM_{f|_{M_i}}$ form a sequence of inclusions, defining an ind-flow category corresponding to $f$. 
\end{exmp}
This will be the main type of example of ind-flow categories we consider, and more specific examples will be given later. By abuse of notation we will generally treat ind-flow categories as if it is a finite flow category.

\section{Bimodules, relative modules and maps between geometric realizations}\label{sec:bimodreal} 
\subsection{Bimodules}
Given two Morse--Smale pairs $(f_1,g_1)$ and $(f_2,g_2)$, one can write a quasi-isomorphism $CM(f_1)\to CM(f_2)$ by counting the trajectories from the critical points of $f_1$ to the critical points of $f_2$ that satisfy the (in our conventions negative) gradient flow equation for $(f_1,g_1)$ near $-\infty$, and that satisfy the gradient flow equation for $(f_2,g_2)$ near $+\infty$. This idea generalizes to Floer homology as well. To extend this to the homotopy theoretic setting, we need to collect all the continuation trajectories into an abstract structure. The relevant definition is that of a bimodule.

\subsubsection{Definition of a bimodule}
\begin{defn}[cf.\ {\cite[Defn.\ 3.6]{largethesis}}]\label{defn:bimodule}
Let $\cM_1$ and $\cM_2$ be two flow categories. A \emph{monotone $\cM_1$-$\cM_2$ bimodule $\cN$} is given by the following data
\begin{itemize}
	\item for each pair of objects $X\subset Ob(\cM_1)$, $Y\subset Ob(\cM_2)$ a smooth compact $\langle \mu_{\cM_1}(X)-\mu_{\cM_2}(Y)\rangle$-manifold $\cN(X,Y)$ (in particular, $\cN(X,Y)=\emptyset$ if $\mu(X)<\mu(Y)$)
	\item evaluation maps $ev_X:\cN(X,Y)\to X$, $ev_Y:\cN(X,Y)\to Y$
	\item smooth gluing/composition maps $\cM_1(W,X)\times_X \cN(X,Y)\to \cN(W,Y)$ and $\cN(X,Y)\times_Y \cM_2(Y,Z)\to \cN(X,Z)$
\end{itemize}
satisfying the following properties analogous to \Cref{defn:flowcategory}
\begin{itemize}
	\item the evaluation maps $\cM_1(W,X) \to X$ and $\cN(X,Y)\to X$ are transverse to each other; similarly the maps $\cN(X,Y) \to Y$ and $\cM_2(Y, Z) \to Y$ are transverse.
	\item the composition maps are diffeomorphisms into boundary faces and $\partial_i \cN(X,Y)$ is given by the disjoint union of the images of compositions from $\cM_1(X,R)\times_Q \cN(R,Y)$ with $\mu_{\cM_1}(R)=\mu_{\cM_2}(Y)+i-1$ and $\cN(X,Q)\times_Q \cM_2(Q,Y)$ with $\mu_{\cM_2}(Q)=\mu_{\cM_2}(Y)+i$
	\item $(\cM_1(X,R)\times_Q \cN(R,Y))\cap (\cN(X,Q)\times_Q \cM_2(Q,Y))=\emptyset$ if $\mu_{\cM_1}(R)<\mu_{\cM_2}(Q)$
\end{itemize}
We define an \emph{$\cM_1$-$\cM_2$ bimodule $\cN$} to be a monotone $\cM_1(a)$-$\cM_2$ bimodule for some $a$ (see \Cref{subsubsec:indexshift}).
\end{defn}
What we called monotone bimodules are defined in \cite[Defn. 3.6]{largethesis}. Note that in the Morse--Bott case, there can be continuation trajectories increasing the index, and a monotone $\cM_1(a)$-$\cM_2$ bimodule can be seen as a notion allowing an index increase of at most $a$. Throughout the paper, we will only use monotone bimodules; however, we include a discussion of the more general bimodules for completeness. 

Part of \cite[Defn. 3.6]{largethesis} is a finer boundary structure on $\cN(X,Y)$ than that of a $\langle \mu_{\cM_1}(X)-\mu_{\cM_2}(Y)\rangle$-manifold. This is imposed by \Cref{defn:bimodule}. To explain this further, recall that the $\langle i-j-1\rangle$-manifold structure of $\cM(i,j)$ keeps track of the indices between $i$ and $j$ so that the trajectories break. On the other hand, in the case of a monotone bimodule, a breaking can occur on both $\cM_1$-side and $\cM_2$ side. In other words, $\partial_b \cN(i,j)$ consists of the disjoint components $\cM_1(i,j+b-1)\times_{j+b-1} \cN(j+b-1,j)$ and $\cN(i,j+b)\times_{j+b} \cM_2(j+b,j)$. We can abstract this notion out, and define a \emph{$\langle 2,k\rangle$-manifold} as a $\langle k\rangle$-manifold $M$ such that $\partial_b M$ admits a decomposition $\partial_{1,b} M\sqcup \partial_{2,b} M$ into two closed unions of faces satisfying $\partial_{1,b_1}M\cap \partial_{2,b_2}M=\emptyset$ for every $b_2\geq b_1$. It follows that $\partial_{\epsilon,b}M\cap \partial_{\epsilon',b'}M$ is a boundary face of both $\partial_{\epsilon,b}M$ and $\partial_{\epsilon',b'}M$, as long as $(\epsilon,b)\neq (\epsilon',b')$. Clearly, $\cN(i,j)$ is a $\langle 2,i-j\rangle$-manifold, with $\partial_{1,b}\cN(i,j)=\cM_1(i,j+b-1)\times_{j+b-1} \cN(j+b-1,j)$ and $\partial_{2,b}\cN(i,j)=\cN(i,j+b)\times_{j+b} \cM_2(j+b,j)$. 

Because of this finer structure, the notion of $\cM_1$-$\cM_2$ bimodule does not depend on the constant $a$ above. More precisely, if we are given a monotone $\cM_1(a)$-$\cM_2$ bimodule $\cN$, and if $a'>a$, one automatically gets a monotone $\cM_1(a')$-$\cM_2$ bimodule. The spaces $\cN(X,Y)$ do not change; however, due to the index shift on $X$ by $a'-a$, one needs to turn the $\langle k\rangle$-manifold $\cN(X,Y)$ into an $\langle k+a'-a\rangle$-manifold. It is easier to define $\langle 2,  k+a'-a\rangle$-manifold structure out of a $\langle2,k\rangle$-manifold: define the new $\partial_{1,b}$ to be the old $\partial_{1,b-(a'-a)}$ (or $\emptyset$ if $b\leq a'-a$), and let the new $\partial_{2,b}$ be the same as the old one (or $\emptyset$ if $b>k$).

If $\cM_1, \cM_2$ are flow categories with metrics, a \emph{metric} on a monotone $\cM_1$-$\cM_2$ bimodule $\cN$ is the data of a Riemannian metric on each $\cN(X,Y)$, such that the product metric on $\cM_1(W,X)\times_X \cN(X,Y)$ and $\cN(X,Y)\times_Y \cM_2(Y,Z)$ coincides with the pullback metric under the embeddings into $\cN(W,Y)$, resp. $\cN(X,Z)$. The notion extends to (non-monotone) bimodules immediately. A metric on a bimodule $\mathcal{N}$ induces splittings analogous to \eqref{equation:boundary-isom} and \eqref{equation:splitting-fiber}. 

\begin{defn}\label{defn:bimoduleframing} Assume $\cM_1$ and $\cM_2$ are both framed and call the associated bundles $V_X$, $W_Y$, respectively. A \emph{framing} on a monotone bimodule $\cN$ is defined to be a collection of isomorphisms $V_X\oplus T_X\cong T_{\cN(X,Y)}\oplus W_Y$ satisfying compatibility conditions analogous to \eqref{equation:framing-diagram-square}. A \emph{framing} on a $\cM_1$-$\cM_2$-bimodule $\cN$ (say given as a monotone $\cM_1(a)$-$\cM_2$-bimodule) is a framing on it as a $\cM_1(a)$-$\cM_2$-bimodule, where $\cM_1(a)$ is endowed with the canonical induced framing (see \Cref{subsubsec:indexshift}). Note that this notion also does not depend on $a$. 
\end{defn}
One can turn monotone bimodules into flow categories. Given a monotone $\cM_1$-$\cM_2$-bimodule $\cN$, there exists a flow category $\cP$ such that 
\begin{itemize}
	\item $Ob(\cP)=Ob(\cM_1(1))\sqcup Ob(\cM_2)$ with the index function $\mu_{\cM_1(1)}\sqcup \mu_{\cM_2}$
	\item $\cM_1(1)$ and $\cM_2$ are both full subcategories of $\cP$ and the morphisms from a component $X\subset Ob(\cM(1))$ to $Y\subset Ob(\cM_2)$ are given by $\cN(X,Y)$
	\item there are no morphisms from $Y\subset Ob(\cM_2)$ to $X\subset Ob(\cM(1))$ 
\end{itemize}
As a result, $\cM_2\hookrightarrow\cP$ is an inclusion, with the corresponding quotient given by $\cM_1(1)$. More generally, given $\cM_1$-$\cM_2$-bimodule $\cN$ (say given as a monotone $\cM_1(a)$-$\cM_2$-bimodule), there exists a flow category $\cP$ constructed as above such that $Ob(\cP)=Ob(\cM_1(a+1))\sqcup Ob(\cM_2)$.
\begin{rk}
In the other direction, the data of a flow category $\cP$ and an inclusion $\cM_2\hookrightarrow\cP$ such that the corresponding quotient given by $\cM_1(a+1)$ is nearly equivalent to \Cref{defn:bimodule}, except the last condition that guarantees $\langle 2,k\rangle$-manifold structures may fail. 
\end{rk}
Assume $\cM_1$, $\cM_2$ and the monotone bimodule $\cN$ are framed. Then, $\cP$ is naturally framed. To see this, let $V_X$, $W_Y$ denote the framing bundles on $\cM_1$ and $\cM_2$, respectively. Define 
\begin{equation}
U_X:=	
\begin{cases}
V_X\oplus\underline{\bR}\text{ for }X\subset Ob(\cM_1(1))\subset Ob(P)\\
W_X\text{ for }	X\subset Ob(\cM_2)\subset Ob(P)	
\end{cases}
\end{equation}
It is easy to check the framing condition \eqref{equation:framing-equation} on $\cP$. More generally, if $\cN$ is a framed bimodule (not necessarily monotone), one can define a natural framing on $\cP$ with the same framing bundles.

We will use the construction of the flow category $\cP$ to induce maps $|\cM_1|\to |\cM_2|$. To give a better context, recall how one can induce chain maps $CM(f_1)\to CM(f_2)$ associated to an interpolation from a Morse function $f_1$ to another $f_2$. Let $\tilde{f}:I\times M\to \bR$ denote the interpolation, i.e. assume $\tilde{f}(0,\cdot)=f_1$, $\tilde{f}(1,\cdot)=f_2$. Let $g:I\to \bR$ be a Morse function on $I$ with only critical points at $0$ and $1$ and with negative gradient flow from $0$ to $1$ (e.g. let $g(t)=t^3/3-t^2/2$). Then $\tilde f+g$ is a Morse function on $I\times M$, with Morse complex given by $CM(f_1)\oplus CM(f_2)[-1]$ such that we have a short exact sequence 
\begin{equation}
0\to CM(f_2)[-1]\to CM(\tilde f+g)\to CM(f_1)\to 0	
\end{equation}
and the associated connecting map $CM(f_1)\to CM(f_2)$ is the map we are looking for.

Similarly, 
\begin{prop}\label{prop:inducedmap}
A framed $\cM_1$-$\cM_2$-bimodule $\cN$ induces a map $c_\cN:|\cM_1|\to |\cM_2|$.	
\end{prop}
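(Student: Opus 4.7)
The plan is to extract the map $c_\cN$ from the Puppe sequence associated with a flow category $\cP$ built from $\cN$, in which $\cM_2$ sits as a subcategory and the quotient is (the index-shifted version of) $\cM_1$. This is the strategy sketched in the paragraph preceding the proposition, which reconstructs at the spectrum level the classical short exact sequence $0\to CM(f_2)\to CM(\tilde f+g)\to CM(f_1)\to 0$ and its connecting homomorphism.

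Concretely, I would first realize $\cN$ as a monotone $\cM_1(a)$-$\cM_2$ bimodule for some $a$ (this is how $\cM_1$-$\cM_2$ bimodules are defined), and then define a finite (or ind-) flow category $\cP$ by
\begin{equation*}
    Ob(\cP)=Ob(\cM_1(a+1))\sqcup Ob(\cM_2),\qquad \mu_\cP=\mu_{\cM_1(a+1)}\sqcup\mu_{\cM_2},
\end{equation*}
with morphism spaces equal to $\cM_1(a+1)(X,X')$, $\cM_2(Y,Y')$, and $\cN(X,Y)$ in the three respective cases (and no morphisms from $Ob(\cM_2)$ to $Ob(\cM_1(a+1))$). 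Checking that this is a flow category in the sense of \Cref{defn:flowcategory} is the first task: I would verify transversality of the evaluation maps and the $\langle k\rangle$-manifold boundary structure on $\cN(X,Y)$ by unpacking the $\langle 2,k\rangle$-manifold structure discussed just after \Cref{defn:bimodule}---the shift by $a+1$ is chosen precisely so that $\cN(X,Y)$ becomes a $\langle \mu_\cP(X)-\mu_\cP(Y)-1\rangle$-manifold on the nose.

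Next I would equip $\cP$ with its canonical framing: take $U_X:=V_X\oplus\underline{\bR}$ for $X\subset Ob(\cM_1(a+1))$ (the extra trivial summand corresponds to the new $[0,1]$-direction, analogous to the Morse factor $g(t)=t^3/3-t^2/2$) and $U_Y:=W_Y$ for $Y\subset Ob(\cM_2)$. The framing identities on $\cM_1(a+1)$-$\cM_1(a+1)$ and $\cM_2$-$\cM_2$ morphism spaces are just those of $\cM_1$ and $\cM_2$ (stabilized appropriately), while on the bimodule spaces $\cN(X,Y)$ the required isomorphism $U_X\oplus T_X\cong T_{\cN(X,Y)}\oplus\underline{\bR}\oplus U_Y$ is exactly a stabilization of the bimodule framing condition from \Cref{defn:bimoduleframing}. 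Compatibility with compositions follows by the same kind of compatibility diagram as in \eqref{equation:framing-diagram-square}, one for each of the two composition maps $\cM_1(W,X)\times_X\cN(X,Y)\to\cN(W,Y)$ and $\cN(X,Y)\times_Y\cM_2(Y,Z)\to\cN(X,Z)$.

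With $\cP$ in hand, it is immediate that $\iota\colon\cM_2\hookrightarrow\cP$ is an inclusion of framed flow categories in the sense of \Cref{defn:inclusion}: $Ob(\cM_2)$ is a union of components of $Ob(\cP)$, the index functions match, and condition (\ref*{item:alltrajectory}) holds vacuously because there are no morphisms out of $Ob(\cM_2)$ to the $\cM_1(a+1)$-part. The resulting quotient category $\cP/\cM_2$ coincides with $\cM_1(a+1)$ together with its induced framing. Applying \Cref{cor:puppemaprealization} yields a cofiber sequence
\begin{equation*}
|\cM_2|\longrightarrow |\cP|\longrightarrow |\cM_1(a+1)|\longrightarrow \Sigma|\cM_2|,
\end{equation*}
and composing the connecting map with the canonical identification $|\cM_1(a+1)|\simeq |\cM_1|$ from \Cref{lem:shiftsame} produces the desired morphism $c_\cN$. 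The main thing to verify carefully is not any deep obstruction but rather the bookkeeping in step one---namely that the boundary faces of $\cN(X,Y)$, as inherited from its $\langle 2,k\rangle$-manifold structure, assemble correctly into the boundary structure of $\cP$ required by \Cref{defn:flowcategory}(\ref*{item:iiflow}), and that the compatibility diagram for the framing commutes at the two types of boundary strata.
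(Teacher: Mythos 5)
Your construction of $\cP$, its framing, the inclusion $\cM_2\hookrightarrow\cP$ with quotient $\cM_1(a+1)$, and the appeal to \Cref{cor:puppemaprealization} all match the paper's argument. However, the last step contains a concrete error in the suspension bookkeeping. \Cref{lem:shiftsame} identifies $|\cM_1(a+1)|$ with $|\cM_1|$ only for the \emph{canonical} framing of the shift, namely $V'_i=V_{i-a-1}$ with no extra summand. The framing that the quotient $\cP/\cM_2=\cM_1(a+1)$ actually inherits from $\cP$ is the one you yourself defined, $U_X=V_X\oplus\underline{\bR}$, i.e.\ the canonical shift framing stabilized by a trivial line. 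Stabilizing every framing bundle by $\underline{\bR}$ replaces each $X_i^{V_i}$ by $\Sigma X_i^{V_i}$, so the realization of the quotient is $\Sigma|\cM_1|$, not $|\cM_1|$. Consequently the connecting map of your cofiber sequence is a map $\Sigma|\cM_1|\to\Sigma|\cM_2|$; as written, your identification would instead produce a map $|\cM_1|\to\Sigma|\cM_2|$, which lands in the wrong target. The fix is exactly the point the paper makes: recognize the quotient's realization as $\Sigma|\cM_1|$ and desuspend the connecting map once to obtain $c_\cN:|\cM_1|\to|\cM_2|$.

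Everything else is sound; in particular your verification that the framing identity on $\cN(X,Y)$ is the $\underline{\bR}$-stabilization of the bimodule framing condition, and that condition (3) of \Cref{defn:inclusion} holds vacuously for $\cM_2\hookrightarrow\cP$, is correct and slightly more explicit than what the paper records.
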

\begin{proof}
Construct the flow category $\cP$ associated to $\cN$ with an inclusion $\cM_2\to\cP$ and quotient $\cM_1(a+1)$. By \Cref{cor:puppemaprealization}, we have a map $|\cM_1(a+1)|\to \Sigma |\cM_2|$. Notice that the framing bundles on $\cM_1(a+1)$ are given by $V_X\oplus \underline{\bR}$, and it is not the canonical framing we discussed in \Cref{subsubsec:indexshift}. Instead, it is the canonical framing stabilized by $\underline{\bR}$ and its geometric realization is $\Sigma|\cM_1|$ by \Cref{lem:shiftsame}. Hence we have a map $\Sigma|\cM_1|\to \Sigma |\cM_2|$, and we obtain $c_\cN$ by desuspending.
\end{proof}
It is easy to see the independence up to homotopy of the induced map from the chosen neat embeddings for $\cM_1$, $\cM_2$, and $\cN$. On the other hand, as a bimodule $\cN$ can be seen as a monotone $\cM_1(a)$-$\cM_2$-bimodule for every large enough $a$, and we have to check the independence of $c_\cN$ from $a$. As we will only use monotone bimodules, we skip this proof.

\subsection{$P$-relative modules}
A bimodule of flow categories induces a map between the geometric realizations. We will also need to write a map to the free loop space. Instead of realizing the latter as a geometric realization, we can use the following notion:
\begin{defn}\label{definition:relative-module}
Given flow category $\cM$ and space $P$, a \emph{$P$-relative module $\cN$} is a module over $\cM$ with a map to $P$. Concisely, it is a $\cM$-$*(0)$-bimodule $\cN$ (where $*(0)$ is the flow category supported at index $0$ and with object space $*$) with a map $\cN(i):=\cN(i,0)\to P$ such that the composition $\cM(i,j)\times_j \cN(j)\to \cN(i)\to P$ is the same as the map induced by $\cN(j)\to P$. A \emph{framing on $\cN$} is a choice of isomorphisms $V_i\oplus T_{X_i}=T_{\cN(i)}\oplus \nu$ over $\cN(i)$. Here $\nu$ is a virtual vector bundle over $P$.  
\end{defn}
More explicitly, $\cN$ is defined by the data of (i) an $\langle i+a\rangle$-manifold $\cN(i)$ for every $i$, (ii), evaluation maps $\cN(i)\to X_i$, (iii) gluing maps $\cM(i,j)\times_j \cN(j)\to \cN(i)$, (iii) compatible maps $\cN(i)\to P$ (also called evaluation maps). Here, $a$ is a shift constant as in the previous section. We assume $-a-1$ is less than the index support of $\cM$ for simplicity.

Observe that if $P$ is a closed manifold, this notion coincides with that of a $\cM$-$P(0)$ bimodule, where $P(0)$ is the Morse--Bott flow category supported at index $0$ with object space $P$. The important examples of relative modules will arise in Morse theory from the moduli of half gradient trajectories (\Cref{subsec:applcjsmorse,sec:equivariantmorse}), and in Floer theory from the moduli of holomorphic half cylinders (\Cref{sec:clmaps}). One can show:
\begin{prop}\label{prop:relative-module}
A framed $P$-relative module induces a morphism $|\cM|\to P^\nu$, and the morphism is independent of the shift constant $a$. 	
\end{prop}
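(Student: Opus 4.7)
\emph{Proof plan.} The plan is to adapt the proof of Proposition~\ref{prop:inducedmap} by replacing the ``target flow category'' $*(0)$ with a formal single-index piece whose carrier is the space $P$ and whose framing bundle is the virtual bundle $\nu$. Explicitly, I build a $\cJ$-module $Z$ by setting $Z(0) := P^\nu$ and $Z(i) := \Sigma^{-i} X_{i-a-1}^{V_{i-a-1}\oplus \underline{\bR}}$ for $i>0$ in the index support of $\cM(a+1)$, with structure maps between positive degrees inherited from the $\cJ$-module of $\cM(a+1)$, and with new maps $Z(i)\wedge \cJ(i,0)\to Z(0)$ constructed as follows. First, choose neat embeddings $\cN(i)\hookrightarrow V_i\times J_q(i,0)$ compatible with the $\cM$-action $\cM(i,j)\times_j \cN(j)\hookrightarrow \cN(i)$; these exist by an immediate extension of Corollary~\ref{collary:neat-embedding-category}. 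The framing identity $V_i + T_{X_i} \cong T_{\cN(i)} + \nu$ identifies the normal bundle of such an embedding with $\nu$ plus a trivial factor, so the associated Pontryagin--Thom collapse composed with $\cN(i)\to P$ produces a map $X_i^{V_i+\underline{\bR}}\wedge \cJ_q(i,0)\to \Sigma^{N}P^\nu$ for suitable $N$; desuspending yields the required structure map.

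The check that $Z$ is a $\cJ$-module is a direct analogue of Proposition~\ref{proposition:j-modules}: for each pair $i>j>0$ one must verify commutativity of the square
\[
\begin{tikzcd}
Z(i)\wedge \cJ(i,j)\wedge \cJ(j,0) \ar[r] \ar[d] & Z(i)\wedge \cJ(i,0) \ar[d] \\
Z(j)\wedge \cJ(j,0) \ar[r] & Z(0).
\end{tikzcd}
\]
This reduces, by the compatibility of the neat embeddings with the composition $\cM(i,j)\times_j \cN(j)\hookrightarrow \cN(i)$, to two applications of Pontryagin--Thom and the framing compatibility of Definition~\ref{defn:bimoduleframing}; the one extra ingredient specific to our setting is the compatibility of the maps $\cN(j)\to P$ with the $\cM$-action, which is precisely the factorisation condition imposed in Definition~\ref{definition:relative-module}. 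I expect this diagram chase to be the main (essentially only) technical step, although it closely parallels the one already carried out for Proposition~\ref{proposition:j-modules}.

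Once $Z$ has been shown to be a $\cJ$-module, the single-degree piece $Z(0)=P^\nu$ is a sub-$\cJ$-module with realisation $P^\nu$ by Lemma~\ref{lem:singledegree}, and the quotient is the $\cJ$-module of $\cM(a+1)$ with the stabilised framing $V_\cdot\oplus \underline{\bR}$, whose realisation is $\Sigma|\cM|$. Corollary~\ref{cor:puppemaprealization} then supplies a connecting map $\Sigma|\cM|\to \Sigma P^\nu$, and desuspending gives the morphism $|\cM|\to P^\nu$. Independence from the shift constant $a$ follows exactly as in Lemma~\ref{lem:shiftsame}: increasing $a$ by $1$ reindexes $Z$ uniformly and stabilises every framing by an extra trivial factor, operations that preserve the realisation and the induced map up to canonical identification. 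Independence from the remaining auxiliary data (neat embeddings, metrics) follows from the standard isotopy-after-stabilisation argument used at the end of Construction~\ref{constr:jmodule}.
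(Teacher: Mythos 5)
Your construction of the map itself is fine, and it is exactly the route the paper acknowledges in the first sentence of its proof: form the cone category (equivalently, the cone $\cJ$-module $Z$ with $Z(0)=P^\nu$ and the $\cM(a+1)$-part in positive degrees), observe that $P^\nu$ sits as a sub-$\cJ$-module supported in degree $0$, and take the connecting map of the quotient sequence via \Cref{cor:puppemaprealization}. The diagram chase you identify as the main step is indeed the analogue of \Cref{proposition:j-modules} with \eqref{eq:neatprel} in place of \eqref{eq:neatflowemb}.

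The gap is in the independence from the shift constant $a$, and it is precisely the point the paper goes out of its way to avoid your route for. You assert that increasing $a$ by $1$ ``reindexes $Z$ uniformly,'' so that \Cref{lem:shiftsame} applies. It does not: passing from $a$ to $a+1$ sends $Z_a(i)$ to $Z_{a+1}(i+1)=\Sigma^{-1}Z_a(i)$ for $i>0$, but leaves the degree-zero piece $Z_{a+1}(0)=P^\nu=Z_a(0)$ fixed. The two cone $\cJ$-modules are therefore \emph{not} related by a global index shift, and \Cref{lem:shiftsame} (which compares $\fX'_q$ with $\Sigma^{-a}\fX_{q+a}$ for the whole module) says nothing about them. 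Note also that the paper explicitly declines to prove independence of $c_\cN$ from the shift constant for general bimodules (see the remark after \Cref{prop:inducedmap}), so you cannot quote that either. This is why the paper's proof abandons the cone category and instead defines the map directly on the semi-simplicial spectrum $\Sigma^{-a}\fX_{a+1}$: the comparison between $a$ and $a+1$ is then carried out by hand, using the decomposition $\fX_{a+2}[k]=(\fX_{a+1}[k]\wedge[0,\infty])\vee\fX_{a+1}[k-1]$ from \Cref{lem:suspendedrealization}, the collapse $\cJ(i,-a-2)\to\cJ(i,-a-1)^{\underline\bR}$ induced by $[0,\infty]\to S^1$, and the key vanishing statement that the composite through $\cJ(i,-a-1)\wedge\cJ(-a-1,-a-2)$ is constant because $X_{-a-1}=\emptyset$. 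Some argument of this kind (or a direct comparison of the connecting maps of the two quotient sequences) is needed; as written, your independence step does not go through. A secondary, smaller point: the paper's direct construction also yields the explicit description of the map on $k$-simplices (compose boundary maps in any order, then evaluate), which is what the filtration argument in the proof of \Cref{thm:morsebottrecovers} actually uses; your cone-category map would require an extra identification before it could be used there.
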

\begin{proof}
The argument in \Cref{prop:inducedmap} goes through without much change. Namely, one can treat $P$ as a flow category and form the cone category with objects $ob(\cM(a+1)) \cup P$. Since this category has no morphisms with domain on $P$, the previous constructions go through. 

Since we have not shown independence from the shift constant before, we give a more direct construction. Fix the neat embeddings for $\cM$, and choose embeddings $\cN(i)\hookrightarrow J_{q_i}(i,-a-1)\cong J_{q_i}(i+a+1,0)$ such that 
\begin{equation}\label{eq:neatprel}
	\xymatrix{\cM(i,j)\times_j\cN(j) \ar[r]\ar@{^{(}->}[d]& \cN(i)\ar@{^{(}->}[d] \\ J_{q_i-q_j}(i,j) \times J_{q_j}(j,-a-1)\ar[r]& J_{q_i}(i,-a-1) }
\end{equation}
commutes (c.f. \eqref{eq:neatflowemb}). The induced embedding $\cN(i)\hookrightarrow V_i\times J_{q_i}(i,-a-1)$ where the $V_i$-component is given by the composition $\cN(i)\to X_i\xrightarrow{0} V_i$ induces 
\begin{equation}\label{equation:explicit-map-module}
	X_i^{V_i}\wedge \cJ_{q_i}(i,-a-1)\to \cN(i)^{V_i+T_{X_i}-T_{\cN(i)}+\underline{\bR}^{i+a+q_i}}\to P^{\nu+\underline{\bR}^{i+a+q_i}}
\end{equation}
which by suspension/desuspension gives 
\begin{equation}\label{eq:baserelmap}
	\Sigma^{-a}X(i)\wedge \cJ(i,-a-1)= \Sigma^{-i-a}X_i^{V_i}\wedge \cJ(i,-a-1)\to P^\nu
\end{equation}
that is compatible with the $\cJ$-module structure maps $X(i)\wedge \cJ(i,j)\to X(j)$ by \eqref{eq:neatprel}. In other words, 
\begin{equation}\label{eq:commutetopnu}
	\xymatrix{ \Sigma^{-a}X(i)\wedge \cJ(i,j)\wedge \cJ(j,-a-1)\ar[r]\ar[d]& \Sigma^{-a}X(j)\wedge  \cJ(j,-a-1)\ar[d] \\
	\Sigma^{-a}X(i)\wedge \cJ(i,-a-1) \ar[r]&	P^\nu
	}
\end{equation} 
commutes. This can be proved by an argument analogous to the proof of \Cref{proposition:j-modules}, using as input the diagram \eqref{eq:neatprel} in place of \eqref{eq:neatflowemb}. 
Let $\fX_{q}$ denote the semi-simplicial set given by 
\begin{equation}
	\fX_{q}[k]=\bigvee_{i_0>\dots>i_k} X(i_0)\wedge \cJ(i_0,i_1)\wedge \dots\wedge \cJ(i_{k-1},i_k)\wedge \cJ(i_k,-q)	
\end{equation}
as before. There is a well-defined map $\Sigma^{-a}\fX_{a+1}[k]\to P^\nu$, given by composing $\cJ$-module structure maps, the maps $\Sigma^{-a}X(i)\wedge \cJ(i,-a-1) \to P^\nu$, and the multiplication in the category $\cJ$. The composition can be taken to be in any order, and well-definedness also follows from the commutativity of \eqref{eq:commutetopnu}. In particular, this map commutes with the boundary maps of the semi-simplicial spectrum $\Sigma^{-a}\fX_{a+1}$. Combining this map with  $\Sigma^\infty\Delta_+^k\to \bS$ (induced by $\Delta_+^k\to S^0$), we obtain maps 
\begin{equation}
	\Sigma^{-a}\fX_{a+1}[k]\wedge \Sigma^\infty\Delta_+^k\to \Sigma^{-a}\fX_{a+1}[k]\wedge \bS=\Sigma^{-a}\fX_{a+1}[k]\to P^\nu
\end{equation}
inducing $|\cM|=\Sigma^{-a}|\fX_{a+1}|=|\Sigma^{-a}\fX_{a+1}|\to P^\nu$. 

To see this map is independent of $a$, one needs to check the compatibility with the isomorphism of \Cref{lem:suspendedrealization}. First, we replace $a$ by $a+1$, and use the embedding $\cN(i)\hookrightarrow J_{q_i}(i,-a-1)\hookrightarrow J_{q_i}(i,-a-2)$, where the second map is an appropriate inclusion into the interior. One can fix the one induced by $J(i,-a-1)\xrightarrow{id\times 1} J(i,-a-1)\times[0,\infty)=J(i,-a-2)$, i.e. it is equal to $1$ in the second component. 
The analogous map $\Sigma^{-a-1}X(i)\wedge \cJ(i,-a-2)\to P^\nu$ can be obtained by desuspending the composition
\begin{equation}
	X(i)\wedge \cJ(i,-a-2)\to X(i)\wedge \cJ(i,-a-1)^{\underline{\bR}}=\Sigma X(i)\wedge\cJ(i,-a-1)\to P^{\nu+\underline{\bR}^{a+1}}
\end{equation}
Here $\cJ(i,-a-2)\to  \cJ(i,-a-1)^{\underline{\bR}}$ is the collapse map corresponding to the inclusion. Under the identification $\cJ(i,-a-2)=\cJ(i,-a-1)\wedge[0,\infty]$, this map is induced by the map $[0,\infty]\to S^1$ that identifies the endpoints. Notice that the composition \begin{equation}
	X(i)\wedge \cJ(i,-a-1)\wedge\cJ(-a-1,-a-2)\to X(i)\wedge \cJ(i,-a-2)\to P^{\nu+\underline{\bR}^{a+1}}
\end{equation} is constant since $\cN(-a-1)=\emptyset$ (or since $X_{-a-1}=\emptyset$). Now consider the decomposition
\begin{equation}
\fX_{a+2}[k]=(\fX_{a+1}\wedge [0,\infty])\vee \fX_{a+1}[k-1]
\end{equation}
observed before (see the proof of \Cref{lem:suspendedrealization}). The map $\fX_{a+2}[k]\to P^{\nu+\underline{\bR}^{a+1}}$ is also constant over $\fX_{a+1}[k-1]$, and the map on the $(\fX_{a+1}\wedge [0,\infty])$ part is given by the composition
\begin{equation}
\fX_{a+1}\wedge [0,\infty]\to \fX_{a+1}\wedge S^1\to P^{\nu+\underline{\bR}^{a}}\wedge S^1=P^{\nu+\underline{\bR}^{a+1}}
\end{equation}
As a result, under the identification $|\fX_{a+2}|\simeq \Sigma|\fX_{a+1}|$, the map $|\fX_{a+2}|\to P^{\nu+\underline{\bR}^{a+1}}$ can be seen as the suspension of $|\fX_{a+1}|\to P^{\nu+\underline{\bR}^{a}}$. This finishes the proof.

\end{proof}
\begin{rk}
Assume $P$ is a closed manifold, and we regard it as a flow category supported at index $0$, framed by $\nu$. Then, one can induce a map $|\cM|\to P^\nu$ by \Cref{prop:inducedmap}. We leave it to the reader to check that this map matches the one constructed in \Cref{prop:relative-module}. 
\end{rk}
\begin{rk}
One can also define a $P$-relative version of $\cM_1$-$\cM_2$-bimodules, as a bimodule with extra maps to $P$. Presumably, one can obtain maps $|\cM_1|\to |\cM_2|\wedge P^\nu$. We have not checked the details, as this is not needed.
\end{rk}
\begin{exmp}\label{exmp:tautpmodule}
Let $\cM$ be a framed flow category supported at index $i_0$. Then, $X_{i_0}$ can be considered as a framed $X_{i_0}$-relative module over $\cM$. More precisely, let $P=X_{i_0}$ and assume $\nu=V_{i_0}$ (where $\nu$ refers to the bundle over $P$). Then, $\cN$ given by $\cN(i_0)=P=X_{i_0}$ is a $P$-relative $\cM$-module, with the evaluation maps $\cN(i_0)\to X_{i_0}$, $\cN(i_0)\to P$ given by the identity. The framing $V_{i_0}\oplus T_{X_{i_0}}=T_{\cN(i_0)}\oplus \nu$ is tautological. 
\end{exmp}
We observe:
\begin{lem}\label{lem:tautmoduleinduceid}
The relative module in \Cref{exmp:tautpmodule} induces the identity morphism.
\end{lem}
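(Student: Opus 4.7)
The plan is to unwind the construction of \Cref{prop:relative-module} directly and observe that it produces the canonical identification of $|\cM|$ with $P^\nu$. Since $\cM$ is supported in the single degree $i_0$, \Cref{lem:singledegree} gives $|\cM| \simeq \Sigma^{i_0} X(i_0) = X_{i_0}^{V_{i_0}} = P^\nu$, and the content of the lemma is that the morphism built from $\cN$ agrees with this identification.

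First, I would fix the smallest admissible shift constant $a = -i_0$, so that $\cN(i_0) = X_{i_0}$ appears as a $\langle 0 \rangle$-manifold (i.e.\ closed, with no boundary strata), matching its tautological definition. With this choice $\cJ(i_0, -a-1) = \cJ(i_0, i_0-1) = S^0$, and the semi-simplicial spectrum from the proof of \Cref{prop:relative-module} reduces to a single non-degenerate simplex. Hence the entire morphism $|\cM| \to P^\nu$ is determined by the map \eqref{eq:baserelmap}, whose point-set implementation is \eqref{equation:explicit-map-module}.

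The main step is to identify this map with the identity. In the tautological setup $\ev_{X_{i_0}} \colon \cN(i_0) \to X_{i_0}$ is the identity, so the embedding $\cN(i_0) \hookrightarrow V_{i_0} \times J_{q_{i_0}}(i_0, i_0-1) = V_{i_0} \times \bR^{q_{i_0}}$ prescribed in \eqref{eq:neatprel} is (up to a choice of base point in $\bR^{q_{i_0}}$) literally the zero section of $V_{i_0} \oplus \underline{\bR}^{q_{i_0}}$ over $X_{i_0}$. Its normal bundle is canonically $V_{i_0} \oplus \underline{\bR}^{q_{i_0}}$, matching the expected $V_{i_0} + T_{X_{i_0}} - T_{\cN(i_0)} + \underline{\bR}^{q_{i_0}}$ from \eqref{equation:explicit-map-module} via $T_{\cN(i_0)} = T_{X_{i_0}}$. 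The twisted Pontryagin--Thom collapse of the zero section of a vector bundle is the identity map of the Thom spectrum, under the canonical identification of the one-point compactification of the total space with the Thom space; the subsequent map $\cN(i_0)^{V_{i_0}+\underline{\bR}^{q_{i_0}}} \to P^{\nu+\underline{\bR}^{q_{i_0}}}$ is also the identity, being induced by the identity $\cN(i_0) = P$. Desuspending by $q_{i_0}$ then gives the identity $X_{i_0}^{V_{i_0}} \to P^\nu$ under the identification of $|\cM|$ supplied by \Cref{lem:singledegree}. There is no substantive obstacle; the calculation is purely an unwinding of definitions.
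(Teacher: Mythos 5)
Your proposal is correct and follows essentially the same route as the paper: take the minimal shift $a=-i_0$ so that the semi-simplicial spectrum collapses to the single term $X(i_0)\wedge\cJ(i_0,i_0-1)=X(i_0)$, and observe that the relative Pontryagin--Thom map \eqref{equation:explicit-map-module} is the identity; the paper simply asserts this last fact, while you spell out that the embedding is (isotopic to) the zero section of $V_{i_0}\oplus\underline{\bR}^{q_{i_0}}$, whose twisted collapse is the identity of the Thom spectrum.
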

\begin{proof}
The claim follows by going through the construction in \Cref{prop:relative-module} for $\fX_{1-i_0}$ (i.e. $a=-i_0$, notice that the $0$-simplices of $\fX_{1-i_0}$ is given by $X(i_0)=\Sigma^{-i_0}X_{i_0}^{V_{i_0}}$, and the rest of its simplices vanish) 
and from the fact that the map produced by the relative Pontryagin-Thom construction in \eqref{equation:explicit-map-module} is the identity.
\end{proof}

\subsection{Application: the stable homotopy type via Morse--Bott theory}\label{subsec:applcjsmorse}
In this section, we prove a Morse--Bott analogue of the Cohen--Jones--Segal theorem. Namely, we show how to recover the stable homotopy type of smooth compact manifold from a Morse--Bott function. Indeed, we will recover the stable homotopy type of Thom spectra. Namely:
\begin{thm}\label{thm:morsebottrecovers}
	Let $M$ be a smooth, compact manifold, and let $\nu$ be a virtual bundle over $M$. Let $f: M \to \mathbb{R}$ be a Morse--Bott function. Consider the flow categories $\cM_f$ (in the presence of a Morse--Bott--Smale metric) and $\cM_f^{act}$ (see \Cref{example:flow-cat-act}) with the framing bundles given by $V_X\oplus \nu$, where $V_X$ is as in \Cref{exmp:morsebottfunction}. Then $|\cM_f|$ and $|\cM_f^{act}|$ are homotopy equivalent to the Thom spectrum $M^\nu$.
\end{thm}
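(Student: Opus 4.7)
The plan is to construct a natural framed $M$-relative module $\cN_f$ over $\cM_f$, use \Cref{prop:relative-module} to produce a map $\Phi\colon|\cM_f|\to M^\nu$, and then check it is a weak equivalence by induction along the filtration of $M$ by sublevel sets of $f$. The same construction and argument apply verbatim to $\cM_f^{act}$.

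For each critical manifold $X$, set $\cN_f(X)$ to be the compactified moduli of (possibly broken) negative gradient half-trajectories with source on $X$, i.e.\ the manifold-with-corners compactification of the unstable manifold $W^u(X)\subset M$, with corner strata recording breakings at intermediate critical manifolds. The evaluation $\ev_X\colon \cN_f(X)\to X$ records the $-\infty$ asymptote, and $\ev_M\colon \cN_f(X)\to M$ records the endpoint at time $0$; concatenation of trajectories furnishes the gluing $\cM_f(X,Y)\times_Y\cN_f(Y)\to\cN_f(X)$. At interior points $T_{\cN_f(X)}\cong\ev_X^*(T_X\oplus V_X)$, since $\cN_f(X)$ is there diffeomorphic to the total space of $V_X\to X$; extending this identification compatibly over the boundary strata (using \eqref{equation:boundary-isom} and the compatibility analogue of \eqref{equation:framing-diagram-square}) and twisting by $\nu$ gives the framing identity $(V_X\oplus\nu)\oplus T_X\cong T_{\cN_f(X)}\oplus\nu$ required by \Cref{definition:relative-module}, with underlying bundle $\nu$ over $P=M$. \Cref{prop:relative-module} then produces $\Phi\colon|\cM_f|\to M^\nu$.

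For the inductive comparison, list the critical values $c_0<c_1<\dots<c_N$ of $f$, assuming after a small perturbation (if necessary) that each value carries a single critical manifold $X_k$---the general case reduces to this by ordering ties arbitrarily, as same-value critical manifolds admit no gradient trajectories between them. Let $\cM_{\leq k}\subset\cM_f$ be the full subcategory on $X_0,\dots,X_k$; since trajectories strictly decrease $f$, this is an inclusion of flow categories in the sense of \Cref{defn:inclusion}, and $\cN_f$ restricts to a framed $M_k$-relative submodule with $M_k:=f^{-1}((-\infty,c_k+\epsilon])$. Invoking \Cref{cor:puppemaprealization} on both sides gives a ladder of cofiber sequences
\[
\begin{tikzcd}
|\cM_{\leq k-1}|\ar[r]\ar[d,"\Phi_{\leq k-1}"'] & |\cM_{\leq k}|\ar[r]\ar[d,"\Phi_{\leq k}"'] & |\cM_{\leq k}/\cM_{\leq k-1}|\ar[d,"\bar\Phi_k"] \\
M_{k-1}^{\nu}\ar[r] & M_k^{\nu}\ar[r] & (M_k/M_{k-1})^{\nu}.
\end{tikzcd}
\]
The quotient category is supported at the single index $\mathrm{ind}(X_k)$ (resp.\ $k$ for $\cM_f^{act}$), so \Cref{lem:singledegree} gives $|\cM_{\leq k}/\cM_{\leq k-1}|\simeq X_k^{V_{X_k}\oplus\nu}$; classical Morse--Bott handle attachment supplies a canonical equivalence $M_k/M_{k-1}\simeq X_k^{V_{X_k}}$, hence $(M_k/M_{k-1})^{\nu}\simeq X_k^{V_{X_k}\oplus\nu}$. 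The base case $k=0$ is immediate: $X_0$ is a local minimum, $V_{X_0}=0$, $\cN_f(X_0)=X_0$, and $M_0$ deformation retracts to $X_0$, so $\Phi_{\leq 0}$ is the tautological equivalence of \Cref{lem:tautmoduleinduceid} (applied to \Cref{exmp:tautpmodule}).

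The main technical point---and the step I expect to require the most care---is verifying that $\bar\Phi_k$ agrees with the natural Thom equivalence. Unwinding the definition of $\Phi$ in \Cref{prop:relative-module}, $\bar\Phi_k$ is the Pontryagin--Thom collapse applied to the embedding $\cN_f(X_k)\hookrightarrow M_k\setminus M_{k-1}^\circ$ which, locally near $X_k$, identifies $\cN_f(X_k)$ with the total space of $V_{X_k}$ and with the attached Morse--Bott handle; this matches the Pontryagin--Thom collapse used to produce the handle-attachment equivalence. Granting this, the five lemma applied inductively to the ladder above promotes the equivalence $\Phi_{\leq k-1}$ to an equivalence $\Phi_{\leq k}$, and the case $k=N$ yields the theorem. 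The argument for $\cM_f^{act}$ is identical since the quotient flow category is still supported at a single (artificial) index and \Cref{lem:singledegree} still produces the same Thom spectrum.
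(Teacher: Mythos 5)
Your proposal is correct and follows essentially the same route as the paper's proof: the same standard $M$-relative module of descending half-trajectories, the same map via \Cref{prop:relative-module}, and the same induction over the sublevel-set filtration, identifying each subquotient on the flow-category side via \Cref{lem:singledegree} and on the target side via Morse--Bott handle attachment. The "main technical point" you flag is exactly the step the paper handles by exhibiting the subquotient map as a relative Pontryagin--Thom collapse $\Sigma^{-a}X(i_0)\wedge(\cJ(i_0,-a-1)/\partial\cJ(i_0,-a-1))\to\cN(i_0)^\nu/\partial\cN(i_0)^\nu\to M_p^\nu/M_{p-\epsilon}^\nu$, using that $M_p$ retracts onto $\cN(i_0)\cup M_{p-\epsilon}$.
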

As mentioned before, Morse--Bott--Smale metrics do not always exist; however, the flow category $\cM_f^{act}$ is always well-defined.

The proof of \Cref{thm:morsebottrecovers} is morally the same as considering the continuation maps between $|\cM_f|$, resp. $|\cM_f^{act}|$ and $|\cM_0|$, resp. $|\cM_0^{act}|$. The latter (with framing $\nu$) is clearly isomorphic to $M^\nu$. Instead, we use the $P$-relative modules constructed in the previous section. This has the advantage of generalizing to maps from symplectic cohomology to loop spaces, and is possibly similar to \cite[Appendix D]{abouzaidblumberg}. Before proceeding to the proof, we state two immediate corollaries:
\begin{cor}\label{cor:dualframe}
	Let $W_X$ denote the ascending bundle of a critical manifold. Since $W_Y\oplus T_Y\cong T_{\cM_f(X,Y)}\oplus \underline{\bR}\oplus W_X$, one can define an alternative framing of $\cM_f$, resp. $\cM_f^{act}$ with index bundle given by $X\mapsto -W_X-T_X$. The geometric realization is the Atiyah dual $M^{-T_M}$ of $M$.
\end{cor}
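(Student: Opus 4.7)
The plan is to exhibit the dual framing as the twist of the natural framing from \Cref{exmp:morsebottfunction} by the virtual bundle $\nu := -T_M$ on $M$, and then invoke \Cref{thm:morsebottrecovers} directly with this choice.

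Over each critical manifold $X \subset M$, the Hessian of $f$ gives an orthogonal decomposition $T_M|_X \cong V_X \oplus W_X \oplus T_X$ of vector bundles on $X$, where $V_X$ is the negative and $W_X$ the positive eigenbundle. As virtual bundles on $X$ we therefore have
\[ V_X \oplus (-T_M)|_X \;\cong\; V_X - V_X - W_X - T_X \;\cong\; -W_X - T_X, \]
so the proposed index bundle coincides with $V_X \oplus \nu$ for $\nu = -T_M$. After cancellation of $\ev_Y^*(W_Y \oplus T_Y)$ from both sides, the framing equation \eqref{equation:framing-equation} applied to the bundle $-W_X - T_X$ reduces to the stated relation
\[ W_Y \oplus T_Y \;\cong\; T_{\cM_f(X,Y)} \oplus \underline{\bR} \oplus W_X, \]
which holds because $\cM_f(X,Y) \times \bR$ is identified with $W^u(X) \cap W^s(Y) \subset W^s(Y)$; the tangent bundle $W_Y \oplus T_Y$ of the ambient $W^s(Y)$ then splits, by Morse--Bott--Smale transversality, into the tangent bundle of the intersection and the pullback under $\ev_X$ of the ascending bundle $W_X$.

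The compatibility with the square \eqref{equation:framing-diagram-square} is automatic from that of the natural framing: since $-T_M$ is globally defined on $M$, the pullbacks $\ev_X^*(-T_M)$, $\ev_Y^*(-T_M)$, $\ev_Z^*(-T_M)$ over $\cM_f(X,Y) \times_Y \cM_f(Y,Z)$ are canonically identified through the homotopy of evaluation maps along trajectories, and the twist respects all compatibilities of the natural framing. The same verification applies verbatim to $\cM_f^{act}$, whose framing data is identical. Hence the dual framing is precisely the $\nu = -T_M$ twist of the natural framing, and \Cref{thm:morsebottrecovers} immediately gives $|\cM_f| \simeq |\cM_f^{act}| \simeq M^{-T_M}$, the Atiyah dual. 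There is no genuine obstacle here: the heavy lifting is done in \Cref{thm:morsebottrecovers}, and the only content of the corollary is the algebraic identification of the dual framing bundle with a twist of the natural one, together with the geometric interpretation of the dual framing equation via the ascending manifolds.
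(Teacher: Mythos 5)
Your proof is correct and follows exactly the route of the paper: identify $-W_X - T_X$ with $V_X - T_M|_X$ via the Hessian decomposition $T_M|_X \cong V_X \oplus W_X \oplus T_X$, match the framing isomorphisms, and apply \Cref{thm:morsebottrecovers} with $\nu = -T_M$. The paper's own proof is a two-line version of this; your added detail on the geometric origin of the dual framing equation via the stable manifolds is a correct elaboration, not a departure.
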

\begin{proof}
	One can easily match $-W_X-T_X$ with $V_X-T_M$ (and similarly the framing isomorphisms). The result follows from \Cref{thm:morsebottrecovers} by taking $\nu=-T_M$. 
\end{proof}
One can modify the framing in this way for every framed flow category, and the geometric realization is the Atiyah dual. We will not show this though.
\begin{cor}[Morse--Bott inequalities for extraordinary cohomology theories I.]\label{corollary:betti-bound}
	Let $R$ be an extraordinary cohomology theory such that $R_*(pt_+)=R_*(\bS)$ is finite in each degree. Then, $rk(R_*(M^\nu))\leq \sum rk(R_*(X_i^{V_i+\nu}))$. In particular, when the bundles $V_i$ are $R$-orientable, 
	\begin{equation}
		rk(R_*(M_+))\leq \sum rk(R_{*-rk(V_i)}(X_i))
	\end{equation} 
\end{cor}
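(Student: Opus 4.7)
The plan is to combine \Cref{thm:morsebottrecovers} with the filtration of the flow category by index, together with the cofiber sequences of \Cref{cor:puppemaprealization}. First, by \Cref{thm:morsebottrecovers} applied to the framing twisted by $\nu$, we identify $M^\nu \simeq |\cM_f|$ (or $|\cM_f^{act}|$ in the absence of a Morse--Bott--Smale metric). Since $M$ is compact, $f$ has finitely many critical manifolds, so $\cM_f$ is finite and its index function takes values in some bounded interval $[k_-, k_+] \subset \bZ$.

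Next I would use the ascending chain of inclusions of flow categories
\begin{equation*}
\emptyset = \cM_{\leq k_- - 1} \hookrightarrow \cM_{\leq k_-} \hookrightarrow \cdots \hookrightarrow \cM_{\leq k_+} = \cM_f,
\end{equation*}
where $\cM_{\leq k}$ denotes the full subcategory of $\cM_f$ spanned by objects of index at most $k$. This is an honest inclusion in the sense of \Cref{defn:inclusion} because gradient trajectories (weakly) decrease the index, so condition (3) of that definition holds. By \Cref{lem:flowincljinclgeomincl,lem:jmodquot,cor:puppemaprealization}, each step produces a cofiber sequence of spectra
\begin{equation*}
|\cM_{\leq k-1}| \longrightarrow |\cM_{\leq k}| \longrightarrow |\cM_{\leq k}/\cM_{\leq k-1}|.
\end{equation*}
The quotient $\cM_{\leq k}/\cM_{\leq k-1}$ is a flow category supported in the single index $k$, with object space the disjoint union of critical manifolds of index $k$. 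Hence by \Cref{lem:singledegree} its geometric realization is the wedge of Thom spectra $\bigvee_{\mu(X_i)=k} X_i^{V_i+\nu}$.

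Finally I would apply $R$-homology. Each cofiber sequence yields a long exact sequence of $R_*$-modules, from which one reads off the rank subadditivity
\begin{equation*}
\operatorname{rk} R_*(|\cM_{\leq k}|) \leq \operatorname{rk} R_*(|\cM_{\leq k-1}|) + \sum_{\mu(X_i)=k} \operatorname{rk} R_*(X_i^{V_i+\nu}).
\end{equation*}
Iterating from $k_--1$ up to $k_+$ gives the desired inequality $\operatorname{rk} R_*(M^\nu) \leq \sum_i \operatorname{rk} R_*(X_i^{V_i+\nu})$. The $R$-orientable refinement is then immediate from the Thom isomorphism $R_*(X_i^{V_i}) \cong R_{*-\operatorname{rk}(V_i)}(X_i)$ (specializing to $\nu = 0$). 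The hypothesis that $R_*(\bS)$ is degreewise finite ensures, via the Atiyah--Hirzebruch spectral sequence applied to the compact manifolds $X_i^{V_i+\nu}$, that each summand on the right-hand side is finite in every degree, so the inequality is meaningful. There is no real obstacle here: the argument is essentially a direct application of the filtration machinery developed in the previous subsections, and the ``rank'' bookkeeping is the only mild subtlety (one can interpret it in any of the usual ways: minimal number of generators, or dimension after tensoring with a field).
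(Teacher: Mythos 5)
Your proof is correct and follows essentially the same route as the paper: filter the flow category (the paper filters so that each subquotient realizes a single $X_i^{V_i+\nu}$, you filter by index so subquotients are wedges of these — an immaterial difference), identify the subquotient realizations via \Cref{lem:singledegree}, and conclude by rank subadditivity in the long exact sequences, invoking \Cref{thm:morsebottrecovers} to identify the total realization with $M^\nu$. Nothing further is needed.
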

\begin{proof}
	One can filter by flow categories $\cM_f$, resp. $\cM_f^{act}$ for which the subquotients have realization given by some $X_i^{V_i+\nu}$ (see \Cref{lem:singledegree}). The statement follows by induction and the equivalence $\cM_f\simeq\cM_f^{act}\simeq M^\nu$. 
\end{proof}
The finiteness condition is to ensure that the ranks above are well-defined. On the other hand, this condition can be weakened: for instance, one can assume $R$ is an $H\bK$-module, for a field $\bK$, and $R_*(\bS)$ is finite dimensional over $\bK$ (in this case, the rank would be replaced by $dim_\bK$). \Cref{corollary:betti-bound} generalizes the classical fact in Morse theory that the rank of homology is bounded above by the number of critical points of index $k$ of $f$.

Similarly, one can replace $rk$, resp. $dim_\bK$ with the $\bZ[[t,t^{-1}]]$-valued dimension $dim_t$. For a degreewise finite graded abelian group $M_*$, define $dim_t(M_*)=\sum rk(M_i)t^i\in \bZ[[t,t^{-1}]]$ (similar with $\bK$-modules). The group $\bZ[[t,t^{-1}]]$ is partially ordered: define $P(t) \preceq Q(t)$, if there exists $A(t) \in \mathbb{Z}[[t,t^{-1}]]$ with non-negative coefficients such that $Q(t)=P(t)+(1+t)A(t)$ (concretely, there is a sequence $a_i\in \bZ_{\geq 0}$ such that the $i^{th}$ coefficient of $Q(t)-P(t)$ is given by $a_i+a_{i-1}$). It is easy to check that $dim_t$ is sub-additive, i.e. for a given short exact sequence $0\to C'\to C\to C''\to 0$
\begin{equation}
	dim_t(H(C'))+dim_t(H(C''))\geq dim_t(H(C))
\end{equation}
The proof of \cite[Lem.\ 2.14]{nic} goes without change to establish this claim. By replacing $rk$ with $dim_t$ in the proof of \Cref{corollary:betti-bound}, we obtain: 
\begin{cor}[Morse--Bott inequalities for extraordinary cohomology theories II.]\label{cor:mbineqs}
Under the assumptions in \Cref{corollary:betti-bound}, $dim_t(R_*(M^\nu))\preceq \sum dim_t(R_*(X_i^{V_i+\nu}))$. Moreover, when each $V_i$ is $R$-orientable, 	
\begin{equation}
	dim_t(R_*(M_+))\preceq \sum t^{rk(V_i)}dim_t(R_{*}(X_i))
\end{equation} 
\end{cor}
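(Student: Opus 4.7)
The plan is to transcribe the proof of Corollary \ref{corollary:betti-bound}, replacing the rank by the Poincar\'e polynomial $\dim_t$ and replacing the additivity of rank under direct sums by the subadditivity of $\dim_t$ under cofiber sequences. Concretely, I would build a finite filtration $\emptyset = \cM_0 \subsetneq \cM_1 \subsetneq \cdots \subsetneq \cM_N = \cM_f$ (or $\cM_f^{act}$) of sub-flow-categories in which each quotient $\cM_k/\cM_{k-1}$ is supported at a single critical manifold $X_{i_k}$. Such a filtration exists because negative gradient trajectories strictly decrease the index (or the action), so one can add one critical component at a time from the bottom up; the downward-closure condition of \Cref{defn:inclusion} is satisfied at every stage because trajectories from a given component only reach components of strictly smaller index. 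By \Cref{cor:puppemaprealization} together with \Cref{lem:singledegree}, each inclusion produces a cofiber sequence
\[
|\cM_{k-1}| \longrightarrow |\cM_k| \longrightarrow X_{i_k}^{V_{i_k}+\nu}.
\]

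The main step is the subadditivity of $\dim_t$ under cofiber sequences. For any cofiber sequence $A \to B \to C$ of spectra, with connecting map $\delta\colon R_*(C) \to R_{*-1}(A)$ and $I := \operatorname{im}(\delta)$, a direct Poincar\'e-polynomial count in the $R$-homology long exact sequence yields
\[
\dim_t R_*(A) + \dim_t R_*(C) - \dim_t R_*(B) = (1+t^{-1})\dim_t I = (1+t)\bigl(t^{-1}\dim_t I\bigr).
\]
Since $t^{-1}\dim_t I$ has non-negative coefficients, this gives $\dim_t R_*(B) \preceq \dim_t R_*(A) + \dim_t R_*(C)$, which is exactly the transcription of \cite[Lem.\ 2.14]{nic} from short exact sequences of chain complexes to exact triangles of spectra. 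Iterating this bound along the filtration and identifying $|\cM_f|$ (resp.\ $|\cM_f^{act}|$) with $M^\nu$ via \Cref{thm:morsebottrecovers} yields the first inequality.

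The second inequality follows from the first by setting $\nu = 0$ and applying the Thom isomorphism: when $V_i$ is $R$-orientable, $R_*(X_i^{V_i}) \cong R_{*-\operatorname{rk}(V_i)}(X_{i,+})$, so $\dim_t R_*(X_i^{V_i}) = t^{\operatorname{rk}(V_i)}\dim_t R_*(X_{i,+})$. I do not anticipate any serious obstacle: all the geometric input is already packaged in \Cref{thm:morsebottrecovers} and \Cref{cor:puppemaprealization}, so the only real work is the elementary polynomial bookkeeping in the subadditivity identity, plus keeping track of the grading shift from the Thom isomorphism.
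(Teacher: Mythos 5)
Your proposal is correct and follows essentially the same route as the paper: filter the flow category so that each subquotient is a single $X_{i}^{V_i+\nu}$, apply subadditivity of $\dim_t$ along the resulting cofiber sequences, and conclude via \Cref{thm:morsebottrecovers}, with the Thom isomorphism handling the $R$-orientable case. The only (harmless) difference is that you verify subadditivity directly from the $R$-homology long exact sequence of a cofiber sequence, whereas the paper invokes the analogous statement for short exact sequences of complexes from \cite[Lem.\ 2.14]{nic}; your identity $\dim_t R_*(A)+\dim_t R_*(C)-\dim_t R_*(B)=(1+t)\bigl(t^{-1}\dim_t I\bigr)$ checks out.
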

\Cref{cor:mbineqs} strengthens \Cref{corollary:betti-bound} slightly, and it is closer to what is called Morse--Bott inequalities in the literature (e.g. see \cite{hurtubise2012three, rot2016morse}).  

Next, we proceed to the proof of \Cref{thm:morsebottrecovers}. 
\begin{defn}
	The \emph{standard module} on $M$ is the $M$-relative $\cM_f$, resp. $\cM_f^{act}$-module which associates to an object its descending manifold. Explicitly, $\cN(i)$ is given by the compactification of the moduli space of half-trajectories $\tilde{x}: (-\infty, 0] \to M$ satisfying $ \frac{d}{dt} \tilde{x} = - grad_{g}f$ and $\lim_{t \to -\infty} \tilde{x}(t)= x\in X_i$. The $M$-relative structure is given by $\tilde x\mapsto \tilde x(0)$. Clearly, $V_X\oplus T_{X_i} \cong T_{\cN(i)}$, which endows $\cN$ with a framing given by $(V_X\oplus \nu)\oplus T_{X_i} \cong T_{\cN(i)}\oplus \nu$.
\end{defn}
By \Cref{prop:relative-module}, we have maps
\begin{equation}\label{equation:eval}
	| \cM_f| \to M^\nu  \text{ and }     | \cM_f^{act}| \to M^\nu 
\end{equation} 
\begin{proof}[Proof of \Cref{thm:morsebottrecovers}]
	Both $\cM_f$ and  $\cM_f^{act}$ are filtered as follows: define $F^p\cM_f$, resp. $F^p\cM_f^{act}$ to be the full subcategory spanned by critical manifolds whose value of $f$ is less than or equal to $p$. This induces a filtration on the corresponding geometric realizations. 
	Moreover, if we consider the restriction of $\cN$ to $F^p\cM_f$, resp. $F^p\cM_f^{act}$, we obtain an $M_p:=M_{f\leq p}$-valued module, i.e. the evaluation map into $M$ lands in the sublevel set $M_p=\{x\in M:f(x)\leq p\}$. As a result, the maps $    | \cM_f| \to M^\nu$ and $| \cM_f^{act}| \to M^\nu$ respect the filtrations (the filtration on $M^\nu$ is induced by $f$). To prove that $    | \cM_f| \to M^\nu$ and $| \cM_f^{act}| \to M^\nu$ are equivalences, we only need to check the induced maps on the subquotients are equivalences. 
	
	If $f$ has no critical values in the interval $[p-\epsilon,p]$, then $F^p| \cM_f|/F^{p-\epsilon}| \cM_f|$, resp. $F^p| \cM_f^{act}|/F^{p-\epsilon}| \cM_f^{act}|$ is trivial by construction, and $F^pM^\nu/F^{p-\epsilon}M^\nu$ is trivial by standard Morse theory. 
	
	Assume $p$ is a critical value of $f$, $\epsilon$ is small, and without loss of generality only one $X_{i_0}$ is at level $p$ (this is a tautology for $\cM_f^{act}$). In particular, the flow category $F^p \cM_f/F^{p-\epsilon} \cM_f$, resp. $F^p \cM_f^{act}/F^{p-\epsilon} \cM_f^{act}$ is supported at index $0$. Let $F^p\fX_{q}$ denote the semi-simplicial set corresponding to $F^p \cM_f$, resp. $F^p \cM_f^{act}$. The map $\Sigma^{-a}F^p\fX_{a+1}[k]\to F^pM^\nu=M_p^\nu$ is constructed in the proof of \Cref{prop:relative-module} by first applying the boundary maps in any order to obtain $\Sigma^{-a}F^p\fX_{a+1}[k]\to\Sigma^{-a}F^p\fX_{a+1}[0]$ and then by composing with \eqref{eq:baserelmap}. As a result, if $k>0$, this map factors through $\Sigma^{-a} F^{p-\epsilon}\fX_{a+1}[k-1]$, and thus through $M^\nu_{p-\epsilon}$. Hence, the map $\Sigma^{-a}F^p\fX_{a+1}[k]\to M_p^\nu/M_{p-\epsilon}^\nu$ is $0$ for $k>0$ and only depends on $\Sigma^{-a}X(i_0)\wedge \cJ(i_0,-a-1)  \to M_{p}^\nu$.
	
	There is a commutative diagram
	\begin{equation}
		\xymatrix{   \Sigma^{-a}X(i_0)\wedge \partial\cJ(i_0,-a-1)  \ar[r]\ar@{^{(}->}[d]& \partial\cN(i_0)^\nu\ar[r]\ar@{^{(}->}[d]& M_{p-\epsilon}^\nu \ar@{^{(}->}[d]\\ 
			\Sigma^{-a}X(i_0)\wedge \cJ(i_0,-a-1)  \ar[r]& \cN(i_0)^\nu\ar[r]& M_{p}^\nu}
	\end{equation}
	inducing 
	\begin{equation}
		\Sigma^{-a}X(i_0)\wedge (\cJ(i_0,-a-1)/\partial\cJ(i_0,-a-1))\to \cN(i_0)^\nu/\partial \cN(i_0)^\nu\to M_{p}^\nu/M_{p-\epsilon}^\nu
	\end{equation}
	which coincides with the map induced on the subquotient. It is easy to check that the first map is an equivalence 
	and the second map is an equivalence by standard Morse theory, as $M_p$ contracts onto $\cN(i_0)$.

\end{proof}

\subsection{Compositions and homotopies}
Bimodules can be thought as morphisms between flow categories, and  it is natural to ask how to compose them. An infinity categorical version of this question, would be a discussion of conditions implying that the composition $c_{\cN_2}\circ c_{\cN_1}$ is homotopic to $c_{\cN_3}$, i.e. to describe $2$-cells of the corresponding infinity category (and to prove a weak Kan condition). This is beyond our goals: instead we will give a condition that implies $c_{\cN_2}\circ c_{\cN_1}\simeq c_{\cN_4}\circ c_{\cN_3}$. 

We use the intuition from \cite{audindamianmorse}. Consider the analogous Morse theoretic setting of four Morse functions $f_1$, $f_2$, $f_3$, $f_4$ with interpolations $f_1\to f_2$, $f_1\to f_3$, $f_2\to f_4$ and $f_3\to f_4$. We denote the interpolations by $\tilde f_{ij}$. One has a diagram
\begin{equation}
	\xymatrix{ CM(f_3)\ar[r]&CM(f_4)\\ CM(f_1)\ar[u]\ar[r]&CM(f_2)\ar[u]}
\end{equation}
To show that this diagram is commutative, one puts a Morse function on $I\times I$ with critical points only at $\{0,1\}\times \{0,1\}$ and with only negative gradient trajectories between adjacent corners at the edges (going in upper or right direction). See the arrows on \Cref{figure:morsesquare}. One can use the function $h(s,t)=s^3/3+t^3/3-s^2/2-t^2/2=g(s)+g(t)$ ($g$ is as above). 

\begin{figure}
	\centering
	\includegraphics[height=4cm]{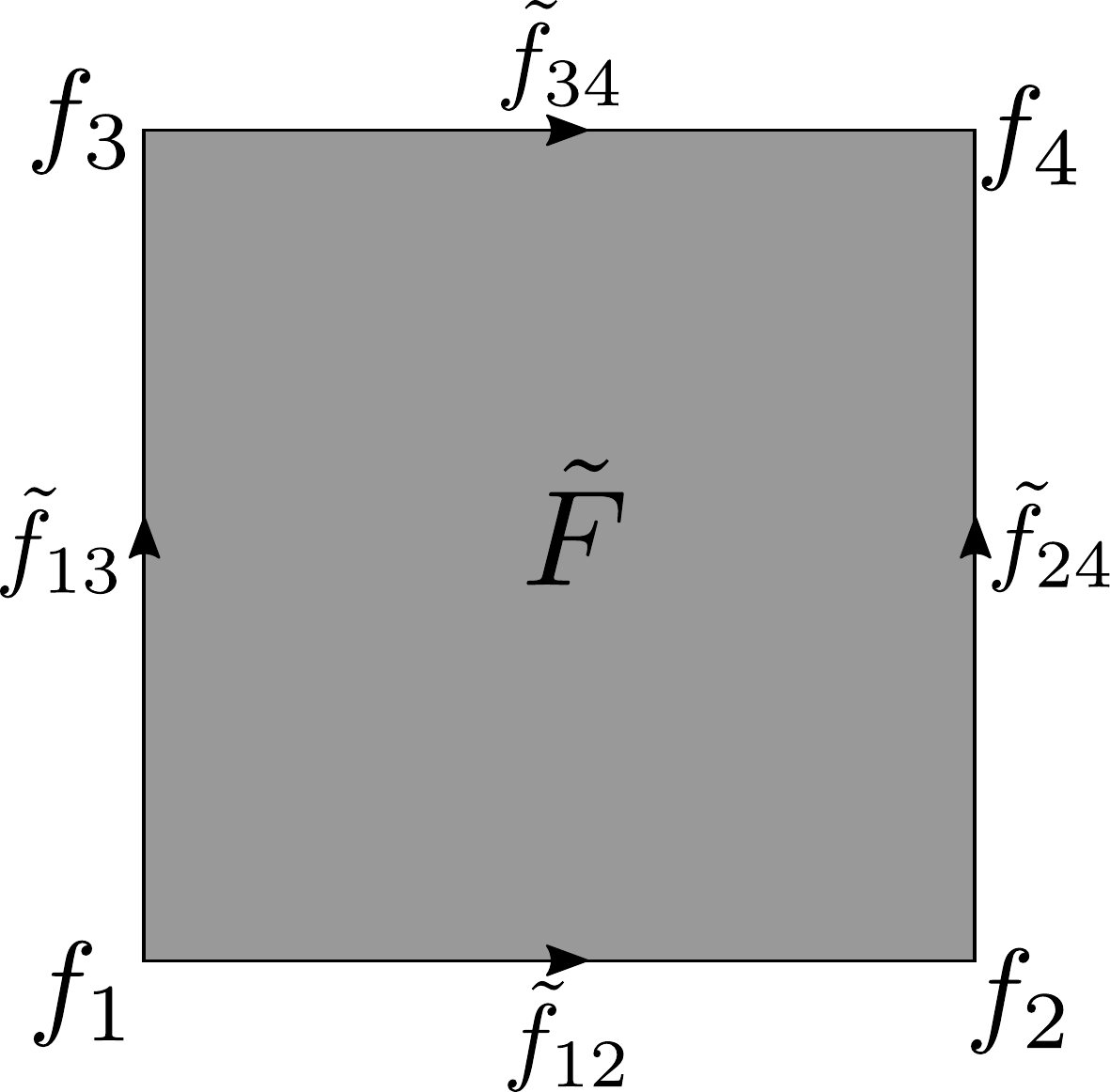}
	\caption{Interpolations of $f_1$, $f_2$, $f_3$ and $f_4$}
	\label{figure:morsesquare}
\end{figure}
Define a function $\tilde F:I\times I\times M\to\bR$ extending $f_i$ and the interpolations. In other words, place $f_1,f_2,f_3,f_4$ to the corners $(0,0), (1,0), (0,1), (1,1)$, in the respective orders. Put the chosen interpolations $\tilde f_{ij}$ to the edges between adjacent vertices corresponding to $f_i$. Now, we have a function $\partial(I\times I)\times M\to\bR$, and extend it smoothly to $\tilde F:I\times I\times M\to\bR$. See \Cref{figure:morsesquare}. The Morse complex $CM(\tilde F+h)$ fits into a diagram

\begin{equation}
\xymatrix{CM(f_4)\ar@{^{(}->}[r]\ar@{^{(}->}[d]& CM(\tilde f_{34}+g)\ar@{->>}[r]\ar@{^{(}->}[d]& CM(f_3)[-1]\ar@{^{(}->}[d] \\ 
CM(\tilde f_{24}+g)\ar@{^{(}->}[r]\ar@{->>}[d]& CM(\tilde F+h)\ar@{->>}[r]\ar@{->>}[d]& CM(\tilde f_{13})[-1]\ar@{->>}[d]\\
CM(f_2)[-1]\ar@{^{(}->}[r]& CM(\tilde f_{12}+g)[-1]\ar@{->>}[r]& CM(f_1)[-2]
} 	
\end{equation}
It can be checked explicitly on the generators that the diagram commutes. The rows and the columns are quotient sequences, and we need to show the two compositions of connecting maps $CM(f_1)\to CM(f_2)\to CM(f_4)$ and $CM(f_1)\to CM(f_3)\to CM(f_4)$ are homotopic. This is a simple diagram chasing argument. 

Motivated by this, we make the following definition
\begin{defn}\label{defn:homotopicbimodule}
Consider framed flow categories $\cM_1$, $\cM_2$, $\cM_3$, $\cM_4$, and framed (monotone) $\cM_i$-$\cM_j$-bimodules $\cN_{ij}$	for $(i,j)=(1,2),(1,3),(2,4),(3,4)$. See \eqref{eq:bimodulesqr}. Consider the category $Ob(\cM_1(2))\sqcup Ob(\cM_2(1))\sqcup Ob(\cM_3(1))\sqcup Ob(\cM_4)$, with the index function. One can extend this to a partial category by adding each $\cN_{ij}$ as the morphisms from $Ob(\cM_i)$ to $Ob(\cM_j)$, for $(i,j)=(1,2),(1,3),(2,4),(3,4)$. We call the pairs $(\cN_{12},\cN_{24})$ and $(\cN_{13},\cN_{34})$ \emph{homotopic} if this partial category extends fully faithfully to a framed flow category $\tilde\cP$ such that the only morphisms are from $Ob(\cM_i)$ to $Ob(\cM_j)$ for $(i,j)=(1,2),(1,3),(2,4),(3,4),(1,4)$ (hence, only morphisms added to the partial category are from $Ob(\cM_1)$ to $Ob(\cM_4)$). Moreover, we require the index bundle on $Ob(\cM_1(2))$, resp. $Ob(\cM_2(1))$, resp. $ Ob(\cM_3(1))$, resp. $Ob(\cM_4)$, to be that on $Ob(\cM_1)$, resp. $Ob(\cM_2)$, resp. $ Ob(\cM_3)$, resp. $Ob(\cM_4)$ plus $\underline{\bR}^2$, resp. $\underline{\bR}$, resp. $\underline{\bR}$, resp. $\underline{0}$, and the framing on $\tilde\cP$ restricts to those on $\cM_i$, $\cN_{ij}$. Such a $\tilde \cP$ is called \emph{an homotopy of the pairs $(\cN_{12},\cN_{24})$ and $(\cN_{13},\cN_{34})$}.
\end{defn}
One could give a definition in terms of the newly added morphisms from $\cM_1$ to $\cM_4$, which we denote by $\cO$. We find the current approach more practical. In this case, one can reconstruct $\tilde{P}$ from such $\cM_i$, $\cN_{ij}$, $\cO$. See \Cref{figure:flowcategorysquare}. 
\begin{equation}\label{eq:bimodulesqr}
	\xymatrix{ \cM_3\ar[r]^{\cN_{34}}&\cM_4\\ \cM_1\ar[u]^{\cN_{13}}\ar[r]^{\cN_{12}}&\cM_2\ar[u]_{\cN_{24}}}
\end{equation}

\begin{figure}
\centering
\includegraphics[height=4cm]{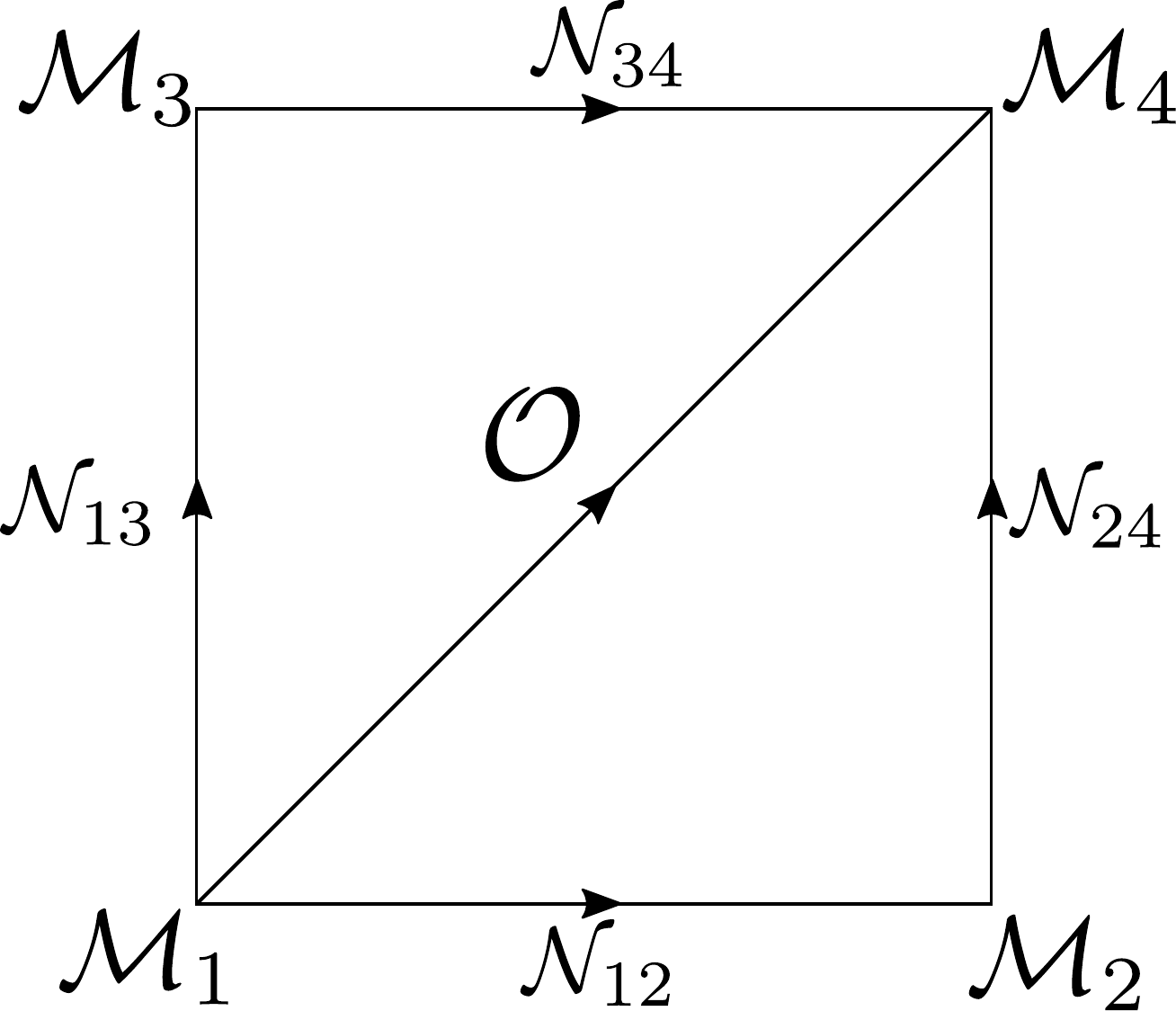}
\caption{Flow category $\tilde \cP$ reconstructed from $\cM_i$, $\cN_{ij}$ and $\cO$}
\label{figure:flowcategorysquare}
\end{figure}

Our goal is to show the following
\begin{prop}\label{prop:homotopyinduced}
If the pairs $(\cN_{12},\cN_{24})$ and $(\cN_{13},\cN_{34})$ are homotopic, then the induced maps $c_{\cN_{34}}\circ c_{\cN_{13}}$ and $c_{\cN_{24}}\circ c_{\cN_{12}}$ on geometric realizations are homotopic. 
\end{prop}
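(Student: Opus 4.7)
The plan is to exploit the filtered structure of $\tilde\cP$ to produce a canonical null-homotopy relating the two compositions.

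First, I would consider the chain of downward-closed full subcategories
\[
A := \cM_4 \ \subset\ D := \cM_4 \cup \cM_2(1) \cup \cM_3(1) \ \subset\ E := \tilde\cP
\]
of $\tilde\cP$, each of which is downward closed because in $\tilde\cP$ all morphisms out of $\cM_2$ or $\cM_3$ land in $\cM_4$. By \Cref{cor:puppemaprealization}, each inclusion yields a cofiber sequence of spectra. Because there are no morphisms between $\cM_2$ and $\cM_3$ in $\tilde\cP$, \Cref{lem:jmodquot} gives $|D/A|\simeq \Sigma|\cM_2|\vee\Sigma|\cM_3|$ (the extra suspensions coming from the framing stabilization by $\underline{\bR}$ on $\cM_2(1)$ and $\cM_3(1)$ built into \Cref{defn:homotopicbimodule}). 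Restricting to the intermediate subcategories $\cM_4\cup\cM_2(1)$ and $\cM_4\cup\cM_3(1)$ and applying \Cref{prop:inducedmap} identifies the connecting morphism $|D/A|\to\Sigma|A|$ with $(\Sigma c_{\cN_{24}},\Sigma c_{\cN_{34}})$.

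Second, since $E/D=\cM_1(2)$ has realization $\Sigma^2|\cM_1|$, the connecting morphism $\delta:\Sigma^2|\cM_1|\to\Sigma|D|$ of $|D|\to|E|\to|E/D|$, composed with the projection $\Sigma|D|\to\Sigma|D/A|$, produces a map $\alpha:\Sigma^2|\cM_1|\to\Sigma^2|\cM_2|\vee\Sigma^2|\cM_3|$. By the octahedral axiom, $\alpha$ coincides with the connecting map of the cofiber sequence $|D/A|\to|E/A|\to|E/D|$ inside the quotient category $E/A=\cM_1(2)\cup\cM_2(1)\cup\cM_3(1)$, whose nontrivial morphisms are $\cN_{12}$ and $\cN_{13}$. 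Further quotienting $E/A$ by $\cM_3(1)$ (resp.\ by $\cM_2(1)$) to obtain $\cM_1(2)\cup\cM_2(1)$ (resp.\ $\cM_1(2)\cup\cM_3(1)$) and invoking \Cref{prop:inducedmap}, the two components of $\alpha$ are identified with $\Sigma^2 c_{\cN_{12}}$ and $\Sigma^2 c_{\cN_{13}}$ respectively.

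Third, the composition $\Sigma|D|\to\Sigma|D/A|\to\Sigma^2|A|$ is canonically null-homotopic, being the suspension of two consecutive maps in the cofiber triangle $|A|\to|D|\to|D/A|\to\Sigma|A|$. Precomposing with $\delta$ yields a canonical null-homotopy of
\[
\Sigma^2(c_{\cN_{24}}\circ c_{\cN_{12}}) \ +\ \Sigma^2(c_{\cN_{34}}\circ c_{\cN_{13}}) : \Sigma^2|\cM_1|\longrightarrow\Sigma^2|\cM_4|,
\]
which after desuspending twice and absorbing the standard sign of the boundary orientations on the $\cM_1$--$\cM_4$ morphism space of $\tilde\cP$ (whereby the two boundary facets $\cN_{12}\times_{\cM_2}\cN_{24}$ and $\cN_{13}\times_{\cM_3}\cN_{34}$ inherit opposite induced orientations, as is built into the framing compatibility of \Cref{defn:homotopicbimodule}) gives the desired homotopy $c_{\cN_{24}}\circ c_{\cN_{12}}\simeq c_{\cN_{34}}\circ c_{\cN_{13}}$.

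The main obstacle is Step 2, namely verifying that the two components of $\alpha$ really are the continuation maps $\Sigma^2 c_{\cN_{ij}}$. This is a naturality statement for the Pontryagin--Thom geometric realization with respect to quotients of flow categories, and amounts to a careful tracing of the construction of $c_\cN$ from \Cref{prop:inducedmap} within a sub-filtration of $\tilde\cP$. A secondary technicality is the orientation bookkeeping in Step 3 required to pass from a null-homotopy of the sum to a homotopy between the two compositions.
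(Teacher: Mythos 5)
Your filtration $\cM_4\subset \cM_4\cup\cM_2(1)\cup\cM_3(1)\subset\tilde\cP$ is a genuinely different organization from the paper's, which instead assembles the full $3\times 3$ grid of quotient sequences of $\cJ$-modules (with middle term $\cX_{\tilde\cP}$ and the four $\cX_{\cP_{ij}}$ on the edges) and proves an elementary lemma about such diagrams by an explicit manipulation of iterated cones: both composites of connecting maps are identified with one and the same collapse map out of $c^2A_4\cup cA_{24}\cup cA_{34}\cup A_a$, so the two compositions are compared directly rather than through a vanishing statement. Steps 1 and 2 of your argument (the wedge splitting $|D/A|\simeq\Sigma|\cM_2|\vee\Sigma|\cM_3|$, and the identification of the relevant connecting maps with the suspended continuation maps) are plausible and are of the same nature as the identifications the paper also has to make.

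The genuine gap is in Step 3. In a stable category, a null-homotopy of $f+g$ yields $f\simeq -g$, so what your argument produces on the nose is $c_{\cN_{24}}\circ c_{\cN_{12}}\simeq -\,c_{\cN_{34}}\circ c_{\cN_{13}}$, and $-g$ is not homotopic to $g$ in general. To get the stated conclusion you must exhibit an extra sign in exactly one of the four identifications of Steps 1--2 (equivalently, one of the two wedge-summand identifications must be twisted by the swap of the two suspension coordinates of $\Sigma^2|\cM_4|$). This is precisely the classical anticommutativity of the square of connecting maps in a $3\times 3$ diagram, and it cannot be "absorbed" by appealing to induced orientations of the boundary facets of the $\cM_1$--$\cM_4$ morphism spaces: the framing compatibility in \Cref{defn:homotopicbimodule} is symmetric in the two factorizations and does not by itself single out a sign, and in any case the sign you need lives in the homotopy-theoretic identifications of iterated cones with double suspensions, not in the moduli spaces. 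Until you locate and verify that sign, your argument proves the two compositions agree only up to $-1$; the paper's cone-by-cone computation is designed exactly so that this issue never arises, since both composites are realized by the identical collapse map.
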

The proof is analogous to the Morse case. Let $\cP_{ij}$ denote the flow category associated to $\cN_{ij}$, and let $\cX_{\cM}$ denote the $\cJ$-module associated to a flow category $\cM$. We have the following diagram of $\cJ$-modules
\begin{equation}
	\xymatrix{\cX_{\cM_4}\ar@{^{(}->}[r]\ar@{^{(}->}[d]& \cX_{\cP_{34}} \ar@{->>}[r]\ar@{^{(}->}[d]& \cX_{\cM_3(1)}\ar@{^{(}->}[d] \\ 
		\cX_{\cP_{24}}\ar@{^{(}->}[r]\ar@{->>}[d]& \cX_{\tilde \cP}\ar@{->>}[r]\ar@{->>}[d]& \cX_{\cP_{13}(1)}\ar@{->>}[d]\\
		\cX_{\cM_2(1)}\ar@{^{(}->}[r]& \cX_{\cP_{12}(1)} \ar@{->>}[r]& \cX_{\cM_1(2)}
	} 	
\end{equation}
Each row and column is a quotient sequence of $\cJ$-modules, and it is clear that the diagram commutes. Therefore, we have the analogous diagram
\begin{equation}
	\xymatrix{|{\cM_4}| \ar@{^{(}->}[r]\ar@{^{(}->}[d]& |\cP_{34}| \ar@{->>}[r]\ar@{^{(}->}[d]& |{\cM_3(1)}|\ar@{^{(}->}[d] \\ 
		|{\cP_{24}}|\ar@{^{(}->}[r]\ar@{->>}[d]& |{\tilde \cP}|\ar@{->>}[r]\ar@{->>}[d]& |{\cP_{13}(1)}|\ar@{->>}[d]\\
		|{\cM_2(1)}|\ar@{^{(}->}[r]& |{\cP_{12}(1)}| \ar@{->>}[r]& |{\cM_1(2)}|
	} 	
\end{equation}
where the rows and columns are quotient sequences (indeed, the diagram can be obtained by applying $\Sigma^{\infty-m}$ to such a diagram in spaces). A quotient sequence induces a map from the quotient to the suspension of the first term. Composing $|{\cM_1(2)}|\to \Sigma|{\cM_2(1)}|\to \Sigma^2|{\cM_4}|$ and desuspending construct us $c_{\cN_{24}}\circ c_{\cN_{12}}$. Similarly, $|{\cM_1(2)}|\to \Sigma|{\cM_3(1)}|\to \Sigma^2|{\cM_4}|$ gives us $c_{\cN_{34}}\circ c_{\cN_{13}}$. The following elementary statement finishes the proof of \Cref{prop:homotopyinduced}:
\begin{lem}
Consider a diagram of spaces 	
\begin{equation}\label{eq:general9diag}
	\xymatrix{A_4 \ar@{^{(}->}[r]\ar@{^{(}->}[d]& A_{34} \ar@{->>}[r]\ar@{^{(}->}[d]& A_3\ar@{^{(}->}[d] \\ 
		A_{24}\ar@{^{(}->}[r]\ar@{->>}[d]& A_a\ar@{->>}[r]\ar@{->>}[d]& A_{13}\ar@{->>}[d]\\
		A_2\ar@{^{(}->}[r]& A_{12} \ar@{->>}[r]& A_1
	} 	
\end{equation}
such that each row and column are quotient sequences. One obtains two maps $A_1\to\Sigma^2A_4$, namely by composing the connecting maps $A_1\to\Sigma A_2\to\Sigma^2A_4$ and $A_1\to\Sigma A_3\to\Sigma^2A_4$. Then these two maps are homotopic.
\end{lem}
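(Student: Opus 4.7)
The strategy is to identify both compositions as projections of a single double Puppe connecting map associated to a two-step filtration of $A_a$. First, replace the 9-diagram by a strict cofibrant model in which each given inclusion is a genuine cofibration of based spaces (or spectra) and each cofiber is a strict quotient. Inside $A_a$, form the pushout $B := A_{24} \cup_{A_4} A_{34}$. A direct check (or the octahedral axiom) shows $B/A_4 \simeq A_2 \vee A_3$ and $A_a/B \simeq A_1$, giving a two-step filtration $A_4 \subset B \subset A_a$ with Puppe connecting maps
\[
\partial_1 : A_2 \vee A_3 \to \Sigma A_4 \qquad \text{and} \qquad \partial_2 : A_1 \to \Sigma B.
\]

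Next, use naturality of the Puppe sequence in two complementary ways. Applied to the sub-cofibrations $A_{24} \hookrightarrow B \hookleftarrow A_{34}$, naturality gives $\partial_1|_{A_2} = \partial^L$ (the left-column connecting map) and $\partial_1|_{A_3} = \partial^T$ (the top-row connecting map); so $\partial_1 = \partial^L \vee \partial^T$. Dually, applied to the quotient maps $A_a/A_4 \twoheadrightarrow A_{13}$ (collapsing $A_2 = A_{24}/A_4$) and $A_a/A_4 \twoheadrightarrow A_{12}$ (collapsing $A_3 = A_{34}/A_4$), naturality shows that the composite
\[
\tilde{\partial}_2 := \bigl(\Sigma B \twoheadrightarrow \Sigma(A_2 \vee A_3)\bigr) \circ \partial_2 : A_1 \to \Sigma A_2 \vee \Sigma A_3
\]
has components the connecting maps $\partial^B$ of the bottom row and $\partial^R$ of the right column.

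The crucial input is the vanishing of the double connecting map: the composite $\Sigma \partial_1 \circ \tilde{\partial}_2$ factors as
\[
A_1 \xrightarrow{\partial_2} \Sigma B \to \Sigma(A_2 \vee A_3) \xrightarrow{\Sigma \partial_1} \Sigma^2 A_4,
\]
whose last two terms are consecutive maps in the suspended Puppe sequence of $A_4 \hookrightarrow B$, hence null-homotopic. Combining this vanishing with the wedge decompositions of the previous step and the fact that maps out of a wedge add in the abelian group $[A_1, \Sigma^2 A_4]$, one obtains the additive relation $\Sigma \partial^L \circ \partial^B + \Sigma \partial^T \circ \partial^R \simeq 0$, which with the standard sign convention for Puppe connecting maps is exactly the asserted homotopy.

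The main obstacle is sign bookkeeping: the $3 \times 3$ lemma in the stable homotopy category is naturally a statement of anti-commutativity of iterated boundary maps (Verdier's sign), and extracting literal homotopy rather than anti-homotopy requires tracking the conventions used when setting up each Puppe sequence. One could alternatively bypass these signs by treating both compositions as the double boundary map of a filtered spectrum, defined intrinsically via the total cofiber of the square $A_4 \to A_{24}, A_{34} \to A_a$, but the underlying content of the argument is the same.
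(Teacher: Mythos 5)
Your argument is correct in substance but takes a genuinely different route from the paper. You run the abstract triangulated-category version of the $3\times 3$ lemma: filter $A_a$ by $A_4\subset B:=A_{24}\cup_{A_4}A_{34}\subset A_a$, identify $B/A_4\simeq A_2\vee A_3$ and $A_a/B\simeq A_1$, and deduce the relation among the four connecting maps from the nullity of two consecutive maps in the Puppe sequence of $A_4\hookrightarrow B$ together with additivity of maps out of a wedge. The paper instead works with explicit point-set models: it extends \eqref{eq:general9diag} to the $4\times 4$ diagram \eqref{eq:general16diag} of iterated mapping cones and checks that both composites are obtained by passing, via homotopy inverses, to the double cone $c^2A_4\cup cA_{24}\cup cA_{34}\cup A_a$ and then collapsing $cA_{24}$, $cA_{34}$ and $A_a$. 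Your version is shorter and works verbatim in any stable $\infty$-category; the paper's version is self-contained, needs no group structure on $[A_1,\Sigma^2A_4]$, and exhibits both maps as literally the same collapse map, at the cost of some cone bookkeeping.

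The one real gap is the sign, and you should not expect it to evaporate by "choosing conventions." What your argument actually proves is $\Sigma\partial^L\circ\partial^B+\Sigma\partial^T\circ\partial^R\simeq 0$, i.e.\ the classical anticommutativity of the iterated boundary map (Verdier's sign); the lemma asserts the two composites are homotopic, which is a genuinely different statement unless one of the four connecting maps is defined with a compensating sign, or unless the homotopy is asserted only after composing with the swap of the two suspension coordinates of $\Sigma^2A_4$ (a self-equivalence of degree $-1$). The same point is latent in the paper's own proof: the composite through $\Sigma A_2$ produces the outer suspension from the bottom row and the inner one from the left column, while the composite through $\Sigma A_3$ produces them in the opposite order, so identifying both targets with a fixed $\Sigma^2A_4$ costs exactly the coordinate swap. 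For the intended application (\Cref{prop:homotopyinduced}) this is harmless, since one only needs the two continuation composites to agree up to a canonical equivalence of the target; but if you want the literal statement you must either build an asymmetric sign into the row versus column connecting maps or state the conclusion up to the swap. Apart from this, your outline is complete and the naturality checks you invoke (that $\partial_1$ restricts to $\partial^L$ and $\partial^T$, and that $\tilde\partial_2$ has components $\partial^B$ and $\partial^R$) do go through, using that $B/A_{34}\simeq A_2$ and $B/A_{24}\simeq A_3$ map compatibly to $A_{12}$ and $A_{13}$.
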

\begin{proof}
The proof is elementary topology. A quotient sequence $A\overset{\iota}{\hookrightarrow} B\twoheadrightarrow C$ extends functorially to 
\begin{equation}
A\hookrightarrow B\hookrightarrow cone(\iota)=cA\cup_\iota B\overset{\simeq}{\twoheadrightarrow} C	
\end{equation}
The connecting map is obtained by composing an inverse $C\to cA\cup_\iota B$ with the map $cA\cup_\iota B\to\Sigma A$ that collapses $B$. 

Applying this to \eqref{eq:general9diag}, we obtain the following diagram
\begin{equation}\label{eq:general16diag}
	\xymatrix{A_4 \ar@{^{(}->}[r]\ar@{^{(}->}[d]& A_{34} \ar@{^{(}->}[d]\ar@{^{(}->}[r]&cA_4\cup A_{34}\ar@{->>}[r]^\simeq\ar@{^{(}->}[d] & A_3\ar@{^{(}->}[d] \\ 
		A_{24}\ar@{^{(}->}[r]\ar@{^{(}->}[d]& A_a\ar@{^{(}->}[r]\ar@{^{(}->}[d]& cA_{24}\cup A_a\ar@{^{(}->}[d]\ar@{->>}[r]^\simeq & A_{13}\ar@{^{(}->}[d]\\
			cA_4\cup A_{24}	\ar@{^{(}->}[r]\ar@{->>}[d]^\simeq& cA_{34}\cup A_a \ar@{^{(}->}[r]\ar@{->>}[d]^\simeq & c^2A_4\cup cA_{24}\cup cA_{34}\cup A_a  \ar@{->>}[r]^{\qquad\;\;\simeq}\ar@{->>}[d]^\simeq & cA_3\cup A_{13} \ar@{->>}[d]^\simeq \\
		A_2\ar@{^{(}->}[r]& A_{12}\ar@{^{(}->}[r] & cA_2\cup A_{12} \ar@{->>}[r]^\simeq & A_1
	} 	
\end{equation}
Note that we have omitted the subscripts of $\cup$ from the notation, i.e. we have written $\cup$ instead of $\cup_\iota$. It is easy to check that this diagram commutes up to homotopy. 
To obtain the composition $A_1\to \Sigma A_2\to\Sigma^2 A_4$, one goes from $A_1$ to $cA_2\cup A_{12}$ via the homotopy inverse, collapses $A_{12}$, and repeats the same to go from $\Sigma A_2$ to $\Sigma^2A_4$. As $cA_2\cup A_{12}$ is the quotient of $c^2A_4\cup cA_{24}\cup cA_{34}\cup A_a$ by $c(cA_4\cup A_{34})=c^2A_4\cup cA_{34}$, for the first step (i.e. to go to $\Sigma A_2$), one can also go from $A_1$ to $c^2A_4\cup cA_{24}\cup cA_{34}\cup A_a$ via the homotopy inverses, then collapse $c^2A_4\cup cA_{34}$, then collapse the image of $A_{12}$. This is the same as collapsing $c^2A_4$, $cA_{34}$ and $A_a$ in $c^2A_4\cup cA_{24}\cup cA_{34}\cup A_a$ (due to the collapse of $c^2A_4$, one ends up in $\Sigma A_2$, not $cA_{24}$). To further go from $\Sigma A_2$ to $\Sigma^2 A_4$, one needs to first lift to $\Sigma (cA_4\cup A_{24})$. But instead, one can simply skip the collapse of $c^2A_4$ in the previous step, i.e. collapse $cA_{34}$ and $A_a$ in $c^2A_4\cup cA_{24}\cup cA_{34}\cup A_a$ only, and end up in $\Sigma (cA_4\cup A_{24})$. After this going to $\Sigma^2 A_4$ only requires collapsing the image of $cA_{24}$. 

In summary, the composition $A_1\to\Sigma A_2\to \Sigma^2A_4$ amounts to going from $A_1$ to $c^2A_4\cup cA_{24}\cup cA_{34}\cup A_a$ via the composed homotopy inverses (does not matter which way), then collapsing $cA_{24}$, $cA_{34}$ and $A_a$. The same holds for the composition of $A_1\to\Sigma A_3\to \Sigma^2A_4$, as well, by interchanging indices $2$ and $3$ (and $24$ and $34$). Therefore, the two compositions are homotopic. 
\end{proof}

\subsection{Extension to the $G$-equivariant case}\label{subsection:extension-equivariant}
In this section, we briefly explain the modifications needed in the equivariant case. Let $G$ be a compact Lie group. By a \emph{$G$-action on a smooth manifold with corners $M$} we always mean a smooth action. When $M$ carries the structure of a $\langle k\rangle$, resp. $\langle 2, k\rangle$-manifold, we also assume $\partial_aM$, resp. $\partial_{i,a}M$ is fixed by the $G$-action (this is automatic for connected $G$). A \emph{$G$-equivariant flow category $\cM$} is a flow category such that $G$-acts on $ob(\cM)$ and each $\cM(i,j)$ such that the gluing/composition maps are $G$-equivariant. A \emph{$G$-equivariant bimodule} and \emph{$G$-equivariant homotopy} is also defined analogously. 

To define framings, we recall basic facts from \cite{segalequivariantktheory} about (real) equivariant vector bundles and virtual bundles. Given a $G$-space $X$, let $Vect_G(X)$ denote the category of real $G$-equivariant vector bundles over $X$ (when $X$ and the action are smooth this is equivalent to the category of smooth bundles). 
For any representation $V$ of $G$, there is a $G$-equivariant vector bundle $\underline{V}$ over $X$ with total space $X\times V$. This defines a symmetric monoidal functor from $Rep_O(G)$ (the category of finite dimensional, real representations of $G$) to $Vect_G(X)$. Let $KO_G(X)$ denote the Grothendieck ring of $Vect_G(X)$. Elements of $KO_G(X)$ can be represented as formal differences $V-W$ for $V,W\in  Vect_G(X)$. Note that the following versions of the standard facts for non-equivariant bundles hold in the equivariant case as well:
\begin{enumerate}
	\item short exact sequences in $Vect_G(X)$ split (this can be shown by choosing a non-equivariant splitting for $V\hookrightarrow W$ and averaging over its orbit)
	\item\label{item:addtotriv} for any $V\in Vect_G(X)$, there is a $V'\in Vect_G(X)$, $W\in Rep_O(G)$, and an isomorphism $V\oplus V'\cong \underline{W}$ in $Vect_G(X)$ (see \cite[Prop (2.4)]{segalequivariantktheory})
	\item as a result, every element of $KO_G(X)$ can be represented as a difference $V-\underline{W}$, for $V'\in Vect_G(X)$, $W\in Rep_O(G)$
\end{enumerate}
We call a formal difference $V-W$ of \emph{$G$-equivariant vector bundles}. 
We consider them up to \emph{stable equivalence}, i.e. $V-W$ is the same as $V'-W'$ if and only if there exists a $U\in Vect_G(X)$ such that $V\oplus W'\oplus U\cong V'\oplus W\oplus U$. This condition implies that they are the same in $KO_G(X)$. By the claim (\ref{item:addtotriv}), any $V-W$ is also stably equivalent to some $V'-\underline{W}'$, where $V'\in Vect_G(X)$, $W'\in Rep_O(G)$. Claim (\ref{item:addtotriv}) also implies that $V-\underline{W}$ is stably equivalent to $V'-\underline{W}'$ is and only if $V\oplus \underline W'\oplus \underline U\cong V'\oplus \underline W\oplus \underline U$ for some $U\in Rep_O(G)$. Given a virtual bundle $V-\underline{W}$, define its Thom spectrum as $\Sigma_W^{-1}X^V$. This is a well-defined genuine $G$-equivariant spectrum, we denote it by $X^{V-\underline{W}}$ as before. 

Note that for most of the paper, we consider virtual bundles of the form $V-\underline{\bR}^n$, where $\underline{\bR}^n$ is the trivial representation. Also, when discussing Floer theory, we only consider the case $G=S^1$. 

With this preparation, define a \emph{$G$-equivariant framing} on a $G$-equivariant flow category $\cM$ to be a choice of virtual $G$-bundles $V_i$ on each $X_i$, and choice of stable equivalences \begin{equation}
	V_i\oplus T_{X_i}\cong T_{\cM(i,j)}\oplus \underline{\bR}\oplus V_j
\end{equation}
where $\underline{\bR}$ is trivial as before. We require the same compatibility conditions. The following is analogous to \Cref{proposition:j-modules}: 
\begin{prop}
There is a $\cJ$-module in genuine $G$-spectra with $X(i):=\Sigma^{-i}X_i^{V_i}$.
\end{prop}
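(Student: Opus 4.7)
The plan is to follow the non-equivariant construction in \Cref{constr:jmodule} and \Cref{proposition:j-modules} verbatim, systematically replacing trivial Euclidean stabilizations $\bR^q$ by finite-dimensional orthogonal $G$-representations and using the genuine model for orthogonal $G$-spectra. I will sketch the required modifications.

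First, I need a $G$-equivariant analogue of the neat embedding theorem (\Cref{collary:neat-embedding-category}). For any smooth $G$-manifold $M$ with compact $G$ there exists a faithful $G$-representation $W$ and a $G$-equivariant smooth embedding $M\hookrightarrow W$ (Mostow--Palais). Running Laures' inductive argument equivariantly (using equivariant tubular neighborhoods and averaging over $G$ at each stage to produce equivariant extensions), I obtain, for a sufficiently large $G$-representation $W$ and suitable stabilization exponents $q_i$, $G$-equivariant neat embeddings
\begin{equation*}
\cM(i,j)\hookrightarrow W^{\oplus(q_i-q_j)}\times J(i,j),
\end{equation*}
compatible with composition as in \eqref{eq:neatflowemb}, where $G$ acts trivially on $J(i,j)$ and diagonally on the source. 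Composing with the zero section of $V_i\to X_i$ (which is equivariant since $V_i$ is a virtual $G$-bundle) yields equivariant embeddings $\cM(i,j)\hookrightarrow V_i\oplus W^{\oplus(q_i-q_j)}\times J(i,j)$ fitting into the equivariant analogue of \eqref{eq:relativeneatflowemb}.

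Next, I apply the $G$-equivariant Pontryagin--Thom collapse. For a $G$-equivariant closed embedding $P\hookrightarrow Q$ of smooth $G$-manifolds with corners, an equivariant closed tubular neighborhood exists and produces a $G$-equivariant collapse $Q^+\to P^{\nu_{P/Q}}$, with all the naturality properties (pullback, products, transverse cutouts, extension to $\langle k\rangle$-manifolds) enumerated below \Cref{constr:jmodule} holding equivariantly. Applied to our embeddings this gives $G$-equivariant maps
\begin{equation*}
X_i^{V_i}\wedge S^{W^{\oplus(q_i-q_j)}}\wedge \cJ(i,j)\longrightarrow \cM(i,j)^{\nu},
\end{equation*}
and the $G$-equivariant framing \eqref{equation:framing-equation} — interpreted as a stable equivalence of virtual $G$-bundles — identifies the normal virtual bundle $\nu$ with $V_j+\underline{\bR}^{i-j}$ up to the stabilizing trivial representations. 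Projecting along $\cM(i,j)\to X_j$ produces
\begin{equation*}
X_i^{V_i}\wedge S^{W^{\oplus(q_i-q_j)}}\wedge \cJ(i,j)\longrightarrow \Sigma^{i-j}X_j^{V_j}\wedge S^{W^{\oplus(q_i-q_j)}}.
\end{equation*}
Desuspending globally by the representation sphere $S^{W^{\oplus q_i}}$ (i.e. smashing with $S^{-W^{\oplus q_i}}$ in genuine $G$-spectra) and observing that the representation factors cancel across the construction because I use the same $W$ throughout, I obtain the desired structure map $\Sigma^{-i}X_i^{V_i}\wedge\cJ(i,j)\to \Sigma^{-j}X_j^{V_j}$ as a morphism of genuine orthogonal $G$-spectra.

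The associativity check is the verbatim $G$-equivariant translation of diagrams \eqref{eq:jmodulediagramflow}--\eqref{eq:midsqr}: every arrow there is either an equivariant Pontryagin--Thom collapse, an equivariant pullback map of Thom spectra, or an instance of the framing compatibility square \eqref{equation:framing-diagram-square}, which by hypothesis holds as a stable equivalence of virtual $G$-bundles. Thus commutativity carries over without change, yielding the $\cJ$-module structure. The main technical obstacle is bookkeeping the representation desuspensions so that the result lives in genuine $G$-spectra rather than merely carrying a $G$-action on underlying spectra; this is handled by using a single fixed representation $W$ for all stabilizations (enlarging $W$ if necessary during the inductive construction of neat embeddings) and invoking the fact, recalled before the proposition, that every class in $KO_G(X_i)$ admits a representative of the form $V_i'-\underline{W}'$ with $V_i'\in\operatorname{Vect}_G(X_i)$ and $W'\in\operatorname{Rep}_O(G)$, so that each $X_i^{V_i}$ is an honest representable Thom $G$-spectrum.
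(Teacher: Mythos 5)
Your proposal is correct and follows essentially the same route as the paper, which simply asserts that the non-equivariant construction of \Cref{constr:jmodule} and \Cref{proposition:j-modules} goes through verbatim once one has the equivariant neat embedding theorem (\Cref{collary:neat-embedding-category}), itself obtained exactly as you describe by combining Laures' induction (with collars averaged over $G$) with Mostow--Palais. The only cosmetic difference is that you stabilize by powers of a single representation $W$, whereas the paper's \Cref{definition:neat-embedding-flow-cat} allows arbitrary orthogonal $G$-actions on the $\bR^{q_i-q_j}$ compatible with the product structure; for a finite flow category these are interchangeable.
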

The construction and the proof are the same. One needs to use an equivariant version of the neat embedding theorem, which can be proven by combining the proofs of \cite{laures} and Mostow-Palais theorem. See \Cref{appendix:k-manifolds} for details. 

The construction of the geometric realization goes through in the category of genuine equivariant spectra. Therefore we have 
\begin{prop}
There is a genuine $G$-spectrum $|\cM|$.
\end{prop}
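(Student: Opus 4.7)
The plan is to run the construction of Definition \ref{definition:geometric-real} and the argument of Lemma \ref{lem:suspendedrealization} verbatim, but in the category $\mathrm{Sp}^G$ of genuine $G$-spectra rather than $\mathrm{Sp}^O$. Taking as input the equivariant $\cJ$-module with $X(i)=\Sigma^{-i}X_i^{V_i}$ produced by the previous proposition, I would define a semi-simplicial object $\fX_q$ in $\mathrm{Sp}^G$ by the same formula
\begin{equation*}
\fX_q[k]=\bigvee_{i_0>\dots>i_k} X(i_0)\wedge \cJ(i_0,i_1)\wedge \dots\wedge \cJ(i_{k-1},i_k)\wedge \cJ(i_k,-q),
\end{equation*}
where each $\cJ(i,j)$ is regarded as a based $G$-space carrying the trivial $G$-action, and all smash products and wedges are taken in $\mathrm{Sp}^G$. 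The face maps are built from the $\cJ$-module structure maps, which are $G$-equivariant by the equivariant analogue of \Cref{proposition:j-modules}.

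I would then set $|\cM|_q$ to be the geometric realization of $\fX_q$ in $\mathrm{Sp}^G$. This makes sense because genuine $G$-spectra (modeled, say, by orthogonal $G$-spectra) admit coequalizers and are tensored over based $G$-spaces, so the usual bar-type coequalizer formula computes $|\fX_q|$ levelwise, with $G$-action inherited from the actions on each $\fX_q[k]$. To obtain a well-defined spectrum $|\cM|$ independent of $q$, I need the $G$-equivariant version of \Cref{lem:suspendedrealization}, namely $\Sigma|\cM|_q\simeq |\cM|_{q+1}$. The non-equivariant proof was purely formal: it relied only on the decomposition $\fX_{q+1}[k]=(\fX_q[k]\wedge[0,\infty])\vee \fX_q[k-1]$ and an explicit identification of the last face map. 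Since $[0,\infty]$ carries the trivial $G$-action and all the structure maps of $\fX_q$ are $G$-equivariant, this decomposition is one of semi-simplicial objects in $\mathrm{Sp}^G$, and the same manipulation produces a $G$-equivariant equivalence. We then define $|\cM|:=\Sigma^{-q+1}|\cM|_q$ for $q\gg 0$, canonically independent of $q$.

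The main obstacle — which is really the only non-formal input — is ensuring that the $\cJ$-module structure maps are genuinely $G$-equivariant, and this is addressed not here but at the level of the previous proposition, where it reduces to the existence of $G$-equivariant neat embeddings of the flow category. This is guaranteed by the equivariant neat embedding theorem discussed in the appendix, obtained by combining the Laures-style argument with Mostow--Palais to equivariantly embed each $\cM(i,j)$ into some $J_q(i,j)\times W$ for a suitable orthogonal $G$-representation $W$. Once that input is in place, the construction of $|\cM|$ as a genuine $G$-spectrum is essentially a transcription of the non-equivariant construction into $\mathrm{Sp}^G$, and requires no new ideas.
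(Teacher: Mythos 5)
Your proposal is correct and follows exactly the paper's route: the paper's proof is the one-line observation that the geometric realization construction goes through verbatim in genuine $G$-spectra, with the only non-formal input being the $G$-equivariance of the $\cJ$-module structure maps, established in the preceding proposition via the equivariant neat embedding theorem. Your expansion of the formal steps (trivial action on $\cJ(i,j)$ and $[0,\infty]$, levelwise coequalizers in orthogonal $G$-spectra, the equivariant version of \Cref{lem:suspendedrealization}) is an accurate unpacking of what the paper leaves implicit.
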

The equivariant framings for bimodules and homotopies are defined similarly. The proofs of \Cref{prop:inducedmap} and \Cref{prop:homotopyinduced} hold in the equivariant setting too, giving us equivariant maps and homotopies. 

For a $G$-space $P$, a $P$-relative $G$-equivariant module $\cN$ over $\cM$ is defined as before, one additionally requires $\cN(i)$ and the structure maps to be $G$-equivariant. Given $G$-equivariant virtual bundle $\nu$ on $P$, one can define similar framings, to obtain a $G$-equivariant map $|\cM|\to P^\nu$ as in \Cref{prop:relative-module}.

\section{The stable equivariant homotopy type of a manifold via Morse--Bott theory}\label{sec:equivariantmorse}

Let $M$ be a smooth, oriented, compact manifold and $G$ be a compact Lie group acting on $M$. The purpose of this section is to construct $\Sigma^+_\infty(EG \times M)$ as a genuine $G$-spectrum using Morse(--Bott) theory. In particular, we also obtain a Morse-theoretic model for $M$ as a Borel $G$-spectrum. 

Typically, it is hard to find $G$-equivariant Morse functions on $M$, and even if they exist, finding Morse--Smale pairs is harder. As a result, we will apply the Borel construction to construct the equivariant stable homotopy type. In other words, we will construct ``$EG\times M$'' using Morse theory.

We start by fixing a model for $EG$ having certain desirable properties. For the remainder of this section, $EG$ denotes a fixed contractible topological space endowed with a free $G$-action. We assume that there is an exhaustion $EG= \bigcup_{n \geq 1} E_nG$ where the $E_nG$ are smooth compact manifolds on which $G$ acts freely, and $E_nG \hookrightarrow E_{n+1}G$ is a smooth $G$-equivariant inclusion. As a result, $BG:=EG/G$ is filtered by $B_nG:=E_nG/G$. We also require there exists a Morse--Smale pair $(f,g)$ on the ind-manifold $BG$. In other words, 
\begin{enumerate}
	\item $f:BG\to\bR$ is a function such that $f|_{B_nG}$ is smooth and Morse for all $n$. 
	\item $g$ is a metric on $BG$ (i.e.\ a metric on each $B_nG$ that is compatible with the inclusions) such that $(f|_{B_nG},g|_{B_nG})$ is Morse--Smale for all $n$.
	\item the gradient vector field of $f|_{B_nG}$ and $f|_{B_{n+1}G}$ agree on ${B_nG}$ (i.e.\ $df$ vanishes in normal directions ${B_{n}G}\subset {B_{n+1}G}$).
	\item the critical points satisfy $crit(f|_{B_nG})\subset crit(f|_{B_{n+1}G})$ (which follows from the previous condition) and the indices of critical points are stable (i.e. the Hessian is positive in normal directions ${B_{n}G}\subset {B_{n+1}G}$).
\end{enumerate}
The fourth condition implies that the only negative gradient trajectories from a critical point in $B_nG$ are to these in $B_nG$, i.e.\ the negative gradient flow decreases the filtration level $n$. These conditions essentially guarantee that the Morse theory of $f|_{B_nG}$ is embedded into that for $f|_{B_{n+1}G}$. More precisely, the corresponding flow categories are nested by inclusions in the sense of \Cref{defn:inclusion} (indeed, in this case, each $B_nG$ can be equipped with framings compatible with the inclusions). 

The pullback $\tilde f:EG\to \bR$ is automatically Morse--Bott. Moreover, by fixing a connection on the principal $G$-bundle $EG\to BG$ (i.e.\ a nested sequence of equivariant, horizontal splittings for each $T_{E_nG}$), one can guarantee that $g$ lifts to a $G$-equivariant metric on $EG$ whose restriction to each $E_nG$ is Morse--Bott--Smale (cf.\ also \cite[Ex.\ 2.5]{latschev2000gradient}). 

The following example is standard, and will be used in \Cref{subsubsection:equivariant-SH-construction} for constructing $S^1$-equivariant closed-string Floer homology. 
\begin{exmp}\label{exmp:circlemorsebott}
Let $G=S^1$, and $EG=S^\infty$. Let $(z_1,z_2,\dots)$ denote the complex coordinates. Define $\tilde f:=|z_1|^2+2|z_2|^2+3|z_3|^2+\dots$. This descends to a Morse function on $\bC\bP^\infty$ and one can construct a metric satisfying the desired conditions inductively. 
For each $i$, the circle given by the set of points such that $z_j=0$ if $j\neq i$ is the critical manifold of index $2(i-1)$, and there is no critical point of odd index. 
\end{exmp}
\begin{exmp}\label{exmp:quoternion1morse}
In the same spirit, let $G=SU(2)=S^3$ and let $EG=S^\infty\subset\bH^\infty$. Let $(w_1,w_2\dots)$ denote the quaternionic coordinates and define $\tilde f=|w_1|^2+2|w_2|^2+3|w_3|^2+\dots$. The critical manifolds of $\tilde f$ are similar, and they are copies of $G=S^3$. The corresponding indices are $4(i-1)$. 
\end{exmp}
\begin{exmp}\label{exmp:unitarymorse}
Let $G=U(k)$ and $EG=V(k,\bC^\infty)$, the space of orthonormal frames. The quotient is the infinite Grassmanian $Gr(k,\bC^\infty)$, and one can describe various Morse functions on it. The following is taken from \cite{morsegrassmaniannotesmgualt}: a dimension $k$ subspace of $\bC^\infty$ can be realized as a rank $k$ projection operator $P:\bC^\infty\to \bC^\infty$ satisfying $P^2=P$, $P^*=P$, and as an infinite matrix, $P$ has finitely many non-vanishing coefficients. If $(v_1,\dots, v_k)$ is an orthonormal frame of the corresponding subspace, $P=\sum_{i=1}^{k}v_iv_i^*$. Let $Z:\bC^\infty\to \bC^\infty$ be given by $diag(1,2,3,4,\dots)$ (or any positive increasing sequence) and define $f(P)=tr(PZ)$. Explicitly, if $v_i$ has coefficients $z_{ij}$, $f(P)=\sum_{i=1}^{k}\sum_j j|z_{ij}|^2$. The critical points of the function on $Gr(k,\bC^\infty)$ are given by subspaces spanned by $k$ of the standard basis elements $e_i$, and the index of such a critical point spanned by $(e_{i_1},\dots,e_{i_k})$ is given by $(i_1-1)+(i_2-2)+(i_3-3)+\dots$, and it is the dimension of the corresponding Schubert cell. 
See \cite{morsegrassmaniannotesmgualt}. One can inductively construct an appropriate Morse--Smale metric on $Gr(k,\bC^\infty)$, and a lift to a pair on $V(k,\bC^\infty)$.
\end{exmp}
For an arbitrary compact group $G$, one can pick an embedding $G\hookrightarrow U(k)$, and inductively construct a Morse function on $V(k,\bC^\infty)/G$ and a metric, satisfying the desired properties.
\begin{rk}
One can presumably combine the ideas in \Cref{exmp:quoternion1morse} and \Cref{exmp:unitarymorse} to describe explicit Morse functions for $G=Sp(k)$.
\end{rk}
\begin{note}
The finite Grassmanians can be obtained by Hamiltonian reduction of a faithful representation of $U(k)$ on $\bC^{k+n}$. This could possibly be used to give a more explicit description of Morse--Bott functions on $EG$ of desired type. Namely, by picking a directed system of unitary (hence, symplectic) representations $\{V_i\}_{i\in I}$ and inclusions, and compatible Hamiltonians $V_i\to Lie(G)^*$, one can possibly construct a model for $BG$ as the colimit of reduction, and use the Morse theory on symplectic reduction to endow $BG$ with such a Morse function. 
\end{note}
Assume we are given such $\tilde f:EG\to\bR$ with a suitable metric. We will use this to describe the equivariant stable homotopy type of a compact manifold via Morse theory: let $M$ be a closed manifold with $G$-action. Choose a smoothly varying family $(h_s,g_s)$, $s\in EG$ of functions $h_s:M\to\bR$ and $g_s$ a metric on $M$. We require that
\begin{enumerate}
	\item $(h_s,g_s)$ is fixed under the $G$-action, i.e. $g_*(h_s,g_s)=(h_{gs},g_{gs})$
	\item $(h_s,g_s)$ is Morse--Smale when $s\in crit(\tilde f)$ and with respect to a $G$-equivariant tubular neighborhood of a critical set $X\cong G$, it depends only on the $X$-coordinate
	\item given $s_0,s_1\in crit(\tilde f)$, $x_0\in crit(h_{s_0})$, $x_1\in crit(h_{s_1})$, the moduli space of $-grad_{g_s}(h_s)$ trajectories from $x_0$ to $x_1$ is cut out transversally
\end{enumerate}
One can also assume $h_s$ is $C^1$-small, so that $\tilde f+\{h_s\}:EG\times M\to \bR$ is Morse--Bott. Instead, we formally imitate Morse theory on the product, by considering pairs of trajectories.

Define a Morse--Bott (ind-)flow category $\cM^G_{\tilde f, h_s}$ whose objects are given by pairs $(s,x)$, where $s\in crit(\tilde f)$, $x\in crit(h_s)$. The objects are $G$-torsors. The morphisms are given by pairs $(s(t),\gamma(t))$, where $s(t)$ is a negative gradient trajectory of $\tilde f$, and $\gamma(t)$ is a solution to $\frac{d}{dt}\gamma=-grad_{g_{s(t)}}(h_{s(t)})$. We will will explain in \Cref{sec:gluing} how to construct smooth charts making $\cM^G_{\tilde f, h_s}$ into a $G$-equivariant Morse--Bott flow category by the assumptions above.  One can endow it with $G$-equivariant framings analogous to \Cref{exmp:morsebottfunction} and \Cref{example:flow-cat-act}. Therefore, we have a genuine $G$-equivariant spectrum $|\cM^G_{\tilde f, h_s}|$. 

Our next goal is to prove the following equivariant analogue of the Cohen--Jones--Segal theorem:
\begin{thm}\label{thm:equivariantcjs}
$|\cM^G_{\tilde f, h_s}|$ is homotopy equivalent to $\Sigma^\infty(EG\times M)_+$ as a genuine $G$-spectrum (therefore, homotopy equivalent to $\Sigma^\infty_+M$ as a ``homotopy/Borel'' $G$-spectrum). 
\end{thm}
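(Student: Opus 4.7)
The strategy is to imitate the proof of \Cref{thm:morsebottrecovers} using the relative-module formalism. I would first construct the analogue of the standard module: let $\cN$ be the $G$-equivariant $(EG \times M)$-relative module over $\cM^G_{\tilde f, h_s}$ whose space $\cN(s,x)$ at a critical pair $(s,x)$ is the compactified moduli of pairs $(\sigma,\gamma)$, where $\sigma:(-\infty,0]\to EG$ satisfies $\dot\sigma=-\mathrm{grad}_g \tilde f$ with $\lim_{t\to-\infty}\sigma(t)=s$, and $\gamma:(-\infty,0]\to M$ satisfies $\dot\gamma=-\mathrm{grad}_{g_{\sigma(t)}}h_{\sigma(t)}$ with $\lim_{t\to-\infty}\gamma(t)=x$. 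The evaluation sends $(\sigma,\gamma)\mapsto(\sigma(0),\gamma(0))$, and the framing comes as in \Cref{exmp:morsebottfunction} from the descending bundle of $(s,x)$ for the coupled flow. Equivariance is inherited from the $G$-invariance of $(\tilde f, g)$ and of the family $(h_s, g_s)$. By the $G$-equivariant version of \Cref{prop:relative-module}, $\cN$ induces a map of genuine $G$-spectra
\[
\Phi : |\cM^G_{\tilde f, h_s}| \longrightarrow \Sigma^\infty(EG\times M)_+.
\]

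Next I would argue $\Phi$ is an equivalence of genuine $G$-spectra. Because $G$ acts freely on $EG$, every object of $\cM^G_{\tilde f,h_s}$ is a $G$-torsor, and the morphism spaces/framing bundles fiber $G$-equivariantly over free $G$-spaces. Therefore both $|\cM^G_{\tilde f, h_s}|$ and $\Sigma^\infty(EG\times M)_+$ are built from free $G$-cells, and their genuine $G$-fixed points for any nontrivial $H\le G$ are trivial. Hence it suffices to show that $\Phi$ is a non-equivariant equivalence.

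For the non-equivariant claim I would run a filtration argument parallel to \Cref{thm:morsebottrecovers}. Use the ind-structure coming from $EG=\bigcup E_nG$: the stability assumptions on $(\tilde f,g)$ ensure that critical points in $E_nG$ are closed under the flow, so $\cM^G_{\tilde f|_{E_nG}, h_s}\hookrightarrow \cM^G_{\tilde f, h_s}$ is an inclusion of flow categories in the sense of \Cref{defn:inclusion}, and $\cN$ restricts to an $(E_nG\times M)$-relative module. Since geometric realization commutes with these inclusions and $\Sigma^\infty(EG\times M)_+=\colim_n \Sigma^\infty(E_nG\times M)_+$, it suffices to prove that $\Phi_n:|\cM^G_{\tilde f|_{E_nG}, h_s}|\to\Sigma^\infty(E_nG\times M)_+$ is an equivalence for each $n$. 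For this I would filter further by the value of $\tilde f$ at $s$, and then within a level set of $\tilde f$ by the value of $h_s$ at $x$; correspondingly filter $E_nG\times M$ by the sublevel sets of an auxiliary function built from $\tilde f$ and $h_s$. On each subquotient we are reduced to a single critical pair $(s,x)$, where exactly the local argument in \Cref{thm:morsebottrecovers} applies: the relative Pontryagin--Thom map collapses the geometric realization of the subquotient onto the Thom space of the descending bundle of $(s,x)$ in $E_nG\times M$, which is a homotopy equivalence by standard Morse theory applied to the coupled flow.

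\textbf{Main obstacle.} I expect the technical heart to be verifying that the coupled moduli spaces $\cN(s,x)$ and the morphism spaces of $\cM^G_{\tilde f, h_s}$ really do form framed, $G$-equivariant, $\langle k\rangle$-manifolds (and, in the case of $\cN$, the correct relative framing with $\nu=0$), with the expected boundary structure. Because $h_s$ varies with $s$ rather than being a global Morse--Bott function on $EG\times M$, one cannot simply quote \Cref{exmp:morsebottfunction}; instead one must build smooth charts for the coupled pairs of trajectories and verify the framing compatibility conditions of \Cref{definition:stable-framing} --- this is precisely the gluing analysis deferred to \Cref{sec:gluing}. Once those foundations are in place, the filtration argument and the collapse of subquotients onto descending cells proceed as outlined.
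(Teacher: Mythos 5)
Your construction of the relative module $\cN$ and the induced map $\Phi$ is exactly the paper's, and the overall filtration strategy matches; the difference lies in how genuineness of the equivalence is established. You observe up front that $G$ acts freely on $EG$, so both $|\cM^G_{\tilde f, h_s}|$ (built from Thom spectra over $G$-torsors, smashed with trivial-action spaces $\cJ(i,j)$ and realized by colimits) and $\Sigma^\infty(EG\times M)_+$ have trivial geometric fixed points for every nontrivial closed subgroup; hence a genuine equivalence can be detected on underlying spectra, and the rest of your argument is purely non-equivariant. The paper instead keeps the equivariance inside the filtration: it filters only by the value of $\tilde f$, identifies the subquotient at a critical $G$-torsor $X$ as $X^{T^dX}\wedge|\cM_{h_{s_0}}|\to X^{T^dX}\wedge\Sigma^\infty M_+$, and notes that since the $G$-action lives entirely in the induced factor $X^{T^dX}$, the non-equivariant Cohen--Jones--Segal equivalence $|\cM_{h_{s_0}}|\simeq\Sigma^\infty M_+$ upgrades to a genuine one on each subquotient. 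Your route buys a clean separation of the homotopy-theoretic input (freeness) from the Morse theory, at the cost of having to justify the global freeness of the realization including the desuspensions by representations appearing in the virtual framing bundles; the paper's route avoids any global statement about the realization but reproves the freeness observation locally on each subquotient. Your proposed second, finer filtration by the value of $h_s$ within a level of $\tilde f$ is also fine (it is the argument of \Cref{thm:morsebottrecovers} run for the coupled flow), though the paper gets away without it by smashing with the whole of $|\cM_{h_{s_0}}|$ at once; if you do filter further, note as the paper does that $\tilde f+h_s$ is only formally a Morse--Bott function on $EG\times M$, so the auxiliary filtration of the sublevel sets should be phrased via the coupled flow (or via $\tilde f+\epsilon h_s$ for $h_s$ $C^1$-small). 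Your identification of the main technical obstacle --- the smooth chart and framing verification for the coupled moduli spaces --- is also the one the paper defers to \Cref{sec:gluing} (see \Cref{prop:Geqflowcategory}).
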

As a warm-up, we prove a similar statement in the presence of $G$-equivariant Morse--Bott--Smale pairs. Notice that, if $N$ is a $G$-equivariant closed manifold, where $G$ acts freely, such pairs can be obtained by considering Morse--Smale pairs $(f,g)$ on $N/G$, pulling back $f$, and lifting $g$. More precisely, choose a connection on the principal bundle $N\to N/G$ to lift $g$ to the horizontal tangent bundle, and extend orthogonally to the entire $T_N$ using a biinvariant metric on $G$. 
In the presence of a $G$-equivariant Morse--Bott--Smale pair on $N$, \emph{the standard framed $N$-relative module} that associates every object its descending manifold is also $G$-equivariant. We prove:
\begin{prop}\label{prop:mbscjs}
Let $N$ be a $G$-equivariant closed manifold equipped with a $G$-equivariant Morse--Bott--Smale pair. Then the standard $N$-relative module on the corresponding flow category induces a $G$-equivariant homotopy equivalence of genuine equivariant spectra.	
\end{prop}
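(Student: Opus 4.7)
The plan is to run the proof of \Cref{thm:morsebottrecovers} equivariantly, exploiting the fact that every piece of data in sight is manifestly $G$-equivariant. Since the Morse--Bott function $f$ and the metric $g$ are both $G$-invariant, the sublevel sets $N_p := f^{-1}(-\infty, p]$ are $G$-invariant closed subspaces of $N$, and the full subcategories $F^p \cM_f \subset \cM_f$ spanned by critical manifolds with critical value $\leq p$ are $G$-invariant. Similarly, the standard $N$-relative module $\cN$ (which assigns to a critical point its descending manifold) is $G$-equivariant, because the negative gradient flow of a $G$-equivariant Morse--Bott--Smale pair is $G$-equivariant. Consequently the map $|\cM_f| \to \Sigma^\infty N_+$ produced by \Cref{prop:relative-module} is a morphism of genuine $G$-spectra that respects the two filtrations.

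To show the map is an equivalence of genuine $G$-spectra, I would first reduce to showing the induced map on each successive subquotient $F^p/F^{p-\epsilon}$ is an equivalence of genuine $G$-spectra: this follows by induction on the (finite) set of critical values together with the cofiber sequence obtained from the $G$-equivariant inclusion of closed subspectra. Choose $\epsilon$ small enough that the interval $[p-\epsilon, p]$ contains at most one $G$-orbit of critical manifolds. If it contains none, both subquotients are contractible and there is nothing to check. Otherwise, let $X_{i_0}$ denote the (possibly disconnected) critical set at level $p$, which is $G$-invariant. The same simplicial-degree analysis as in the proof of \Cref{thm:morsebottrecovers} shows the induced map on subquotients is the composition
\begin{equation*}
\Sigma^{-a}X(i_0)\wedge \bigl(\cJ(i_0,-a-1)/\partial\cJ(i_0,-a-1)\bigr) \longrightarrow \cN(i_0)^\nu/\partial\cN(i_0)^\nu \longrightarrow N_p^\nu/N_{p-\epsilon}^\nu,
\end{equation*}
and every space and arrow in this composition is $G$-equivariant.

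The first arrow is the relative Pontryagin--Thom collapse for the $G$-equivariant embedding of $\cN(i_0)$ into the total space of the descending bundle $V_{i_0}\times J_q(i_0,-a-1)$, which is a $G$-equivariant homotopy equivalence. The second arrow is an equivalence by $G$-equivariant Morse--Bott theory: under the $G$-equivariant Morse--Bott--Smale hypothesis, the negative gradient flow provides a $G$-equivariant deformation retraction of $N_p$ onto $N_{p-\epsilon}\cup \overline{\cN(i_0)}$, identifying $N_p^\nu/N_{p-\epsilon}^\nu$ with the Thom spectrum of the descending bundle over $X_{i_0}$ twisted by $\nu$, which is exactly $\cN(i_0)^\nu/\partial\cN(i_0)^\nu$. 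Both steps are equivalences of genuine $G$-spectra, because $G$-equivariant maps of $G$-CW spectra that are non-equivariant equivalences at the level of every $H$-fixed set are genuine equivalences; here the $H$-fixed-point version of everything (the $H$-fixed critical manifold, the $H$-equivariant descending bundle, the $H$-invariant sublevel sets) plugs directly into the same argument.

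The main technical obstacle to be careful about is the $G$-equivariant deformation retraction of $N_p$ onto $N_{p-\epsilon}$ union the closed descending cell, which is precisely where the $G$-equivariant Morse--Bott--Smale hypothesis is used: without it one could not integrate the negative gradient flow $G$-equivariantly past a critical manifold. Given this input, the rest of the argument is a $G$-equivariant transcription of the proof of \Cref{thm:morsebottrecovers}, with the filtration providing the inductive scaffolding to pass from the subquotient-level equivalences to a global equivalence of genuine $G$-spectra.
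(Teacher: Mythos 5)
Your proposal is correct and follows exactly the paper's own route: the paper's proof simply observes that the argument of \Cref{thm:morsebottrecovers} goes through verbatim because the standard relative module, the filtration by critical values, and the subquotient maps are all $G$-equivariant, with the subquotient maps being genuine equivalences. Your write-up supplies the details the paper leaves implicit (the equivariant Pontryagin--Thom collapse and the equivariant deformation retraction onto the descending set), and these are the right details to supply.
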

Note that the subtlety here is in establishing a genuine $G$-equivariant homotopy equivalence: a $G$-equivariant map that is also a homotopy equivalence is not necessarily a genuine $G$-equivariant homotopy equivalence: the geometric fixed points may be different.
\begin{proof}
This is almost identical to the proof of \Cref{thm:morsebottrecovers}. Indeed, the relative module constructed there is $G$-equivariant. Since the filtrations are compatible with the $G$-action and the induced maps on the subquotients are equivalences of genuine $G$-spectra, the conclusion follows. 
\end{proof}
By using similar ideas, we show:
\begin{proof}[Proof of \Cref{thm:equivariantcjs}]
Let $\cN$ be the equivariant $EG\times M$-relative module that associates to an object $(s,x)$ its descending manifold. 
Explicitly, $\mathcal{N}(s,x)$ is the compactification of the moduli of pairs $(\beta,\gamma)$, where:
\begin{itemize}
    \item $\beta: (-\infty, 0] \to EG$ is a half-gradient trajectory of $-\tilde f$, i.e. it satisfies $ \frac{d}{dt}\beta+grad \tilde{f}( \beta)=0$ and $\lim_{t \to -\infty} \beta(t)= s$ 
    \item $\gamma: -(-\infty, 0] \to M$  is a ``half-gradient trajectory relative to the base'', i.e. it satisfies $ \frac{d}{dt} \gamma+ grad_{g_{\tilde{s}}}(h_{\tilde{s}})=0$ and $\lim_{t \to -\infty} \gamma(t)= x$ 
\end{itemize}
The evaluation map to $EG\times M$ is given by sending such a trajectory to the end-point $(\beta(0),\gamma(0))$. 

It is easy to see that $\cN$ is $G$-equivariant, and it can be equivariantly framed analogously to \Cref{subsec:applcjsmorse}, with $\nu$ given by $0$. 
%
\Cref{prop:relative-module} thus produces a $G$-equivariant map \begin{equation}\label{eq:inducedeqmap}
	|\cM^G_{\tilde f, h_s}|\to \Sigma^\infty(EG\times M)_+
\end{equation}
More precisely, (the restriction of) $\cN$ induces maps $|\cM^G_{\tilde f,n, h_s}|\to \Sigma^\infty(E_nG\times M)_+$, where $\cM^G_{\tilde f,n, h_s}$ is the flow category obtained by restricting $\tilde f$ to $E_nG$, and \eqref{eq:inducedeqmap} is produced by taking colimit $n\to\infty$. Therefore, it suffices to prove the claim for each $n$. 
%

The proof is similar to that of \Cref{thm:morsebottrecovers}. Filter both $|\cM^G_{\tilde f,n, h_s}|$ and $\Sigma^\infty(EG\times M)_+$ by the value of $\tilde f$. Let the $G$-torsor $X$ be the critical set of $\tilde f$ at level $p$. The subquotients of $|\cM^G_{\tilde f,n, h_s}|$ do not see the non-constant trajectories of $\tilde f$, and are homotopy equivalent to the $G$-spectrum $X^{T^dX}\wedge |\cM_{h_{s_0}}|$. 
Here $s_0\in X$, and recall $h_s$, $s\in X$ can be obtained by applying $G$-action to $h_{s_0}$ due to equivariance. The subquotients on the right hand side are the same as $X^{T^dX}\wedge \Sigma^\infty M_+$ and the $G$-action comes from $X^{T^dX}$ alone. $|\cM_{h_{s_0}}|\to  \Sigma^\infty M_+$ at $s_0$ is an equivalence by \cite{cohen1995floer} or by \Cref{thm:morsebottrecovers}, which implies that $G$-equivariant map $X^{T^dX}\wedge |\cM_{h_{s_0}}|\to X^{T^dX}\wedge \Sigma^\infty M_+$ is a genuine equivalence. This finishes the proof. 
%
%
\end{proof}

One can prove Morse--Bott and twisted versions of \Cref{thm:equivariantcjs} similar to \Cref{thm:morsebottrecovers}. The proof is a combination of these two statements. Namely, in the definition of $\cM_{\tilde f,h_s}^G$, we can allow $(h_s,g_s)$ to be Morse--Bott--Smale for $s\in crit(\tilde f)$. Even this is stronger than what we need: we can allow $h_s$ to be Morse--Bott, and instead of requiring the pair to be Morse--Bott, we only assume $g_s$ is generic so that transversality holds. Given $s\in crit(\tilde f)$ and $x\in crit(h_s)$, index $(s,x)$ by the value of $\tilde f+\{h_s\}$ as in \Cref{example:flow-cat-act}. Call the resulting flow category $\cM_{\tilde f,h_s}^{G,act}$. Frame it analogous to $\cM_{\tilde f,h_s}^G$, except add the $G$-equivariant virtual bundle $\nu$ over $M$ to each index bundle. In this case we have:
\begingroup
\def\thethm{\ref*{thm:equivariantcjs}$'$}
\begin{thm}\label{thm:equivariantcjsalternative}
$|\cM^{G,act}_{\tilde f, h_s}|$ is homotopy equivalent to $(EG\times M)^\nu\simeq EG_+\wedge M^\nu$ as a genuine $G$-spectrum (therefore, homotopy equivalent to $M^\nu$ as a ``homotopy/Borel'' $G$-spectrum). 
\end{thm}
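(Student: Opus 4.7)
The plan is to combine the strategy from the proof of \Cref{thm:equivariantcjs} with the Morse--Bott filtration argument used to prove \Cref{thm:morsebottrecovers}. First I would define a $G$-equivariant framed $(EG\times M)$-relative module $\cN^{act}$ on $\cM^{G,act}_{\tilde f, h_s}$ by the standard recipe: to an object $(s,X)$ (with $s\in crit(\tilde f)$ and $X\subset crit(h_s)$) assign the compactified moduli of pairs $(\beta,\gamma)$, where $\beta:(-\infty,0]\to EG$ is a half trajectory of $-grad\,\tilde f$ asymptotic to $s$, and $\gamma:(-\infty,0]\to M$ satisfies $\tfrac{d}{dt}\gamma+grad_{g_{\beta(t)}}h_{\beta(t)}=0$ with $\lim_{t\to-\infty}\gamma(t)\in X$, evaluating at $(\beta(0),\gamma(0))\in EG\times M$. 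The framing is the natural one with index bundle $V_s\oplus V_X\oplus \nu$, where $\nu$ is pulled back along the evaluation map to $M$. By the equivariant version of \Cref{prop:relative-module}, this produces a $G$-equivariant morphism $\Phi:|\cM^{G,act}_{\tilde f, h_s}|\to (EG\times M)^\nu$ of genuine $G$-spectra.

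Next, I would restrict to $E_nG$ (so that all flow categories in sight are finite) and reduce to showing that the restricted map $\Phi_n$ is a genuine equivariant equivalence, since the full statement then follows by taking the colimit over $n$. Filter both $|\cM^{G,act}_{\tilde f, n, h_s}|$ and $((E_nG)\times M)^\nu$ by the value of $\tilde f$. The module $\cN^{act}$ manifestly respects these filtrations (half trajectories of $\tilde f$ do not increase $\tilde f$), so $\Phi_n$ is a filtered map, and it suffices to show that the induced map on each subquotient is a genuine $G$-equivalence.

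The core computation is the subquotient at a critical value $p$ of $\tilde f$, with critical manifold $X_p\subset E_nG$ (a $G$-torsor). On the target, standard Morse--Bott theory identifies this subquotient with $X_p^{V_s}\wedge M^\nu$. On the source, in the subquotient only objects $(s,X)$ with $s\in X_p$ survive, and only trajectories along which $\beta$ is constant contribute (since there is no non-constant $\tilde f$-trajectory of critical value $p$). Hence the subquotient flow category is $G$-equivariantly the ``parametrized Morse--Bott activation flow category'' of $h_{s_0}$ fibered over $X_p$, and its geometric realization identifies $G$-equivariantly with $X_p^{V_s}\wedge |\cM^{act}_{h_{s_0},\nu}|$, where $\cM^{act}_{h_{s_0},\nu}$ is the activation flow category of $h_{s_0}$ framed by $V_X\oplus \nu$. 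Applying \Cref{thm:morsebottrecovers} to $\cM^{act}_{h_{s_0},\nu}$ gives $|\cM^{act}_{h_{s_0},\nu}|\simeq M^\nu$; moreover, the $\cM^{act}_{h_{s_0},\nu}\to M^\nu$ equivalence of that theorem is itself induced by the standard $M$-relative module of half $h_{s_0}$-trajectories, which is exactly what the restriction of $\cN^{act}$ to a fiber becomes. Thus the map on subquotients is $\operatorname{id}_{X_p^{V_s}}\wedge(\text{the equivalence of }\Cref{thm:morsebottrecovers})$ parametrized $G$-equivariantly over $X_p$, hence a genuine $G$-equivalence.

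The hard part will be the equivariant bookkeeping in the subquotient identification: matching, on the nose or at least coherently, the total framing of $\cM^{G,act}_{\tilde f, h_s}$ (activation indexing, $\nu$-twist, descending bundles $V_s$ and $V_X$) with the parametrized framing of $\cM^{act}_{h_{s_0},\nu}$ over the $G$-torsor $X_p$, and verifying that the resulting subquotient equivalence is genuine rather than merely Borel equivariant. Once that is dispatched, colimiting over $n$ yields the equivalence $\Phi\simeq $ equivalence of genuine $G$-spectra, and the Borel statement then follows formally because $EG_+\wedge M^\nu$ is the Borel equivariant Thom spectrum of $M^\nu$.
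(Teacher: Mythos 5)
Your proposal is correct and is essentially the paper's intended argument: the paper only remarks that the proof ``is a combination'' of the proofs of \Cref{thm:morsebottrecovers} and \Cref{thm:equivariantcjs}, and your write-up carries out exactly that combination (standard relative module of half-trajectory pairs, filtration by the value of $\tilde f$, identification of subquotients with $X_p^{T^dX_p}\wedge|\cM^{act}_{h_{s_0},\nu}|$, and genuineness via the free $G$-action on the torsor factor).
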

\addtocounter{thm}{-1}
\endgroup
As an application of this, we can prove an equivariant Morse--Bott inequality. List the indices of the critical points of $\tilde f$ in increasing order $j_0,j_1,\dots$ (numbers can be repeated). Consider a sequence of Morse--Bott functions $h_0,h_1,\dots: M \to \mathbb{R}$ and let $X_{i,j_k} \subset M$ denote the index $i$ critical set of $h_k$ and $V_{i,j_k}$ denote the corresponding index bundle over $X_{i,j_k}$. Then we have
\begin{cor}\label{cor:eqmbineq}
Let $R$ be an extraordinary cohomology theory as in \Cref{corollary:betti-bound}, and further assume it is bounded below. Then,  $rk(R^G_*(M^\nu))\leq \sum\limits_{i,k} rk(R_{*-j_k}(X_{i, j_k}^{V_{i, j_k}+\nu}))$. In particular, when each $V_i$ is $R$-orientable
\begin{equation}\label{eq:eqmbineq}
rk(R^G_*(M_+))\leq \sum_{i,k} rk(R_{*-j_k-rk(V_{i,j_k})}(X_{i, j_k}))	
\end{equation}
\end{cor}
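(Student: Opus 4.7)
The plan is to run the same filtration argument as in \Cref{corollary:betti-bound}, but applied to the equivariant flow category $\cM^{G,act}_{\tilde f, h_s}$ and its geometric realization, and then to take the Borel quotient to compute $R^G_*$. By \Cref{thm:equivariantcjsalternative}, there is an equivalence
\begin{equation*}
|\cM^{G,act}_{\tilde f, h_s}| \simeq (EG \times M)^\nu
\end{equation*}
of genuine $G$-spectra. Smashing with $EG_+$ over $G$ and applying $R$-homology therefore identifies $R^G_*(M^\nu)$ with $R_*$ of the Borel quotient of $|\cM^{G,act}_{\tilde f, h_s}|$.

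Next, I would filter the ind-flow category $\cM^{G,act}_{\tilde f, h_s}$ by the activity level. By construction, the critical manifolds of the combined Morse--Bott data sit over individual critical manifolds $G \cdot \tilde{s}_k \subset EG$ of $\tilde f$: the lift of the $k^{\text{th}}$ critical point of $f:BG\to\bR$ is a free $G$-orbit, and the hypothesis that $h_s$ depends only on the fiber coordinate in an equivariant neighborhood of $G \cdot \tilde{s}_k$ identifies the set of pairs $(s,x)$ with $s \in G\cdot \tilde{s}_k$ and $x \in X_{i,j_k}$ as $G$-equivariantly diffeomorphic to $G \times X_{i,j_k}$ (with $G$ acting on the first factor). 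The framing bundle over this critical manifold is $V^{desc}_{s_k} \oplus V_{i,j_k} \oplus \nu$, where $V^{desc}_{s_k}$ is the rank-$j_k$ descending bundle of $\tilde f$, which is $G$-equivariantly trivial on the free $G$-orbit. By \Cref{lem:singledegree} (applied on the subquotients of the activity filtration) each subquotient of $|\cM^{G,act}_{\tilde f, h_s}|$ is a genuine $G$-equivariant Thom spectrum of the form $(G\times X_{i,j_k})^{V^{desc}_{s_k}\oplus V_{i,j_k}\oplus \nu}$, possibly shifted.

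Taking the Borel quotient of such a subquotient produces the (non-equivariant) Thom spectrum $X_{i,j_k}^{V_{i,j_k}+\nu}$ shifted by $j_k$ (since $V^{desc}_{s_k}$ becomes trivial of rank $j_k$ in the quotient). Hence its $R$-homology in degree $*$ is $R_{*-j_k}(X_{i,j_k}^{V_{i,j_k}+\nu})$. Now subadditivity of rank (or of $\dim_t$) in the long exact sequence of a cofiber pair, applied inductively to the filtration, yields
\begin{equation*}
rk\bigl(R_*\bigl((EG \times M)^\nu / G\bigr)\bigr) \leq \sum_{i,k} rk\bigl(R_{*-j_k}(X_{i,j_k}^{V_{i,j_k}+\nu})\bigr),
\end{equation*}
which is the first inequality. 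When the $V_{i,j_k}$ are $R$-orientable, applying the Thom isomorphism to each summand (for $\nu=0$) gives the second inequality.

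The main subtlety I expect is verifying that the Borel construction behaves well with the activity filtration and that the colimit over $n$ (passing from each $E_n G$ to $EG$) commutes with rank estimates; since $R$ is assumed bounded below and the index $j_k \to \infty$, each degree of $R^G_*(M^\nu)$ receives contributions from only finitely many levels $k$, so the inequality passes to the colimit. The input \Cref{thm:equivariantcjsalternative} carries the genuine-equivariant content, and the present corollary only needs its image under the Borel functor.
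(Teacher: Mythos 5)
Your proposal is correct and follows essentially the same route as the paper: arrange the family $\{h_s\}$ so the critical manifolds are free $G$-orbits of the form $G\times X_{i,j_k}$ with framing $V_{i,j_k}+\nu+\underline{\bR}^{j_k}$ (the trivialized descending bundle of $\tilde f$), filter $\cM^{G,act}_{\tilde f,h_s}$ equivariantly so the subquotients are $\Sigma^{j_k}X_{i,j_k}^{V_{i,j_k}+\nu}\wedge G_+$, pass to homotopy quotients (an exact functor, so the filtration and the rank subadditivity survive), and invoke \Cref{thm:equivariantcjsalternative} to identify the total object with $(EG\times M)^\nu$ and hence $R^G_*(M^\nu)$. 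Your remarks on the bounded-below hypothesis and the colimit over $E_nG$ match the paper's reason for including that hypothesis, so there is nothing to add.
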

Recall, $R^G_*(M^\nu)$ is the equivariant homology, defined by $R_*(EG_+\wedge_G M^\nu)$, i.e. $R$ applied to the homotopy quotient. The extra bounded below condition is to ensure that \eqref{eq:eqmbineq} is a finite sum for a fixed degree $*$. 
\begin{proof}
Without loss of generality, one can increase each $h_k$ by a constant to ensure that whenever $k<k'$, $max(h_k)<min(h_{k'})$. One can first extend $h_k$ to an equivariant family of Morse--Bott functions parametrized by $crit(\tilde f)$, then extend to an $\{h_s\}$ as above. As a result, we obtain a flow category $\cM^{G,act}_{\tilde f, h_s}$, whose critical sets are isomorphic to $X_{i, j_k}\times G$	as $G$-torsors, and the corresponding framing bundle on it is given by $V_{i, j_k}+\nu+ \bR^{j_k}$. One can filter $\cM^{G,act}_{\tilde f, h_s}$ equivariantly with subquotients given by $X_{i, j_k}^{V_{i, j_k}+\nu}\wedge G^{\bR^{j_k}}=\Sigma^{j_k}X_{i, j_k}^{V_{i, j_k}}\wedge G_+$, and the homotopy quotient of this spectrum is given by $\Sigma^{j_k}X_{i, j_k}^{V_{i, j_k}}$. Taking homotopy quotients is an exact functor; hence, one can conclude as in the proof of \Cref{corollary:betti-bound}.
\end{proof} 
One can fix a nice model for $EG$ or let $h_0=h_1=\dots$ to obtain simpler statements. Denote the critical sets of $h_0$ by $X_i$ and the index bundle by $V_i$. Let $G=S^1$. Then we have 
\begin{cor}\label{cor:circleeqmbineq}
In the setting of \Cref{cor:mbineqs}
\begin{equation}
rk(R^{S^1}_*(M_+))\leq \sum_{k\geq 0,i} rk(R_{*-2k-rk(V_i)}(X_i))		
\end{equation}
\end{cor}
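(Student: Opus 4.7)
The plan is to deduce this as a direct specialization of \Cref{cor:eqmbineq} to $G = S^1$, using the explicit equivariant Morse--Bott model for $EG = S^\infty$ from \Cref{exmp:circlemorsebott}, together with the constant sequence of Morse--Bott functions $h_0 = h_1 = h_2 = \cdots$ on $M$.

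First, I would recall that in \Cref{exmp:circlemorsebott}, the Morse--Bott function $\tilde{f} = \sum_i i |z_i|^2$ on $S^\infty$ is $S^1$-equivariant, satisfies the conditions imposed on $EG$ at the start of \Cref{sec:equivariantmorse}, and has critical manifolds given by $S^1$-orbits sitting at indices $0, 2, 4, \ldots$. Listing these indices in increasing order (with multiplicity) as in \Cref{cor:eqmbineq} thus gives $j_k = 2k$ for $k \geq 0$, with no odd indices occurring.

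Next, I would take the sequence of functions on $M$ in \Cref{cor:eqmbineq} to be the constant sequence $h_k = h_0$ for all $k$. Then the critical submanifolds and index bundles collapse to $X_{i,j_k} = X_i$ and $V_{i,j_k} = V_i$, independently of $k$. The hypothesis in the proof of \Cref{cor:eqmbineq} that $\max(h_k) < \min(h_{k'})$ for $k < k'$ can be arranged by adding increasing constants to each copy of $h_0$; adding a constant changes neither the critical submanifolds $X_i$ nor their index bundles $V_i$ nor the moduli spaces, so this does not affect the conclusion.

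Finally, substituting $j_k = 2k$, $X_{i,j_k} = X_i$, $V_{i,j_k} = V_i$ into inequality \eqref{eq:eqmbineq}, and noting that the hypotheses on $R$ and the $R$-orientability of the $V_i$ are inherited from the setting of \Cref{cor:mbineqs} as referenced in the statement, yields directly
\begin{equation*}
rk(R^{S^1}_*(M_+)) \leq \sum_{k\geq 0,\, i} rk\bigl(R_{*-2k-rk(V_i)}(X_i)\bigr),
\end{equation*}
which is the claimed bound. Since the corollary follows essentially formally from \Cref{cor:eqmbineq} together with the explicit index data of \Cref{exmp:circlemorsebott}, there is no substantive obstacle; the only mild point is ensuring that the constant sequence $h_k = h_0$ fits the combinatorial setup of \Cref{cor:eqmbineq}, which is handled by the shift argument above.
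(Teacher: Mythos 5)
Your proposal is correct and is exactly the derivation the paper intends: the corollary is stated immediately after the remark that one can ``fix a nice model for $EG$ or let $h_0=h_1=\dots$'', and your specialization of \Cref{cor:eqmbineq} to $G=S^1$ with the model of \Cref{exmp:circlemorsebott} (so $j_k=2k$) and the constant sequence $h_k=h_0$ is precisely that. Your handling of the $\max(h_k)<\min(h_{k'})$ condition by adding constants matches the device already used in the proof of \Cref{cor:eqmbineq}, so there is nothing further to check.
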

\begin{rk}
That the left side of \Cref{cor:eqmbineq} and \Cref{cor:circleeqmbineq} has equivariant cohomology and the right the ordinary cohomology can be seen as an instance of Atiyah--Bott localization. 
\end{rk}













\section{Genuine $S^1$-equivariant symplectic cohomology over the sphere spectrum}
Given a Liouville manifold $M$ satisfying appropriate topological assumptions, it is now possible thanks to work of Large \cite{largethesis} to define symplectic cohomology $SH(M; \mathbb{S})$ with coefficients in the sphere spectrum. This invariant expected to carry an ``$S^1$-action'' corresponding to loop rotation. The purpose of this section is to construct said action. 

In \cite{ganatracyclic}, Ganatra constructed a Borel $S^1$-action on symplectic cohomology $SH(M; k)$ with coefficients in an ordinary commutative ring $k$.\footnote{Recall from \Cref{section:equivariant-homotopy} that a genuine (resp.\ Borel) $G$-action on a spectrum $X$ is a lift of $X$ to $G \operatorname{Sp}$ (resp.\ to $\operatorname{Sp}^{BG}$) along the forgetful functors $G \operatorname{Sp} \to \operatorname{Sp}^{BG} \to \operatorname{Sp}$.} Presumably the same moduli spaces can be used to also endow $SH(M,\bS)$ with a Borel $S^1$-action. However, we will not take this route. 

Instead, we will construct a \emph{genuine} $S^1$-action on $SH(M; \mathbb{S})$ by using the Morse--Bott formalism developed earlier in the paper. More precisely, to a Liouville manifold $M$, we will associate an object $SH_S(M, \bS) \in S^1\operatorname{Sp}$ whose image under $S^1\operatorname{Sp} \to \operatorname{Sp}$ is $SH(M; \mathbb{S})$. Our construction is based on a Floer theoretic version of the ideas in \Cref{sec:equivariantmorse}, that were already used by Seidel and Bourgeois--Oancea to construct $S^1$-equivariant (ordinary) symplectic cohomology; see for instance \cite{bourgeois2009symplectic}.

For the construction of the $S^1$-equivariant spectral symplectic cohomology, we will rely on the general setup of Bourgeois--Oancea \cite[Sec.\ 2.2]{bourgeois2009symplectic} and will generally follow their notation. Morally, we will apply the construction of \Cref{sec:equivariantmorse} to the loop space of $M$, where the family of Morse--Smale pairs is replaced by a family of Floer data parametrized by $ES^1$. 

%
\subsection{Reminders on spectral symplectic cohomology}\label{subsubsec:reminderslarge}
In this section, we recall the construction of $SH(M,\bS)$, without the equivariance. We will follow \cite{largethesis}.

Let $M^{2n}$ be a Liouville manifold. Assume $M$ is polarizable, i.e. $TM$ is stably trivial as a symplectic bundle. Fix a stable symplectic trivialization of $TM$, i.e. an isomorphism of the symplectic bundles $TM\oplus \underline\bC^k\cong \underline\bC^{n+k}$. This data is called \emph{a background framing} by \cite{largethesis}. 

Let $H:S^1\times M\to \bR$ be a non-degenerate Hamiltonian, and assume it is linear at infinity with positive slope. Then, Large proves: 
\begin{thm}[{\cite[Theorem 6.12,\S8]{largethesis}}] Let $J:S^1\to \cJ(M)$ be a generic family of almost complex structures that is cylindrical at infinity. Then there exists a framed flow category $\cM_{H,J}$ such that
\begin{itemize}
		\item $ob(\cM_{H,J})$ is the (finite, discrete) set of time-$1$ orbits of $H$
		\item the index of $x\in ob(\cM_{H,J})$ is the Conley--Zehnder index
		\item for $x,y\in ob(\cM_{H,J})$, $\cM_{H,J}(x,y)$ is the set of (broken) Floer trajectories from $x$ to $y$
	\end{itemize}
\end{thm}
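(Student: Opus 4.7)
The plan is to verify the axioms of \Cref{defn:flowcategory} and \Cref{definition:stable-framing} for the data arising from Floer theory, with the framings coming from the background stable trivialization of $TM$. Since the Hamiltonian $H$ is non-degenerate and linear at infinity with positive slope, standard Floer theoretic arguments (maximum principle for cylindrical $J$, action bounds) show that the set of time-$1$ orbits is finite, discrete, and that the moduli spaces $\widetilde{\cM}_{H,J}(x,y)$ of parametrized Floer cylinders from $x$ to $y$ are smooth, finite-dimensional manifolds for generic $J$, of dimension $\mu_{CZ}(x) - \mu_{CZ}(y)$. Dividing by the $\mathbb{R}$-translation action yields manifolds of dimension $\mu_{CZ}(x) - \mu_{CZ}(y) - 1$. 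Gromov--Floer compactness gives compactifications $\cM_{H,J}(x,y)$ by broken trajectories, and the standard gluing theorem in Hamiltonian Floer theory (as carried out e.g.\ in \cite{largethesis} with the required smoothness) equips these compactifications with the structure of smooth $\langle \mu_{CZ}(x)-\mu_{CZ}(y)-1\rangle$-manifolds with corners, whose boundary strata are precisely the products $\cM_{H,J}(x,z) \times \cM_{H,J}(z,y)$ over intermediate orbits $z$, with composition maps given by concatenation and embedded as boundary faces. This establishes the underlying flow category structure together with the transversality condition \Cref{defn:flowcategory}(\ref*{item:iflow}).

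Next I would construct the stable framing. For each orbit $x$, fix a capping disk using the background framing to produce a stable symplectic trivialization of $x^*TM$ over $S^1$; this together with the linearized Floer operator $D_u$ at a cylinder $u$ from $x$ to $y$ determines, via the determinant line / index bundle construction, a virtual bundle identification
\begin{equation}
    \operatorname{ind}(D_u) \simeq V_x - V_y
\end{equation}
where $V_x$ is a virtual bundle over the point $\{x\}$ of rank equal to $\mu_{CZ}(x)$ (up to a global shift depending on $n$ and the trivialization convention). Because the orbits are isolated (all objects are points), the ``evaluation maps'' to objects are trivial, and the framing equation \eqref{equation:framing-equation} reduces to an identification
\begin{equation}
    V_x \simeq T_{\cM_{H,J}(x,y)} \oplus \underline{\mathbb{R}} \oplus V_y
\end{equation}
which is supplied by the tangent space to the parametrized moduli space being the index of the linearized operator, modulo the one-dimensional translation factor. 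The background framing ensures these identifications are canonical up to homotopy.

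The compatibility condition \eqref{equation:framing-diagram-square} amounts to the statement that, at a broken trajectory $(u_1,u_2) \in \cM_{H,J}(x,z)\times \cM_{H,J}(z,y)$, the linear gluing map identifies $\operatorname{ind}(D_{u_1})\oplus \operatorname{ind}(D_{u_2})$ with $\operatorname{ind}(D_{u_1\#u_2}) \oplus T_z$ in a way that is coherently compatible with the index decompositions through the intermediate orbit. This is the standard linear gluing/spectral flow additivity of the Cauchy--Riemann index, and it matches the topological gluing produced in the construction of the corner structure.

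The main obstacle, and the only nontrivial analytic input, is the smooth gluing theorem: producing the $\langle k \rangle$-manifold smooth structure on $\cM_{H,J}(x,y)$ in a way that is compatible with the framings on boundary strata. This is the content of \cite[\S8]{largethesis}, and I would invoke those results directly rather than redo them. The remaining pieces --- transversality, Conley--Zehnder grading, index additivity, and the existence of the background-framed virtual bundles $V_x$ --- are classical and follow from established Hamiltonian Floer theory combined with the stable trivialization hypothesis.
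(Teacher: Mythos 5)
Your proposal is correct and follows essentially the same route as the paper, which itself defers the analytic content (smooth corner structures via gluing, and the framing compatibilities) to \cite[Thm.\ 6.12, \S 8]{largethesis} and only recalls the construction. The one presentational difference is that you fix a single capping disk to define $V_x$, whereas the paper (following Large) works with the contractible space $\cU(x)$ of \emph{abstract caps} and takes the index bundle over it, which is what makes the identifications $V_x \simeq T_{\cM_{H,J}(x,y)}\oplus\underline{\bR}\oplus V_y$ and their compatibility under gluing canonical rather than merely well-defined up to homotopy; this is a refinement of your argument, not a correction.
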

The major input of \cite{largethesis} is the smooth gluing: the compactified moduli spaces $\cM_{H,J}(x,y)$ of Floer trajectories form manifolds with corners, and the boundary strata are given by the set of broken trajectories. Large's proof of smooth gluing relies crucially on estimates of Fukaya--Oh--Ohta-Ono \cite{foooexponential}. We expect that the methods of \cite[Theorem 1.1]{baixuarnold} or \cite{rezchikovarnold} could be used to give an alternative proof.

Note that we choose our conventions so that $x$ is the ``input'' and $y$ is the ``output''. Hence $x$ corresponds to the positive end of the cylinder and $y$ to the negative end, a cylinder from $x$ to $y$ really means a map $\bR\times S^1\to M$ that is asymptotic to $x$, resp. $y$, as $s\to+\infty$, resp. $s\to -\infty$. We swapped the notational conventions of \cite{largethesis} to fit into our framework.

Next, we recall the construction of framings from \cite{largethesis}. As $TM\oplus \bC^k$ is trivialized, one can realize the Cauchy--Riemann operator on the cylinder $\bR\times S^1$ as a Fredholm operator $W_1^2(\bR\times S^1, \bC^{n+k})\to L^2(\bR\times S^1, \Omega_{\bR\times S^1}^{0,1}\otimes \bC^{n+k})$. More precisely, the choice of Floer data gives rise to a Fredholm operator $W_1^2(\bR\times S^1, u^*TM)\to L^2(\bR\times S^1, \Omega_{\bR\times S^1}^{0,1}\otimes u^*TM)$, which can be extended to $W_1^2(\bR\times S^1, u^*TM\oplus \bC^k)\to L^2(\bR\times S^1, \Omega_{\bR\times S^1}^{0,1}\otimes u^*TM\oplus \bC^k)$ without changing the index. To extend, one takes the direct sum of this operator $k$ times with a fixed index zero Cauchy--Riemann operator 
\begin{align} \label{equation:CR-stabilized}
    W_1^2(\bR\times S^1,\bC)&\to L^2(\bR\times S^1, \Omega_{\bR\times S^1}^{0,1}\otimes  \bC) \\
    X &\mapsto \partial_sX+i(\partial_t-B)X
\end{align}
where $B$ does not depend on $s,t$. Note that the index of \eqref{equation:CR-stabilized} is zero independently of the choice of $B$ \cite[Thm.\ 8.8.1]{audindamianmorse}; later it will be mildly convenient to add an extra assumption on $B$, see \Cref{note:stabilizationchange}. 

After this stabilization, we obtain a Cauchy--Riemann operator $W_1^2(\bR\times S^1, \bC^{n+k})\to L^2(\bR\times S^1, \Omega_{\bR\times S^1}^{0,1}\otimes \bC^{n+k})$ thanks to the trivialization of $TM\oplus \bC^k$. Observe that this operator is the Cauchy--Riemann operator corresponding to a pair $(J_\ell,Y_\ell)$, where $J_\ell$ is a domain dependent almost complex structure on $\bC^{n+k}$ and $Y_\ell\in \Omega_{\bR\times S^1}^{0,1} \otimes \operatorname{End}( \bC^{n+k})$.


One can compute the index of this problem by capping off the cylinder at $y$. The caps allow \cite{largethesis} to define the framings as well. More precisely, let $S_-:= \bC=\bP^1\setminus \{\infty\}$. Recall that, a \emph{tubular end} is a conformal map $\epsilon: [0,\infty) \times S^1 \to S_-$ such that $[0,\infty)  \times \{1\}$ is tangent to a fixed line in $T_0\bP^1$. 
\begin{defn}[{\cite[Defn 8.3]{largethesis},\cite[\S11.3]{abouzaidblumberg}}]\label{defn:abstractcap}
Given orbit $x\in ob(\cM_{H,J})$, an \emph{abstract cap} at $x$ is a tuple $\{(L, J, Y, g)\}$ where
\begin{itemize}
	\item $\epsilon$ is a tubular end
	\item $L>0$ is a real number
	\item $(J,Y)$ is a Floer datum on $S_-\times \bC^{n+k}$, which equals $(J_\ell,Y_\ell)$ on $\epsilon([L,\infty) \times S^1)$
	\item $g$ in a metric on $S_-$ which agrees with the standard metric on $\epsilon([L,\infty) \times S^1)$ 
\end{itemize}	
The space of abstract caps is denoted by $\cU(x)$, and it is contractible.
\end{defn}
Each abstract cap determines a Fredholm problem; therefore, there is a continuous map $\cU(x)\to Fred(W_1^2(S_-, \bC^{n+k}), L^2(S_-, \Omega_{S_-}^{0,1}\otimes \bC^{n+k})) $. The latter represents the $0^{th}$ space of the real $K$-theory, and comes equipped with a virtual bundle, \emph{the index bundle}. Therefore, one has a natural virtual bundle on $\cU(x)$, denoted by $V_x$. As $\cU(x)\to \{x\}$ is a homotopy equivalence, $V_x$ gives rise to a natural bundle (vector space) on $\{x\}$ of the same rank.  

A cylinder $u\in \cM_{H,J}(x,y)$ and an abstract cap at $y$ can be glued to give an abstract cap at $x$. In other words, there exists a map $G^{ab}_{x,y}:\cM_{H,J}(x,y)\times \cU(y)\to\cU(x)$. The index bundles are additive under gluing, and there is a natural isomorphism 
\begin{equation}\label{eq:gluingindexbundles}
(G^{ab}_{x,y})^*V_x\cong T_{\cM_{H,J}(x,y)}\oplus\underline\bR\oplus V_y
\end{equation}
This defines a framing on the flow category $\cM_{H,J}$. For more details, see \cite{largethesis}. 
\begin{note}\label{note:stabilizationchange}
One can change the background framing $TM\oplus\underline{\bC}^k\cong \underline\bC^{n+k}$ by further stabilizing. When doing so, one must correspondingly stabilize the abstract caps. More precisely, one adds multiples of a fixed Cauchy--Riemann operator $\partial_s+i(\partial_t-B)$. The matrix $B$ should have the same asymptotics as the matrix which occurs in \eqref{equation:CR-stabilized}, so that stabilization is compatible with the gluing. By choosing $B$ appropriately, we may assume that the index of the stabilized Cauchy--Riemann problem on the cap is also $0$. An alternative is to replace $V_x$ by $V_x-\bR^{2k\mu_0}$, where $\mu_0$ is the index of the stabilizing problem $\partial_s+i(\partial_t-B)$. Note that a background framing automatically determines a grading: a trivialization of $TM$ over the corresponding orbit. Our construction ensures that the index of $V_x$ coincides with the index determined by the grading. 

\end{note}
In the subsequent sections, we will extend this to the equivariant setting. But before moving on, we describe an alternative (and equivalent) way of framing the moduli spaces, that will become useful later. 

Let $S_+:=\bP^1\setminus\{0\}$, and define a \emph{negative cap} to be the same as \Cref{defn:abstractcap}, except that $S_-$ is replaced by $S_+$ and the tubular end is replaced by a negative tubular end $\epsilon: (-\infty,0]\times S^1\to S_+$. Denote the space of negative caps by $\cU^-(x)$, which is still contractible. As before, there is a continuous map $\cU^-(x)\to Fred(W_1^2(S_+, \bC^{n+k}), L^2(S_+, \Omega_{S_+}^{0,1}\otimes \bC^{n+k})) $, defining a virtual bundle on $\cU^-(x)$. Denote this bundle by $V_x^-$. Notice that a negative cap can be glued to a Floer trajectory at the input, to give a negative cap at the output. Therefore, similar to before, we have an identification
\begin{equation}\label{eq:negframe}
V_x^-\oplus T_{\cM_{H,J}(x,y)}\oplus \bR\cong V_y^-	
\end{equation}
Note the slight abuse of notation. One can check the compatibility of \eqref{eq:negframe} with the gluing Floer trajectories as in \cite[\S8]{largethesis} and $V_x':=\underline{\bR}^{2(n+k)}-V_x^-$ defines a stable framing. In particular, 
\begin{equation}\label{eq:alternativeframe}
	V_x'\cong T_{\cM_{H,J}(x,y)}\oplus \bR \oplus V_y'	
\end{equation}
\begin{rk}
The same issue in \Cref{note:stabilizationchange} about the stabilization is present for the negative caps as well, and if one assumes that the rank $1$ stabilizing problem on the cylinder and the positive cap have both index $0$, the index of the stabilizing problem on the negative cap is necessarily $1$ (by the gluing and Riemann--Roch, see the proof of \Cref{lem:negposframingsum}). On the other hand, the $k$-factor in the definition of $V_x'$ removes the dependency on $k$, similar to the alternative described in \Cref{note:stabilizationchange}. 
\end{rk}
We have:
\begin{lem}\label{lem:negposframingsum}
$V_x\oplus V_x^-\cong \underline{\bR}^{2(n+k)}$ and the stable framings $V_x$ and $V_x'$ are equivalent.
\end{lem}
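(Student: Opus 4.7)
The plan is to glue a positive cap at $x$ to a negative cap at $x$ along their common cylindrical ends modeled on the orbit $x$, thereby producing a Cauchy--Riemann problem on the closed Riemann sphere $\bP^1 = S_- \#_x S_+$. Concretely, I would construct a gluing map $G: \cU(x) \times \cU^-(x) \to \mathrm{Fred}$ using a large gluing parameter. The key observation is that the background framing provides a symplectic trivialization of $TM \oplus \underline{\bC}^k$ along the orbit which is constant on the cylindrical model; after the stabilizations described in \Cref{note:stabilizationchange}, the complex bundles on the two caps patch together over $\bP^1$ to form the trivial rank $n+k$ bundle $\underline{\bC}^{n+k}$.

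For any Cauchy--Riemann type operator on $\underline{\bC}^{n+k} \to \bP^1$, Riemann--Roch gives a real Fredholm index of $2(n+k)$. Since the space of such operators is affine (hence contractible), the corresponding index bundle over $\cU(x) \times \cU^-(x)$ is canonically trivial of rank $2(n+k)$. Applying the standard linear gluing theorem for index bundles --- which states that the index bundle of a glued problem is canonically the direct sum of the input index bundles --- one concludes
\begin{equation*}
V_x \oplus V_x^- \cong \underline{\bR}^{2(n+k)}.
\end{equation*}
This immediately gives $V_x' := \underline{\bR}^{2(n+k)} - V_x^- \cong V_x$ as virtual bundles.

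For the \emph{equivalence of framings}, I would check that under the identification $V_x \cong V_x'$ just constructed, the isomorphism $V_x \cong T_{\cM_{H,J}(x,y)} \oplus \underline{\bR} \oplus V_y$ from \eqref{eq:gluingindexbundles} matches the one on $V_x'$ induced by \eqref{eq:negframe}. To do this, I would consider the \emph{triple gluing} of a positive cap at $x$, a Floer cylinder $u \in \cM_{H,J}(x,y)$, and a negative cap at $y$, producing again a Cauchy--Riemann problem on $\bP^1$ with canonically trivial index bundle $\underline{\bR}^{2(n+k)}$. Associativity of gluing gives two parenthesizations of this triple glued operator: (i) first glue the Floer trajectory to the negative cap (producing an element of $\cU^-(x)$), then to the positive cap; (ii) first glue the positive cap to the Floer trajectory (producing an element of $\cU(y)$), then to the negative cap. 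Both routes decompose the same trivial index bundle $\underline{\bR}^{2(n+k)}$, and equating them yields exactly the commutative diagram needed to identify the two framing data on $T_{\cM_{H,J}(x,y)}$.

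The main obstacle is the parametric linear gluing theorem: one must check that gluing two Fredholm Cauchy--Riemann problems at matching cylindrical ends produces a Fredholm problem whose index bundle is canonically and naturally the direct sum of the inputs, and that this identification is associative under iterated gluing. This is essentially classical but nontrivial at the level of pre-glued Sobolev spaces; the relevant setup is carried out in \cite[\S 8]{largethesis}. A secondary bookkeeping obstacle is to ensure that the index-$0$ stabilizations on the cylinder and positive cap, together with the (necessarily) index-$1$ stabilization on the negative cap (cf.\ \Cref{note:stabilizationchange}), glue to the standard index-$1$ stabilization on $\bP^1$, so that no spurious index shifts appear in the identification $V_x \oplus V_x^- \cong \underline{\bR}^{2(n+k)}$.
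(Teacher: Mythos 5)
Your proposal follows essentially the same route as the paper: both glue the positive and negative caps into a contractible family of Cauchy--Riemann problems on $\bP^1$, identify the resulting index bundle with $\underline{\bR}^{2(n+k)}$ via the standard operator (Riemann--Roch), and deduce the equivalence of framings from the two parenthesizations of the triple gluing cap--cylinder--cap. The only slip is in the labels: with the paper's conventions the \emph{negative} cap attaches at the input $x$ and the \emph{positive} cap at the output $y$ (so gluing the trajectory to $\cU(y)$ yields an element of $\cU(x)$, and gluing $\cU^-(x)$ to the trajectory yields an element of $\cU^-(y)$), but this does not affect the argument.
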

\begin{proof}
There is a contractible space $\cU_{S^2}$ of Floer data $(J,Y)$ parametrized by the sphere (similar to \Cref{defn:abstractcap}, but the $\epsilon$, $L$, and conditions involving them are removed, $S_-$ is replaced by $S^2$), and there is a gluing map $\cU(x)\times \cU^-(x)\to\cU_{S^2}$. As before, there is a natural map 
\begin{equation}\label{eq:captofredsphere}
\cU_{S^2}\to Fred(W_1^2(S^2, \bC^{n+k}), L^2(S^2, \Omega_{S^2}^{0,1}\otimes \bC^{n+k}))
\end{equation}
It can be seen similar to \eqref{eq:gluingindexbundles} that the corresponding virtual bundle pulls back to $V_x^-\oplus V_x$ under the gluing map. On the other hand, by contractibility of $\cU_{S^2}$, its image contracts to the standard Cauchy--Riemann operator on $S^2=\bP^1$, which has a kernel of complex rank $n+k$ and vanishing cokernel (as $h^{0,0}(S^2)=1$ and $h^{0,1}(S^2)=0$). In other words, \eqref{eq:captofredsphere} lands in the index $2(n+k)$ operators, and the corresponding vector bundle on the contractible space $\cU_{S^2}$ is naturally isomorphic to $\underline{\bR}^{2(n+k)}$. Thus, $V_x\oplus V_x^-\cong \underline{\bR}^{2(n+k)}$, and $V_x'= \underline{\bR}^{2(n+k)}-V_x^-\cong V_x$. 

To see that \eqref{eq:alternativeframe} and the isomorphism 
\begin{equation}\label{eq:stdfrm}
	V_x\cong T_{\cM_{H,J}(x,y)}\oplus \bR \oplus V_y
\end{equation} coincide under the isomorphisms $V_x'\cong V_x$ and $V_y'\cong V_y$, one uses the gluing map 
\begin{equation}\label{eq:doublegluing}
\cU^-(x)\times \cM_{H,J}(x,y)\times \cU(y)\to \cU_{S^2}
\end{equation}
\eqref{eq:doublegluing} can be defined by gluing a cap at $y$ first, then at $x$, or vice versa, or by gluing caps on both ends simultaneously (these are homotopic maps). The virtual vector bundle on $\cU_{S^2}$ that is isomorphic to $\underline{\bR}^{2(n+k)}$ pulls-back to $V_x^-\oplus (T_{\cM_{H,J}(x,y)}\oplus \bR)\oplus V_y$, i.e. 
\begin{equation}\label{eq:doublegluinglinear}
V_x^-\oplus (T_{\cM_{H,J}(x,y)}\oplus \bR)\oplus V_y\cong \underline{\bR}^{2(n+k)}
\end{equation}  
As gluing caps at both ends in different orders lead to homotopic maps, the isomorphism \eqref{eq:doublegluinglinear} can also be obtained by adding $V_y$ to the two sides of \eqref{eq:negframe} and using $V_y^-\oplus V_y\cong \underline{\bR}^{2(n+k)}$ or by adding $V_x^-$ to the two sides of \eqref{eq:stdfrm} and using $V_x^-\oplus V_x\cong \underline{\bR}^{2(n+k)}$. Therefore, by adding $V_x^-\oplus V_y^-\oplus V_y$ to the both sides of \eqref{eq:alternativeframe}, we obtain $\eqref{eq:doublegluinglinear}\oplus \underline{\bR}^{2(n+k)}$, and which can be obtained from \eqref{eq:stdfrm}, by adding $V_x^-\oplus\underline{\bR}^{2(n+k)}$ to the both sides. This shows that \eqref{eq:alternativeframe} and \eqref{eq:stdfrm} agree. 

To check the compatibility of the isomorphisms $V_x\cong V_x'$ with the gluing-compatibility maps, one uses a similar argument for 
\begin{equation}
\cU^-(x)\times \cM_{H,J}(x,y)\times\cM_{H,J}(y,z)\times \cU(z)\to \cU_{S^2}
\end{equation}
\end{proof}

\subsection{$S^1$-equivariant spectral symplectic cohomology}\label{subsubsection:equivariant-SH-construction}
In this section, we use the Morse--Bott model for $ES^1$ that is introduced in \Cref{exmp:circlemorsebott}. In other words, we let $ES^1=S^\infty=\{(z_1,z_2,\dots)\}\subset \bC^\infty$ and $\tilde f:=|z_1|^2+2|z_2|^2+3|z_3|^2+\dots$, the Morse--Bott function with exactly one critical set of index $2(i-1)$, for each $i\geq 1$. The finite dimensional approximation $E_dS^1$ is given by $S^{2d-1}\subset S^\infty$. We will denote $ES^1=S^\infty$ also by $S$. 
For each critical manifold of $X\subset S$, fix a local slice for the $S^1$-action on $S$ which is transverse to the critical manifold and tangent to the gradient of $\tilde{f}$. This determines an equivariant tubular neighborhood $U$ of the critical manifold $X\cong S^1$ and a trivialization of it, i.e. an equivariant isomorphism $U\cong X\times B$. 

Following \cite{bourgeois2009symplectic}, consider the pairs $(H,J)$, where $H: S\times S^1\times M\to \bR$ and $J:S\times S^1\to\cJ(M)$ (the space of almost complex structures) such that
\begin{enumerate}
	\item $(H,J)$ is invariant under the diagonal circle action on $S\times S^1$
	\item if $z\in crit(\tilde f)$, $H_z:S^1\times M\to \bR$ is a non-degenerate Hamiltonian
	\item there exists a compact subset of $M$ such that $(H_z,J_z)$ is cylindrical outside this compact subset and $H_z$ is linear in a fixed Liouville parameter of fixed slope
    \item near each critical manifold $X\subset S$, $(H, J)$ only depends on the $X$ coordinate (with respect to the trivialization of the equivariant tubular neighborhood mentioned above)
\end{enumerate}


We consider the moduli space of solutions $(\gamma, u)$ to the system of equations
\begin{equation} \label{equation:sh-tilde}
    \left\{
  \begin{array}{@{}rr@{}}
    \dot{\gamma}+ \nabla \tilde{f}(\gamma) &=0 \\
    \partial_s u + J_{\gamma(s)}(\partial_t u - X_{H_{\gamma(s)}}) &=0
  \end{array}\right.
\end{equation}
where $\gamma: \bR \to S$ and $u: \bR \times S^1 \to M$, subject to the condition that they are positively/negatively asymptotic to pairs $(a,x)$, where $a\in crit(\tilde f)$ and $x\in orb(H_a)$.

Observe that the first equation of \eqref{equation:sh-tilde} is just the negative gradient flow equation for $\tilde{f}$ on $S$. Moreover, given a trajectory $\gamma$ solving the first equation, the second equation is just a standard continuation equation. (Here we use our assumptions that  $(H, J)$ are independent of $z \in S$ along each slice, and that $\nabla \tilde{f}$ is tangent to each slice). The moduli space of solutions $(\gamma, u)$ to \eqref{equation:sh-tilde} carries an $\mathbb{R}$-action by translation in the domain, and there is also a natural free $S^1$-action.

We will now construct a flow category $\cM_{\tilde f,H,J}$ as follows:
\begin{itemize}
\item the objects $ob(\cM_{\tilde f,H,J})$ are the     critical manifolds formed by pairs $(a,x)$ where        $a\in crit(\tilde f)$, $x\in orb(H_a)$. The index of $(a,x)$ is defined as the sum of indices of $a$ and $x$. Note that the object space is a disjoint union of circles.
\item the morphism spaces $\cM_{\tilde f,H,J}((a,x),(b,y))$ are the compactified moduli spaces of solutions to \eqref{equation:sh-tilde}, considered up to $\mathbb{R}$-translation. Concretely, interior points are pairs $(\gamma, u)$ solving \eqref{equation:sh-tilde} and the boundary elements correspond as usual to broken trajectories. By our conventions, $\gamma$ is asymptotic to $a$ at $-\infty$ and $b$ at $+\infty$, but $u$ is $\gamma$ is asymptotic to $x$ at $+\infty$ and $y$ at $-\infty$ (i.e. the input of a Floer trajectory is the positive end). 
The dimension of $\cM_{\tilde f,H,J}((a,x),(b,y))$ is $ind(a)+ind(x)-ind(b)-ind(y)-1$ and we will argue that it carries the structure of an $\langle ind(a)+ind(x)-ind(b)-ind(y)-2\rangle$-manifold. 
\end{itemize}

\begin{prop}[cf.\ {\cite{largethesis}}, \cite{foooexponential}]\label{proposition:s1-flow-cat} $\cM_{\tilde f,H,J}$ is an $S^1$-equivariant flow category.
\end{prop}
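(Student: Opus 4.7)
The plan is to verify in turn the four items in \Cref{defn:flowcategory}: smoothness/dimension of the moduli, transversality of the composition/evaluation maps, the $\langle k\rangle$-manifold structure with the correct boundary decomposition, and smoothness of the gluing maps as embeddings onto boundary faces; then observe that the diagonal $S^1$-action survives all of these constructions. At the level of linear analysis, the system \eqref{equation:sh-tilde} is a parametrized version of the standard Floer equation: the first equation is the negative gradient flow of $\tilde f$ on $S^\infty$, and once $\gamma$ is fixed the second equation is a continuation equation on $M$ whose coefficients depend smoothly on the parameter $\gamma(s)$. The linearization thus splits as an upper-triangular operator with diagonal blocks given by the Morse linearization on $S^\infty$ and a Floer linearization on $M$, so the index is the sum of the Morse index difference for $\tilde f$ and the Conley--Zehnder index difference for the orbits, agreeing with $\operatorname{ind}(a)+\operatorname{ind}(x)-\operatorname{ind}(b)-\operatorname{ind}(y)$, and after dividing by the $\bR$-translation we obtain the expected dimension.

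For transversality I would combine the Morse--Bott--Smale condition we fixed for $(\tilde f, g)$ on the finite approximations $E_dS^1\subset S$ (pulled back from $B_dS^1$ via the chosen connection) with a standard parametric-transversality argument on $J$. Because the Floer data is required only to depend on the $X$-coordinate near each critical circle of $\tilde f$, the behavior of the solutions near their asymptotes is governed purely by the Floer data over $crit(\tilde f)$, so non-degeneracy is forced by hypothesis (ii) and somewhere-injectivity can be checked in the interior where $\gamma$ is not locally constant; genericity of the fiberwise family $\{J_z\}_{z\in S}$ then cuts out the moduli transversally. Transversality of the evaluation maps $\cM_{\tilde f,H,J}\to ob$ and mutual transversality of these maps (as required for composition, see \Cref{note:fiber-products}) follows, as in the Morse--Bott case, by perturbing the interpolating data away from the critical set. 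Gromov-type compactness is standard: $C^0$-bounds on the cylinder $u$ come from the maximum principle since $(H_z,J_z)$ is cylindrical and linear of fixed slope outside a compact set, energy is controlled since $\tilde f$ is bounded on each sublevel and the Floer action is monotone along negative gradient flow on the $S$-component, and bubbling is excluded by exactness.

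The main obstacle, and the heart of the proof, is the smooth $\langle k\rangle$-manifold structure with the correct boundary stratification, which boils down to a smooth gluing theorem for the coupled system. Boundaries of $\cM_{\tilde f,H,J}((a,x),(b,y))$ come from two types of breaking: a Morse breaking of $\gamma$ at an intermediate critical circle $c\in crit(\tilde f)$, which since $(H,J)$ depends only on the critical circle coordinate there produces a factorization $\cM_{\tilde f,H,J}((a,x),(c,z))\times_{(c,z)}\cM_{\tilde f,H,J}((c,z),(b,y))$ summed over time-$1$ orbits $z$ of $H_c$; and a Floer breaking of $u$ at some intermediate orbit while $\gamma$ is unbroken. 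Both types contribute as in \Cref{defn:flowcategory}(ii), because a Floer breaking at $\gamma$-time $s$ happens in the fiber over $\gamma(s)\in S$. The smoothness of the gluing maps follows by adapting the exponential-decay/obstruction-bundle estimates of Large \cite{largethesis} (themselves building on \cite{foooexponential}) to the parametrized setting: away from Morse breakings, gluing a Floer breaking is uniformly $C^\infty$-controlled in the $S$-parameter thanks to the local $X$-coordinate triviality of $(H,J)$ near $crit(\tilde f)$, while gluing a Morse breaking reduces to standard Morse--Bott gluing on $S$ tensored with the Floer data along the critical slice. Matching these two gluings at mixed corners is where the analytic work concentrates; the key point is that both gluing constructions are defined via the same pregluing ansatz on the common overlap, so the transition is smooth. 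Finally, the diagonal $S^1$-action on $S\times S^1$ preserves the equation \eqref{equation:sh-tilde} (by the $S^1$-invariance assumption (i) on $(H,J)$), acts freely on $ob(\cM_{\tilde f,H,J})$ with quotient a disjoint union of points, commutes with the $\bR$-translation action, and respects the asymptotics, so it lifts to a smooth free action on each morphism space commuting with all evaluation and composition maps.
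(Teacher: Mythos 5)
Your outline is a reasonable checklist of what must be verified, but it has a genuine gap precisely at the step you identify as ``the heart of the proof.'' You propose to establish the smooth corner structure by ``adapting the exponential-decay/obstruction-bundle estimates of Large to the parametrized setting,'' and you describe the Morse breaking as reducing to ``standard Morse--Bott gluing on $S$ tensored with the Floer data along the critical slice.'' This is exactly the point where the argument cannot proceed as stated: the asymptotes of $(\gamma,u)$ are $S^1$-families of critical points/orbits, so the linearized operators at the ends are not Fredholm on unweighted Sobolev spaces, and Large's gluing estimates (and the underlying exponential decay results of Fukaya--Oh--Ohta--Ono) are proved only in the non-degenerate setting. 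Morse--Bott gluing in the smooth category normally requires weighted Sobolev spaces and corresponding decay estimates, which you neither supply nor cite a substitute for. Calling this step ``standard'' does not close it.

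The paper's proof takes a different route that avoids this issue entirely. Since the $S^1$-action on the objects and morphism spaces is free and the Floer data descends, one first works on the quotient $\cM_{\tilde f,H,J}/S^1$, which is genuinely non-degenerate; there Large's complete systems of hypersurfaces, pregluing, and Newton iteration apply verbatim and produce smooth charts with corners. The equivariant moduli space upstairs is then recovered as $X\times(\text{quotient moduli})$ using the asymptotic evaluation map to the $S^1$-torsor $X$ as the single additional coordinate (this uses that the gradient flow is horizontal for the lifted metric, and that the data only depends on the torsor coordinate near the critical circles). The nontrivial analytic content that remains is showing that this extra coordinate --- equivalently, the parallel transport $\rho_{XX'}$ along glued trajectories --- extends \emph{smoothly to the corners} of the compactification. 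The paper proves this via an explicit estimate on $\nabla_{\mathbf u}\partial^p_{\mathbf r}\int\theta(\tfrac{d}{ds}\cG(\mathbf v,\mathbf r))\,ds$ using the exponential decay of the correction field $\xi$ and the assumption that the connection is flat near the critical points. Your proposal contains no analogue of this step: you address transversality of the evaluation maps on the open stratum, but not their smoothness up to the boundary faces, which is required both by \Cref{defn:flowcategory} and for the fiber products defining compositions to make sense as smooth embeddings. A correct proof along your lines would either have to develop the weighted-Sobolev gluing theory in the parametrized Floer setting, or adopt the quotient-plus-torsor-coordinate mechanism; as written, the proposal does neither.
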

A proof will be given in \Cref{sec:gluing}. 

\subsection{Equivariant framings}\label{subsubsec:equivframings}
In this section, we combine the previous notions to extend the framings to the equivariant setting. The space of objects of $\cM_{\tilde f,H,J}$ is a disjoint union of $S^1$-torsors. Let $X$ be such a torsor. Then the spaces of abstract caps $\cU(x)$, where $(a,x)\in X$, forms an $S^1$-equivariant fiber bundle over $X$ with contractible fibers. We call this bundle $\cU(X)$. The projection $\cU(X)\to X$ is an $S^1$-equivariant homotopy equivalence. Indeed, one can extend a cap in $\cU(x)$, where $(a,x)\in X$, to an equivariant section $X\to \cU(X)$, and $X$ is an equivariant deformation retract of $\cU(X)$. As before, we have a map from $\cU(X)$ to the space of Fredholm operators, and a corresponding index bundle. The index bundle carries the structure of an $S^1$-equivariant virtual bundle. Call this bundle $V_X^{cap}$. 
\begin{rk}\label{rk:s1eqfred}
Notice that $\cU(X)\cong\cU(x)\times X\cong \cU(x)\times S^1$, and the restriction of the index bundle of the $S^1$-equivariant map 
\begin{equation}
\cU(X)\to  Fred(W_1^2(S_-, \bC^{n+k}), L^2(S_-, \Omega_{S_-}^{0,1}\otimes \bC^{n+k})) 
\end{equation} 
to $\cU(x)\times\{\alpha\}$ is canonically the same for every $\alpha\in X$. More precisely, to construct the index bundle over $\cU(x)\times\{\alpha\}$, one kills the kernel of the Fredholm operators uniformly over $\cU(x)\times\{\alpha\}$ by replacing the the target with some $L^2(S_-, \Omega_{S_-}^{0,1}\otimes \bC^{n+k})\oplus \bC^m$, and further changing the family of Fredholm operators by adding maps $W_1^2(S_-, \bC^{n+k})\to \bC^m$ parametrized by $\cU(x)\times\{\alpha\}$. On the other hand, if one chooses such a family for a single $\alpha$, they one can extend it to $\cU(X)\cong \cU(x)\times X$ by using the $S^1$-action on $W_1^2(S_-, \bC^{n+k})$ induced by the rotation action on $S_-$. Hence, we have an $S^1$-equivariant map 
\begin{equation}
F:	\cU(X)\to  Fred(W_1^2(S_-, \bC^{n+k}), L^2(S_-, \Omega_{S_-}^{0,1}\otimes \bC^{n+k})\oplus \bC^m) 
\end{equation} 
with image landing in the injective operators, and the index bundle defined as $\underline \bC^m-coker (F)$ is clearly $S^1$-equivariant.  
Note that there is an equivariant version of the Atiyah--J\"anich theorem; however, we do not rely on it; see \cite{matumotoeqkfredholm}.
\end{rk}
Let $X_0\subset S^\infty$ be the underlying critical set of $\tilde f$. Define $V_X:=T^d_{X_0}\oplus V_X^{caps}$ (we consider the pull-back of $T_{X_0}^d$ to be more precise). This is our framing bundle. 

The tangent bundle $T_{\cM_{\tilde f,H,J}}\oplus\underline \bR$ has a subbundle $T_{\cM_{\tilde f,H,J}}^{cyl}$. At $(\gamma,u)$, its fibers are given by the infinitesimal deformations of the pair that do not change $\gamma$. In other words, when $\gamma$ is non-constant, it is the kernel of the projection map $T_{\cM_{\tilde f,H,J}}\oplus\underline\bR\to T_{\cM_{\tilde f}}\oplus\underline\bR$ (the objects are suppressed from the notation). When $\gamma$ is the constant trajectory at $s\in X_0$ (and $u$ is a Floer trajectory with respect to $(H_s,J_s)$), one has 
\begin{equation}
(T_{\cM_{\tilde f,H,J}}^{cyl})_{(\gamma,u)}=(T_{\cM_{H_s,J_s}})_{(\gamma,u)}\oplus \underline\bR
\end{equation}

The same construction as in \Cref{subsubsec:reminderslarge} gives us isomorphisms $V_X^{caps}\cong T_{\cM_{\tilde f,H,J}(X,Y)}^{cyl} \oplus V_Y^{caps}$. On the other hand, if $X_0$, $Y_0$ denote the underlying critical sets of $\tilde f$, one also has $T^d_{X_0}\oplus T_{X_0}\cong T_{\cM_{\tilde f}(X_0,Y_0)}\oplus \underline\bR\oplus T^d_{Y_0}$. Obviously $X\xrightarrow{\cong} X_0$ and $Y\xrightarrow{\cong} Y_0$. These combine to give us 
\begin{equation}
V_X\oplus T_X\cong 	T_{\cM_{\tilde f,H,J}(X,Y)}\oplus\underline\bR \oplus V_Y^{caps}
\end{equation}
as $S^1$-equivariant virtual bundles. The case $X_0=Y_0$ and $\gamma$ is constant is similar. 
In summary,
\begin{prop}
$\cM_{\tilde f,H,J}$ is an $S^1$-equivariant framed flow category.
\end{prop}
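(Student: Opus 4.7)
The argument sketched in the excerpt essentially constructs the desired isomorphism pointwise; what remains is to assemble the pieces coherently, verify $S^1$-equivariance, and check compatibility with composition. I would organize the proof into four steps.

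First, I would fix notation and make the decomposition of the tangent bundle of the moduli precise. For an interior point $(\gamma,u)\in\cM_{\tilde f,H,J}(X,Y)$ with $\gamma$ non-constant, the projection $\pi:\cM_{\tilde f,H,J}(X,Y)\to \cM_{\tilde f}(X_0,Y_0)$, $(\gamma,u)\mapsto\gamma$, is a smooth submersion whose vertical tangent bundle is exactly $T^{cyl}$. I would verify this, using linearization of \eqref{equation:sh-tilde}: the first equation linearizes to a Morse--Bott gradient linearization that sees only $\gamma$, while the second linearizes to a parametric Cauchy--Riemann operator whose kernel forms $T^{cyl}$. Choosing an auxiliary connection one gets the (stable) splitting
\begin{equation*}
T_{\cM_{\tilde f,H,J}(X,Y)}\oplus\underline{\bR}\;\cong\;T^{cyl}\oplus \pi^*\bigl(T_{\cM_{\tilde f}(X_0,Y_0)}\oplus\underline{\bR}\bigr).
\end{equation*}
(For $\gamma$ constant the analogous statement uses $T^{cyl}=T_{\cM_{H_s,J_s}}\oplus\underline{\bR}$ as in the text.) All constructions are done $S^1$-equivariantly by averaging the splittings under the diagonal circle action that preserves the moduli.

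Second, I would promote the ``fiberwise'' framing identity from \Cref{subsubsec:reminderslarge} to an $S^1$-equivariant identity of virtual bundles over $\cM_{\tilde f,H,J}(X,Y)$. Gluing an abstract cap at $Y$ to a pair $(\gamma,u)$ yields an $S^1$-equivariant gluing map
\begin{equation*}
G^{ab}_{X,Y}:\cM_{\tilde f,H,J}(X,Y)\times_Y\cU(Y)\longrightarrow \cU(X),
\end{equation*}
and the additivity of the Cauchy--Riemann index bundle under gluing (as in \cite[\S8]{largethesis}), applied in the equivariant setting via the discussion of \Cref{rk:s1eqfred}, gives an $S^1$-equivariant isomorphism
\begin{equation*}
V_X^{caps}\;\cong\;T^{cyl}\oplus V_Y^{caps}.
\end{equation*}

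Third, I would combine this with the Morse framing isomorphism on the base: from \Cref{exmp:morsebottfunction} applied $S^1$-equivariantly to $\tilde f$ we get
\begin{equation*}
T^d_{X_0}\oplus T_{X_0}\;\cong\;T_{\cM_{\tilde f}(X_0,Y_0)}\oplus\underline{\bR}\oplus T^d_{Y_0}.
\end{equation*}
Pulling this back along $\pi$ and adding to the cap identity, and then using the vertical/horizontal splitting from Step 1, produces
\begin{equation*}
V_X\oplus T_X\;\cong\;T^d_{X_0}\oplus V_X^{caps}\oplus T_{X_0}\;\cong\;T_{\cM_{\tilde f,H,J}(X,Y)}\oplus\underline{\bR}\oplus V_Y,
\end{equation*}
which is the framing \eqref{equation:framing-equation}.

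Fourth, and this is the main obstacle, I would verify the compatibility condition \eqref{equation:framing-diagram-square} along the composition $\cM_{\tilde f,H,J}(X,Y)\times_Y\cM_{\tilde f,H,J}(Y,Z)\hookrightarrow\cM_{\tilde f,H,J}(X,Z)$. Here one has to check two separate compatibilities simultaneously: (i) the cap-gluing isomorphisms satisfy the associativity $G^{ab}_{X,Z}=G^{ab}_{X,Y}\circ(\mathrm{id}\times G^{ab}_{Y,Z})$, which is part of the framing data already established in \cite{largethesis} for the cylindrical/cap moduli; and (ii) the base Morse framing on $\cM_{\tilde f}$ satisfies its own compatibility, which is \Cref{exmp:morsebottfunction}. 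The new content is that the splitting in Step 1 respects boundary breakings: at a broken configuration, both the vertical fiber $T^{cyl}$ and the horizontal part $\pi^*T_{\cM_{\tilde f}}$ themselves acquire the standard boundary decomposition, and the three isomorphisms are compatible with this decomposition because the linearized operators block-decompose along the neck. Running the same index-additivity argument on the glued cap, while tracking the diagonal $S^1$-action by averaging against the trivialization of the equivariant tubular neighborhood chosen at the start of \Cref{subsubsection:equivariant-SH-construction}, gives the commutativity of \eqref{equation:framing-diagram-square}. The hardest technical ingredient is the smooth-gluing/parametric-estimates input that the splittings in Step 1 extend smoothly across the boundary strata of the $\langle k\rangle$-manifold $\cM_{\tilde f,H,J}(X,Z)$; this is a parametric version of the gluing in \cite{largethesis} and will be addressed in \Cref{sec:gluing}, on which \Cref{proposition:s1-flow-cat} also relies.
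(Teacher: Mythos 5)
Your proposal follows essentially the same route as the paper: you define $V_X=T^d_{X_0}\oplus V_X^{caps}$ with the equivariant cap index bundle, split off the sub-bundle $T^{cyl}\subset T_{\cM_{\tilde f,H,J}}\oplus\underline{\bR}$ as the kernel of the projection to the Morse moduli, combine the cap-gluing index additivity with the Morse--Bott framing of $\tilde f$ on the base, and defer the boundary/compatibility and smoothness issues to the gluing analysis of \cite{largethesis} and \Cref{sec:gluing}. Your Step 4 simply spells out in more detail the compatibility check that the paper dispatches with a citation to \cite[\S7,\S8]{largethesis}, so there is no substantive difference.
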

One needs to check the compatibility of the framing with the $\langle k\rangle$-manifold structure on moduli spaces, which can be done similar to \cite[\S7,\S8]{largethesis}.

Note the slight abuse of notation in that this is an ind-flow category: $\tilde f:S=S^\infty=ES^1\to \bR$ itself is defined on an infinite dimensional manifold. However, as remarked before, to define its geometric realization, one can apply the geometric realization functor to the corresponding constructions for $E_jS^1=S^{2j-1}$, and take a homotopy colimit.

We denote the genuine $S^1$-equivariant spectrum $|\cM_{\tilde f,H,J}|$ by $HF_S(H,\bS)$, suppressing $\tilde f$ and $J$ from the notation. The independence from $J$ is standard (see also \Cref{subsubsec:contmaps}). Note that, as a Borel equivariant spectrum, $HF_S(H,\bS)$ is also independent of $(S,\tilde f)$. 

\subsection{Continuation maps and invariance}\label{subsubsec:contmaps}
In this section we construct continuation maps $HF_S(H_0,\bS)\to HF_S(H_1,\bS)$, whenever $H_1\geq H_0$ over $S\times S^1\times M$. The construction is standard in Floer theory, we only need to implement it using the language of bimodules. One can show independence of the construction from $\tilde f$ using similar methods; however, we exclude this discussion. 

Choose a monotone $S^1$-equivariant interpolation from $H_0$ to $H_1$, i.e. a family of $S^1$-equivariant functions $H_r:S\times S^1\times M\to\bR$, $r\in [0,1]$ such that at $r=0,1$ these match with $H_0,H_1$ above and such that $H_{r'}\geq H_r$ for $r'\geq r$ (outside a fixed compact subset of $M$). Similarly, choose an $S^1$-equivariant interpolation $J_r$ of the corresponding almost complex structures. We assume all $H_r$ are linear at infinity in a uniform manner. 

Consider the Morse function $g(t)=t^3/3-t^2/2$ on $I$, and let $r(t)$ denote a gradient trajectory from $0$ to $1$ (the choice is up to translation). We define an $\cM_{\tilde f,H_0}$-$\cM_{\tilde f,H_1}$-bimodule $\cN_{\tilde f,H_r,J}$ by considering the moduli of (broken) pairs $(\gamma,v)$ satisfying 
\begin{equation} \label{equation:continuationsh-tilde}
	\left\{
	\begin{array}{@{}rr@{}}
		\dot{\gamma}+ \nabla \tilde{f}(\gamma) &=0 \\
		\partial_s v + J_{r(s),\gamma(s)}(\partial_t v - X_{H_{r(s),\gamma(s)}}) &=0
	\end{array}\right.
\end{equation}
The asymptotic conditions are similar. We let $\cN_{\tilde f,H_r,J}(i,j)$ to be the compactification of the moduli of solutions to this equation (not up to translation, the translation symmetry is already broken). It is easy to show that $\cN_{\tilde f,H_r,J}$ is a bimodule, and can be framed in an analogous way to $\cM_{\tilde f,H_0}$ and $\cM_{\tilde f,H_1}$. 
\begin{rk}
One can let the pair $(\tilde f,g)$ vary in a controlled and monotone way as $(\tilde f_r,g_r)$ and modify the first equation as $\dot{\gamma}(s)+ \nabla_{g(r(s))} {\tilde f_r(s)}(\gamma(s))=0$ in order to obtain an $\cM_{\tilde f_0,H_0}$-$\cM_{\tilde f_1,H_1}$-bimodule. Ignoring the Cauchy--Riemann equation, this is how one would define continuation maps in Morse theory. We omit this. 	
\end{rk}
As a result, we obtain a map of genuine $S^1$-equivariant spectra
\begin{equation}
HF_S(H_0,\bS)=|\cM_{\tilde f,H_0}| \to HF_S(H_1,\bS)=|\cM_{\tilde f,H_1}|
\end{equation}

\begin{defn}
Define $SH_{S}(M; \bS):= \colim HF_{S}(H; \bS)$, where the colimit is over the space of all admissible Floer data and the maps induced by all possible interpolations. 
\end{defn}

\begin{rk}\label{rk:fixedinterpolation}
One can interpolate Floer data by adding constant functions to $H$. Notice that $|\cM_{\tilde f,H+a,J}|$, $a\geq 0$, canonically identifies with $|\cM_{\tilde f,H,J}|$, and it is easy to prove using a filtration argument that the interpolation is an equivalence. In fact, it is the identity. This can be shown by explicit methods too; however, we omit this. One can avoid such an explicit proof by replacing $c:HF_{S}(H; \bS)\to HF_{S}(H+a; \bS)$ with $c\circ e_{H}^{-1}$ or equivalently by $e_{H+a}^{-1}\circ c$. Using \Cref{prop:homotopyinduced}, one can prove that the induced map is the identity. In particular, for $a=0$, this can used to prove independence from the choice of almost complex structure. 
\end{rk}




\subsection{Comparison of $SH(M,\bS)$ and $SH_S(M,\bS)$}\label{subsec:comparisonsheqsh}
In this section, we prove
\begin{prop}\label{prop:comparisonsheqsh}
There is a natural homotopy equivalence $SH(M,\bS)\xrightarrow{\simeq} SH_S(M,\bS)$ of non-equivariant spectra. 
\end{prop}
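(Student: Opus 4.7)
The plan is to construct a natural non-equivariant map $\phi_H: HF(H,\bS)\to HF_S(H,\bS)$ for each admissible Hamiltonian $H$, verify it is compatible with continuation maps so that it passes to the colimit, and then prove that $\phi_H$ is an equivalence via a filtration argument modeled on the proofs of \Cref{thm:morsebottrecovers} and \Cref{thm:equivariantcjs}.

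First, to construct $\phi_H$, the idea is to imitate the Morse-theoretic constructions of \Cref{sec:equivariantmorse} in the Floer setting. Fix a critical point $a_0\in\mathrm{crit}(\tilde f)$ lying in the index-$0$ circle $X_1\subset S$. The map should be induced by a framed (non-equivariant) $\cM_{H_{a_0},J_{a_0}}$-$\cM_{\tilde f,H,J}$-bimodule $\cN^H$ built from the moduli of pairs $(\gamma,u)$, where $\gamma:\bR\to S$ is a gradient trajectory of a perturbation $\tilde f'$ of $\tilde f$ with $a_0$ as isolated minimum and $u:\bR\times S^1\to M$ satisfies an interpolated Floer equation that reduces to the equation for $(H_{a_0},J_{a_0})$ at one end and to the family $(H,J)$ at the other. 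The bimodule framings are defined by gluing with abstract caps as in \Cref{subsubsec:equivframings}, and \Cref{prop:inducedmap} produces $\phi_H$. The auxiliary data forms a contractible space, so $\phi_H$ is well-defined up to homotopy.

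Next, to see that $\phi_H$ commutes up to homotopy with continuation maps $HF(H_0,\bS)\to HF(H_1,\bS)$ and $HF_S(H_0,\bS)\to HF_S(H_1,\bS)$, the plan is to apply \Cref{prop:homotopyinduced} to the square of bimodules obtained by choosing a homotopy between the two compositions. Concretely, this homotopy is implemented by an interpolating family of Floer data on a two-parameter domain, and the compactified moduli furnishes the required $\tilde \cP$ in the sense of \Cref{defn:homotopicbimodule}. Passing to the colimit over $H$ then produces the natural non-equivariant map $\phi:SH(M,\bS)\to SH_S(M,\bS)$.

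Finally, to show $\phi_H$ is an equivalence, filter $|\cM_{\tilde f,H,J}|$ by the value of $\tilde f$ along $S$, and filter $|\cM_{\tilde f, H, J}^d|$ for each finite approximation $E_dS^1$. For each critical circle $X_k\subset S$ of $\tilde f$, the objects of $\cM_{\tilde f,H,J}$ lying over $X_k$ consist of $S^1$-families of pairs $(a,x)$ with $a\in X_k$, and—because the Floer data depend only on the $X_k$-coordinate near $X_k$—the morphisms between two such families with constant $\gamma$ component decouple as $X_k\times\cM_{H_{a_k},J_{a_k}}(x,y)$ for a chosen $a_k\in X_k$. Matching the framings of \Cref{subsubsec:equivframings} with the product description, and arguing as in the proof of \Cref{thm:equivariantcjs}, the subquotient of $|\cM_{\tilde f,H,J}|$ corresponding to $X_k$ is naturally equivalent to $X_k^{T^d X_k}\wedge HF(H_{a_k},\bS)$. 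The non-equivariant assembly of these subquotients is $(\colim_d\Sigma^\infty E_dS^1_+)\wedge HF(H,\bS)\simeq \Sigma^\infty ES^1_+\wedge HF(H,\bS)\simeq HF(H,\bS)$ since $ES^1$ is contractible, and one checks that the map $\phi_H$ realizes this equivalence on the filtered pieces.

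The hardest step is the verification that the $\tilde f$-filtered subquotients of $|\cM_{\tilde f,H,J}|$ really split as $X_k^{T^d X_k}\wedge HF(H,\bS)$ non-equivariantly, including the identification of framings; this is a Floer-theoretic analogue of the Künneth-type decoupling used in the Morse case of \Cref{thm:equivariantcjs}, and requires a careful analysis of how the moduli of solutions to \eqref{equation:sh-tilde} degenerate near the critical manifolds of $\tilde f$, together with the gluing results underlying \Cref{proposition:s1-flow-cat}.
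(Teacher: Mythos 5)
Your construction of the map is workable but more elaborate than necessary: the paper simply observes that the full subcategory $\cM_{\tilde f,H,J}^0$ spanned by the objects over the minimal critical circle of $\tilde f$ is an inclusion of flow categories in the sense of \Cref{defn:inclusion}, canonically isomorphic to (circle)$\times\,\cM_{H_a,J_a}$ because the Floer data over that circle are rotations of one another; the map $HF(H_a,\bS)\to HF_S(H,\bS)$ then comes from \Cref{lem:flowincljinclgeomincl} together with the inclusion of the point $a$ into the circle, with no new moduli spaces, no perturbation $\tilde f'$, and no appeal to \Cref{prop:homotopyinduced} needed at this stage.

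The final step, however, has a genuine gap. You filter $|\cM_{\tilde f,H,J}|$ by the value of $\tilde f$ and correctly identify the associated graded pieces as $X_k^{T^dX_k}\wedge HF(H_{a_k},\bS)$, but this does not prove that $\phi_H$ is an equivalence. The source $HF(H_{a_0},\bS)$ sits entirely in filtration degree zero, so the induced maps on subquotients are $HF(H_{a_0},\bS)\to \Sigma^\infty (X_1)_+\wedge HF(H_{a_0},\bS)$ at the bottom (induced by $\{a_0\}\hookrightarrow X_1\cong S^1$, which is not an equivalence) and $0\to X_k^{T^dX_k}\wedge HF(H_{a_k},\bS)$ above; the subquotient-by-subquotient argument of \Cref{thm:morsebottrecovers} and \Cref{thm:equivariantcjs} therefore does not apply. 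Moreover, ``assembling'' the subquotients into $\Sigma^\infty ES^1_+\wedge HF(H,\bS)$ requires control of the attaching maps between filtration levels --- the contractibility of $ES^1$ is visible only through them --- and this amounts to a global product decomposition $\cM_{\tilde f,H,J}\simeq \cM_{\tilde f}\times\cM_{H,J}$, which fails because the Floer equation in \eqref{equation:sh-tilde} is coupled to $\gamma$ away from the critical circles. The paper circumvents both problems differently: it first passes to a \emph{constant} (non-equivariant) family $(H^{const},J^{const})$, for which the flow category genuinely is a product, proves $HF(H_0,\bS)\to HF_S(H^{const},\bS)$ is an equivalence by filtering by the Floer action of $H_0$ (so each subquotient map is $\bS\to\Sigma^\infty S_+$ with $S=S^\infty$ contractible, hence an equivalence), and only then compares the constant and equivariant families after taking the colimit over slopes, by a cofinality argument in the indexing diagram. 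Note in particular that the paper does not assert, as you do, that the map is an equivalence at each fixed slope for the equivariant family --- only the colimit is shown to be one.
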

The equivalence we produce is induced by an inclusion (in a sense slightly broader than \Cref{defn:inclusion}). More precisely, let $X_0$ denote the index $0$ critical manifold of $\tilde f$. There is an inclusion of flow categories $\cM_{\tilde f,H,J}^0\hookrightarrow \cM_{\tilde f,H,J}$, where $\cM_{\tilde f,H,J}^0$ denotes the span of objects $(a,x)$, where $a\in X_0$ and $x\in orb(H_a)$. Notice that, as $(H_a,J_a)$ are related by a rotation for different $a\in X_0$, $\cM_{H_a,J_a}$ and $\cM_{H_{a'},J_{a'}}$ are canonically identified. By extension, one can identify $\cM_{\tilde f,H,J}^0$ with $X_0\times \cM_{H_a,J_a}$ for a fixed $a\in X_0$, i.e. the flow category obtained from $\cM_{H_a,J_a}$ by taking products of objects and morphisms with $X_0$. Hence, the geometric realization of  $\cM_{\tilde f,H,J}^0$ is the same as $(X_0)_+\wedge |\cM_{H_a,J_a}|$. The inclusion $\{a\}\hookrightarrow X_0$ induces a map from $|\cM_{H_a,J_a}|$ to $ |\cM_{\tilde f,H,J}^0|$; hence, we have 
\begin{equation}\label{eq:inclsnoneqHF}
HF(H_a,\bS)=|\cM_{H_a,J_a}|\to |\cM_{\tilde f,H,J}^0|\to |\cM_{\tilde f,H,J}|=HF_S(H,\bS)
\end{equation} 
inducing 
\begin{equation}\label{eq:inclnoneqSH}
SH(M,\bS)\to SH_S(M,\bS)
\end{equation}
\begin{proof}[Proof of \Cref{prop:comparisonsheqsh}]
We show that \eqref{eq:inclnoneqSH} is an equivalence. To see this, observe that one can use a constant family $(H_s,J_s)$, $s\in S$, if one drops the assumption that $(H_s,J_s)$ and $(H_{z.s},J_{z.s})$ are related by rotation. In other words, for a fixed pair $(H_0,J_0)$, where $H_0$ is non-degenerate and $J_0$ is generic, one can assume $(H_s,J_s)=(H_0,J_0)$. We denote such a family by $(H^{const},J^{const})$. One has a map $HF(H_0,\bS)\to HF_S(H^{const},\bS)$ constructed similarly to \eqref{eq:inclsnoneqHF}, and this map is an equivalence. To see this is an equivalence, filter the corresponding flow categories by the action of $H_0$. The maps induced on subquotients are of the form $\bS\to \Sigma^\infty S_+$, which is an equivalence (we are implicitly using \Cref{prop:mbscjs} for $S$, ignoring the equivariant structure). Note that one can see both this map and \eqref{eq:inclsnoneqHF} as maps induced by a bimodule, produced using the constant interpolation of the corresponding Hamiltonian (cf. \Cref{rk:fixedinterpolation}). As a result, as one takes the colimit as $slope(H_0)\to \infty$, one still obtains an equivalence. 

Observe that if one ignores the equivariant structure, $SH_S(M,\bS)$ defined using the equivariant Hamiltonians and the constant Hamiltonians agree: indeed one can take the colimit over a bigger diagram containing both, and each of these two types form a cofinal sequence within the diagram. The result follows from this observation. 
\end{proof}
\section{Cieliebak--Latschev maps and the spectral equivariant Viterbo isomorphism}\label{sec:clmaps}

Given a Liouville manifold $M$, and an exact, compact, spin Lagrangian $Q\subset M$, there is a map $SH^*(M)\to H_{n-*}(\cL Q)$ called the \emph{Cieliebak--Latschev map}; see \cite{cieliebaklatschev}, \cite[Sec.\ 4.4]{zhaothesis}, \cite{abouzaid2013symplectic}.  
When $M=T^*Q$ and $Q$ is the zero section, this map is an isomorphism called the \emph{Viterbo isomorphism} (when the spin assumption is dropped, one needs to consider the homology of $\cL Q$ with local coefficients). In this \namecref{sec:clmaps}, we lift the construction of Cieliebak--Latschev maps to the category of genuine $S^1$-equivariant spectra and we prove that this map is a homotopy equivalence of Borel equivariant spectra. 

\subsection{Statement of the theorems}
We begin by explaining the precise statement we will prove. Let $M$ be a Liouville manifold endowed with a background framing $TM\oplus \underline{\bC}^k\cong \underline\bC^{n+k}$. Given this, one obtains a stable unitary/symplectic trivialization of $TM|_Q\cong TQ\otimes \bC=:T_\bC Q$, i.e. an isomorphism $T_\bC Q\oplus \underline\bC^k\cong \underline\bC^{n+k}$. As a result, we obtain a Gauss map $\rho$ from $Q$ to the Lagrangian Grassmanian $LGr(\bC^{n+k})$, given by $x\mapsto T_xQ\oplus \bR^k$. 

One can define a virtual bundle on $\cL LGr(\bC^{n+k})$, whose rank on a specific component is given by the Maslov index (so the rank may vary). An index theoretic description of this bundle is as follows: for every loop $\Lambda\in \cL LGr(\bC^{n+k})$, one has a linear Cauchy--Riemann problem on the disc $\bD^2$ with the Lagrangian boundary conditions $\Lambda$. In other words, one obtains a map from $\cL LGr(\bC^{n+k})$ to $Fred\simeq BU\times \bZ$, defining a virtual bundle. Here we are using Fredholm operators on a disc, which carries an $S^1$-action by rotation. The classifying map can be made $S^1$-equivariant, endowing the virtual bundle with an $S^1$-equivariant structure (c.f. \Cref{rk:s1eqfred}, we will give more details). Note that the rank of this bundle is different on different components of $\cL LGr(\bC^{n+k})$. 
Let $W_{mas}$ denote the virtual bundle on $\cL LGr(\bC^{n+k})$ obtained by subtracting $\underline\bR^{n+k}$ from the index bundle. We prove:
\begin{thm}\label{thm:equivariantcl}
There is a morphism of genuine $S^1$-equivariant spectra $SH_{S}(M,\bS)\to \cL Q^{\underline \bR^n-\rho^*W_{mas}}$.
\end{thm}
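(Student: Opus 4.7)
The plan is to construct, for each admissible equivariant Floer datum $(\tilde f, H, J)$ on $M$, a framed $S^1$-equivariant $\cL Q$-relative module $\cN_{\tilde f,H,J}$ over $\cM_{\tilde f,H,J}$ with twist $\nu:=\underline{\bR}^n-\rho^*W_{mas}$, and then apply the equivariant version of \Cref{prop:relative-module} followed by passage to the colimit over $H$. The geometric input will be parametrized moduli spaces of holomorphic half-cylinders with Lagrangian boundary on $Q$ as in \cite{zhaothesis}.

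\textbf{Step 1 (Moduli spaces).} For an object $(a,x)\in ob(\cM_{\tilde f,H,J})$, define $\cN_{\tilde f,H,J}(a,x)$ to be the compactification of the moduli of pairs $(\gamma,u)$ where $\gamma:(-\infty,0]\to S$ is a half trajectory of $-\nabla\tilde f$ asymptotic to $a$ and $u:(-\infty,0]\times S^1\to M$ solves $\partial_s u+J_{\gamma(s)}(\partial_t u-X_{H_{\gamma(s)}})=0$ with $u(0,\cdot)\in \cL Q$ and $\lim_{s\to-\infty}u(s,\cdot)=x$. The boundary evaluation $(\gamma,u)\mapsto u(0,\cdot)$ gives a map to $\cL Q$. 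Gluing of solutions to \eqref{equation:sh-tilde} at the $-\infty$ end with half-cylinders provides the composition maps $\cM_{\tilde f,H,J}((a,x),(b,y))\times_{(b,y)}\cN_{\tilde f,H,J}(b,y)\to \cN_{\tilde f,H,J}(a,x)$. Smoothness and the $\langle k\rangle$-manifold structure follow from the analytic arguments in \Cref{sec:gluing} combined with Floer-theoretic gluing for half-cylinders with Lagrangian boundary (cf.\ \cite{zhaothesis,largethesis}). The diagonal $S^1$-action on $S\times S^1$ together with loop rotation on $\cL Q$ make $\cN_{\tilde f,H,J}$ an $S^1$-equivariant relative module.

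\textbf{Step 2 (Framing).} This is the technical heart of the argument. One stabilizes the linearized operators using the background framing $TM\oplus\underline\bC^k\cong\underline\bC^{n+k}$, which on $Q$ becomes a stable trivialization of $T_\bC Q\oplus\underline\bC^k$ and defines the Gauss map $\rho:Q\to LGr(\bC^{n+k})$. For a half-cylinder $(\gamma,u)$, the Lagrangian boundary condition $u(0,\cdot)\in\cL Q$ together with the Gauss map pulls back to a loop of Lagrangians in $\bC^{n+k}$; capping the puncture at $-\infty$ by an abstract cap in $\cU(x)$ (as in \Cref{defn:abstractcap}) produces a linear Cauchy--Riemann problem on the disc $\bD^2$ with Lagrangian boundary conditions classified by $\rho\circ(u|_{\partial})\in\cL LGr(\bC^{n+k})$. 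The index bundle of this family is, by definition, the pullback of $W_{mas}$ (plus the trivial stabilization). Additivity of the index under gluing yields an isomorphism
\begin{equation*}
V_{(a,x)}\oplus T_{(a,x)}\cong T_{\cN_{\tilde f,H,J}(a,x)}\oplus \underline{\bR}^n- \mathrm{ev}^*\rho^*W_{mas},
\end{equation*}
i.e.\ a framing with twist $\nu=\underline{\bR}^n-\rho^*W_{mas}$. The $S^1$-equivariance of the framing follows as in \Cref{subsubsec:equivframings} (cf.\ \Cref{rk:s1eqfred}): the relevant Fredholm families over the contractible spaces of caps and half-cylinder data admit equivariant models because all rotation actions on the domains commute with gluing. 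Compatibility with the flow category composition is verified by the same index-additivity argument used to establish \eqref{eq:gluingindexbundles}.

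\textbf{Step 3 (Assembly).} By the equivariant version of \Cref{prop:relative-module}, the framed $S^1$-equivariant relative module $\cN_{\tilde f,H,J}$ produces an $S^1$-equivariant morphism $HF_S(H,\bS)=|\cM_{\tilde f,H,J}|\to \cL Q^{\underline{\bR}^n-\rho^*W_{mas}}$. For a monotone homotopy of Hamiltonians one constructs a continuation-type relative module extending $\cN_{\tilde f,H_0,J_0}$ and $\cN_{\tilde f,H_1,J_1}$ (a parametrized half-cylinder moduli with varying data), and the homotopy formalism of \Cref{prop:homotopyinduced} shows that the maps so produced commute up to homotopy with the continuation maps of \Cref{subsubsec:contmaps}. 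Taking the colimit over admissible $H$ yields the desired map $SH_S(M,\bS)\to \cL Q^{\underline{\bR}^n-\rho^*W_{mas}}$.

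The main obstacle is Step 2: verifying that the Fredholm-theoretic framing identification can be made equivariant and compatible with all gluings simultaneously, and correctly matching the Maslov twist $\rho^*W_{mas}$ component-by-component on $\cL Q$. Once this index-theoretic bookkeeping is in place, the rest proceeds from the machinery already developed in \Cref{sec:mbcatsgeomrealizations} and \Cref{sec:bimodreal}.
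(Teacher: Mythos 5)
Your overall strategy coincides with the paper's: construct an $\cL Q$-relative module from parametrized half-cylinders with boundary on $Q$, frame it by capping off the orbit end and using index additivity, then apply (the equivariant version of) \Cref{prop:relative-module} and pass to the colimit over Hamiltonians. Steps 1 and 3 are essentially the paper's argument, apart from a convention slip: in the paper the orbit sits at the positive end $s\to+\infty$ of $Z_+=[0,\infty)\times S^1$, because a module element must glue onto the output (negative) end of a Floer cylinder as normalized in \Cref{subsubsec:reminderslarge}; your parametrization $(-\infty,0]\times S^1$ with the orbit at $-\infty$ cannot be concatenated with those cylinders without reversing orientations.

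The genuine gap is in Step 2, exactly where you locate the difficulty. You cap the orbit end with the positive abstract cap $\cU(x)$ of \Cref{defn:abstractcap}, whose index bundle is $V_x$, and invoke additivity. But additivity then gives $V_x\oplus(T_{\cN}-\underline{\bR}^k)\cong\rho^*W_{mas}\oplus\underline{\bR}^{n+k}$, i.e.\ $V_x\cong\rho^*W_{mas}-T_{\cN}+\mathrm{(trivial)}$, which carries the wrong sign on both $T_{\cN}$ and $\rho^*W_{mas}$ relative to the framing $V_x\cong T_{\cN}\oplus(\underline{\bR}^n-\rho^*W_{mas})$ required by \Cref{definition:relative-module}. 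The orbit end of the half-cylinder must instead be capped by a \emph{negative} cap, whose index bundle is $V_x^-$, yielding $(\rho^*W_{mas}+\underline{\bR}^{n+k})\cong(T_{\cN}-\underline{\bR}^k)\oplus V_x^-$ as in \Cref{lem:indexgluehalfneg}; and to convert this into the desired framing one further needs the comparison $V_x\oplus V_x^-\cong\underline{\bR}^{2(n+k)}$ together with the equivalence of the framings $V_x$ and $V_x'=\underline{\bR}^{2(n+k)}-V_x^-$, which is \Cref{lem:negposframingsum} and requires its own gluing argument over the sphere data $\cU_{S^2}$ (and a second such argument to check compatibility with the flow-category compositions). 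Without this ingredient the index bookkeeping you defer does not close up, and the framing — hence the map — is not actually constructed.
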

Note the slight abuse of notation: we denote the map $\cL Q\to\cL LGr(\bC^{n+k})$ induced by $\rho$ by the same letter. Observe that, when $Q$ is simply connected, $\rho^*W_{mas}$ is trivial of rank $0$, and $\cL Q^{\underline \bR^n-\rho^*W_{mas}}=\Sigma^{\infty+n}\cL Q$. 

Applying homology to \Cref{thm:equivariantcl}, one obtains the map in \cite{zhaothesis}. In particular, the classical Viterbo isomorphism theorem tells us that this map is an isomorphism on integral homology for $M=T^*Q$. We will show
\begin{thm}\label{thm:viterbo}
The map $SH_{S}(T^*Q,\bS)\to \cL Q^{\underline \bR^n-\rho^*W_{mas}}$ constructed in \Cref{thm:equivariantcl} is an equivalence of Borel/homotopy $S^1$-equivariant spectra. 
\end{thm}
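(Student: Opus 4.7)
The plan is to reduce the assertion to a statement about the underlying non-equivariant spectra and then apply a Hurewicz argument. Since the forgetful functor $\mathrm{Sp}^{BS^1} \to \mathrm{Sp}$ is conservative, an equivalence of Borel $S^1$-equivariant spectra is the same data as an $S^1$-equivariant map whose underlying map of spectra is a weak equivalence. Thus it suffices to show that the map of \Cref{thm:equivariantcl}, after forgetting the $S^1$-action, is an equivalence of ordinary spectra.

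First, I would combine \Cref{prop:comparisonsheqsh} with the construction of \Cref{thm:equivariantcl} to identify the underlying non-equivariant map with the (non-equivariant) Cieliebak--Latschev map $SH(T^*Q,\bS) \to \cL Q^{\underline\bR^n - \rho^*W_{mas}}$ built from the $\cL Q$-relative module of half-cylinders $\cN_{H,J}$ attached to the constant-in-$S$ Floer data. Concretely, the inclusion $\cM^0_{\tilde f,H,J} \hookrightarrow \cM_{\tilde f,H,J}$ from the proof of \Cref{prop:comparisonsheqsh} restricts the equivariant relative module to $(X_0 \times \cM_{H_a,J_a})$-pieces, whose geometric realization is $(X_0)_+ \wedge HF(H_a,\bS)$ mapping into $\cL Q^{\underline\bR^n-\rho^*W_{mas}}$ through the non-equivariant Cieliebak--Latschev construction fixed at any $a \in X_0$. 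A diagram chase, of the sort already used to establish \Cref{prop:comparisonsheqsh}, identifies the underlying map with the usual Cieliebak--Latschev map in the colimit.

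Next I would decompose both sides over the components of $\cL Q$. On each component $\cL_\alpha Q$, the Maslov bundle $\rho^*W_{mas}$ has constant rank, so $\cL_\alpha Q^{\underline\bR^n-\rho^*W_{mas}}$ is a connective spectrum up to a finite shift, hence bounded below. On the $SH$ side, the subspectrum lying over $\cL_\alpha Q$ is a colimit of $HF(H,\bS)$-summands generated by orbits in the class $\alpha$, and a standard Conley--Zehnder/Maslov computation for $T^*Q$ shows that the degrees of generators in this component are bounded below by a constant depending only on $\alpha$ (the action tends to $+\infty$ along the colimit, but the index grows accordingly in the positive direction). Consequently each component of the map is a map between bounded-below spectra. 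I would then invoke the main theorem of \cite{abouzaid2013symplectic} (the spectral Viterbo isomorphism, together with its component-wise refinement via $\rho^*W_{mas}$), which identifies our map at the level of $H_*(-;\bZ)$ with the classical Viterbo isomorphism, hence shows it is an isomorphism on integral homology. The Hurewicz theorem for bounded-below spectra (applied to the mapping cone) then yields that the map is a $\pi_*$-isomorphism on each component, and therefore an equivalence.

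The main obstacle I anticipate is the compatibility step: verifying that the map on homology induced by our geometric realization of $\cN_{H,J}$ agrees with the Cieliebak--Latschev/Viterbo map of \cite{abouzaid2013symplectic, cohencotangent}. This amounts to unpacking the relative Pontryagin--Thom construction of \Cref{prop:relative-module} in terms of oriented fundamental classes and checking that the resulting chain-level morphism is the standard count of holomorphic half-cylinders weighted by evaluation at the boundary loop; a filtration by action, as in the proof of \Cref{thm:morsebottrecovers}, should reduce the comparison to a single half-cylinder moduli space at each level, where it follows from naturality of the Thom isomorphism. Secondary obstacles are the precise verification of boundedness below on each component of $\cL Q$ (in the non-simply-connected case one must be a bit careful with the twist $\rho^*W_{mas}$) and ensuring that the bounded-below Hurewicz argument applies uniformly across components so that the wedge/product of equivalences assembles to the desired equivalence of spectra.
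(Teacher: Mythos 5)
Your proposal is correct and follows essentially the same route as the paper: reduce to the underlying non-equivariant map via conservativity of $\operatorname{Sp}^{BS^1}\to\operatorname{Sp}$, identify it with the non-equivariant Cieliebak--Latschev map through \Cref{prop:comparisonsheqsh}, decompose over $\pi_1(Q)$ to get bounded-below pieces, and conclude by the classical Viterbo isomorphism on integral homology plus the stable Hurewicz theorem applied to the mapping cone. The compatibility issue you flag as the main obstacle is handled in the paper only by a remark: the homology-level map is observed to coincide with the half-cylinder construction of \cite{zhaothesis}, whose agreement with Abouzaid's map is asserted rather than proved in detail, so your concern is legitimate but is dispatched at the same level of rigor as in the paper.
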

Recall that a Borel equivariant spectrum is a functor $BS^1\to Sp$. A map of Borel equivariant spectra (i.e. a natural transformation of the corresponding functors) that induce an homotopy equivalence of the underlying non-equivariant spectra is automatically an equivalence. 

\subsection{An $\mathcal{L}Q$-valued module}

To construct the Cieliebak--Latschev map at homology level, one constructs a moduli space of half-cylinders with boundary on $Q$, makes consistent choices of fundamental chains on them, and pushes-forward along the evaluation maps to $\cL Q$ (given by restricting to the boundary); see \cite[Sec.\ 4.4]{zhaothesis}. Instead, we will use these moduli spaces to produce $\cL Q$-valued modules. 

As a warmup, we first explain the non-equivariant case. Choose $(H,J)$ as in \Cref{subsubsec:reminderslarge}. Let $Z_+:=[0,\infty)\times S^1$ be endowed with generic Floer data that matches $(H,J)$ in a fixed positive cylindrical end. Consider the moduli space of maps $v: Z_+\to M$ that are solutions to the non-linear Cauchy--Riemann equation for $(H, J)$, subject to the condition that $v$ is asymptotic to $x\in orb(H)$ at infinity and sends $\{0\}\times S^1$ into $Q$. This moduli space can be compactified by adding broken half cylinders. Let $\cN_{H,J}(x)$ denote the compactification. The analysis in \cite[Sec.\ 6]{largethesis} implies that this is a smooth manifold with corners. 
Moreover, there is a natural evaluation map $\cN_{H,J}(x)\to \cL Q$ sending a half cylinder to its restriction to the boundary. In other words, 
\begin{prop}\label{proposition:module-warmup}
$\cN_{H,J}$ is an $\cL Q$-valued module on $\cM_{H,J}$.	
\qed
\end{prop}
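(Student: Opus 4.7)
The plan is to verify the four data items of Definition \ref{definition:relative-module} for $\cN_{H,J}$ in turn, treating the differential-topological parts as parametrized versions of the constructions already carried out for $\cM_{H,J}$.

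First I would establish that each $\cN_{H,J}(x)$ is a smooth compact $\langle k\rangle$-manifold. Interior smoothness of the moduli of half-cylinders with Lagrangian boundary on $Q$ is standard. The boundary strata are of the form $\cM_{H,J}(x,x_1) \times_{x_1} \cM_{H,J}(x_1,x_2) \times_{x_2} \cdots \times_{x_{\ell-1}} \cN_{H,J}(x_\ell)$, coming from the breaking of Floer cylinders at the positive end; there is no breaking at the boundary end since $Q$ is compact and exact. Topological gluing is classical, and the smoothness of the resulting structure is a parametrized version of the Fukaya--Oh--Ohta--Ono-type exponential-decay estimates used in \cite[Sec.\ 6]{largethesis}. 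This yields the compactified $\langle k\rangle$-manifold together with the composition/gluing maps $\cM_{H,J}(x,y) \times_y \cN_{H,J}(y) \hookrightarrow \cN_{H,J}(x)$ as diffeomorphisms onto boundary faces, matching the shift-constant structure in Definition \ref{definition:relative-module}.

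Next I would construct the two evaluation maps. The evaluation $\ev_x : \cN_{H,J}(x) \to \{x\}$ is forced. The evaluation $\cN_{H,J}(x) \to \cL Q$ sends $v$ to $v|_{\{0\}\times S^1}$; this is smooth on each open stratum and extends continuously to the compactification because a broken configuration $(u_1,\dots,u_{\ell-1},v_\ell)$ has a well-defined Lagrangian boundary loop given by $v_\ell|_{\{0\}\times S^1}$. Compatibility with the module structure is automatic: the composition $\cM_{H,J}(x,y) \times_y \cN_{H,J}(y) \to \cN_{H,J}(x) \to \cL Q$ factors through the projection onto the second factor, since Floer cylinders do not touch $Q$, giving the required identity with $\cN_{H,J}(y) \to \cL Q$.

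For the framing, I would follow the cap construction of \cite[\S8]{largethesis} relative to $Q$. Namely, for each orbit $x$ introduce an abstract cap space $\cU^Q(x)$ parametrizing Floer data on the half-plane $S_+^Q := \overline{\bH}$ with Lagrangian boundary, asymptotic to $(J_\ell,Y_\ell)$ on the end. Each point defines a Fredholm Cauchy--Riemann problem with Lagrangian boundary, and pulling back $TM \oplus \underline\bC^k \cong \underline\bC^{n+k}$ together with the Gauss map $\rho$ realizes this as a Cauchy--Riemann problem on $\overline\bH$ with boundary condition in $LGr(\bC^{n+k})$. The index bundle of this family defines a virtual bundle on $\cU^Q(x)$, stably equivalent to $\rho^*W_{mas} + \underline\bR^n$ (via contraction of the cap and Riemann--Roch on the disc, analogous to \Cref{lem:negposframingsum}). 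Gluing a cap to a half-cylinder produces another cap, yielding isomorphisms $V_x \cong T_{\cN_{H,J}(x)} \oplus \rho^*W_{mas} - \underline\bR^n$ that are compatible with the Floer gluing on the positive end, so that $V_x \oplus T_{\{x\}} \cong T_{\cN_{H,J}(x)} \oplus \nu$ with $\nu = \rho^*W_{mas} - \underline\bR^n$, verifying Definition \ref{definition:relative-module}.

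The main obstacle is the smooth gluing of broken configurations with a half-cylinder boundary component and the verification that the framing isomorphisms are coherent on the corner strata in the sense of \eqref{equation:framing-diagram-square}. The former is a parametrized analogue of Large's smooth gluing theorem, and the latter reduces to comparing two a priori different isomorphisms produced by gluing caps at the two ends of a two-level broken object; as in the proof of \Cref{lem:negposframingsum}, both arise from the pullback of the index bundle along a homotopy-unique gluing map, so they agree up to a canonical stabilization. Once these points are in place, the rest of the verification is formal.
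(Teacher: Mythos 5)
Your verification of the module structure (smooth compact corner structure of the compactified half-cylinder moduli via Large's gluing analysis, boundary strata given by cylinder breaking at the puncture only, the two evaluation maps, and the factoring of the composed evaluation through $\cN_{H,J}(y)$) is correct and is essentially the paper's argument, which simply asserts the proposition after this same discussion. Note, however, that the framing is not part of the statement: in Definition \ref{definition:relative-module} a framing is extra data on a $P$-relative module, and the paper establishes it separately in \Cref{prop:framingoncl}. In that extra portion of your write-up you have the sign of $\nu$ reversed: the paper obtains $V_x \cong T_{\cN_{H,J}(x)}\oplus(\underline\bR^n-\rho^*W_{mas})$, i.e.\ $\nu=\underline\bR^n-\rho^*W_{mas}$ (consistent with the target $\Sigma^{\infty+n}\cL Q_+$ in the simply connected case), whereas you wrote $\nu=\rho^*W_{mas}-\underline\bR^n$; also, the paper produces this identification by gluing a \emph{negative} cap $\cU^-(x)$ onto the half-cylinder to form a closed disc with Lagrangian boundary and invoking $V_x'=\underline\bR^{2(n+k)}-V_x^-$, rather than by introducing a separate cap space on the punctured disc, though the index-gluing content is the same.
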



We now explain how to generalize this construction to the equivariant setting. Let $(\tilde f,H,J)$ be as in \Cref{subsubsection:equivariant-SH-construction}. Choose Floer data on $Z_+$-parametrized by $S=S^\infty$ satisfying the equivariance condition in \Cref{subsubsection:equivariant-SH-construction}. In other words, choose a family of Hamiltonians $H:S\times Z_+\to\bR$ and almost complex structures $J:S\times Z_+\to\cJ(M)$ extending the previously chosen Floer data on the cylindrical end (hence, we use the same notation), and satisfying
\begin{enumerate}
	\item $(H,J)$ is invariant under the diagonal circle action on $S\times Z_+$
	\item there exists a compact subset of $M$ such that $(H_s,J_s)$ is cylindrical outside this compact subset and $H_s$ is linear in a fixed Liouville parameter of fixed slope
\end{enumerate} 
We construct an $\cM_{\tilde f,H,J}$-module $\cN_{\tilde f,H,J}$ as follows: given $(a,x)$ (where $a\in crit(\tilde f)$, $x\in orb(H_a)$), $\cN_{\tilde f,H,J}(a,x)$ is the moduli space of (broken) pairs $(\beta,v)$ where 
\begin{itemize}
	\item $\beta: (-\infty,0]\to S$ is an half gradient trajectory of $-\tilde f$ such that $\beta(s)\to a$ as $s\to-\infty$
	\item $v:Z_+\to M$ is a solution to the non-linear Cauchy--Riemann equation with respect to the domain dependent data $(H_{\beta(s)}, J_{\beta(s)})$ and it is asymptotic to $x$ at $+\infty$, and satisfies $v(\{0\} \times S^1) \subset Q$.
\end{itemize}
As before, $(a,x)$ varies continuously, and $\cN_{\tilde f,H,J}(i)$ is given by all such pairs $(\beta,v)$ where $(a,x)$ has index $i$. Analogously to $\cN_{H,J}$, $\cN_{\tilde f,H,J}$ is a module over $\cM_{\tilde f,H,J}$. It is $S^1$-equivariant and admits an equivariant map to $S\times \cL Q$, given by $(\beta,v)\mapsto (\beta(0), v(0,\cdot))$. In other words,
\begin{prop}\label{prop:eqlqvaluedmodule}
    $\cN_{\tilde f,H,J}$ is an $S\times\cL Q$-valued module. 
\end{prop}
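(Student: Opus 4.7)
The plan is to verify the three structural requirements of a $P$-relative module in \Cref{definition:relative-module} with $P = S \times \cL Q$: namely, that each $\cN_{\tilde f, H, J}(i)$ is a smooth compact $\langle k \rangle$-manifold with appropriate boundary structure, that there are compatible gluing maps $\cM_{\tilde f,H,J}(i,j)\times_j \cN_{\tilde f,H,J}(j)\to \cN_{\tilde f,H,J}(i)$ realized as inclusions of boundary faces, and that the evaluation $\cN_{\tilde f,H,J}(i)\to S \times \cL Q$ is compatible with the module structure.

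First, I would address smoothness in the interior. Given a pair $(\beta, v)$ solving the coupled system of half-gradient and Cauchy--Riemann equations, the linearization decouples schematically into a gradient operator along $\beta$ (which is Fredholm thanks to the Morse--Bott structure of $\tilde f$ and the asymptotic condition $\beta(s)\to a$) and a Cauchy--Riemann operator for $v$ on $Z_+$ with Lagrangian boundary on $Q$ and non-degenerate asymptotic $x$. Genericity of the domain-dependent Floer datum $(H, J)$ on $Z_+$ (compatible with the equivariance and fixed cylindrical ends) yields transversality for the non-equivariant problem along each slice of $S$. An argument exactly parallel to the one used for $\cM_{\tilde f, H, J}$ in \Cref{proposition:s1-flow-cat} then upgrades this to a parametric transversality statement: on the quotient by the $S^1$-action (where the action is free away from constant $\beta$), the problem becomes non-degenerate in the usual sense, so the moduli space is a smooth manifold of the expected dimension.

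Next, I would address the compactification and boundary structure. Standard energy estimates together with the Lagrangian boundary condition provide a Gromov-type compactness statement: a sequence of pairs $(\beta_n, v_n)$ has a subsequence converging to a broken configuration, where breaking can occur either at an interior critical value of $\tilde f$ or at a time-$1$ orbit in $M$ (or both, coupled in the Morse--Bott fashion). Since $v$ has only one asymptotic end and the boundary $\{0\}\times S^1$ maps into $Q$, no bubbling off of discs or spheres occurs once slopes/energies are controlled in the usual way, and no breaking at the free boundary end is possible. Thus the boundary of $\cN_{\tilde f,H,J}(i)$ decomposes into pieces of the form $\cM_{\tilde f,H,J}(i,j)\times_j \cN_{\tilde f,H,J}(j)$ indexed by intermediate critical pairs of index $j<i$. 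Smooth gluing in this parametric Morse--Bott setting is exactly the same input as in \Cref{proposition:s1-flow-cat}: the estimates of \cite{largethesis, foooexponential}, implemented on the $S^1$-quotient (or parametrically over the local slice of $S$), produce gluing maps that are smooth embeddings onto the corresponding boundary faces, endowing $\cN_{\tilde f,H,J}(i)$ with the structure of a compact $\langle \mu(i) + a\rangle$-manifold and identifying the boundary strata as required by \Cref{defn:bimodule}. This smooth gluing is the main technical point, but it is entirely analogous to the gluing already invoked for the flow category.

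Finally, the evaluation map $\cN_{\tilde f,H,J}(i)\to S\times \cL Q$ defined by $(\beta,v)\mapsto (\beta(0), v(0,\cdot))$ is smooth (smoothness being a consequence of the smooth charts produced by gluing), and it is $S^1$-equivariant for the diagonal action: $\tau\cdot(\beta,v) = (\tau\cdot\beta,\, v(\cdot,\cdot-\tau))$ is sent to $(\tau\cdot\beta(0),\, v(0,\cdot-\tau))$, which matches the diagonal action on $S\times \cL Q$. Compatibility with the module structure is immediate: under the gluing map $\cM_{\tilde f,H,J}(i,j)\times_j \cN_{\tilde f,H,J}(j)\to \cN_{\tilde f,H,J}(i)$, the restriction of the glued curve to $\{0\}\times S^1$ and the value of $\beta$ at $s=0$ agree (up to the controlled gluing parameter) with those of the second factor, so the two compositions $\cM_{\tilde f,H,J}(i,j)\times_j \cN_{\tilde f,H,J}(j)\to \cN_{\tilde f,H,J}(i) \to S\times\cL Q$ and $\cM_{\tilde f,H,J}(i,j)\times_j \cN_{\tilde f,H,J}(j)\to \cN_{\tilde f,H,J}(j)\to S\times\cL Q$ coincide. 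Combining these steps verifies all the axioms of an $S^1$-equivariant $S\times\cL Q$-valued module over $\cM_{\tilde f,H,J}$.
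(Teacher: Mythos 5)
Your proposal is correct and follows essentially the same route as the paper, which itself gives no separate proof of this proposition but defers to the parametric gluing analysis of \Cref{sec:gluing} (stated there to apply \emph{mutatis mutandis} to $P$-valued modules) together with the exactness of the setting to rule out bubbling. One small tightening: the module compatibility is checked on the boundary face itself, where the composition map is the gluing diffeomorphism onto that face, so the two evaluations to $S\times\cL Q$ agree exactly rather than ``up to the controlled gluing parameter.''
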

One can ignore the $S$-component and obtain an $\cL Q$-valued module as in \Cref{proposition:module-warmup}. 

\subsection{Framing the moduli of half cylinders}
The goal of this section is to show that $\cN_{\tilde f,H,J}$ is framed with respect to the virtual bundle $\underline\bR^n-\rho^*W_{mas}$ on $\cL Q$. In other words, 
\begin{prop}\label{prop:framingoncl}
There are natural isomorphisms of virtual bundles \begin{equation}
	V_X\oplus T_X\cong T_{\cN_{\tilde f,H,J}(X)}\oplus (\underline \bR^n-\rho^*W_{mas})
\end{equation} defining a framing on $\cN_{\tilde f,H,J}$.
\end{prop}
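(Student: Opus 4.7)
The plan is to mirror the framing construction for $\cM_{\tilde f, H, J}$ given in \Cref{subsubsec:equivframings}, replacing the abstract caps $\cU(x)$ by a space of abstract ``half-cylinder caps'' with Lagrangian boundary on $Q$. At an interior point $(\beta, v) \in \cN_{\tilde f, H, J}(X)$, the tangent space splits, via the parametric Fredholm exact sequence, into a Morse direction from infinitesimal variations of the half-gradient trajectory $\beta$ and a CR direction from variations of the half-cylinder $v$. Exactly as in \Cref{subsec:applcjsmorse}, the Morse direction forms the tangent bundle of the descending manifold of $X_0 = \operatorname{crit}(\tilde f)$ in $S$ and contributes a summand identified with $T_{X_0}^d \oplus T_X$; this already accounts for the $T_{X_0}^d$ factor of $V_X$ together with $T_X$ on the left-hand side of the framing equation.

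For the CR direction I will stabilize the linearized Cauchy--Riemann operator $D_v$ (with Lagrangian boundary $v(\partial Z_+) \subset Q$ and asymptotic orbit $x$) using the background framing $TM \oplus \underline{\bC}^k \cong \underline{\bC}^{n+k}$, together with an auxiliary Cauchy--Riemann operator on the strip as in \Cref{note:stabilizationchange}. After stabilization, $D_v$ becomes a Fredholm problem on $\underline{\bC}^{n+k}$ whose Lagrangian boundary values are the loop $\rho \circ (v|_{\partial Z_+}) \in \cL LGr(\bC^{n+k})$. In parallel with \Cref{defn:abstractcap}, I introduce a contractible $S^1$-equivariant space $\cU^{hc}(X)$ of abstract half-cylinder caps $(\epsilon, L, J, Y, g)$ on $Z_+$, matching the equivariant Floer data $(H_a, J_a)$ on $\epsilon([L, \infty) \times S^1)$ for the corresponding $a \in X$, and unconstrained along $\partial Z_+$. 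This space fibers $S^1$-equivariantly over $X \times \cL Q$ (the $\cL Q$ factor recording the boundary loop) with contractible fibers, and the associated family of stabilized Fredholm problems defines an $S^1$-equivariant virtual index bundle exactly as in \Cref{rk:s1eqfred}. By the index-theoretic definition of $W_{mas}$, this index bundle descends to $\rho^*(W_{mas} + \underline{\bR}^{n+k})$ on $X \times \cL Q$.

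The key identification now comes from the $S^1$-equivariant gluing map that pastes a negative cap in $\cU^-(x)$ onto the asymptotic end of a half-cylinder $v \in \cN_{\tilde f, H, J}(X)$, producing a disc-type Fredholm problem on $\bP^1$ with Lagrangian boundary along $\rho \circ \gamma$. Following the proof of \Cref{lem:negposframingsum}, additivity of Fredholm indices under gluing gives an $S^1$-equivariant isomorphism
\begin{equation*}
    V_X^- \oplus T_{\cN_{\tilde f, H, J}(X)}^{CR} \cong \rho^*(W_{mas} + \underline{\bR}^{n+k})
\end{equation*}
of virtual bundles over $\cN_{\tilde f, H, J}(X)$. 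Invoking the identification $V_X^{caps} \cong \underline{\bR}^{2(n+k)} - V_X^-$ of \Cref{lem:negposframingsum} and cancelling the common trivial summands produced by the stabilization, this rearranges to
\begin{equation*}
    V_X^{caps} \oplus (\underline{\bR}^n - \rho^* W_{mas}) \cong T_{\cN_{\tilde f, H, J}(X)}^{CR}.
\end{equation*}
Combining with the Morse contribution $T_{X_0}^d \oplus T_X$ and using $V_X = T_{X_0}^d \oplus V_X^{caps}$ produces the desired framing
\begin{equation*}
    V_X \oplus T_X \;\cong\; T_{\cN_{\tilde f, H, J}(X)} \oplus (\underline{\bR}^n - \rho^* W_{mas}).
\end{equation*}

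Compatibility with the module structure, namely that this framing is compatible with the gluing $\cM_{\tilde f, H, J}(X, Y) \times_Y \cN_{\tilde f, H, J}(Y) \to \cN_{\tilde f, H, J}(X)$ in the sense analogous to \eqref{equation:framing-diagram-square}, follows from the associativity of Fredholm gluing along shared asymptotic ends: the relevant three-stage gluing ``Floer cylinder $+$ half-cylinder $+$ negative cap $=$ disc'' can be performed in either order, yielding a commutative diagram of index bundle identifications. The main technical obstacle will be the parametric bookkeeping of stabilization data -- choosing the auxiliary Cauchy--Riemann operators on caps, half-cylinders, and discs coherently so that the trivial summands introduced by stabilization cancel on the nose, and ensuring $S^1$-equivariance is preserved uniformly across the spaces of abstract caps, their gluings, and the pullbacks by the Gauss map $\rho$. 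This is a direct parametric elaboration of \Cref{lem:negposframingsum} combined with the Riemann--Roch calculation for discs with Lagrangian boundary, and requires no new analytic input beyond what has already been developed for $\cM_{\tilde f, H, J}$.
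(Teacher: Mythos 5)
Your overall strategy is the paper's: frame $\cN_{\tilde f,H,J}$ via the negative-cap description $V'_x=\underline{\bR}^{2(n+k)}-V_x^-$ of \Cref{lem:negposframingsum}, glue a negative cap onto the asymptotic end of a half-cylinder to produce a Lagrangian disc problem, and use additivity of index bundles; the Morse/CR splitting of $T_{\cN_{\tilde f,H,J}(X)}$ and the reduction of the equivariant Morse--Bott case to the cylindrical one are likewise exactly what the paper does (the paper states the gluing identity as \Cref{lem:indexgluehalfneg} and treats the non-equivariant $\cN_{H,J}$ first). Your auxiliary space of ``abstract half-cylinder caps'' is a harmless packaging variation.

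However, your rearrangement of the gluing identity is wrong, and as written it produces the framing with respect to $\rho^*W_{mas}-\underline{\bR}^n$ rather than $\underline{\bR}^n-\rho^*W_{mas}$. From your identity $V_X^-\oplus T^{CR}\cong \rho^*(W_{mas}+\underline{\bR}^{n+k})$ together with $V_X^{caps}=\underline{\bR}^{2(n+k)}-V_X^-$ one gets
\begin{equation*}
V_X^{caps}\;=\;\underline{\bR}^{2(n+k)}-\bigl(\rho^*W_{mas}+\underline{\bR}^{n+k}-T^{CR}\bigr)\;=\;T^{CR}\oplus\bigl(\underline{\bR}^{n+k}-\rho^*W_{mas}\bigr),
\end{equation*}
which, after absorbing the $-\underline{\bR}^{k}$ coming from the $k$-fold index $-1$ stabilization on $Z_+$ (the paper records this by writing $T_{\cN_{H,J}(x)}-\underline{\bR}^k$ in \Cref{lem:indexgluehalfneg}), reads $V_X^{caps}\cong T^{CR}_{\cN}\oplus(\underline{\bR}^n-\rho^*W_{mas})$: the twist sits on the \emph{same} side as the tangent bundle. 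Your displayed intermediate step $V_X^{caps}\oplus(\underline{\bR}^n-\rho^*W_{mas})\cong T^{CR}_{\cN}$ places it on the opposite side; the two versions differ by $2(\underline{\bR}^n-\rho^*W_{mas})$, which is nonzero whenever some component of $\cL Q$ has nonzero Maslov index. Combining your version with the Morse contribution as you describe yields $V_X\oplus T_X\oplus(\underline{\bR}^n-\rho^*W_{mas})\cong T_{\cN_{\tilde f,H,J}(X)}$, which is not the framing claimed in the proposition. Correcting the side of the twist (and keeping track of the $\underline{\bR}^k$ from stabilization, which you defer to ``bookkeeping'' but which is exactly what converts $\underline{\bR}^{n+k}$ into $\underline{\bR}^n$) repairs the argument and brings it in line with the paper's computation $V'_x=\underline{\bR}^{2(n+k)}-V_x^-=T_{\cN_{H,J}(x)}\oplus(\underline{\bR}^n-\rho^*W_{mas})$.
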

The argument uses the equivalent framing via negative caps. We find explaining the non Morse--Bott case for $\cN_{H,J}$ (where $H,J$ are not $S$-dependent) more convenient. The extension to the equivariant Morse--Bott case is analogous to \Cref{subsubsec:equivframings}. 
First we define
\begin{defn}[{cf. \cite[Defn 8.4]{largethesis}}]
Let $\Lambda\in\cL LGr(\bC^{n+k})$ be a loop. Define an \emph{abstract Lagrangian cap at $\Lambda$} to be a tuple $(J,Y,g)$ where
\begin{itemize}
	\item $J$ is a family of complex structures on $\bC^{n+k}$ parametrized by $\bD^2$ such that $\Lambda(t)$ is totally real for all $t\in S^1$
	\item $Y\in \Omega_{\bD^2}^{0,1}\otimes_\bC \bC^{n+k}$
	\item $g$ is a metric on $\bD^2$
\end{itemize}
The space of loops endowed with abstract caps form an $S^1$-equivariant fibration over $\cL LGr(\bC^{n+k})$ with contractible fibers. We denote this fibration by $\cV$. Denote its pullback under $\rho: \cL Q\to \cL LGr(\bC^{n+k})$ by $\cV_Q$, and call this the space of $Q$-caps. 
\end{defn}
As before, every abstract cap defines a Fredholm operator
\begin{equation}
W_1^2(\bD^2,\partial\bD^2; \bC^{n+k},\Lambda)\to L^2(\bD^2, \Omega_{\bD^2}^{0,1}\otimes \bC^{n+k})	
\end{equation}
Here, the domain is the $W_1^2$-space of $\bC^{n+k}$-valued functions on $\bD^2$ that map the boundary point $t\in S^1$ to the Lagrangian subspace $\Lambda_t\subset \bC^{n+k} $. As a result, one obtains an $S^1$-equivariant map 
\begin{equation}\label{eq:indexmaptofredlagr}
\cV\to Fred(	W_1^2(\bD^2,\partial\bD^2; \bC^{n+k},\Lambda), L^2(\bD^2, \Omega_{\bD^2}^{0,1}\otimes \bC^{n+k}))	
\end{equation}
and an $S^1$-equivariant virtual bundle on $\cV\simeq \cL LGr(\bC^{n+k})$ (see \Cref{rk:s1eqfred}). Let $W_{mas}$ denote the virtual bundle such that $W_{mas}+\underline\bR^{n+k}$ is the index bundle of \eqref{eq:indexmaptofredlagr} (or the corresponding $S^1$-equivariant bundle on $\cL LGr(\bC^{n+k})$).
\begin{note}
Strictly speaking, the domains of the Fredholm operators change as one varies $\Lambda$. An easy way around in the non-equivariant case is to use the fact that each space of Fredholm operators is isomorphic to $Fred(\cH_1,\cH_2)$, where $\cH_1$, $\cH_2$ are fixed Hilbert spaces, and the isomorphism is unique up to a contractible choice (by Kuiper's theorem). More concisely, one has a bundle over $\cV$ with fibers given by $Fred(\cH_1,\cH_2)$, and a section. The structure group of this bundle is $U(\cH_1)\times U(\cH_2)\simeq 1$, and so it is trivial, and the section defines a map to $Fred(\cH_1,\cH_2)$. In the equivariant case, first replace $\cV$ by $\widetilde \cV=\cV\times ES^1$, to assume the action is free. Let $\scrH_1$ denote the $S^1$-equivariant bundle over $\widetilde{\cV}$ with fibers given by $W_1^2(\bD^2,\partial\bD^2; \bC^{n+k},\Lambda)$ and $\scrH_2$ denote the trivial $S^1$-equivariant fiber bundle with the same fibers. Then the bundle $\scrU=\cU(\scrH_1,\scrH_2)$ carries an $S^1$-equivariant structure, and has contractible fibers by Kuiper's theorem. $\scrU$ descends to a bundle over $\widetilde{\cV}/S^1=\cV\times_{S^1} ES^1$, and the descent admits a section. As a result, $\scrU$ admits an $S^1$-equivariant section, and using this, one can equivariantly trivialize the bundle whose fibers are as in the right hand side of \eqref{eq:indexmaptofredlagr}. The trivialization is unique up to equivariant homotopy as before, and one obtains an equivariant index bundle over $\widetilde{\cV}$ (hence, over $\cV$ and $\cL LGr(\bC^{n+k})) $. 

One can also prefer to define the equivariant index bundle explicitly over the free $S^1$-space $\widetilde{\cV}$, similar to \Cref{rk:s1eqfred}. 
\end{note}
One can stabilize abstract caps by taking direct sums with the standard Cauchy--Riemann operator \begin{equation}\label{eq:stabdisc}
	W_1^2(\bD^2,\partial\bD^2; \bC,\Lambda_0)\to L^2(\bD^2, \Omega_{\bD^2}^{0,1}\otimes \bC)
\end{equation}
where $\Lambda_0$ is the constant loop $\bR$. This problem has index $1$, and the effect on the corresponding index bundle is adding a trivial line bundle. As a result, $W_{mas}$ does not change (in other words, $W_{mas}$ defined on the caps of rank $n+k+1$ pull-back to $W_{mas}$ on the caps of rank $n+k$ under the stabilization map).

Note that, there is an analogous stabilizing problem 
\begin{equation}\label{eq:stabforhalfcylinder}
	W_1^2(Z_+,\partial Z_+; \bC,\Lambda_0)\to L^2(Z_+, \Omega_{Z_+}^{0,1}\otimes \bC)
\end{equation}
on $Z_+$ given by $\partial_s+i(\partial_t-B)$, where $B$ is as in \eqref{equation:CR-stabilized}. The index bundle of \eqref{eq:stabforhalfcylinder} direct summed with the index bundle of the rank $1$ stabilizing problem on a negative cap $S_-$ gives the index bundle of \eqref{eq:stabdisc}. By our choice of $B$, the index on the negative cap is $2$; therefore, the index of \eqref{eq:stabforhalfcylinder} is $-1$. 

As a result of this discussion, given $u\in \cN_{H,J}(x)$, when one stabilizes 
\begin{equation}
	W_1^2(Z_+,\partial Z_+; u^*T_M,u^*T_Q)\to L^2(Z_+, \Omega_{Z_+}^{0,1}\otimes u^*T_M)
\end{equation}
by adding multiples of \eqref{eq:stabforhalfcylinder}, the index bundle becomes $T_{\cN_{H,J}(x),u}-\underline\bR^k$. Given $u\in \cN_{H,J}(x)$, one can glue a negative cap at $x$ to $u$ to obtain a $Q$-cap. In other words, there is a map $\cU^-(x)\times \cN_{H,J}(x)\to \cV_Q$. Moreover, the index bundles glue, and we have
\begin{lem}\label{lem:indexgluehalfneg}
$(\rho^*W_{mas}+\underline\bR^{n+k})=(T_{\cN_{H,J}(x)}-\underline\bR^k)\oplus V_x^-$.
\end{lem}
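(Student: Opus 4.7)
The plan is to apply additivity of index bundles under gluing of linear Cauchy--Riemann problems. Topologically, $\bD^2$ is obtained from $S_+$ and $Z_+$ by gluing their cylindrical ends at the common asymptotic orbit $x$; the Lagrangian condition on $\partial Z_+=\{0\}\times S^1$ extends to the Lagrangian boundary of the resulting disc, and Cauchy--Riemann data on $S_+$ and on $Z_+$ glue to Cauchy--Riemann data on $\bD^2$. This gluing on data is precisely the map $\cU^-(x)\times \cN_{H,J}(x)\to \cV_Q$, which is the geometric source of the claimed identification of index bundles.

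First I would keep careful track of the three stabilization conventions so that all three Fredholm problems sit at ambient rank $n+k$. The rank-one stabilizing operator on $Z_+$ has index $-1$, so $k$ copies contribute $-\underline{\bR}^k$ to the index bundle, yielding $T_{\cN_{H,J}(x)}-\underline{\bR}^k$ as already recorded in the paragraph preceding the lemma. On $S_+$ the stabilization is chosen (by the earlier convention fixing $B$) so its rank-one piece has index $2$; after $k$ copies one recovers the rank-$(n+k)$ problem defining $V_x^-$. On $\bD^2$ the rank-one stabilizing problem with Lagrangian boundary $\Lambda_0=\underline{\bR}$ has index $1$, and $k$ copies produce the rank-$(n+k)$ problem whose index bundle equals $\rho^*W_{mas}+\underline{\bR}^{n+k}$ by definition of $W_{mas}$. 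Crucially, the stabilizing operators on $S_+$ and $Z_+$ glue to the stabilizing operator on $\bD^2$, with index bookkeeping $2+(-1)=1$ per copy.

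Next I would invoke additivity of the index bundle under gluing: for a family of linear Fredholm Cauchy--Riemann problems on a pair of surfaces glued along a matched asymptotic end, the index bundle of the glued problem is canonically isomorphic to the direct sum of the index bundles of the two pieces, as virtual $S^1$-equivariant bundles over the parameter space. This is the same mechanism that produces the framing identities \eqref{eq:gluingindexbundles} and \eqref{eq:negframe}, and it is proved in \cite{largethesis} via pregluing followed by a kernel/cokernel comparison between the approximate and exact glued operators. Applied to the present gluing $\cU^-(x)\times \cN_{H,J}(x)\to \cV_Q$ and the stabilized problems above, this yields
\[
\rho^*W_{mas}+\underline{\bR}^{n+k}\;\cong\;\bigl(T_{\cN_{H,J}(x)}-\underline{\bR}^k\bigr)\oplus V_x^-,
\]
which is the statement of the lemma. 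Naturality in $x$, in the $S^1$-action (via rotation of domains), and in $\cL Q$ (via the classifying map $\rho$) follows automatically from the corresponding naturality of the index bundle construction.

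The main difficulty is purely bookkeeping: ensuring that the three stabilization conventions are compatible and that the index shifts cancel correctly under gluing. Once this is pinned down, the essential geometric input -- additivity of the Fredholm index bundle under gluing -- is already implicit in the earlier framing constructions of the paper, so no new analytical input is needed.
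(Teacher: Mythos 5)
Your proposal is correct and follows essentially the same route as the paper: the lemma is obtained by applying additivity of index bundles to the gluing map $\cU^-(x)\times \cN_{H,J}(x)\to \cV_Q$, after the stabilization bookkeeping (index $-1$ on $Z_+$, $2$ on the negative cap, $1$ on $\bD^2$) that the paper carries out in the paragraphs immediately preceding the statement. The paper itself treats the lemma as a direct consequence of that discussion ("the index bundles glue"), so your write-up is simply a slightly more explicit version of the same argument.
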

The left hand side is by definition the index bundle of \eqref{eq:indexmaptofredlagr}. 
\begin{proof}[Proof of \Cref{prop:framingoncl}]
This now follows directly from \Cref{lem:indexgluehalfneg}. In other words, \Cref{lem:indexgluehalfneg} implies 
\begin{equation}
V_x'=\underline\bR^{2(n+k)}-V_x^-=T_{\cN_{H,J}(x)}\oplus (\underline\bR^n-\rho^*W_{mas})
\end{equation}
Checking compatibility of these isomorphisms with the module structure on $\cN_{H,J}$ is similar to \Cref{lem:negposframingsum} and \cite[\S8]{largethesis} and we omit this proof. 
\end{proof}
As a result we have maps $HF(H,\bS)\to \cL Q^{\underline \bR^n-\rho^*W_{mas}}$. It is not hard to show that these are compatible with the continuation maps, giving us a map $SH(H,\bS)\to \cL Q^{\underline \bR^n-\rho^*W_{mas}}$. 

One can endow $\cN_{\tilde f,H,J}$ with equivariant framings with respect $\underline \bR^n-\rho^*W_{mas}$. The construction is the same as \Cref{subsubsec:equivframings}. Hence, we have $S^1$-equivariant maps $HF_{S}(H,\bS)\to \cL Q^{\underline \bR^n-\rho^*W_{mas}}$ which induce an $S^1$-equivariant map 
\begin{equation}
	SH_{S}(M,\bS)\to \cL Q^{\underline \bR^n-\rho^*W_{mas}}
\end{equation}
This finishes the proof of \Cref{thm:equivariantcl}.

\subsection{The spectral equivariant Viterbo isomorphism theorem}\label{sec:viterbo}
In this section, we prove \Cref{thm:viterbo}. In other words, for a closed manifold $Q$ such that $M=T^*Q$ is stably framed, we show that 
\begin{equation}\label{eq:viterboinnewsection}
SH_{S}(T^*Q,\bS)\to \cL Q^{\underline \bR^n-\rho^*W_{mas}}
\end{equation}	 
constructed in \Cref{thm:equivariantcl} is an equivalence of Borel equivariant $S^1$-spectra. As remarked, it suffices to show that this map is an homotopy equivalence. By \cite[\href{https://kerodon.net/tag/01DK}{tag01DK}]{kerodon}, a map of Borel equivariant spectra that induce an homotopy equivalence of the underlying non-equivariant spectra is automatically an equivalence of Borel equivariant spectra. This claim is an infinity-categorical version of the classical statement that a natural transformation inducing isomorphisms at object level is invertible. 

For simplicity, first assume $Q$ is simply connected; hence, $\rho^*W_{mas}=0$. It is easy to see that the composition
\begin{equation}
SH(T^*Q,\bS)\xrightarrow{\simeq} SH_S(T^*Q,\bS)\to \Sigma^{\infty+n}\cL Q	
\end{equation}
of \eqref{eq:inclnoneqSH} and \eqref{eq:viterboinnewsection} is the same as the map $SH(T^*Q,\bS)\to \Sigma^{\infty+n}\cL Q$ induced by $\cN_{H,J}$ (\eqref{eq:inclsnoneqHF} and \eqref{eq:inclnoneqSH} are given by (general) inclusions, and $\cN_{H,J}$ is the restriction of $\cN_{\tilde f,H,J}$ along this inclusion). 
 
Therefore, it suffices to show that $C=cone (SH(T^*Q,\bS)\to \Sigma^{\infty+n}\cL Q	)$ is $0$. We observe the following 
\begin{enumerate}
	\item\label{item:connective} both $SH(T^*Q,\bS)$ and $\Sigma^{\infty+n}\cL Q$ are bounded below (hence so is $C$)
	\item\label{item:classviter} by the classical Viterbo isomorphism $SH(T^*Q,\bS)\to \Sigma^{\infty+n}\cL Q$ induces an isomorphism in integral homology
\end{enumerate}
\eqref{item:connective} follows from the fact that Conley--Zehnder indices agree with the geodesic (Morse) index (up to a universal shift and/or sign change, depending on one's conventions) \cite[Sec.\ 1]{abbondandoloschwarz1}, \cite[Thm.\ 1.2]{weber2002perturbed}. 
As a result of \eqref{item:classviter}, $H_*(C,\bZ)=0$. By the bounded below assumption \eqref{item:connective}, one can apply the (stable) Hurewicz theorem to $C$ to conclude that $C\simeq 0$. 
\begin{rk}
Note that we do not apply Hurewicz to $SH(M,\bS)$ or $\Sigma^{\infty+n}\cL Q	$ separately, even though these are bounded below. Rather we apply this theorem to their cone $C$, to conclude inductively that $\pi_k^{st}(C)=0$ for all $k$. 
\end{rk}

When the simply connected assumption is dropped, $\cL Q^{\underline \bR^n-\rho^*W_{mas}}$ is not a priori bounded below. However, one can decompose $SH(T^*Q,\bS)$, $\cL Q^{\underline \bR^n-\rho^*W_{mas}}$, and \eqref{eq:viterboinnewsection} into 
\begin{equation}\label{eq:decomposedviterboinnewsection}
	SH_{S}(T^*Q,\bS)_\alpha\to \cL_\alpha Q^{\underline \bR^n-\rho^*W_{mas}}
\end{equation}	 
indexed by the homotopy classes $\alpha\in\pi_1(Q)$. The cone of \eqref{eq:decomposedviterboinnewsection} is bounded below; therefore, $C$ is a wedge of bounded below spectra, and one can still apply Hurewicz theorem to conclude it is $0$ if $H_*(C,\bZ)=0$. On the other hand, the homology of $\cL Q^{\underline \bR^n-\rho^*W_{mas}}$ is given by $H_{*-n}(\cL Q, \eta)$, where $\eta$ is a local system corresponding to $-\rho^*W_{mas}$. The map induced on homology is given by $SH(M,\bZ)\to H_{*-n}(\cL Q, \eta)$, and the Viterbo isomorphism still holds, with twisted coefficients. See \cite{abouzaid2013symplectic, zhaothesis} for more details. 

This concludes the proof of \Cref{thm:viterbo}.

\begin{rk}
The map we constructed has no chance of being a genuine equivalence: even when $M$ is a point, it coincides with $\Sigma^\infty S_+\to \bS$. On the other hand, one can use the same relative module to induce a map $SH(T^*Q,\bS)\to S_+\wedge\cL Q^{\underline \bR^n-\rho^*W_{mas}}$, and presumably one can show that this is a genuine equivalence by using a filtration argument as in \Cref{sec:equivariantmorse}. More precisely, we expect it is possible to use physical Hamiltonians as in \cite{abbondandoloschwarz1} and filter the loop space by the Lagrangian action in loc.\ cit.\ to apply the filtration argument. 
\end{rk}

\begin{rk}
The map induced by \eqref{eq:viterboinnewsection} in homology matches the construction in \cite{zhaothesis}, which is also defined using half-cylinders; however, it is defined slightly differently from \cite{abouzaid2013symplectic}. One can show that these maps agree on homology. Hence Abouzaid's proof implies the isomorphism statement for the map in \cite{zhaothesis}. One could also adapt Abouzaid's map to our setup to define the Cieliebak--Latschev maps. At first glance, his version looks incompatible with the circle action; however, one could possibly tackle this problem using similar Borel constructions on the target. 
\end{rk}

\section{Gluing}\label{sec:gluing} 
The main goal of this section is to prove \Cref{proposition:s1-flow-cat}. In other words, we will produce $S^1$-equivariant coordinate systems on $\cM_{\tilde f,H,J}(i,j)$, and show that they assemble into manifolds with corners. Related to this, we will show the evaluation maps are smooth. The arguments also work \emph{mutatis mutandis} for the bimodules and $P$-valued modules (i.e. for statements such as \Cref{prop:eqlqvaluedmodule}), so we focus only on the flow categories themselves.


In general, Morse--Bott gluing requires the use of weighted Sobolev spaces. However, we can avoid this here thanks to the particular geometry of our situation. The basic strategy is as follows: if one considers the quotient $\cM_{\tilde f,H,J}/S^1$ of $\cM_{\tilde f,H,J}$ by the $S^1$-action (i.e.\ $\cM_{\tilde f,H,J}/S^1$ is the flow category obtained by quotienting both objects and morphism spaces by $S^1$), then the resulting flow category is non-degenerate. The techniques of Large \cite{largethesis}, which build on crucial work of Fukaya--Oh--Ota--Ono \cite{foooexponential}, allow us to produce smooth coordinate charts on $\cM_{\tilde f,H,J}/S^1$. To prove that $\cM_{\tilde f,H,J}$ is an $S^1$-equivariant Morse--Bott flow category, we only need to produce one more coordinate. This coordinate will be given by the asymptotic evaluation map to the domain of the trajectory, which is an $S^1$-torsor.

It is instructive to contemplate this strategy in the special case where $M$ is a point. Then $\cM_{\tilde f,H,J}/S^1$ is a flow category formed by Morse trajectories in $\mathbb{CP}=S/S^1$, for which smooth charts can be constructed e.g.\ as in \cite[Sec.\ 6]{largethesis}. Our plan is to equip the moduli spaces upstairs on $S$ with $S^1$-equivariant smooth charts, using the asymptotic evaluation map as our additional coordinate. In fact, the argument for a general symplectic manifold $M$ is almost the same as when $M$ is a point, so will only be discussed at the end of this section. 

Rather, let us begin by considering the following general situation: let $N$ carry a free $G$-action, and $f:N\to\bR$ be lifted from a Morse function on $N/G$. We choose a generic metric on $N/G$ that satisfies the Morse--Smale condition, and lift it to a $G$-equivariant metric on $N$. Such a lift exists thanks to: 
\begin{lem}
The following data are equivalent
\begin{itemize}
	\item a $G$-equivariant metric on $N$
	\item a metric on $N/G$, a connection on the principal bundle $p:N\to N/G$ and a $G$-equivariant metric on the vertical distribution
\end{itemize}
\end{lem}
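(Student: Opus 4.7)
The plan is to construct maps in both directions and verify that they are mutually inverse. The forward direction extracts the three pieces of data from a $G$-equivariant metric, while the backward direction reassembles a metric from the three pieces. Since the question is essentially local and linear on tangent spaces, most of the work amounts to unpacking definitions.

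For the forward direction, suppose $g$ is a $G$-equivariant metric on $N$. The vertical distribution $V \subset TN$ is the kernel of $dp: TN \to p^* T_{N/G}$; since the $G$-action preserves fibers of $p$, $V$ is automatically $G$-invariant, and the restriction $g_V := g|_V$ is a $G$-equivariant metric on $V$. Define the horizontal distribution $H$ to be the $g$-orthogonal complement of $V$; equivariance of $g$ and of $V$ implies that $H$ is $G$-invariant, which is precisely the definition of a principal connection. Finally, since $dp|_H : H \to p^*T_{N/G}$ is a $G$-invariant fiberwise isomorphism, the restriction $g|_H$ descends to a well-defined (and hence $G$-invariant) metric $\bar g$ on $N/G$; concretely, at $[x] \in N/G$, one sets $\bar g_{[x]}(\bar v, \bar w) := g_x(\tilde v, \tilde w)$ for $\tilde v, \tilde w \in H_x$ the unique horizontal lifts.

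For the backward direction, suppose we are given $(\bar g, H, g_V)$. At each $x \in N$ the subspaces $H_x, V_x \subset T_xN$ are complementary, so there is a canonical decomposition $T_xN = H_x \oplus V_x$. Define $g_x$ to make this decomposition orthogonal, with $g_x|_{V_x} = (g_V)_x$ and $g_x|_{H_x}$ equal to the pullback of $\bar g_{p(x)}$ via the isomorphism $dp|_{H_x}$. To check $G$-equivariance, note that for $h \in G$, the differential $dh$ preserves the splitting ($V$ is $G$-invariant by construction and $H$ is $G$-invariant since it is a principal connection); on $V$ it preserves $g_V$ by hypothesis, and on $H$ the pullback metric is preserved because $p \circ h = p$.

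It remains to verify these constructions are inverse to each other. Starting from a metric $g$, extracting $(\bar g, H, g_V)$ and then reassembling yields a metric with the same vertical part, the same horizontal distribution, and the same horizontal inner product, hence $g$ itself; this uses only that $V$ and $V^\perp$ span $TN$. Conversely, starting from $(\bar g, H, g_V)$, reassembling and then extracting recovers $V$ canonically, the horizontal subspace $V^\perp$ equals $H$ by design of the reassembly, and the restrictions to $V$ and to the horizontal distribution coincide with $g_V$ and (via $dp$) with $\bar g$. No single step is subtle; the main thing to keep track of is the equivariance at each stage and the canonical nature of the splitting $TN = H \oplus V$ once either a connection or a metric is fixed.
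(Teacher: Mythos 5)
Your proof is correct and is exactly the standard argument (split $TN = H \oplus V$ with $H = V^{\perp}$, check $G$-invariance of each piece, verify the two constructions are mutually inverse); the paper simply states this lemma without proof, treating it as folklore. The only cosmetic quibble is the parenthetical "(and hence $G$-invariant)" applied to the metric $\bar g$ on $N/G$, where $G$-invariance is vacuous — what matters is that the horizontal inner product is constant along orbits, which is what makes $\bar g$ well-defined, and you do establish that.
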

This, together with the invariance of $f$, guarantees that the gradient flow is \emph{horizontal}. As a result, the data of a negative gradient flow line from the critical set $X=p^{-1}(x)$ to another $X'=p^{-1}(x')$ is the same as the data a negative gradient flow line from $x$ to $x'$ together with the asymptotic at $X$ (or $X'$). 

It will also be technically convenient to assume: 
\begin{assump}\label{assump:localflat}
The corresponding connection on $N\to N/G$ is flat near the critical points. 
\end{assump}

Assume $ind(x)=ind(x')+d+1$, and let $\cW(x,x')$, resp. $\cW(X,X')$ denote the moduli of parametrized negative gradient trajectories in $N/G$, resp. $N$ from $x$ to $x'$, resp.\ from $X$ to $X'$. By the remarks above, $\cW(X,X')\cong X\times \cW(x,x')\cong \cW(x,x')\times X'$, as $G$-equivariant spaces. We will now explain how to endow the compactified moduli spaces with the structure of a smooth manifold with corners, which can be organised into a $G$-equivariant flow category which we denote by $\cM_f$. 

As mentioned above, our strategy is roughly to use the smooth coordinates on $\cW(x,x')$ as constructed in \cite{largethesis}, and to use the $X$-component as the extra coordinate function. We will use the constructed smooth coordinates to show the compactification is a smooth manifold with corners. As part of \Cref{defn:flowcategory}, we have to show $\cW(X,X')\to X$ and $\cW(X,X')\to X'$ are smooth too. 

Note that we cut out the moduli space $\cW(X,X')$ in the Banach manifold of paths $v$ such that (i) $v$ is $W^{2,k}$-regular, (ii) $\dot v$ is horizontal with respect to the chosen connection. We restrict the target of the gradient operator $v\mapsto \dot v+grad_g(f)$ to the horizontal sections as well. In this way, we obtain a Fredholm operator, and the linear problem is essentially equivalent to the one on $N/G$, except it differs by an extra $Lie (G)$-component. 
\begin{lem}\label{lem:smteva}
The maps $\cW(X,X')\to X$ and $\cW(X,X')\to X'$ are smooth.
\end{lem}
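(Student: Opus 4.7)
The strategy is to promote the set-theoretic identification $\cW(X,X')\cong X\times\overline{\cW(x,x')}$ (coming from horizontal lifts of trajectories) to a diffeomorphism of smooth manifolds with corners, and then to describe the two evaluation maps explicitly in these coordinates. First, since $\nabla f$ is horizontal and the $G$-action is free, any trajectory $v$ downstairs from $x$ to $x'$ has a unique horizontal lift starting at any prescribed $y\in X$, whose asymptotic at $+\infty$ lies in $X'$. The map $\sigma:\overline{\cW(x,x')}\to\cW(X,X')$ which picks out the lift starting at a fixed base point $y_0\in X$ yields, together with the $G$-action, a $G$-equivariant bijection $X\times\overline{\cW(x,x')}\xrightarrow{\cong}\cW(X,X')$. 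One then checks that this bijection is a diffeomorphism with respect to Large's smooth charts: both Banach manifolds (upstairs horizontal $W^{k,2}$-paths, downstairs $W^{k,2}$-paths) are related by pullback along $p:N\to N/G$, the defining gradient operators correspond, and the Newton iteration used in \cite{largethesis} to correct the preglued approximate solution commutes with this pullback. Hence the smooth charts upstairs are precisely the products of $X$ with the charts downstairs.

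Granted this, the evaluation map $\cW(X,X')\to X$, $v\mapsto v(-\infty)$, becomes projection to the first factor in $X\times\overline{\cW(x,x')}$, which is manifestly smooth. For the evaluation at $+\infty$, define $h:\overline{\cW(x,x')}\to X'$ by $h(v):=\sigma(v)(+\infty)$. By $G$-equivariance of horizontal lifting, the evaluation $\cW(X,X')\to X'$ factors as $(y,v)\mapsto (y\cdot y_0^{-1})\cdot h(v)$, where $y\cdot y_0^{-1}\in G$ is the unique group element sending $y_0$ to $y$; both operations are smooth once $h$ is. So everything reduces to showing $h$ is smooth on $\overline{\cW(x,x')}$.

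On the open stratum of unbroken trajectories, $h(v)$ is the endpoint of a smoothly varying ODE with smooth initial data, hence smooth. The main obstacle, and the only nontrivial step, is to verify smoothness at broken configurations $(v_1,\dots,v_k)$: in a gluing chart with parameters $(\rho_1,\dots,\rho_{k-1})$ and slice coordinates on each $v_i$, one must show that $h(v^\rho)$ depends smoothly on $\rho$. Here \Cref{assump:localflat} enters crucially. Indeed, for small $\rho$ the glued trajectory $v^\rho$ spends most of its time near the intermediate critical sets $X_1,\dots,X_{k-1}$, where the connection is flat; parallel transport through any flat neighborhood is locally trivial, so holonomy is additive under breaking. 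Consequently $h(v^\rho)$ can be written, up to smooth corrections arising from the exponentially small deviation of $v^\rho$ from the pregluing inside the flat regions, as the composition $\mathrm{Hol}_{v_k^\rho}\circ\cdots\circ\mathrm{Hol}_{v_1^\rho}$ evaluated at $y_0$. The exponential decay estimates of \cite{foooexponential} that underlie Large's smooth gluing theorem yield the required smooth dependence of the Newton correction, and hence of $\mathrm{Hol}_{v_i^\rho}$, on $(\rho,v_i)$. Composing smooth $G$-equivariant maps $X_{i-1}\to X_i$ preserves smoothness, so $h$ is smooth at the broken configuration and the lemma follows.
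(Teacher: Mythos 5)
Your proof is aimed at the wrong statement, and the one sentence you devote to the actual content of \Cref{lem:smteva} is the step that needs justification. At this point in the paper $\cW(X,X')$ is the \emph{open} moduli space of unbroken parametrized trajectories, cut out inside a Banach manifold of horizontal $W^{2,k}$-paths; there are no gluing charts, no breaking parameters $\rho$, and no compactification yet (smoothness of the evaluation on the compactified space $\cM_f(X_0,X_n)$ is a separate, later lemma, which is where the holonomy additivity and the exponential decay estimates of \cite{foooexponential} that you invoke actually belong). The entire difficulty in \Cref{lem:smteva} is that $\ev_X(v)=\lim_{s\to-\infty}v(s)$ is an \emph{asymptotic} limit, not a finite-time endpoint: in the unweighted Sobolev topology used to cut out $\cW(X,X')$, smooth dependence of such a limit on $v$ is exactly what is not automatic (the paper flags this: "this would presumably be by construction if we used weighted Sobolev norms"). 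Your claim that on the open stratum "$h(v)$ is the endpoint of a smoothly varying ODE with smooth initial data, hence smooth" silently conflates these two things. The fix is \Cref{assump:localflat}, but you deploy it only for broken configurations; it is needed here, for unbroken ones: in a flat trivialization $p^{-1}(U)\cong U\times X$ near the critical set, a horizontal path has locally constant $X$-component, so $\ev_X$ factors as the finite-time evaluation $v\mapsto v(s_0)$ (smooth on the Banach manifold) followed by the projection $p^{-1}(U)\to X$. That three-line local argument is the paper's entire proof.

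There is also a circularity in your setup. You want to read off $\ev_X$ as "projection to the first factor" of $X\times\cW(x,x')\cong\cW(X,X')$, but that identification is \emph{defined} via $v\mapsto(\ev_X(v),p\circ v)$, and promoting it (or its inverse $\sigma$, the horizontal lift with prescribed asymptotic at $-\infty$) to a diffeomorphism for the transversality-induced smooth structure is equivalent to the lemma: constructing $\sigma(v)$ requires imposing a condition at $s=-\infty$, which again needs either weighted norms or the flatness trick. In the paper the logical order is the reverse: one first proves $\ev_X$ is smooth by the local flatness argument, and only then uses $(s_1-s_0,\dots,s_d-s_{d-1},\ev_X)$ to exhibit $U_{\mathbf H}\cong X\times U_{\mathbf{\underline H}}$ as a smooth chart.
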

\begin{proof}
This would presumably be by construction if we used weighted Sobolev norms. Instead, we appeal to \Cref{assump:localflat}. Let $\gamma\in \cW(X,X')$ and $\gamma_0=p\circ \gamma$. Let $U$ be a small convex neighborhood of $x$ so that the connection is flat over $U$. Pick an $s_0\in\bR$ such that $\gamma_0(s_0)\in U$. Hence $\gamma(s_0)\in p^{-1}(U)$ and there exists a small neighborhood $\cU\subset \cW(X,X')$ of $\gamma$ such that $v(s_0)\in p^{-1}(U)$ for every $v\in \cU$. Observe that $p^{-1}(U)\cong U\times X$ as smooth $G$-bundles with flat connections. As the gradient flow is horizontal (and hence every $v\in \cU$ as well), $ev_X(v)=\lim\limits_{s\to-\infty} v(s)\in X$ is the same as the composition of $v\mapsto v(s_0)\in p^{-1}(U)$ with the projection $p^{-1}(U)\cong U\times X\to X$. The smoothness of $ev_{X'}$ is similar. 
\end{proof} 
Given $\gamma\in\cW(X,X')$, there exists a unique map of $G$-torsors $X\to X'$ carrying $ev_X(\gamma)$ to $ev_{X'}(\gamma)$. Moreover, this map is the same for $g\gamma$, and therefore, only depends on the $\gamma_0=p\circ\gamma$. Indeed, this map is the same as the parallel transport along $\gamma_0$. We denote this map by $\rho_{XX'}(\gamma_0)$ or by $\rho_{XX'}(\gamma)$, and its inverse by $\rho_{X'X}(\gamma_0)$ or by $\rho_{X'X}(\gamma)$. As a result, we obtain a function $\rho_{XX'}:\cW(x,x')\to Hom_G(X,X')\cong G$ that factors through $\cM^\circ(x,x')$. It follows from \Cref{lem:smteva} that $\rho_{XX'}$ is smooth. Under the identification $\cW(X,X')\cong X\times \cW(x,x')$, $\rho_{X'X}$ satisfies $ev_{X'}(x,\gamma_0)= \rho_{XX'}(\gamma_0)x$.

To construct equivariant coordinates around a given negative gradient trajectory $\gamma:\bR\to N$, we shall first construct non-equivariant coordinates around $\gamma_0=\pi\circ \gamma:\bR\to N/G$. To this end, fix $s_0<\dots<s_d$. Recall from \cite[Sec.\ 6]{largethesis} that a \emph{complete system of hypersurfaces} for $\gamma_0$ is a sequence of local hypersurfaces $\underline H_i$, $i=0,\dots ,d$ such that $\gamma_0(s_i)\in \underline H_i$ and satisfying
\begin{enumerate}
	\item\label{item:secondcomplete} $ev_{s_0,\dots, s_d}:\cW(x,x')\to (N/G)^{d+1}$ is transverse to $\prod \underline H_i$ at $\gamma_0$
	\item\label{item:thirdcomplete} $\gamma_0$ is transverse to each $\underline H_i$ at $s_i$
\end{enumerate}
%
Using \eqref{item:thirdcomplete}, one constructs coordinate functions $v\mapsto s_i(v)$ near $\gamma_0\in\cW(x,x')$ satisfying (i) $s_i(\gamma_0)=s_i$ and (ii) $v(s_i(v))\in \underline H_i$. (i) and (ii) uniquely determine the functions near $\gamma_0$. By \eqref{item:secondcomplete}, the function $v\mapsto (s_0(v),\dots s_d(v))$ is a local diffeomorphism. Indeed, the preimage of $\prod\underline H_i$ in $\bR^{d+1}\times\cW(x,x')$ under the evaluation map is the graph of a local inverse. 

The functions $s_i(v)$ are translation invariant, giving a smooth coordinate chart $$(s_1(v)-s_0(v),\dots ,s_d(v)-s_{d-1}(v))$$ on $\cM^\circ(x,x'):=\cW(x,x')/\bR$ defined near $\gamma_0$. 
\begin{rk}
The open moduli spaces $\cW(x,x')$ and $\cW(X,X')$ carry smooth structures by the standard transversality arguments, and so far we merely constructed smooth charts (on the former). 
\end{rk}

We now to constructing equivariant coordinates around our negative gradient trajectory $\gamma:\bR\to N$. Let $H_i:=p^{-1}(\underline H_i)$. The $H_i$ are smooth hypersurfaces that contain the orbit of $\gamma(s_i)$, are local near the orbit and satisfy
\begin{enumerate}
	\item $ev_{s_0,\dots,s_d}:\cW(X,X')\to N^{d+1}$ is transverse to $\prod H_i$ at $\gamma$
	\item $\gamma$ is transverse to each $H_i$ at $s_i$
	\item $H_i$ is $G$-invariant
\end{enumerate}
One can also start with $H_i\subset N$ satisfying these conditions, and construct a complete system $\underline{ H}_i$. The functions $s_i$ are well-defined on $\cW(X,X')$ and satisfy (i) $s_i(\gamma)=s_i$ , (ii) $v(s_i(v))\in H_i$, (iii) $s_i(gv)=s_i(v)$. Note that the dimension of the transverse intersection $ev_{s_0,\dots,s_d}\pitchfork \prod H_i$ is now $dim(G)$, and $(s_0,\dots,s_d)$ no longer determines a coordinate system on $\cW(X,X')$. On the other hand, if we also use the evaluation map $ev_X$ into $X$, $(s_0,\dots,s_d,ev_X)$ is a local diffeomorphism. 
Hence $(s_1-s_0,\dots,s_d-s_{d-1},ev_X)$ is a smooth chart in the interior of $\cM^\circ(X,X'):=\cW(X,X')/\bR$ near $\gamma$. We denote the chart corresponding to $\mathbf{\underline H}:=(\underline H_i)_i$, resp. $\mathbf H$ by $U_{\mathbf{\underline H}}\subset \cM^\circ (x,x')/\bR$, resp. $U_{\mathbf{H}}\subset \cM^\circ (X,X')/\bR$. By construction, $U_{\mathbf{H}} \cong X\times U_{\mathbf{\underline H}}$ as smooth manifolds. 

We shall now explain how to extend the coordinate charts to the boundary, in order to show that the compactification of $\cM^\circ(X,X') :=\cW(X,X')/\bR$ is a manifold with corners. We first explain how to do this for the non-equivariant, Morse moduli spaces $\cM^\circ(x,x') :=(\cW(x,x')/\bR)$, following \cite{largethesis}. 

Let $x_0,\dots,x_n\in N/G$ be a sequence of critical points and let $\gamma^\ell_0\in \cW(x_{\ell-1},x_\ell)$. Assume we have chosen a complete system $\mathbf{\underline H}^\ell$ for each $\gamma^\ell_0$. Given $\mathbf{v}:= (v^\ell)_\ell\in \prod_{\ell=1}^n U_{\mathbf{\underline H}^\ell}$ and $\mathbf{T}= (T_1,\dots, T_{n-1})\in [T_0,\infty)^{n-1}$, where $T_0\gg 0$, define 
\begin{itemize}
	\item $L(v^\ell):=\frac{1}{2}(s_{d^\ell}(v^\ell)-s_0(v^\ell))$ (the half sum of all coordinates)
	\item $Z_1(\mathbf{v},\mathbf{T})=(-\infty,L(v^1)+T_1]$ and $Z_n(\mathbf{v},\mathbf{T})=[-L(v^n)-T_{n-1},\infty)$ 
	\item $Z_\ell(\mathbf{v},\mathbf{T})=[-L(v^\ell)-T_{\ell-1},L(v^\ell)+T_\ell]$, for $1<\ell<n$
	\item $W_\ell(T_\ell)=[-T_\ell,T_\ell]$
\end{itemize}
Identify $\bR$ with $Z=Z_1(\mathbf{v},\mathbf{T})\cup W_1(T_1)\cup Z_2(\mathbf{v},\mathbf{T})\cup \dots W_{n-1}(T_{n-1})\cup Z_n(\mathbf{v},\mathbf{T})$ and preglue the curves $v^\ell$ by defining a smooth map $v:\bR\to N/G$ that agrees with $v^\ell|_{Z_\ell(\mathbf{v},\mathbf{T})}$ on the region $Z_\ell(\mathbf{v},\mathbf{T})\subset Z$. More precisely, and as explained in \cite[Sec.\ 6]{largethesis}, for $T_0\gg 0$ there is a pregluing map
\begin{equation}
pre\cG:	 U_{\mathbf{\underline H}^1}\times \dots \times U_{\mathbf{\underline H}^n}\times (T_0,\infty)^{n-1}\to C^\infty (\bR, N/G)
\end{equation}
such that $v=pre\cG(\mathbf{v},\mathbf{T})$ satisfies $||\dot{v}+grad(f)||_{2,k}\leq C.e^{-\delta min \mathbf{T}}$ for some $C$ and $\delta$. Indeed, by construction $\dot{v}+grad(f)$ is supported on $\bigsqcup W_\ell(T_\ell)$ and the above inequality can be guaranteed using the exponential decay of each $v^\ell$.

Then Large \cite[Sec.\ 6]{largethesis} proceeds to use implicit function theorem to obtain a uniquely defined vector field $\xi=\xi_{\mathbf{v},\mathbf{T}}$ in $N/G$ along $Z$ such that $||\xi||_{2,k}\leq C.e^{-\delta min \mathbf{T}}$ and $\cG(\mathbf{v},\mathbf{T}):= exp_{pre\cG(\mathbf{v},\mathbf{T})}\xi  $ satisfies the gradient equation. Observe that the union $\mathbf{\underline H}$ of complete systems associated to each $\gamma^\ell_0 $  is a complete system for the corresponding preglued curves and that by assuming $\xi$ is tangent to each hypersurface and that each $H_i^\ell$ is totally geodesic, we can guarantee the same for $\cG(\mathbf{v},\mathbf{T})$. In summary, we obtain a smooth map
\begin{equation}
	\cG: U_{\mathbf{\underline H}^1}\times \dots \times U_{\mathbf{\underline H}^n}\times (T_0,\infty)^{n-1}\to  U_{\mathbf{\underline H}}.
\end{equation}
By making a coordinate change via the gluing profile $r=1/T$, and setting $r_i:=1/T_i$, $i=0,\dots, n-1$, we obtain a smooth map
\begin{equation}\label{eq:gluingbottom}
	\cG: U_{\mathbf{\underline H}^1}\times \dots \times U_{\mathbf{\underline H}^n}\times (0,r_0)^{n-1}\to  U_{\mathbf{\underline H}}.
\end{equation}
We write $\mathbf{r}:=(r_i)_i$ and denote the image of this map by $\cG(\mathbf{v},\mathbf{r})$. By the remarks above, the map respects the previously constructed $s_i$-coordinates on $U_{\mathbf{\underline H}^\ell}$, but $U_{\mathbf{\underline H}}$ has $n-1$ more coordinates. The map $\cG$ extends to a continuous map from $U_{\mathbf{\underline H}^1}\times \dots \times U_{\mathbf{\underline H}^n}\times [0,r_0)^{n-1}$ to the compactified moduli space, where a point with $r_i=0$ maps to a curve broken between $x_{i-1}$ and $x_i$. Following \cite[Sec.\ 6]{largethesis}, these map form a system of charts which cover the compactified moduli space and endow it with the structure of a smooth manifold with corners. The verification that the transition functions are smooth relies on a refinement of the aforementioned  exponential decay estimate; see \cite{largethesis,foooexponential} for details. 


We now turn our attention to our main task, namely of constructing $G$-equivariant charts on the moduli spaces of trajectories in $N$ itself. Let $X_i=p^{-1}(x_i)\subset N$ be the pre-image of the critical point $x_i\in N/G$ and $\mathbf{H}^\ell$ be the collection of hyperplanes given by the pre-images of the hyperplanes in $\mathbf{\underline H}^\ell$.
By similar considerations, we can construct a gluing map
\begin{equation}\label{eq:gluetop}
	\widetilde \cG: U_{\mathbf{ H}^1}\times_{X_1} \dots \times_{X_{n-1}} U_{\mathbf{ H}^n}\times [0,r_0)^{n-1}\to  \cM_f(X_0,X_n)
\end{equation}
where the latter denotes the compactified moduli space, consistently with our conventions. The construction is exactly the same, and the only thing to note is our assumption that the moduli spaces are cut out from spaces of horizontal paths. Moreover, the construction can be made equivariant. 

Recall the smooth identification $U_{\mathbf{ H}^\ell}\cong X_{\ell-1}\times U_{\mathbf{\underline H}^\ell}$. This implies
\begin{equation}
	U_{\mathbf{ H}^1}\times_{X_1} \dots \times_{X_{n-1}} U_{\mathbf{ H}^n}\cong X_0\times U_{\mathbf{\underline H}^1}\times_{X_1} X_1 \times U_{\mathbf{\underline H}^2}\times \dots \times_{X_{n-1}} X_{n-1}\times  U_{\mathbf{\underline H}^n}
\end{equation}
and the points of the latter are of the form 
\begin{equation}\label{eq:zibit}
	(x_0,v^1,\rho_{X_0X_1}(v^1)x_0,v^2,\rho_{X_1X_2}(v^2)\rho_{X_0X_1}(v^1)x_0,v^3,\dots )
\end{equation}
giving an explicit diffeomorphism 
\begin{equation}
	X_0\times U_{\mathbf{\underline H}^1}\times U_{\mathbf{\underline H}^2}\times \dots \times  U_{\mathbf{\underline H}^n}\cong X_0\times U_{\mathbf{\underline H}^1}\times_{X_1} X_1 \times U_{\mathbf{\underline H}^2}\times \dots \times_{X_{n-1}} X_{n-1}\times  U_{\mathbf{\underline H}^n}
\end{equation}
sending $(x_0,v^1,v^2,v^3,\dots )$ to \eqref{eq:zibit}. As a result, the restriction of \eqref{eq:gluetop} to $(0,r_0)^{n-1}$
\begin{equation}\label{eq:openchartmb1}
	X_0\times U_{\mathbf{\underline H}^1}\times U_{\mathbf{\underline H}^2}\times \dots \times  U_{\mathbf{\underline H}^n} \times (0,r_0)^{n-1} \to X_0\times U_{\mathbf{\underline H}}\cong U_{\mathbf{\underline H}}
\end{equation}
can be written as $\tilde \cG(x_0,\mathbf{v},\mathbf{r})=(x_0,\cG(\mathbf{v},\mathbf{r}))$. Moreover, the extension 
\begin{equation}\label{eq:chartsmb1}
	X_0\times U_{\mathbf{\underline H}^1}\times U_{\mathbf{\underline H}^2}\times \dots \times  U_{\mathbf{\underline H}^n} \times [0,r_0)^{n-1} \to X_0\times \cM_{f_0}(x_0,x_n)\cong \cM_f(X_0,X_n)
\end{equation}
satisfies the same formula. As a result, for different choices of complete systems, the transition maps are the same as those for $N/G$, with an added $id_{X_0}$ component. As a result, they are smooth and $\cM_f(X_0,X_n)$ is a manifold with corners. 

One can also construct charts of the form 
\begin{equation}
	 U_{\mathbf{\underline H}^1}\times \dots\times U_{\mathbf{\underline H}^{n-1}} \times  U_{\mathbf{\underline H}^n}\times X_n \times [0,r_0)^{n-1} \to  \cM_{f_0}(x_0,x_n)\times X_n\cong \cM_f(X_0,X_n)
\end{equation}
exactly in the same way; however, the compatibility with \eqref{eq:chartsmb1} is not clear. This is equivalent to the following statement, which is also one of the conditions in \Cref{defn:flowcategory}:
\begin{lem}
The evaluation map $\cM_f(X_0,X_n)\to X_n$ is smooth with respect to the smooth structure induced by \eqref{eq:chartsmb1}.
\end{lem}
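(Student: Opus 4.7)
Under the identification $\cM_f(X_0,X_n)\cong X_0\times \cM_{f_0}(x_0,x_n)$ built into the chart \eqref{eq:chartsmb1}, $G$-equivariance forces $ev_{X_n}$ to take the form $(x_0,[\gamma_0])\mapsto \widetilde{\rho}_{X_0X_n}([\gamma_0])\cdot x_0$, where $\widetilde{\rho}_{X_0X_n}$ agrees on the open stratum with $\rho_{X_0X_n}$ of \Cref{lem:smteva}, and on the open boundary stratum indexed by breakings through $x_1,\dots,x_{n-1}$ is forced by multiplicativity of parallel transport to be $(v^1,\dots,v^n)\mapsto \rho_{X_{n-1}X_n}(v^n)\circ\cdots\circ \rho_{X_0X_1}(v^1)$, which is already smooth in $\mathbf{v}$. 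It therefore suffices to show that $\widetilde{\rho}_{X_0X_n}\circ \cG$ is smooth on $U_{\mathbf{\underline H}^1}\times\cdots\times U_{\mathbf{\underline H}^n}\times [0,r_0)^{n-1}$ up to and including $\mathbf{r}=0$.

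For $\mathbf{r}$ in the open stratum, the identification $\bR=Z_1\cup W_1\cup \cdots \cup W_{n-1}\cup Z_n$ and multiplicativity of parallel transport give
\begin{equation*}
\widetilde{\rho}_{X_0X_n}(\cG(\mathbf{v},\mathbf{r})) = \rho_{Z_n}\circ \rho_{W_{n-1}}\circ \cdots\circ \rho_{W_1}\circ \rho_{Z_1},
\end{equation*}
where $\rho_{Z_\ell}, \rho_{W_\ell}$ denote parallel transport (with respect to the fixed connection on $N\to N/G$) along the restrictions of $\cG(\mathbf{v},\mathbf{r})$ to $Z_\ell$ and $W_\ell$ respectively. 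By \Cref{assump:localflat}, for $T_0\gg 0$ each $W_\ell$-piece lies in a neighborhood of $x_\ell$ on which the connection is flat; in a local flat trivialization $\rho_{W_\ell}$ becomes the $G$-valued transition between the two endpoints of the $W_\ell$-piece, and those endpoints converge to $x_\ell$ smoothly as $r_\ell\to 0$ by the pregluing estimates, so $\rho_{W_\ell}\to \mathrm{id}$ smoothly. Meanwhile, the $C^k$-estimates on the pregluing correction $\xi$ from \cite{foooexponential} and \cite[Sec.\ 6]{largethesis} imply $\cG(\mathbf{v},\mathbf{r})|_{Z_\ell}\to v^\ell$ in $C^k$ exponentially as $r_\ell\to 0$, so $\rho_{Z_\ell}$ extends smoothly to $r_\ell=0$ with limit $\rho_{X_{\ell-1}X_\ell}(v^\ell)$. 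Composing these smooth extensions yields a smooth extension of $\widetilde{\rho}_{X_0X_n}\circ \cG$ up to $\mathbf{r}=0$.

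The main technical point is the second step: verifying that the $C^k$-exponential decay from \cite{foooexponential, largethesis} propagates to $C^k$-smoothness of the neck and Z-piece parallel transports jointly in $\mathbf{v}$ and $\mathbf{r}$, including all mixed derivatives at $\mathbf{r}=0$. Since parallel transport is the solution of a linear ODE whose coefficients depend smoothly on the path in $C^1$, this follows directly from the corresponding $C^k$-smoothness of the family of paths, which is precisely what the cited estimates provide. The same argument, with the non-degenerate quotient $\cM_{\tilde f,H,J}/S^1$ playing the role of $\cM_{f_0}$, the $S^1$-asymptotic evaluation playing the role of $ev_{X_n}$, and the Floer-theoretic charts of \cite[Sec.\ 6]{largethesis} replacing the Morse ones, establishes smoothness of the analogous charts and evaluation maps on $\cM_{\tilde f,H,J}$ and thereby proves \Cref{proposition:s1-flow-cat}.
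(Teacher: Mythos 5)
Your proposal follows essentially the same route as the paper: reduce to showing that $\rho_{X_0X_n}\circ\cG$ extends smoothly to $\mathbf{r}=0$ with limit $\rho_{X_{n-1}X_n}(v^n)\circ\cdots\circ\rho_{X_0X_1}(v^1)$, use \Cref{assump:localflat} to neutralize the neck regions (and, implicitly, the $\mathbf{r}$-dependence of the domain boundaries), and use the $C^k$ exponential decay of the gluing correction $\xi$ to control the contribution of the $Z_\ell$-pieces. The one place where you assert rather than prove is the final step: "parallel transport depends smoothly on the path in $C^1$, so this follows directly" is not automatic here, because the domains $Z_\ell(\mathbf{v},\mathbf{r})$ have length $\sim 2L(v^\ell)+2/r_\ell$, and each $\partial_{r_\ell}$ passes through $T_\ell=1/r_\ell$. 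Concretely, writing the transport as an integral (or path-ordered exponential) of the connection form along $\cG(\mathbf{v},\mathbf{r})$, a bound of the form $\|\nabla_{\mathbf{u}}\partial^p_{\mathbf{r}}\xi\|_{2,k-|\mathbf{u}|-|p|}\le Ce^{-\delta\min\mathbf{T}}$ only yields, after H\"older on the growing domain, an estimate of the form $Ce^{-\delta/\max(\mathbf{r})}/r_\ell$ for the derivatives of the transport; one must observe that this still tends to $0$ as $\mathbf{r}\to 0$, which is exactly the computation occupying the second half of the paper's proof (together with the remark that the boundary terms from the moving endpoints of $Z_\ell$ vanish because $\theta=0$ there). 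So the argument is correct in outline, but the quantitative verification that the exponential decay dominates both the linearly growing length of $Z_\ell$ and the $r^{-2}$ factors from the gluing profile is the actual content of the lemma and should be carried out rather than cited as immediate. A minor point in your favor: your ODE formulation of parallel transport works for general compact $G$, whereas the paper simplifies to the torus case to write the transport as $\int\theta$.
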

\begin{proof}
In the image of \eqref{eq:openchartmb1}, the evaluation map satisfies 
\begin{equation}
	ev_{X_n}(\tilde \cG(x_0,\mathbf{v},\mathbf{r}))=\rho_{X_0X_n}(\cG(\mathbf{v},\mathbf{r}))x_0
\end{equation}
Therefore, it suffices to show that $\rho_{X_0X_n}\circ \cG$ extends to a smooth function 
\begin{equation}
U_{\mathbf{\underline H}^1}\times U_{\mathbf{\underline H}^2}\times \dots \times  U_{\mathbf{\underline H}^n} \times [0,r_0)^{n-1} \to Hom_G(X_0,X_n)
\end{equation}
whose restriction to $\mathbf{r}=0$ is given by $\mathbf{v}\mapsto\rho_{X_{n-1}X_n}(v^n)\circ\dots  \circ \rho_{X_0X_1}(v^1)$. For a given broken curve $\mathbf{v}$, denote $\rho_{X_{n-1}X_n}(v^n)\circ\dots  \circ \rho_{X_0X_1}(v^1)$ also by $\rho_{X_0X_n}(\mathbf{v})$. By construction,  $\rho_{X_0X_n}(pre\cG(\mathbf{v},\mathbf{r}))=\rho_{X_0X_n}(\mathbf{v})$. Recall that $\cG(\mathbf{v},\mathbf{r})=exp_{pre\cG(\mathbf{v},\mathbf{r})}(\xi_{\mathbf{v},\mathbf{r}})$. Therefore, to prove the claim, it suffices to show that the rate of change in $\rho_{X_0X_n}$ is controlled by $\xi_{\mathbf{v},\mathbf{r}}$.

One can trivialize the principal bundle $N\to N/G$ in an open neighborhood $\cU$ of the broken trajectory. Moreover, we can choose the trivialization such that the restriction of the connection to small neighborhoods of the critical points is the standard flat connection. For simplicity, assume that for each chosen complete system $\mathbf{\underline H}^\ell$, the first and the last hyperplane is in the standard flat neighborhood. For simplicity, also assume $G$ is a torus and let $\mathfrak{g}$ denote its Lie algebra. In this case, a connection on the principal bundle $N|_\cU\to \cU$ is the same as a $\mathfrak g$-valued $1$-form $\theta$ on $\cU$. For a given $v\in T_\cU$, its horizontal lift is given by $(v,\theta(v))$. Our assumption that the connection is the standard flat one near the critical points implies that $\theta$ vanishes near the critical points. Therefore, given small $\mathbf{r}$, the support of $\theta$ along a curve of the form $exp_{pre\cG(\mathbf{v},\mathbf{r})}(\xi_{\mathbf{v},\mathbf{r}})$ is contained in $Z_\ell(\mathbf{v},\mathbf{r})$. 

For a given path $a(s)$ in $\cU$, the parallel transport is given by $\int \theta(\dot a(s))ds\in G$ (this is normally an element of $\mathfrak{g}$, but we project to the torus $G$ via the exponential map $\mathfrak{g}\to G$). In summary,
\begin{equation}
\rho_{X_0X_n} \cG(\mathbf{v},\mathbf{r})=\int_{-\infty}^{\infty}\theta\bigg(\frac{d}{ds} \cG(\mathbf{v},\mathbf{r})\bigg)ds=\sum_\ell\int_{Z_\ell(\mathbf{v},\mathbf{r})}\theta\bigg(\frac{d}{ds} \cG(\mathbf{v},\mathbf{r})\bigg)ds
\end{equation}
and thus we have to check the smoothness of each
\begin{equation}\label{eq:zumuuth}
	(\mathbf{v},\mathbf{r})\mapsto \int_{Z_\ell(\mathbf{v},\mathbf{r})}\theta\bigg(\frac{d}{ds} \cG(\mathbf{v},\mathbf{r})\bigg)ds=\int_{Z_\ell(\mathbf{v},\mathbf{r})}\theta\bigg(\frac{d}{ds} exp_{pre\cG(\mathbf{v},\mathbf{r})}(\xi_{\mathbf{v},\mathbf{r}})\bigg)ds
\end{equation}
For this it suffices to show that $\nabla_\mathbf{u}\partial^p_\mathbf{r}\int_{Z_\ell(\mathbf{v},\mathbf{r})}\theta\bigg(\frac{d}{ds} exp_{pre\cG(\mathbf{v},\mathbf{r})}(\xi_{\mathbf{v},\mathbf{r}})\bigg)ds$ is decaying exponentially. Notice, 
\begin{equation}\nabla_\mathbf{u}\partial^p_\mathbf{r}	\int_{Z_\ell(\mathbf{v},\mathbf{r})}\theta\bigg(\frac{d}{ds} exp_{pre\cG(\mathbf{v},\mathbf{r})}(\xi_{\mathbf{v},\mathbf{r}})\bigg)ds =	\int_{Z_\ell(\mathbf{v},\mathbf{r})} \nabla_\mathbf{u}\partial^p_\mathbf{r}\theta\bigg(\frac{d}{ds} exp_{pre\cG(\mathbf{v},\mathbf{r})}(\xi_{\mathbf{v},\mathbf{r}})\bigg)ds
\end{equation}
The boundaries of the integral is changing in $\mathbf{r}$ and there would normally be additional terms corresponding to the derivatives of the boundaries of the integral. However, $\theta=0$ near the boundaries of the integral, for $\mathbf{r}\ll 1$, and these terms vanish. 
$exp_{pre\cG(\mathbf{v},\mathbf{r})}(\xi_{\mathbf{v},\mathbf{r}})$ is the composition of the path $s\mapsto (pre\cG(\mathbf{v},\mathbf{r})(s), \xi_{\mathbf{v},\mathbf{r}}(s))$ in $T(N/G)$ with the function $exp:T(N/G)\to N/G$. As a result $\frac{d}{ds} exp_{pre\cG(\mathbf{v},\mathbf{r})}(\xi_{\mathbf{v},\mathbf{r}})$ is given by the composition of $dexp$, the differential of $exp:T(N/G)\to N/G$ with $s\mapsto (\frac{d}{ds}pre\cG(\mathbf{v},\mathbf{r})(s), \nabla_s\xi_{\mathbf{v},\mathbf{r}}(s))$. In summary, $\nabla_\mathbf{u}\partial^p_\mathbf{r}\theta\bigg(\frac{d}{ds} exp_{pre\cG(\mathbf{v},\mathbf{r})}(\xi_{\mathbf{v},\mathbf{r}})\bigg)$ is bounded above by a constant multiple of sum of terms of the form $\bigg|\nabla_\mathbf{u'}\partial^{p'}_\mathbf{r}\xi_{\mathbf{v},\mathbf{r}}\bigg|$ and $\bigg|\nabla_\mathbf{u'}\partial^{p'}_\mathbf{r}\frac{d}{ds}\xi_{\mathbf{v},\mathbf{r}}\bigg|$, where $p'$ runs on the terms of order less than $|p|$ and $\mathbf{u'}$ on the terms of order less than $|\mathbf{u}|$.  The constants depend on the derivatives of $\theta$, $exp$ and $pre\cG(\mathbf{v},\mathbf{r})$, and they can be chosen to be uniformly bounded over $Z_\ell(\mathbf{v},\mathbf{r})$. As a result, 
\begin{align}
\Bigg|	\int_{Z_\ell(\mathbf{v},\mathbf{r})} \nabla_\mathbf{u}\partial^p_\mathbf{r}\theta\bigg(\frac{d}{ds} exp_{pre\cG(\mathbf{v},\mathbf{r})}(\xi_{\mathbf{v},\mathbf{r}})\bigg)ds\Bigg|\leq  \\
\int_{Z_\ell(\mathbf{v},\mathbf{r})} \bigg|\nabla_\mathbf{u}\partial^p_\mathbf{r}\theta\bigg(\frac{d}{ds} exp_{pre\cG(\mathbf{v},\mathbf{r})}(\xi_{\mathbf{v},\mathbf{r}})\bigg)\bigg|ds\preceq \\
\sum_{\mathbf{u'},p'} \int_{Z_\ell(\mathbf{v},\mathbf{r})} \bigg|\nabla_\mathbf{u'}\partial^{p'}_\mathbf{r}\xi_{\mathbf{v},\mathbf{r}}\bigg| + \bigg|\nabla_\mathbf{u'}\partial^{p'}_\mathbf{r}\frac{d}{ds} \xi_{\mathbf{v},\mathbf{r}}\bigg|  ds \\
 \leq 
\sum_{\mathbf{u'},p'} ||\nabla_\mathbf{u'}\partial^{p'}_\mathbf{r}\xi_{\mathbf{v},\mathbf{r}}|_{Z_\ell(\mathbf{v},\mathbf{r})}||_{2,1}(2L(\gamma^\ell)+2T_\ell)
\end{align}
The last line follows from H\"older's inequality (here, $1<\ell<n$, $\ell=1$ and $\ell=n$ require a similar but special treatment). 
The inequality $||\xi||_{2,k}\leq C.e^{-\delta min \mathbf{T}}$ can be refined to 
\begin{equation}
||\nabla_\mathbf{u}\partial^{p}_\mathbf{r}\xi|_{Z_\ell(\mathbf{v},\mathbf{r})}||_{2,k-|\mathbf{u}|-|p|}\leq C.e^{-\delta min \mathbf{T}}
\end{equation}
See \cite[Proposition 6.10]{largethesis}. Therefore, for $k\gg 0$, 
\begin{equation}
	\Bigg|	\int_{Z_\ell(\mathbf{v},\mathbf{r})} \nabla_\mathbf{u}\partial^p_\mathbf{r}\theta\bigg(\frac{d}{ds} exp_{pre\cG(\mathbf{v},\mathbf{r})}(\xi_{\mathbf{v},\mathbf{r}})\bigg)ds\Bigg|\leq C.e^{-\delta min \mathbf{T}}T_\ell\leq C.e^{-\delta /max (\mathbf{r})}/r_\ell
\end{equation}
where the constants may be different. This suffices to prove the claim that \eqref{eq:zumuuth} is smooth. 
\end{proof}
This completes the proof that $\cM_f$ is a Morse--Bott flow category. 
\begin{rk}
    The moduli spaces $\cM_f(X,X')$ and $X\times \cM_{f_0}(x,x')$ are smoothly identified, and one can describe the second evaluation maps as well as the compositions in terms of the functions $\rho_{XX'}:\cM_{f_0}(x,x')\to Hom_G(X,X')$. Conversely, if we are given a Morse flow category $\cM_{f_0}$, and a smooth functor from $\cM_{f_0}$ to the category of $G$-torsors, we can construct a $G$-equivariant Morse--Bott flow category whose space of objects have connected components given by $G$-torsors. 
\end{rk}

We now explain how to adapt the above argument to other flow categories which occur in the paper. We first prove the following:
\begin{prop}\label{prop:Geqflowcategory}
$\cM_{\tilde f,h_s}^G$ is a $G$-equivariant Morse--Bott flow category (see \Cref{sec:equivariantmorse}).
\end{prop}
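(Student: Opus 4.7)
The plan is to combine the techniques just developed for $G$-equivariant flow categories coming from pulled-back Morse functions on a principal $G$-bundle with the gluing estimates for coupled gradient/continuation systems from \cite{largethesis,foooexponential}. The payoff is that, because the $G$-action on $EG$ is free and the equations for $s(t)$ and $\gamma(t)$ are only weakly coupled (the first drives the second, but not vice versa), essentially no new analysis is required.

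First, I would descend to $BG$. Since $(h_s,g_s)$ depends only on the $X$-coordinate near each critical manifold $X \subset EG$ of $\tilde f$, and is $G$-equivariant globally, after dividing by $G$ the data descends to a function $\bar h_{\bar s}$ and metric $\bar g_{\bar s}$ on $M$ parametrized by a neighborhood of each $\bar s \in \mathrm{crit}(f)$ (where $f: BG \to \bR$ is the function lifted by $\tilde f$). This yields a non-equivariant flow category $\bar\cM$ with objects $(\bar s, x)$ for $\bar s \in \mathrm{crit}(f)$ and $x \in \mathrm{crit}(h_{\bar s})$, and morphisms consisting of (broken) pairs $(\bar s(t), \gamma(t))$ where $\bar s(t)$ is a negative gradient trajectory of $f$ and $\gamma(t)$ is a solution of the time-dependent gradient flow $\dot\gamma = -\mathrm{grad}_{g_{\bar s(t)}} h_{\bar s(t)}$.

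Second, I would endow $\bar\cM$ with the structure of a smooth flow category. Since the asymptotic problems for both $\bar s(t)$ and $\gamma(t)$ are non-degenerate (the Morse critical points of $f$ and of $h_{\bar s}$, respectively), interior smoothness follows from standard transversality, and one may construct smooth charts via complete systems of hypersurfaces exactly as in \cite[Sec.\ 6]{largethesis}. The gluing of broken trajectories proceeds by the same pregluing and Newton iteration scheme: one preglues the $\bar s$-component as in the Morse case and the $\gamma$-component as in a time-dependent continuation problem, and the usual exponential decay estimates (at a rate governed by the spectral gap of the Hessians at the asymptotic critical points) guarantee smoothness of the resulting transition maps. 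The only slight novelty is that the pregluing for $\gamma$ depends on the pregluing for $\bar s$, but since the decay estimates hold uniformly in a neighborhood of any fixed broken $\bar s$-trajectory, the joint gluing still produces smooth charts with smooth transitions.

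Third, I would lift to obtain $\cM_{\tilde f, h_s}^G$. Choose a $G$-equivariant metric on $EG$ coming from a metric on $BG$ together with a connection on $EG \to BG$ that is flat near critical manifolds (possible by \Cref{assump:localflat}, which we may impose on the auxiliary data). Then negative gradient trajectories of $\tilde f$ are exactly horizontal lifts of negative gradient trajectories of $f$, so a morphism space $\cM_{\tilde f,h_s}^G((X,x),(X',x'))$ is canonically diffeomorphic to $X \times \bar\cM((\bar s, x),(\bar s', x'))$ via the asymptotic evaluation $ev_X$. This gives the desired smooth structure, and the $G$-action through $X$ makes it manifestly equivariant.

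Finally, I would verify the remaining axioms of \Cref{defn:flowcategory}: that the boundary strata match broken trajectories, and that the evaluation maps $ev_X$ and $ev_{X'}$ are smooth. The first is automatic from the compactness/gluing theorems for $\bar\cM$. The second reduces, via the identification $\cM_{\tilde f,h_s}^G(X,X') \cong X \times \bar\cM(\bar s, \bar s')$, to showing that the parallel transport $\rho_{XX'}: \bar\cM(\bar s, \bar s') \to \mathrm{Hom}_G(X,X')$ extends smoothly across broken trajectories. This is precisely the computation carried out in the proof just preceding the proposition: writing the transport as an integral of the connection $1$-form over the glued path, one reduces to controlling the $\mathbf{r}$- and $\mathbf{v}$-derivatives of the perturbation $\xi_{\mathbf{v},\mathbf{r}}$, and the refined estimate $\|\nabla_{\mathbf{u}} \partial^p_{\mathbf{r}} \xi|_{Z_\ell(\mathbf{v},\mathbf{r})}\|_{2,k-|\mathbf{u}|-|p|} \leq C e^{-\delta \min \mathbf{T}}$ from \cite[Prop.\ 6.10]{largethesis} yields the required smoothness. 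The main obstacle is this last step, but since the connection is flat near the critical manifolds and the $\gamma$-component of the trajectory does not enter the parallel transport integral at all, the argument given previously applies essentially verbatim.
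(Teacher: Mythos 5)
Your overall strategy --- exploit that the coupled system is triangular (the $EG$-trajectory drives the $M$-trajectory but not conversely), reduce the analysis to a non-degenerate problem using the freeness of the $G$-action on $EG$, build charts from complete systems of hypersurfaces together with the gluing estimates of Large and Fukaya--Oh--Ohta--Ono, and control the far evaluation map through the parallel transport $\rho_{XX'}$ --- is the same as the paper's, and your third and fourth steps track the actual proof closely.

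The step that fails as written is the descent in your first step. The family $(h_s,g_s)$ is required to be $G$-\emph{equivariant}, $g_*(h_s,g_s)=(h_{gs},g_{gs})$, not $G$-\emph{invariant}; when $G$ acts nontrivially on $M$ the functions $h_s$ and $h_{gs}=g_*h_s$ are genuinely different, so there is no well-defined function $\bar h_{\bar s}$ on $M$ attached to a point $\bar s\in BG$. What descends is a section of the associated bundle $EG\times_G C^\infty(M)$; the quotient data lives on the Borel construction $EG\times_G M$, not on $BG\times M$. Consequently the category $\bar\cM$ with objects $(\bar s,x)$, $x\in crit(h_{\bar s})$, is ill-posed, and the product decomposition $\cM^G_{\tilde f,h_s}(\cdot,\cdot)\cong X\times\bar\cM(\cdot,\cdot)$ must incorporate a monodromy twist: the horizontal lift of a trajectory $\bar s(t)$ starting at a chosen basepoint $\tilde x\in X$ arrives at $\rho_{XX'}(\bar s)\tilde x'$, so the $\gamma$-component converges to a critical point of $\rho_*h_{\tilde x'}$ rather than of $h_{\tilde x'}$. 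This is precisely why the paper does not descend the second factor: it works upstairs and rotates the hypersurfaces used for the $M$-component by the group element determined by the asymptotic evaluation, imposing $w(s'_j(v,w))\in gH'_j$ for the unique $g$ with $ev_X(v)=g\,ev_X(\gamma)$. Your fourth step already contains the ingredient needed to repair this --- the smooth dependence of $\rho_{XX'}$ on the glued trajectory --- but it has to enter the \emph{definition} of the charts (and of the composition and boundary structure of the would-be quotient category), not only the verification that $ev_{X'}$ is smooth.
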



We only explain the construction of the smooth coordinate charts, as the rest is the argument is the same. Recall that $\tilde f$ is an equivariant Morse--Bott function on $EG$ that lifts a Morse function $f$ on $BG$, and $h_s$, $s\in EG$ is a family of functions on the closed manifold $M$, that is Morse over the critical points of $\tilde f$ and that depend on $s$ in a $G$-equivariant way. For simplicity, we will pretend that $EG$ is a finite dimensional closed manifold. (Strictly speaking, the following construction should be conducted over $E_nG$, and then by induction on $n$. We wish to avoid the notational cumbersomeness that would arise from one more parameter.)

Let $x,x'\in BG$ be two critical points of $f$, and let $\tilde x\in X:=p^{-1}(x)$, $\tilde x'\in X'=p^{-1}(x')$. Let $y\in crit(h_{\tilde x})$ and $y'\in crit(h_{\tilde x'})$; thus $(\tilde x,y), (\tilde x',y')\in ob(\cM_{\tilde f,h_s})$. For simplicity, assume $x\neq x'$, and consider a smooth gradient trajectory $(\gamma,\eta)$ from $(\tilde x,y)$ to $(\tilde x',y')$. Let $ind(x)-ind(x')=d_1+1$, $ind(y)-ind(y')=d_2$ and choose a complete system $s_0<\dots<s_{d_1}$, $\underline{H}_0,\dots ,\underline H_{d_1}\subset BG$ associated to $\gamma_0=p\circ\gamma$. As before, let $H_i:=p^{-1}(\underline H_i)$. As we have seen, such a choice gives rise to a smooth equivariant coordinate chart $U_{\mathbf{H}}\cong X\times U_{\mathbf{\underline H}}\subset \cM_{\tilde f}(X,X')$.

Now choose a (non-equivariant) complete system $s_1'<\dots <s_{d_2}'$, $H_1',\dots,H_{d_2}'\subset M$ associated to the path $\eta$. More precisely, we require that $(\{s_i\},\{s_j'\},H_i,H_j')$ is a complete system for $(\gamma,\eta)$. Given $\eta'=g\eta$ in the orbit of $\eta$, we obtain a complete system for $\eta'$ with the same $s_i'$ and with the hyperplanes $H_i'(g\eta):=gH_i'$. In particular, in an equivariant neighborhood $\cW$ of the orbit of $(\gamma,\eta)$, we have coordinate functions $(v,w)\mapsto s_i(v)$, we have an evaluation map $\cW\to X$, $(v,w)\mapsto ev_X(v)$ and finally we have functions $s_j'(v,w)$ satisfying $s_j'(\gamma, \eta)=s_j'$ and 
\begin{equation}
	w(s_j'(v,w))\in H_j'(g\eta)=gH_j'
\end{equation}
for the unique $g\in G$ such that $ev_X(v)=gev_X(\gamma)$. Hence, we obtain a smooth equivariant coordinate chart $	U_{\mathbf{ H},\mathbf{ H'}}\subset\cW((x,y),(x'y'))/\bR$, $	U_{\mathbf{ H},\mathbf{ H'}} \to X\times \bR^{d_1+d_2}$ given by 
\begin{equation}
(v,w)\mapsto (ev_X(v),s_1(v)-s_0(0),\dots ,s_{d_1}(v)-s_{d_1-1}(v),s_1'(v,w)-s_0(v),s_2'(v,w)-s_0(v),\dots )
\end{equation}
One can apply the arguments above to define gluing maps, and smooth equivariant charts on the compactification. The smoothness of the second evaluation map follows from \Cref{lem:smteva}, as this map depends only on the $v$ component.

Finally, we explain how to modify the above arguments for $\cM_{\tilde f,H,J}$. 
\begin{proof}[Proof of \Cref{proposition:s1-flow-cat}]
As in the above discussion, let $S=ES^1=S^\infty$. Let $x,x'\in \bC\bP^\infty$ be two critical points of $f$, which we assume to be different for simplicity, and let $\tilde x\in X:=p^{-1}(x)$, $\tilde x'\in X'=p^{-1}(x')$.  Let $y\in orb(H_{\tilde x})$ and $y'\in orb(H_{\tilde x'})$; thus, $(\tilde x,y), (\tilde x',y')\in ob(\cM_{\tilde f,H_s})$. Consider an unbroken pair $(\gamma,u)$ from $(\tilde x,y)$ to $(\tilde x',y')$. Let $ind(x)-ind(x')=d_1+1$, $ind(y)-ind(y')=d_2$ and choose a complete system $s_0<\dots<s_{d_1}$, $\underline{H}_0,\dots ,\underline H_{d_1}\subset S$ associated to $\gamma_0=p\circ\gamma$.

Now, we choose a sequence of points $z_j'=(s_j',t_j')$ and hypersurfaces $H_1',\dots,H_{d_2}'\subset M$ on the cylinder $S^1\times\bR$ such that $z_j'$ is regular for $u$ (see \cite{largethesis} for the definition), that satisfies $s_1'<\dots <s_{d_2}'$. We further assume this is a \emph{complete system for $u$}. More precisely
\begin{enumerate}
	\item\label{item:secondcompletehol} $ev_{\{s_i\},\{z_j'\}}:\cW((\tilde x,y),(\tilde x',y'))\to S^{d_1+1}\times M^{d_2}$ is transverse to $\prod H_i\times \prod  H_j'$ at $(\gamma,u)$
	\item\label{item:thirdcompletehol} $u|_{\bR\times\{t_j'\}}$ is transverse to each $H'_j$ at $s'_j$ and this is the unique intersection point.
\end{enumerate}
Analogously, this produces functions $s_i(v)$, $s_j'(v,w)$ in an equivariant neighborhood of the orbit of $(\gamma,u)$ such that
\begin{itemize}
	\item $s_i(\gamma)=s_i$ and $v(s_i(v))\in H_i$
	\item $s_j'(\gamma,u)=s_j'$ and $w(s_j'(v,w),gt_j')\in H_j'$ where $g\in S^1$ is the unique element satisfying $ev_X(v)=gev_X(\gamma)$
\end{itemize}
Similar to above, we obtain a smooth equivariant coordinate chart $U_{\mathbf{ H},\mathbf{ H'}}\subset\cW((x,y),(x'y'))/\bR$, $U_{\mathbf{ H},\mathbf{ H'}} \to X\times \bR^{d_1+d_2}$ given by 
\begin{equation}
	(v,w)\mapsto (ev_X(v),s_1(v)-s_0(0),\dots ,s_{d_1}(v)-s_{d_1-1}(v),s_1'(v,w)-s_0(v),s_2'(v,w)-s_0(v),\dots )
\end{equation}
The above arguments similarly produce us gluing maps, and smooth equivariant charts on the compactification. The smoothness of the second evaluation map follows from \Cref{lem:smteva}, as this map depends only on the $v$ component.
\end{proof}

\appendix 

\section{$\langle k \rangle$-manifolds and equivariant neat embeddings}\label{appendix:k-manifolds}

The purpose of this appendix is to review the standard notions of $\langle k \rangle$-manifolds, neat embeddings, etc. We also briefly explain how to generalize these notions to the equivariant setting. We refer to \cite{hajek2014manifolds} for foundational material on the category of manifolds with corners (charts, diffeomorphisms, etc.).  

\subsection{$\langle k \rangle$-manifolds}
 
%


For a given manifold with corners, the codimension $c(x)$ of a point $x$ is defined to be the highest codimension of boundary faces containing $x$. For instance, the interior points have codimension $0$, the points in the interior of the boundary has codimension $1$, etc. 
\begin{defn}[{\cite[Def.\ 3]{cohenrevisited},\cite{laures}}]\label{defn:kmfd} 
A $\langle k \rangle$-manifold is a manifold with corners $M$ along with an ordered $k$-tuple $(\partial_1 M, \dots, \partial_k M)$ of disjoint unions of codimension $1$ faces satisfying the following conditions:
\begin{itemize}
\item[(i)] $\partial_1 M \cup \dots \cup \partial_k M = \partial M$
\item[(ii)] for all $ i \neq j$ we have that $\partial_i M \cap \partial_j M$ is a face of both $\partial_i M$ and $\partial_j M$
\item[(iii)] every $x\in M$ is contained exactly in $c(x)$ connected boundary faces
\end{itemize}
\end{defn}

\begin{defn}
Let $M$ and $N$ be $\langle k\rangle$-manifolds. A \emph{neat embedding} $M\to N$ is a smooth embedding such that each $\partial_i M$ maps to $\partial_i N$ and the pre-image of $\partial_i N$ is given by $\partial_i M$.
\end{defn}

\begin{exmp}\label{example:neat-embedding}
The canonical example of a $\langle k \rangle$-manifold is $\mathbb{R}_+^k \times \mathbb{R}^N$, for any $N \geq 0$, where 
\begin{equation}
    \partial_i (\mathbb{R}_+^k \times \mathbb{R}^N):= \{ (x_1, x_k; y_1,\dots, y_N) \mid x_i=0 \}. 
\end{equation}
\end{exmp}

\Cref{example:neat-embedding} is universal in the following sense. 
\begin{prop}\cite{laures}\label{proposition:laures}
    Any $\langle k \rangle$-manifold admits a neat embedding (in the category of $\langle k \rangle$-manifolds) into some $\mathbb{R}_+^k \times \mathbb{R}^N$. Any two such neat embeddings are isotopic for $N$ large enough.
\end{prop}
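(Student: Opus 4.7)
The plan is to proceed by induction on $k$. The base case $k=0$ is Whitney's classical embedding theorem, and $k=1$ is the standard neat embedding theorem for a manifold with boundary into $\mathbb{R}_+ \times \mathbb{R}^N$ (embed first, then perturb so the boundary is sent into $\{0\} \times \mathbb{R}^N$ and the interior is transverse to this hyperplane). The isotopy statement in these cases is also classical: two embeddings into $\mathbb{R}^N$ become isotopic after stabilizing, and the neatness condition is preserved by an isotopy rel boundary.

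For the inductive step, the key geometric input is the existence of a \emph{compatible collar system} on a $\langle k\rangle$-manifold $M$, i.e.\ smooth embeddings $c_i : \partial_i M \times [0,\epsilon) \hookrightarrow M$ which mutually commute on all multi-intersections $\partial_{i_1}M \cap \dots \cap \partial_{i_j}M$, yielding a local product model $\mathbb{R}_+^j \times (\text{interior of the face})$ around every corner point. This is a standard extension of the collar neighborhood theorem obtained by iterating collars and using the condition (iii) in \Cref{defn:kmfd} which ensures that each intersection of faces carries an induced $\langle k-j\rangle$-manifold structure.

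Given such collars, I would first apply the inductive hypothesis to each boundary face $\partial_i M$, regarded as a $\langle k-1\rangle$-manifold whose faces are the intersections $\partial_i M \cap \partial_j M$, producing neat embeddings $\partial_i M \hookrightarrow \mathbb{R}_+^{k-1} \times \mathbb{R}^{N_i}$. After stabilizing to a common $N$ and invoking the uniqueness-up-to-isotopy part of the inductive hypothesis on the lower-dimensional intersections, these embeddings can be made to agree on overlaps and thus assemble into a neat embedding $\partial M \hookrightarrow \partial(\mathbb{R}_+^k \times \mathbb{R}^N)$. The compatible collar system then extends this to a neat embedding of an open collar neighborhood $U$ of $\partial M$ in $M$. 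Finally, on the compact region $M \setminus U'$ (for some slightly smaller collar $U' \subset U$) we apply Whitney's theorem to obtain an embedding into $\mathbb{R}^N$, and patch the two partial embeddings by a partition of unity subordinate to the cover $\{U, M \setminus \overline{U'}\}$, placing the two embeddings into orthogonal $\mathbb{R}^N$-summands after further stabilization so that convex combinations remain embeddings and preserve neatness.

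The isotopy clause is proved by exactly the same inductive scheme applied to a one-parameter family: one first isotopes the two given embeddings on the boundary faces by the inductive hypothesis, extends the isotopy to a collar neighborhood via the parametrized collar construction, and patches the resulting isotopy on the interior using Whitney's theorem and a partition of unity. The main technical obstacle is the patching step, which must respect neatness on \emph{all} $k$ faces simultaneously; this is controlled by carrying out the stabilization so that the collar-extended embedding and the interior embedding live in orthogonal subspaces of $\mathbb{R}^N$ on the overlap, ensuring that the convex combinations meet each face $\partial_i(\mathbb{R}_+^k \times \mathbb{R}^N)$ only along the image of the already-constructed boundary embedding.
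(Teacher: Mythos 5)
Your proposal is correct and follows essentially the same route as the paper, which does not prove this statement itself but cites Laures; the paper explicitly identifies the two key ingredients of that proof as the compatible collar systems for $\langle k\rangle$-manifolds (Laures's Lemma 2.1.6) and the Whitney embedding theorem, which is exactly the architecture of your induction. The only place needing more care in a full write-up is the step where the face embeddings are "made to agree on overlaps": one should build the embedding from the deepest corner strata outward using the collars (matching normal data at the corners, not just point sets), but this is the standard execution of the idea you describe and not a different approach.
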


We record the following fact: 
\begin{lem}[Thm.\ 4 in \cite{hajek2014manifolds}]\label{lemma:fiber-j-module}
For $i=1,2$, let $M_i$ be a $\langle k_i \rangle$-manifold and let $Y$ be a closed manifold. If $M_1 \to Y \leftarrow M_2$ are transverse, then $M_1 \times_Y M_2 \hookrightarrow M_1 \times M_2$ is a neat embedding of $\langle k_1+k_2 \rangle$-manifolds.
\end{lem}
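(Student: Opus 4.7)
The plan is to leverage Theorem~4 of \cite{hajek2014manifolds}, which asserts that the transverse fiber product of two manifolds with corners along a boundaryless target is again a manifold with corners, with tangent space cut out by the usual linear fiber product. The remaining work is to upgrade this manifold-with-corners structure to a $\langle k_1+k_2\rangle$-structure and to verify that the inclusion into $M_1\times M_2$ is neat.

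First, I would equip $M_1\times M_2$ with its canonical $\langle k_1+k_2\rangle$-structure, defined by
\begin{equation*}
\partial_i(M_1\times M_2):=\begin{cases}\partial_i M_1\times M_2, & 1\le i\le k_1, \\ M_1\times \partial_{i-k_1}M_2, & k_1<i\le k_1+k_2,\end{cases}
\end{equation*}
which satisfies the axioms of Definition A.1 directly. Next, the transversality hypothesis on $M_1\to Y\leftarrow M_2$ (in the sense of \cite[Def.~16]{hajek2014manifolds}, i.e.\ on every pair of faces) implies that each pair $(\partial_i M_1, M_2)$ and $(M_1,\partial_j M_2)$ is also transverse over $Y$. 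Applying Hájek's theorem to each such pair yields that $\partial_i M_1\times_Y M_2$ and $M_1\times_Y\partial_j M_2$ are manifolds with corners, embedded in the corresponding boundary faces of $M_1\times M_2$. I would then \emph{define}
\begin{equation*}
\partial_i(M_1\times_Y M_2):=\begin{cases}\partial_i M_1\times_Y M_2, & 1\le i\le k_1, \\ M_1\times_Y \partial_{i-k_1}M_2, & k_1<i\le k_1+k_2,\end{cases}
\end{equation*}
and check axioms (i)--(iii) of Definition~A.1 on $M_1\times_Y M_2$.

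Axiom (i) (that the union of the $\partial_i$ is the boundary) follows from the corresponding property of $M_1\times M_2$ combined with the transversality, since the intersection of the fiber product with $\partial(M_1\times M_2)$ is exactly the union of the above faces. Axiom (ii) (pairwise intersections are faces) reduces to the analogous property for $M_1\times M_2$ together with the observation that $\partial_i M_1\times_Y\partial_j M_2$ is transversally cut out by Hájek's theorem applied iteratively. For axiom (iii), the key observation is that Hájek's theorem identifies the codimension of a point $(x_1,x_2)\in M_1\times_Y M_2$ with the codimension of $(x_1,x_2)$ in $M_1\times M_2$ (because the normal direction $T_yY$ is complementary to the corner strata); this codimension equals $c(x_1)+c(x_2)$, and the $a+b=c(x_1)+c(x_2)$ connected faces containing $(x_1,x_2)$ are precisely the products of the faces containing $x_1$ with those containing $x_2$ (appropriately fibered).

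Finally, neatness of the embedding $M_1\times_Y M_2\hookrightarrow M_1\times M_2$ amounts to verifying that (a) each $\partial_i(M_1\times_Y M_2)$ lands in $\partial_i(M_1\times M_2)$, and (b) $\partial_i(M_1\times_Y M_2)$ is the \emph{entire} preimage of $\partial_i(M_1\times M_2)$ under the inclusion; both are immediate from the definitions above. The main technical step — and essentially the only nontrivial one — is thus the codimension/face-count verification in axiom (iii), which ultimately rests on Hájek's transverse-intersection theorem controlling how the fiber product meets each corner stratum. The rest is bookkeeping.
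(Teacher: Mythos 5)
Your argument is correct and takes the same route as the paper: the paper offers no separate proof, simply citing H\'ajek's Theorem~4, and your write-up supplies exactly the routine upgrade from that theorem (the product $\langle k_1+k_2\rangle$-structure, face-by-face transversality, the codimension count for axiom~(iii), and the preimage condition for neatness) that the citation leaves implicit. Nothing further is needed.
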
 
In particular, since neat embeddings of $\langle k \rangle$ manifolds are natural under products and compositions, the data of neat embeddings $M_i \hookrightarrow \bR_+^{k_i} \times \bR^{N_1}$ induces a neat embedding of $\langle k_1+ k_2\rangle$-manifold $M_1 \times_Y M_2 \hookrightarrow \bR_+^{k_1+k_2} \times \bR^{N_1+N_2}$. 

\subsection{$G$-equivariant $\langle k \rangle$-manifolds and equivariant neat embeddings}
Fix a compact Lie group $G$. 

\begin{defn} 
A \emph{$G$-equivariant $\langle k \rangle$-manifold} is a $\langle k \rangle$-manifold $M$ equipped with a smooth $G$-action such that the action preserves $\partial_i M$ (this is automatic if $G$ is connected). A \emph{neat embedding} of $G$-equivariant $\langle k \rangle$-manifolds is a neat embedding of $\langle k \rangle$-manifolds that is also $G$-equivariant. 
\end{defn}
\begin{exmp}\label{example:equiv-universal}
Let $V$ be a real representation of $G$. Then $\mathbb{R}_+^k \times V$ is a $G$-equivariant $\langle k \rangle$-manifold. 
\end{exmp}

Recall the Mostow--Palais theorem: one can embed every $G$-equivariant smooth closed manifold $G$-equivariantly into some $V$. The proof of \Cref{proposition:laures} in \cite{laures} can be adapted to show the existence of equivariant neat embeddings into $\mathbb{R}_+^k \times V$. 

An important ingredient in the proof of the standard neat embedding theorem in \cite{laures} is a generalization of the collars of smooth manifolds with boundary to $\langle k\rangle$-manifolds. See \cite[Lemma 2.1.6]{laures} for the precise notion. One can adapt this to the $G$-equivariant setting. For simplicity, we explain this only for $\langle 1\rangle$-manifolds, i.e. for smooth manifolds with boundary. For a $G$-equivariant smooth manifold with boundary $M$, a collar $\partial M \times\bR_+\hookrightarrow M$ is determined by a vector field $v$ in a neighborhood of $\partial M$. One can fix a bi-invariant metric on $G$ and replace $v$ by the vector field $v^{avg}$ defined by $v^{avg}_p:=\frac{1}{vol(G)} \int_{g \in G} {g}_* (v_{g^{-1}p})$. This vector field determines a $G$-equivariant collar of $\partial M$ over some $\partial M \times[0,\epsilon)$.

\begin{prop}\label{proposition:equivariant-neat}
Any $G$-equivariant $\langle k \rangle$-manifold admits an equivariant neat embedding into some $\bR_+^k \times V$, where $V$ is an orthogonal representation of $G$. Any two such embeddings become isotopic after stabilization (by another representation of $G$). 
\end{prop}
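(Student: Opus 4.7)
The plan is to mimic Laures' argument from \cite{laures} in the equivariant setting, using two equivariant ingredients: the equivariant Mostow--Palais theorem (any smooth closed $G$-manifold embeds $G$-equivariantly into some orthogonal representation $V$) and the existence of $G$-equivariant collars at each face. The equivariant collar at $\partial_i M$ is obtained by starting from a non-equivariant inward-pointing vector field $v$ in a neighborhood of $\partial_i M$, averaging it to $v^{\mathrm{avg}}_p := \frac{1}{\mathrm{vol}(G)}\int_{g\in G} g_*(v_{g^{-1}p})$ using a bi-invariant metric on $G$, and integrating; by slightly shrinking, we obtain a $G$-equivariant diffeomorphism of $\partial_i M \times [0,\epsilon)$ onto an open neighborhood of $\partial_i M$ in $M$. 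Iterating this face by face, and arranging the collars so that they are mutually compatible near deeper corners (as in \cite[Lem.~2.1.6]{laures}, averaging each time), yields a $G$-equivariant system of collars indexed by the poset $\{0,1\}^k$ of faces.

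The main construction proceeds by induction on $k$. The base case $k=0$ is exactly Mostow--Palais. For the inductive step, $\partial_k M$ is itself a $G$-equivariant $\langle k-1\rangle$-manifold (with faces $\partial_i M\cap \partial_k M$), so by the induction hypothesis it admits a $G$-equivariant neat embedding $\partial_k M \hookrightarrow \bR_+^{k-1}\times V_1$ for some representation $V_1$. Using the equivariant collar $\partial_k M \times [0,\epsilon)\hookrightarrow M$ constructed above, this extends to a $G$-equivariant neat embedding of an open neighborhood $U$ of $\partial_k M$ into $\bR_+^{k-1}\times \bR_+ \times V_1$. Separately, Mostow--Palais gives a $G$-equivariant embedding of the double of $M$ (or any convenient closed $G$-manifold containing a copy of $M\setminus$ (smaller collar neighborhood)) into some representation $V_2$, hence a $G$-equivariant embedding of $M$ itself into $\bR^N \times V_2$ for $N$ large. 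Choosing a $G$-invariant cutoff function (obtained by averaging an ordinary one) supported in $U$ and equal to $1$ on a smaller neighborhood, one patches the two embeddings into a single $G$-equivariant embedding of $M$ into $\bR_+^k \times V$ where $V = V_1 \oplus V_2 \oplus \bR^N$; the boundary-respecting structure is inherited from the embedding near $\partial_k M$ and verified using the collar structure.

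The hardest part, and what takes the most care, is ensuring neatness at the corners: one must arrange that the embedding is transverse to each face of the target $\bR_+^k$ and that the preimage of $\{x_i=0\}$ is exactly $\partial_i M$. This is managed exactly as in \cite{laures}, by using the collar coordinates to guarantee that in a neighborhood of a depth-$j$ corner, the embedding takes the product form $(c_1,\dots,c_j,\sigma(p))$ on $[0,\epsilon)^j \times \sigma(\text{corner})$ for some embedding $\sigma$ of the corner stratum; the averaged/equivariant choices above ensure this product form is $G$-equivariant.

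For the uniqueness statement, given two $G$-equivariant neat embeddings $\iota_0, \iota_1: M \hookrightarrow \bR_+^k \times V$, stabilize by replacing $V$ with $V \oplus V$ and consider $(\iota_0,0)$ and $(0,\iota_1)$, which are still neat. Linear interpolation $t\mapsto ((1-t)\iota_0, t\iota_1)$ inside $\bR_+^k\times(V\oplus V)$ gives a path of $G$-equivariant maps that are immersions away from $t=\tfrac12$; a standard further stabilization together with a $G$-equivariant isotopy extension argument (applying the averaging trick to a generic perturbation) produces a $G$-equivariant neat isotopy between the two, completing the proof.
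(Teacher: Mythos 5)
Your proposal follows essentially the same route as the paper: the paper's proof simply runs Laures' argument \cite[Prop.\ 2.1.7, Lem.\ 2.2.3]{laures} verbatim, substituting $G$-equivariant collars (obtained by the same averaging $v^{\mathrm{avg}}_p=\frac{1}{\mathrm{vol}(G)}\int_{g\in G}g_*(v_{g^{-1}p})$ you describe) for ordinary collars and Mostow--Palais for Whitney, and your inductive construction and stabilization argument for uniqueness are exactly this. The only quibble is cosmetic: in the uniqueness step the straight-line isotopy $t\mapsto((1-t)\iota_0,t\iota_1)$ in $\bR_+^k\times(V\oplus V)$ is already an embedding for every $t$ (not merely away from $t=\tfrac12$), so no further perturbation is needed there.
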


\begin{proof}
The proof is essentially the same as \cite[Prop.\ 2.1.7]{laures}. The only differences are (i) one uses $G$-equivariant collars instead of standard collars, (ii) one appeals to the Mostow--Palais theorem wherever the Whitney embedding theorem is applied. 
	

For the uniqueness of $G$-equivariant neat embeddings, the proof of \cite[Lem.\ 2.2.3]{laures} works $G$-equivariantly without change.
\end{proof}

Finally, the above discussion readily generalizes to flow categories. 

\begin{defn}\label{definition:neat-embedding-flow-cat}
    A ($G$-equivariant) neat embedding of a $G$-equivariant flow category  $\cM$ (\Cref{subsection:extension-equivariant}) is the data of:
    \begin{itemize}
        \item integers $q_i$ and orthogonal $G$-actions on each $\bR^{q_i-q_j}$ such that the action on $\bR^{q_i-q_k}=\bR^{q_i-q_j}\times \bR^{q_j-q_k}$ coincides with the product action (hence, we have $G$-actions on each $J_{q_i-q_j}(i,j)$ compatible with the product structure)
        \item $G$-equivariant neat embeddings $\cM(i,j)\hookrightarrow J_{q_i-q_j}(i,j)$ such that the following diagram commutes
\begin{equation}\label{eq:neatflowembgeq}
	\xymatrix{\cM(i,j)\times_j\cM(j,k) \ar[r]\ar@{^{(}->}[d]& \cM(i,k)\ar@{^{(}->}[d] \\ J_{q_i-q_j}(i,j) \times J_{q_j-q_k}(j,k)\ar[r]& J_{q_i-q_k}(i,k) }
\end{equation}
    \end{itemize}
\end{defn}

\begin{cor}\label{collary:neat-embedding-category}
    Any (Morse--Bott) flow category $\cM$ with a $G$-action admits a $G$-equivariant neat embedding, unique up to stabilization.
\end{cor}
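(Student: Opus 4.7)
The plan is to build the $G$-equivariant neat embeddings by induction on $n := i-j$, using \Cref{proposition:equivariant-neat} as the key ingredient in both the base case and the extension step. Since the flow category is finite (or we work with a fixed finite truncation and pass to the colimit), the induction terminates.

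First, I would fix a nondecreasing sequence of integers $q_i$ and, for each pair $i>j$, a finite-dimensional orthogonal $G$-representation $W_{ij}$ of dimension $q_i-q_j$, chosen so that $W_{ik}\cong W_{ij}\oplus W_{jk}$ as $G$-representations (for instance, take $W_{ij}=\bigoplus_{i>\ell\geq j} U_\ell$ for fixed representations $U_\ell$; then $J_{q_i-q_j}(i,j)=\bR_+^{i-j-1}\times W_{ij}$ carries the natural $G$-action and the product structure is compatible with $G$-equivariant inclusions). By taking each $U_\ell$ to be a sufficiently large regular representation, we retain enough room to embed everything in sight.

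\emph{Base case $n=1$.} Each $\cM(i,i-1)$ is a closed $G$-manifold, so the Mostow--Palais theorem furnishes a $G$-equivariant embedding into some orthogonal representation, which after enlarging $U_{i-1}$ may be assumed to land in $W_{i,i-1}=J_{q_i-q_{i-1}}(i,i-1)$.

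\emph{Inductive step.} Assume $G$-equivariant neat embeddings $\cM(i',j')\hookrightarrow J_{q_{i'}-q_{j'}}(i',j')$ have been constructed for all $i'-j'<n$, compatibly with composition. For a pair with $i-j=n$, the boundary facets $\partial_a\cM(i,j)=\cM(i,j+a)\times_{j+a}\cM(j+a,j)$ already carry $G$-equivariant neat embeddings into $J_{q_i-q_{j+a}}(i,j+a)\times J_{q_{j+a}-q_j}(j+a,j)$, and the inclusion of this product into $J_{q_i-q_j}(i,j)$ (cf.\ \eqref{eq:neatflowembgeq}) gives a $G$-equivariant neat embedding of the entire boundary $\partial\cM(i,j)$. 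At this stage I would use $G$-equivariant collars (as in the discussion preceding \Cref{proposition:equivariant-neat}, obtained by averaging inward-pointing vector fields) to extend this boundary embedding to a $G$-equivariant neat embedding of an open collar neighborhood of $\partial\cM(i,j)$. Then I would apply the relative equivariant Whitney/Mostow--Palais embedding: extend the partially defined equivariant embedding of $\cM(i,j)$ to all of $\cM(i,j)$ after further stabilizing $W_{ij}$ by a sufficiently large representation. This is a standard equivariant relative embedding argument (the non-equivariant version is used in \cite[Prop.\ 2.1.7]{laures}, and the equivariant version goes through by averaging and using Mostow--Palais in place of Whitney, exactly as in the proof of \Cref{proposition:equivariant-neat}).

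The uniqueness statement is proven by the same inductive scheme: given two choices of equivariant neat embeddings, one stabilizes (if necessary, by replacing each $U_\ell$ by a larger representation) and then constructs a $G$-equivariant isotopy facet-by-facet, at each step extending an isotopy already defined on the boundary using the relative uniqueness part of \Cref{proposition:equivariant-neat}.

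The main obstacle I anticipate is bookkeeping: ensuring that the stabilizations performed at stage $n$ of the induction are propagated consistently to all earlier-constructed embeddings without destroying the commutativity of \eqref{eq:neatflowembgeq}. This is handled by first fixing, before starting the induction, sufficiently large representations $U_\ell$ and stabilization slack so that all the extension steps fit; alternatively one can perform the induction and then revisit each embedding to post-compose with the necessary equivariant inclusion of representations, which preserves the compatibility with products because the maps $W_{ij}\hookrightarrow W_{ij}\oplus(\text{extra})$ are $G$-equivariant and commute with direct sums.
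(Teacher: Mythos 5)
Your proposal is correct and is exactly the inductive application of \Cref{proposition:equivariant-neat} that the paper's (one-line) proof invokes: induct on $i-j$, embed the boundary facets using the already-constructed embeddings of fiber products, extend over a $G$-equivariant collar, and finish with a relative Mostow--Palais extension after stabilizing. The uniqueness-up-to-stabilization argument you sketch likewise matches the intended facet-by-facet isotopy extension.
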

\begin{proof}
This follows by applying the argument in \Cref{proposition:equivariant-neat} inductively. 

\end{proof}

\section{Equivariant spectra} \label{section:equivariant-homotopy}
There are multiple non-equivalent categories whose objects deserve to be called $G$-equivariant spectra. We briefly summarize the notions which are relevant in this paper.

Let $G$ be a compact Lie group and let $\operatorname{Sp}$ be the infinity category of spectra. The infinity category of \emph{spectra with a $G$-action} (also called \emph{Borel $G$-spectra}) is the functor category $Fun(BG, \operatorname{Sp})$. Concretely, a Borel $G$-spectrum is can be viewed as an ordinary spectrum along with maps $\phi_g: E \to E$ for all $g \in G$, homotopies $\phi_h \circ \phi_g \simeq \phi_{gh}$, etc. In other words, they can considered as spectra with a homotopy $G$-action. 
As natural isomorphisms can be detected pointwise \cite[\href{https://kerodon.net/tag/01DK}{tag01DK}]{kerodon}, a morphism $E \to E'$ of Borel $G$-spectra is a weak equivalence in $Fun(BG, \operatorname{Sp})$ iff it is a weak equivalence in $\operatorname{Sp}$ after forgetting the $G$-action. 

We are also interested in the infinity category $G\operatorname{Sp}$ of \emph{genuine $G$-spectra}. Let $\operatorname{Sp}^O$ be the $1$-category of orthogonal spectra. Then one can consider the \emph{$1$-category} $G\operatorname{Sp}^O:= Fun(BG, \operatorname{Sp}^O)$ of orthogonal $G$-spectra. Concretely, an orthogonal $G$-spectrum $E$ can be viewed as a system of $G$-actions on each level $E_n$ which are compatible with the structure maps \cite[Def.\ 2.1]{Schwede}. The infinity category $G\operatorname{Sp}$ is obtained from $G\operatorname{Sp}^O$ by inverting equivalences. We emphasize that to be an equivalence in $G\operatorname{Sp}^O$, it is not enough to merely induce a non-equivariant equivalence; rather, equivalences in $G\operatorname{Sp}^O$ induce non-equivariant equivalences on \emph{geometric fixed points} for all closed subgroups of $G$. In particular, there are forgetful functors 
\begin{equation}
	G \operatorname{Sp} \to \operatorname{Sp}^{BG} \to \operatorname{Sp}.
\end{equation}

\begin{exmp}
	The $G$-equivariant map $EG \to \{*\}$ induces an equivalence $\Sigma^\infty EG_+\to\bS$ of Borel $G$-spectra, but $\Sigma^\infty EG_+$ is not equivalent to $\bS$ as a genuine $G$-spectrum. The geometric fixed points of the former are given by $0$, whereas it is $\bS$ for the latter.
\end{exmp}

Given a (non-equivariant) spectrum $E$, a \emph{Borel $G$-action} on $E$ shall mean a lift of $E$ to $\operatorname{Sp}^{BG}$. A \emph{genuine $G$-action} means a lift to $G \operatorname{Sp}$.

\begin{comment}

\section{Applications to symplectic topology} \textcolor{blue}{I suggest starting a new paper here}

\subsection{Floer theory for exact Lagrangians}

\begin{prop}
Let $L$ be an exact Lagrangian in a Liouville manifold. Then $\tilde{HF}(L) \simeq \Sigma^\infty L$. 
\end{prop}

More generally: if $K, L$ are cleanly intersecting exact Lagrangians in a (say) Liouville manifold $M$, Pozniak constructed a spectral sequence computing $HF(K, L)$. We have the following  generalization:
\begin{prop}
[state here?]
\end{prop}

\subsection{Spectral symplectic cohomology of smooth divisor complements}
\red{or more generally when the boundary is M-B, make index restrictions, speculate about normal crossing divisors of depth two}

\subsection{Cotangent bundle and spectral Viterbo isomorphism}
\red{we had this idea somewhere in Slack. Use quadratic Hamiltonians, it can be checked here that it gives the same thing. Indeed, we may simply use the Hamiltonian $r^2$, where $r$ is the norm of a cotangent vector wrt a metric. Try to match the flow category with the energy functional on he free loop space}

\red{it may be better to deal w/ M-B theory on the loop space earlier}

\section{Circle actions and cyclotomic structures on $\widetilde{SH}(M)$}

\section{Fixed point computations}

\red{brief overview such as ``Our tool facilitates the computations of fixed points''}

\subsection{Geometric fixed points}

\begin{thm}
Let $M$ be a Liouville manifold and let $\tilde{SH}(M)$ be the spectral symplectic cohomology spectrum as defined in the previous section. Then $\Phi^{S^1} \widetilde{SH}(X) \simeq \Sigma^\infty X$. 
\end{thm}

\subsection{Homotopy and Tate fixed points of the spectrum $\widetilde{SH}(M)$}

\subsection{Fixed points of $\widetilde{SH}(M)$ after applying extraordinary cohomology theories}
\red{bunch of exmaples, ideally $H\bZ$, $H\bF_p$, maybe some others. M--B setting simplifies this too}

\section{Applications to topology}
\subsection{Homotopy type via Morse--Bott theory and Cohen--Jones--Segal theorem}

\subsection{Equivariant homotopy type}
\red{may emphasize plenty of equivariant M--B function, but not Morse. Also make explicit computations, derive some localization theorems for Hamiltonian $G$-manifolds}

Let $M$ be a smooth manifold with an $S^1$-action. We explain how to construct via Morse theory a spectrum with a $S^1$-action which is isomorphic to $\Sigma_+^\infty M$ (as a spectrum with a $S^1$-action). 

Define $\tilde{q}: S^{2N+1} \times M \to \R$ to have the following properties:
\begin{itemize}
\item $\tilde{q}$ is $S^1$-invariant
\item if $x$ is in the local slice, then $\tilde{q}(t \cdot x, t \cdot w)= q_t(w)$
\item critical points of $\tilde{f}+ \tilde{q}$ lie over the orbits of $\tilde{f}$. 
\end{itemize}

We also choose a family of metrics $h$ on $M$ parametrized by $S^{2N+1}$ which is globally $S^1$ invariant and independent of $z$ in each slice. 
Let $\tilde{h}$ be the product metric on $S^{2N+1} \times M$ induced by the round metric on $S^{2N+1}$ and $h$ on the second factor

\begin{lem}\label{lemma:morse-case-flow-cat}
We can choose $h$ so that the pair $(\tilde{q}, \tilde{h})$ satisfies the Morse--Bott--Smale transversailty condition; the moduli spaces are $\langle k \rangle$-manifolds with an $S^1$-action;  we have an $S^1$-flow category.
\end{lem}

\begin{defn}
Let $\tilde{H}_{S^{2N+1}}(M)$ the geometric realization of the flow category of \Cref{lemma:morse-case-flow-cat}. This is a spectrum with an $S^1$-action.
\end{defn}

\subsubsection{Invariance}

\begin{lem}\label{lemma:morse-increase-N}
Increasing $N$ gives rize to an inclusion of flow categories.
\end{lem}

\begin{defn}
Let $\tilde{H}_{S}(M):= \lim_{N \to \infty} \tilde{H}_{S^{2N+1}}(M)$.  This is a spectrum with an $S^1$-action.
\end{defn}


\begin{lem}
Given a family $(\tilde{q}_t^s, \tilde{g}_t^s)$, (i) there is a bimodule... ii) the induced map on realization is compatible with the induced maps in \Cref{lemma:morse-increase-N}. 
\end{lem}

\begin{lem}
The constant interpolation induces the identity on realizations.
\end{lem}

Remaining questions:
(i) show independence of model of BG?
(ii) show that this spectrum really computes the correct thing.

\subsection{Free loop spaces}
\red{the energy functional on the free loop space is naturally M--B, we may be able to recover the homotopy type in this case too. Involves a colimit. Still pure topology, but can be used later to match $\widetilde{SH}$}

\red{more symplectic applications have to come later}

\section{Plan/Outline (remove this section)}

A rough plan
\begin{enumerate}
	\item introduction
	\item some standard background
	\item Morse--Bott flow categories (cite Zhengyi) (first do non-equivariantly, then explain how to generalize, may include equivariant neat embeddings in an appendix)
	\begin{enumerate}
		\item definition
		\item geometric realization (first explain relative collapse)
		\item maps induced by bi-modules, homotopies etc.
		\item easy applications (e.g. C-J-S theorem, G-equivariant version)
	\end{enumerate}
	\item Spectral symplectic cohomology
	\begin{enumerate}
		\item construction (include transversality)
		\item equivalence to the standard version (non-autonomus version)
		\item may discuss relations to existing autonomous constructions
		\item briefly the quadratic Hamiltonian version, and equivalence [for equivalence, it may be practical to realize ]
		\item some examples
		\begin{enumerate}
			\item Divisor complement/circle bundle
			\item cotangent bundle (use quadratic Hamiltonians)
		\end{enumerate}
	\end{enumerate}
    \item Genuine circle action and the cyclotomic structures
    \begin{enumerate}
    	\item construction and the comparison for different autonomus Hamiltonians (the latter such take a few sentences)
    	\item geometric fixed points, spectral Jingyu 1
    \end{enumerate}
    \item Some homotopy/Tate fixed point computations (of the spectrum itself, as well as its smashes with various cohomology theories)
    \item Non-autonomus circle action and the cyclotomic structure (this may take a whole other paper, so can be left out, or we can skip the comparison part)
    \begin{enumerate}
    	\item construct the circle action and comparison with the autonomus case
    	\item cyclotomic structure and the comparison
    \end{enumerate}
    \item relations to existing operators (?)
    \item further applications (?)
\end{enumerate}

Consider including
\begin{itemize}
	\item other constructions, such as the p-typical cyclotomic structure
	\item relations to existing operators
\end{itemize}

\bibliographystyle{alpha}
\bibliography{biblio}		
\end{document}